\newcommand\reallywidehat[1]{%
\savestack{\tmpbox}{\stretchto{%
  \scaleto{%
    \scalerel*[\widthof{\ensuremath{#1}}]{\kern-.6pt\bigwedge\kern-.6pt}%
    {\rule[-\textheight/2]{1ex}{\textheight}}
  }{\textheight}%
}{0.5ex}}%
\stackon[1pt]{#1}{\tmpbox}%
}
\tikzset{
  symbol/.style={
    draw=none,
    every to/.append style={
      edge node={node [sloped, allow upside down, auto=false]{$#1$}}}
  }
}
\newcommand{\Z}{\mathbb{Z}}
\newcommand{\Q}{\mathbb{Q}}
\newcommand{\R}{\mathbb{R}}
\newcommand{\BC}{\mathbb{C}}
\renewcommand{\H}{\mathbb{H}}
\newcommand{\OO}{{\mathcal O}}
\newcommand{\ol}{\overline}
\newcommand{\Speh}{\mathrm{Speh}}
\newcommand{\WFP}{\mathfrak{p}^{m}}
\newcommand{\SL}{\mathrm{SL}}
\newcommand{\GL}{\mathrm{GL}}
\newcommand{\SO}{\mathrm{SO}}
\newcommand{\rO}{\mathrm{O}}
\newcommand{\Sp}{\mathrm{Sp}}
\newcommand{\St}{\mathrm{St}}
\newcommand{\RU}{\mathrm{U}}
\newcommand{\JL}{\mathrm{JL}}
\newcommand{\LJ}{\mathrm{LJ}}
\newcommand{\Fr}{\mathrm{Fr}}
\newcommand{\RG}{\mathrm{G}}
\newcommand{\Ind}{\mathrm{Ind}}
\newcommand{\WFN}{\overline{\mathfrak{n}}^{m}}
\newcommand{\half}[1]{\frac{#1}{2}}
\newcommand{\ul}[1]{\underline{\vphantom{p}}_{#1}}
\newcommand{\comment}[1]{}
\newcommand{\CO}{\mathcal{O}}
\newtheorem{thm}{Theorem}[section]
\newtheorem{cor}[thm]{Corollary}
\newtheorem{lemma}[thm]{Lemma}
\newtheorem{prop}[thm]{Proposition}
\newtheorem {conj}[thm]{Conjecture}
\newtheorem {ques/conj}[thm]{Question/Conjecture}
\newtheorem{defn}[thm]{Definition}
\newtheorem{remark}[thm]{Remark}
\newtheorem{exmp}[thm]{Example}
\newtheorem{assumption}[thm]{Working Hypotheses}
\newtheorem*{globalcond*}{Global Condition}
\newtheorem*{localcond*}{Local Condition}
\newtheorem*{globalconj*}{Global Conjecture}
\newtheorem*{localconj*}{Local Conjecture}
\newtheorem*{nonzero*}{Conjecture on the non-vanishing of the normalized intertwining operators}
\newtheorem*{holo*}{Conjecture on the holomorphicity of the normalized intertwining operators}
\DeclareMathOperator{\Aut}{Aut}
\DeclareMathOperator{\Gal}{Gal}
\DeclareMathOperator{\disc}{disc}
\numberwithin{equation}{section}
\let\oldbullet\bullet
\renewcommand{\bullet}{{\vcenter{\hbox{\tiny$\oldbullet$}}}}
\begin{document}
\title[Upper bound of wavefront sets]{On the upper bound of wavefront sets for representations of $p$-adic groups}

\author[A. Hazeltine]{Alexander Hazeltine
}
\address{Department of Mathematics\\
University of Michigan\\
Ann Arbor, MI 48109, USA}
\email{ahazelti@umich.edu}

\author[B. Liu]{Baiying Liu}
\address{Department of Mathematics\\
Purdue University\\
West Lafayette, IN, 47907, USA}
\email{liu2053@purdue.edu}

\author[C.-H. Lo]{Chi-Heng Lo}
\address{Department of Mathematics\\
National University of Singapore\\
119076, Singapore}
\email{{ch\_lo@nus.edu.sg}}

\author[F. Shahidi]{Freydoon Shahidi}
\address{Department of Mathematics\\
Purdue University\\
West Lafayette, IN, 47907, USA}
\email{freydoon.shahidi.1@purdue.edu}

\subjclass[2000]{Primary 11F70, 22E50; Secondary 11F85, 22E55}

\date{\today}


\keywords{Admissible Representations, Local Arthur Packets, Local Arthur Parameters, Nilpotent Orbits, Wavefront Sets}

\thanks{The research of the second named author is partially supported by the NSF Grant DMS-1848058 and the Simons Foundation: Travel Support for Mathematicians. The research of the fourth named author is partially supported by the NSF Grant DMS-2135021.}

\begin{abstract}
    In this paper, we study the upper bound of wavefront sets of irreducible admissible representations of connected reductive groups defined over non-Archimedean local fields of characteristic zero. We formulate a new conjecture on the upper bound and show that it can be reduced to that of anti-discrete series representations, namely, those whose Aubert-Zelevinsky duals are discrete series. 
    Then, we show that this conjecture is equivalent to the Jiang conjecture on the upper bound of wavefront sets of representations in local Arthur packets and also equivalent to an analogous conjecture on the upper bound of wavefront sets of representations in local ABV packets. 
\end{abstract}

\maketitle


\section{Introduction}

Let $F$ be a non-Archimedean field of characteristic zero. In this paper, we let $\RG$ denote a connected reductive algebraic groups defined over $F$, and put $G=\RG(F)$. We let $\Pi(G)$ denote the set of equivalence classes of irreducible admissible representations of $G$, and let $\Pi_{temp}(G)$ (resp. $\Pi_{2}(G)$) denote the subset of $\Pi(G)$ consisting of tempered (resp. discrete series) representations.
Given an irreducible {admissible} representation $\pi$ of $G$, one important invariant is a set $\mathfrak{n}(\pi)$ which is defined to be all the $F$-rational nilpotent orbits $\CO$ in the Lie algebra $\mathfrak{g}$ of $G$ such that the coefficient $c_{\CO}(\pi)$ in the Harish-Chandra-Howe local expansion of the character $\Theta(\pi)$ of $\pi$ is nonzero (see \cite{HC78, MW87}). Let $\mathfrak{n}^m(\pi)$ be the subset of $\mathfrak{n}(\pi)$ consisting of maximal nilpotent orbits, under the Zariski closure ordering of nilpotent orbits. We let $\ol{\mathfrak{n}}(\pi)$ be the sets of (geometric) nilpotent orbits over $\ol{F}$ corresponding to orbits in $\mathfrak{n}(\pi)$, and let $\ol{\mathfrak{n}}^m(\pi)$ be the subset of maximal elements under the dominance ordering. 
The set $\ol{\mathfrak{n}}^m(\pi)$ is called the (geometric) wavefront set of $\pi$.

In general, it is known that the structures of wavefront sets of representations are closely related to theory of Langlands functorial lifting and descent, and the Gan-Gross-Prasad conjecture. 
It is an interesting and long-standing question to study the structures of the sets $\mathfrak{n}(\pi)$,  $\mathfrak{n}^m(\pi)$, $\ol{\mathfrak{n}}(\pi)$ and $\ol{\mathfrak{n}}^m(\pi)$. Recently, Tsai (\cite{Tsa24}) constructed an example of representations of $\RU_7(\mathbb{Q}_3)$ showing that the wavefront set $\ol{\mathfrak{n}}^m(\pi)$ may not be a singleton, which implies the complication of its study in general.
In this paper, inspired by the work of \cite{CMBO22}, we consider the following conjecture on the upper bound of the wavefront set $\ol{\mathfrak{n}}^m(\pi)$ based on the local Langlands parameter of its 
Aubert-Zelevinsky dual $\widehat{\pi}$. 
{Note that 
one can attach nilpotent orbits to $L$-parameters as follows. Let $\widehat{G}(\BC)$ be the complex dual group of $\RG$ and $\widehat{\mathfrak{g}}(\BC)$ denote its Lie algebra. For an $L$-parameter $\phi$ of $G$, let $\{H,X,Y\}$ be an $\mathfrak{sl}_2$-triple of $\widehat{\mathfrak{g}}(\BC)$ associated to the morphism $\phi|_{\SL_2(\BC)}: \SL_2(\BC) \to \widehat{G}(\BC)$. We define $\OO_{\phi}$ to be the nilpotent orbit of $\widehat{\mathfrak{g}}(\BC)$ containing $X$ (see \S\ref{lp and lap} for more details). }

\begin{conj}[Upper Bound Conjecture, Conjecture \ref{conj bound of WF}]\label{conj bound of WF intro}
Assume the local Langlands correspondence for irreducible admissible representations of $G$. 
Let $\pi$ be an irreducible admissible representation of $G$. For any ${\CO} \in \ol{\mathfrak{n}}^m(\pi)$, 
the following inequality holds
\[{\CO} \leq d_{BV}( {\CO}_{\phi_{\widehat{\pi}} }),\]
where $\widehat{\pi}$ is the Aubert-Zelevinsky dual of $\pi$, $\phi_{\widehat{\pi}}$ is the local Langlands parameter of $\widehat{\pi}$. 
Here, 
$d_{BV}$ is the Barbasch-Vogan duality map from nilpotent orbits in $\widehat{\mathfrak{g}}(\mathbb{C})$ to nilpotent orbits in $\mathfrak{g}(\mathbb{C})$ which are naturally identified with nilpotent orbits in $\mathfrak{g}(\ol{F})$ (see \cite{Spa82, Lus84, BV85, Ach03} and \S \ref{wfs}). 
\end{conj}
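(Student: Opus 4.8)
The plan is not to establish Conjecture~\ref{conj bound of WF intro} unconditionally --- that is beyond the reach of current techniques --- but to reduce it to a small, structured class of representations and to identify it with existing conjectures. Since Aubert-Zelevinsky duality is an involution on $\Pi(G)$, the conjecture is equivalent to the assertion that $\ol{\mathfrak{n}}^m(\widehat{\pi}) \leq d_{BV}(\CO_{\phi_\pi})$ for every irreducible $\pi$, and the target of the reduction is the case where $\pi$ is a discrete series, equivalently where $\widehat{\pi}$ is an \emph{anti-discrete series}.

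First I would carry out the reduction. Given $\pi$, write its Aubert-Zelevinsky dual $\widehat{\pi}$ as the Langlands quotient of a standard module $\Ind_P^G(\tau \otimes \nu)$ with $\tau$ tempered and $\nu$ in the open positive chamber, and realize $\tau$ as a subrepresentation of $\Ind_Q^M(\delta)$ with $\delta \in \Pi_2(M')$ for a Levi subgroup $M'$; by induction in stages $\widehat{\pi}$ is a subquotient of $\Ind_{P'}^G(\delta \otimes \nu)$ for a parabolic $P'$ with Levi $M'$. By the compatibility of the local Langlands correspondence with the Langlands classification, the $\SL_2$-factor of $\phi_{\widehat{\pi}}$ is, via $\widehat{M}'(\BC) \hookrightarrow \widehat{G}(\BC)$, that of $\phi_\delta$ (the twist $\nu$ affects only the $W_F$-factor), so $\CO_{\phi_{\widehat{\pi}}} = \mathrm{Sat}_{\widehat{\mathfrak{m}}'}^{\widehat{\mathfrak{g}}}(\CO_{\phi_\delta})$; and since Aubert duality commutes with parabolic induction up to sign and with unramified twists, $\pi$ is a subquotient of $\Ind_{P'}^G(\widehat{\delta} \otimes \nu)$ with $\widehat{\delta}$ an anti-discrete series of $M'$. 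Two inputs then finish the reduction: (i) the behavior of wavefront sets under parabolic induction (a subquotient of $\Ind_{P'}^G(\sigma)$ has wavefront set dominated by the orbits of $\mathfrak{g}$ induced from $\ol{\mathfrak{n}}^m(\sigma)$), which gives $\ol{\mathfrak{n}}^m(\widehat{\pi}) \leq \mathrm{Ind}_{\mathfrak{m}'}^{\mathfrak{g}}(\ol{\mathfrak{n}}^m(\widehat{\delta}))$ (wavefront sets being insensitive to $\nu$); and (ii) the compatibility of Barbasch-Vogan duality with induction and saturation of nilpotent orbits in the required direction, $\mathrm{Ind}_{\mathfrak{m}'}^{\mathfrak{g}}(d_{BV}(\CO_{\phi_\delta})) \leq d_{BV}(\mathrm{Sat}_{\widehat{\mathfrak{m}}'}^{\widehat{\mathfrak{g}}}(\CO_{\phi_\delta}))$, cf. \cite{Spa82}. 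Chaining (i), (ii), the order-preservation of orbit induction, and the anti-discrete series case for $M'$ yields the conjecture for $\pi$; applied uniformly over all groups this shows Conjecture~\ref{conj bound of WF intro} is equivalent to its restriction to anti-discrete series.

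For the equivalence with Jiang's conjecture I would argue that, after this reduction, the two statements coincide on the nose. A discrete series $\delta$ lies in the local Arthur packet $\Pi_{\psi_\delta}$ with $\psi_\delta$ of trivial Arthur $\SL_2$, so $\widehat{\delta} \in \widehat{\Pi_{\psi_\delta}} = \Pi_{\widehat{\psi_\delta}}$, and the nilpotent orbit attached to $\widehat{\psi_\delta}$ is exactly $\CO_{\phi_\delta} = \CO_{\phi_{\widehat{\widehat{\delta}}}}$; hence for the anti-discrete series $\widehat{\delta}$, Conjecture~\ref{conj bound of WF intro} and Jiang's bound for $\widehat{\delta} \in \Pi_{\widehat{\psi_\delta}}$ are literally the same inequality. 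A parallel reduction on the Arthur side --- moving the Weil-Deligne $\SL_2$ of a general $\psi$ into a Levi --- reduces Jiang's conjecture to these same packets, so the two conjectures are equivalent. The ABV analogue is handled by the same scheme, using the inclusion $\Pi_\psi \subseteq \Pi^{\mathrm{ABV}}_\psi$, the stability of ABV packets under Aubert duality with swapped parameter, and the matching of the associated nilpotent data.

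The main obstacle is the anti-discrete series base case itself, which by the above is precisely Jiang's (still open) conjecture restricted to the relevant packets: proving it requires controlling $\ol{\mathfrak{n}}^m$ of the Aubert dual of a discrete series, which for classical groups calls for Moeglin's explicit construction of the discrete series together with delicate Jacquet-module bookkeeping, and in general for information on wavefront sets of (anti-)tempered representations that is not yet available --- it is known for unipotent representations by \cite{CMBO22}. A secondary technical point is input (ii): $d_{BV}$ is assembled from Lusztig-Spaltenstein duality, which is only order-reversing and interacts with induction through inequalities rather than equalities, so one must check the inequalities point the right way and that the passage from $F$-rational to geometric nilpotent orbits via Achar's rational Barbasch-Vogan map \cite{Ach03} respects the bound.
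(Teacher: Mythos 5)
Your reduction is essentially the paper's argument: dualize the standard module of $\widehat{\pi}$ (resp.\ the embedding of a tempered $\widehat{\pi}$ into an induction from discrete series), bound $\ol{\mathfrak{n}}^m(\pi)$ by Moeglin--Waldspurger's induced-orbit result applied to $\Speh$-type factors whose wavefront sets are known, and conclude with the compatibility of $d_{BV}$ with induction. The only cosmetic difference is that you pass to the full Levi with discrete-series support in one step, whereas the paper peels off one $\GL$ factor at a time (Theorems \ref{thm (A)} and \ref{thm (F)}) and proves the needed compatibility, Lemma \ref{lem goal}, as an exact partition identity $d_{BV}([b^{2d}]\sqcup\underline{p})=([(2d)^b]+d_{BV}(\underline{p}))_{X'}$ by a case-by-case collapse computation in \S\ref{sec proof of key lemma}; your inequality version of (ii) suffices for the chain, but the equality is what is actually available and proved there.

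Where your proposal genuinely diverges, and where there is a gap, is the claimed equivalence with Jiang's conjecture. The direction ``anti-discrete Jiang $\Rightarrow$ Conjecture \ref{conj bound of WF intro}'' is indeed the parameter-matching you describe (the paper's Lemma \ref{lem (C) (H)}): for anti-discrete $\pi$ there is an anti-tempered $\psi$ with $\pi\in\Pi_\psi$ and $d_{BV}(\OO_{\phi_{\widehat{\pi}}})=d_{BV}(\OO_\psi)$. But the converse direction, Conjecture \ref{conj bound of WF intro} $\Rightarrow$ Jiang's bound for \emph{all} $\psi\in\Psi(\pi)$, is not ``a parallel reduction moving the Deligne $\SL_2$ into a Levi'': members of $\Pi_\psi$ for nontempered $\psi$ are not in any straightforward way subquotients of inductions from smaller packets with matching $d_{BV}$-data, and the paper instead routes this direction through the closure-ordering relation $\phi_\pi\geq_C\phi_\psi$ for $\pi\in\Pi_\psi$ (Working Hypothesis \ref{assu closure ordering A-packet}, known for $\Sp_{2n}$ and split $\SO_{2n+1}$ by \cite{HLLZ22}) together with order-reversal of $d_{BV}$ (Lemma \ref{lem (B)}). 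Without that hypothesis your equivalence claim is unsupported; likewise the ABV equivalence requires Working Hypothesis \ref{assu ABV AZ dual} (compatibility of ABV-packets with Aubert--Zelevinsky duality), which you invoke but should flag as unproven outside $\GL_n$.
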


Following the idea of M{\oe}glin and Waldspurger (see \cite[\S II.2]{MW87}), for the cases of $\GL_n(F)$ and its inner form, we compute the wavefront set for any irreducible representation and show that the Conjecture \ref{conj bound of WF intro} is true (see Theorem \ref{thm main GL}). 
In this paper, we focus on the case of the quasi-split classical groups
$\RG_n=\RU_n, \Sp_{2n}, \SO_{2n+1}, \SO^{\alpha}_{2n}$, and their pure inner forms (though the mains results naturally extend to general connected reductive groups, see Remark \ref{general thm}), where $\alpha$ is a square class and $n$ is any positive integer. Here, we identify a square class with the corresponding quadratic character of the Weil group $W_F$ via the local class field theory.  Let $G_n = \RG_n(F)$. 
An irreducible admissible representation of $G_n$ is called anti-tempered (resp. anti-discrete) if its Aubert-Zelevinsky involution is tempered (resp. discrete series).  
Our first main result is as follows.

\begin{thm}[Theorem \ref{thm main classical groups}]\label{thm main classical groups intro}
The following statements are equivalent.
\begin{enumerate}
\item Conjecture \ref{conj bound of WF intro} holds for all admissible representations of $G_m$, $m \leq n$.
\item Conjecture \ref{conj bound of WF intro} holds for all representations of $G_m$ of Arthur type, $m \leq n$.
\item Conjecture \ref{conj bound of WF intro} holds for all anti-tempered representations of $G_m$, $m \leq n$.
\item Conjecture \ref{conj bound of WF intro} holds for all anti-discrete representations of $G_m$, $m \leq n$.
\end{enumerate}
\end{thm}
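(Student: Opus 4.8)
The plan is to prove the cyclic chain of implications $(1) \Rightarrow (2) \Rightarrow (3) \Rightarrow (4) \Rightarrow (1)$. The first two implications, $(1) \Rightarrow (2)$ and $(2) \Rightarrow (3)$, are essentially trivial: representations of Arthur type form a subclass of all admissible representations, and anti-tempered representations (whose Aubert-Zelevinsky duals are tempered) are of Arthur type — indeed, if $\widehat{\pi}$ is tempered with $L$-parameter $\phi$, then $\pi$ lies in the local Arthur packet attached to the parameter $\psi(w,x,y) = \phi(w) \cdot \nu(x)$ extended trivially on the second $\SL_2$, and more to the point, anti-tempered representations are a subclass of the representations appearing in (2). (In fact $(2) \Rightarrow (3)$ follows simply because anti-tempered $\subset$ Arthur type.) So the content is in $(3) \Rightarrow (4)$, which is immediate since anti-discrete $\subset$ anti-tempered (discrete series are tempered), and in the genuinely substantial implication $(4) \Rightarrow (1)$.

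For $(4) \Rightarrow (1)$, I would argue by induction on $m \le n$ and reduce an arbitrary irreducible admissible representation $\pi$ of $G_m$ to the anti-discrete case. The key tool is the Aubert-Zelevinsky dual: set $\sigma = \widehat{\pi}$, so that Conjecture \ref{conj bound of WF intro} for $\pi$ is a statement about $d_{BV}(\OO_{\phi_\sigma})$ and $\ol{\mathfrak{n}}^m(\pi) = \ol{\mathfrak{n}}^m(\widehat{\sigma})$. Using the Langlands classification, write $\sigma$ as the unique irreducible quotient (Langlands quotient) of a standard module $\Ind_P^{G_m}(\tau \otimes \delta)$, where $\tau$ is an essentially tempered representation of a $\GL$-factor of the Levi $M$ and $\delta$ is a discrete series (or more precisely tempered, then further reduced) representation of a smaller classical group $G_{m'}$ with $m' < m$ (or a twist thereof). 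The plan is to control $\ol{\mathfrak{n}}^m(\pi)$ in terms of $\ol{\mathfrak{n}}^m$ of the pieces via: (i) the behavior of wavefront sets under parabolic induction — an upper bound of the form that $\OO \in \ol{\mathfrak{n}}^m(\text{Langlands quotient})$ forces $\OO$ to lie below the orbit induced from the wavefront set of the inducing data, which on the $\GL$-side is computed exactly by the Mœglin–Waldspurger algorithm (Theorem \ref{thm main GL}); and (ii) the compatibility of $d_{BV}$ and $\OO_{\phi}$ with the parabolic/Levi structure, i.e. that the parameter $\phi_\sigma$ and its $\SL_2$-type decompose accordingly so that $d_{BV}(\OO_{\phi_\sigma})$ dominates the corresponding induced orbit. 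Applying the inductive hypothesis to $\delta$ (or rather to the anti-discrete representation $\widehat{\delta}$ on $G_{m'}$) and combining with the exact $\GL$ computation then yields the bound for $\pi$.

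The main obstacle, and where the real work lies, is step (i)–(ii): making precise and proving the comparison between $\ol{\mathfrak{n}}^m$ of a Langlands quotient and the orbit obtained by inducing the wavefront sets of the inducing data, and then matching this with the Barbasch–Vogan dual of the parameter. Parabolic induction is only semicontinuous for wavefront sets in general, and the Langlands quotient is a proper subquotient of the standard module, so one must ensure that passing to the quotient does not destroy the bound; this is exactly the kind of subtlety that makes the reduction nontrivial rather than formal. I expect one needs a careful analysis of how the Aubert-Zelevinsky involution interacts with the Langlands data — ideally reducing $\sigma$ all the way down so that its "classical part" is a discrete series $\delta$, at which point $\widehat{\delta}$ is anti-discrete and hypothesis (4) applies — together with a clean statement (perhaps drawn from \cite{MW87, CMBO22} and the $\GL$ computation) that the induced orbit $\OO_M \mapsto \Ind_{\mathfrak{m}}^{\mathfrak{g}} \OO_M$ is compatible with $d_{BV}$ and with the combinatorics of the parameters. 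Once this induction step is established uniformly across the four families $\RU_n, \Sp_{2n}, \SO_{2n+1}, \SO_{2n}^\alpha$ and their pure inner forms, the equivalence follows.
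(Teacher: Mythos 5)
Your overall architecture is the right one, and it matches the paper's: the implications $(1)\Rightarrow(2)\Rightarrow(3)\Rightarrow(4)$ are containments, and the content is a reduction of an arbitrary $\pi$ to the anti-discrete case by peeling off the $\GL$-factors of the Langlands/tempered data of $\widehat{\pi}$ and inducting. But as written your proposal has a genuine gap exactly where you flag "the main obstacle": you never supply the two ingredients that make the reduction work, and without them the argument does not close. First, the mechanism. Bounding $\ol{\mathfrak{n}}^m(\pi)$ by the data of the standard module of $\sigma=\widehat{\pi}$ is hopeless as stated, because the $\GL$-inducing data there are twists of Steinberg representations, whose wavefront sets are regular in the $\GL$-factor and give no useful bound. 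The correct move (Theorems \ref{thm (A)} and \ref{thm (F)}) is to embed $\widehat{\pi}\hookrightarrow \St(\rho_1,a_1)|\cdot|^{x_1}\rtimes(\widehat{\pi})^-$ one factor at a time and \emph{then} apply the Aubert--Zelevinsky involution, so that $\pi\le \Speh(\rho_1,a_1)|\cdot|^{x_1}\rtimes\pi_-$; the Speh wavefront set is known exactly (Theorem \ref{thm WF GL Speh}), Proposition \ref{prop induced orbit} computes the induced orbit, and the upper bound passes to the subquotient $\pi$ because the leading coefficients at maximal orbits are nonnegative — so your worry about semicontinuity and the Langlands quotient is in fact harmless for an \emph{upper} bound, but you neither resolve it nor observe this. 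Note also that the induction is on the number of Langlands (resp. tempered) factors, and $\pi_-$ lives on a \emph{smaller group of the same type} and need not yet be anti-discrete; this is precisely why the statement must be formulated for the whole family $G_m$, $m\le n$, and why the paper splits the reduction into anti-tempered $\Rightarrow$ general (Theorem \ref{thm (A)}) and anti-discrete $\Rightarrow$ anti-tempered (Theorem \ref{thm (F)}, using Proposition \ref{prop A-packet}(b) for the tempered-to-discrete step), with the compatibility of LLC with Langlands classification identifying $\phi_{\widehat{\pi}}$ with $(\rho_1|\cdot|^{x_1}\otimes S_{a_1}+\rho_1^{\vee}|\cdot|^{-x_1}\otimes S_{a_1})+\phi_{\widehat{\pi_-}}$.

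Second, and most importantly, your step "(ii) compatibility of $d_{BV}$ with the parabolic/Levi structure" is exactly the nontrivial combinatorial heart of the proof, and you leave it as a hope. What is needed is Lemma \ref{lem goal}: for $(X,X')\in\{(B,C),(C,B),(D,D)\}$,
\begin{align*}
d_{BV}\bigl([b^{2d}]\sqcup\underline{p}\bigr)=\bigl([(2d)^{b}]+d_{BV}(\underline{p})\bigr)_{X'},
\end{align*}
which identifies the induced orbit $\bigl([(2\dim\rho_1)^{a_1}]+\WFP(\pi_-)\bigr)_X$ with $d_{BV}(\underline{p}(\phi_{\widehat{\pi}}))$ after the inductive hypothesis is applied to $\pi_-$. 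This is \cite[Proposition A.2(c)]{BV85}, stated there only with a sketch, and its detailed collapse-by-collapse verification for types $B$, $C$, $D$ occupies \S\ref{sec proof of key lemma} of the paper. Without proving (or at least precisely invoking) this statement, and without the dualization-to-Speh step above, your reduction $(4)\Rightarrow(1)$ remains a plan rather than a proof.
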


We remark that the computation of $\phi_{\widehat{\pi}}$ or $\widehat{\pi}$ from $\pi$ is in general complicated. For general linear groups, a recursive algorithm is given in \cite{MW86}, and for $\Sp_{2n}(F)$ and split $\SO_{2n+1}(F)$, a recursive algorithm is given in \cite{AM20}. Currently, there is no algorithm for other groups. However, there are several special cases that $d_{BV}(\OO_{\phi_{\widehat{\pi}}})$ can be computed explicitly. Here are two important families.
\begin{enumerate}
    \item [$\oldbullet$] If $\pi$ is a generic representation of $G_n$, then $\OO_{\phi_{\widehat{\pi}}}$ is the zero orbit (see \cite{LLS24b}), and hence $d_{BV}(\OO_{\phi_{\widehat{\pi}}})$ is the regular orbit (see \cite{BV85, Ach03}).
    Thus, the wavefront sets of generic representations achieve the conjectural upper bound in Conjecture \ref{conj bound of WF intro}. 
    \item [$\oldbullet$] If $\pi$ is anti-tempered or anti-discrete, then $\phi_{\widehat{\pi}}$ is the unique open $L$-parameter in the associated Vogan variety (see \cite[Proposition 5.6]{CFMMX22}). Therefore, the conjectural upper bound $d_{BV}(\OO_{\phi_{\widehat{\pi}}})$ can be easily computed from the $L$-parameter (or infinitesimal parameter) of $\pi$.
\end{enumerate}
Thus, Theorem \ref{thm main classical groups intro} allows us to reduce Conjecture \ref{conj bound of WF intro} to a case that the conjectural upper bound $d_{BV}(\OO_{\phi_{\widehat{\pi}}})$ can be computed explicitly.

Our second main result is that Conjecture \ref{conj bound of WF intro} is equivalent to the Jiang conjecture on the upper bound of wavefront sets of representations in local Arthur packets (see Conjecture \ref{conj Jiang}). More precisely, the well-known Shahidi conjecture states that  tempered $L$-packets of any quasi-split reductive group have generic members. The Jiang conjecture (see \cite[Conjecture 4.2]{Jia14} and \cite[Conjecture 1.7]{LS23}) generalizes the Shahidi conjecture to arbitrary local Arthur packets as follows. 

\begin{conj}[Jiang Conjecture, Conjecture \ref{conj Jiang}] \label{conj Jiang intro}
Let $\RG$ be a connected reductive group over $F$ and let $G=\RG(F)$. Assume that there is a local Arthur packets theory for $G$ as conjectured in \cite[Conjecture 6.1]{Art89}. Then for any $\psi \in \Psi(G)$, the following holds.
\begin{enumerate}
    \item [(i)] For any $\pi\in \Pi_{\psi}$ and any $\OO \in \WFN(\pi)$, we have
    $ \OO \leq d_{BV}(\OO_\psi). $
    \item [(ii)] If $G$ is quasi-split over $F$, there exists at least one member $\pi \in \Pi_{\psi}$ having the property that $\overline{\mathfrak{n}}^m(\pi)=\{d_{BV}(\OO_\psi)\}$.
\end{enumerate}
Here, $\Psi(G)$ is the set of local Arthur parameters of $G$ (see \S \ref{lp and lap}). Given $\psi \in \Psi(G)$, $\Pi_{\psi}$ is the corresponding local Arthur packet (see \S \ref{lap ABV Jiang}), $\OO_\psi$
is the corresponding geometric nilpotent orbit (see \S \ref{lp and lap}), and
$d_{BV}(\OO_\psi)$ is the nilpotent orbit obtained via applying the Barbasch-Vogan duality map to $\OO_\psi$.
\end{conj}

Note that $d_{BV}(\OO_\psi)=d_{BV}(\OO_{\phi_{\widehat{\psi}}})$, where $\widehat{\psi}$ is the dual of $\psi$ (see \eqref{eq psi hat}), and $\phi_{\widehat{\psi}}$ is the $L$-parameter associated to $\widehat{\psi}$ (see \eqref{phi_psi}). 

In \cite{LS23}, the second and the fourth named authors studied Conjecture \ref{conj Jiang intro} for quasi-split classical groups, using the matching method in endoscopic lifting and partially reduced it to certain properties of the wavefront sets of bitorsor representations.
Recently,
\cite{AC26} claims a proof of Conjecture \ref{conj Jiang intro} for split classical groups under certain hypothesis, applying similar matching method.

In this paper, we focus on Part (i) of the Jiang conjecture and specialize to pure inner forms of classical groups or general linear groups. We rephrase it as follows.

\begin{conj}[Conjecture \ref{conj Jiang 2}]\label{conj Jiang 2 intro}
Assume there is a local Arthur packets theory for $G$ as conjectured in \cite[Conjecture 6.1]{Art89}. 
For any $\pi \in \Pi_A(G)$, the subset of $\Pi(G)$ that consists of representations of Arthur type, 
and $\OO \in \overline{\mathfrak{n}}^m(\pi)$, we have $$\OO \leq d_{BV}(\OO_\psi),$$
for any $\psi \in \Psi(\pi):= \{ \psi \in \Psi(G) \ | \ \pi \in \Pi_{\psi}  \}.$
\end{conj}

Our second main result is as follows. 

\begin{thm}[Theorem \ref{thm main Jiang}]\label{thm main Jiang intro}
    Assume there is a local Arthur packets theory for $G$ as conjectured in \cite[Conjecture 6.1]{Art89} and the closure ordering relation holds (see Working Hypothesis \ref{assu closure ordering A-packet}) for any local Arthur packet of $G_m$, $m \leq n$. Then the following statements are equivalent.
\begin{enumerate}
\item Conjecture \ref{conj bound of WF intro} holds for all admissible representations of $G_m$, $m \leq n$.
\item Conjecture \ref{conj Jiang 2 intro} holds for all representations of $G_m$ of Arthur type, $m \leq n$
\item Conjecture \ref{conj Jiang 2 intro} holds for all anti-tempered representations of $G_m$, $m \leq n$.
\item Conjecture \ref{conj Jiang 2 intro} holds for all anti-discrete representations of $G_m$, $m \leq n$.
\end{enumerate}
\end{thm}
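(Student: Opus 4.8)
The plan is to prove Theorem \ref{thm main Jiang intro} by establishing the cycle of implications $(1)\implies(2)\implies(3)\implies(4)\implies(1)$, running in parallel to the proof of Theorem \ref{thm main classical groups intro} but now tracking all local Arthur parameters attached to a given representation rather than just the parameter of the Aubert--Zelevinsky dual. The implications $(1)\implies(2)$ and $(2)\implies(3)\implies(4)$ are essentially formal: $(1)\implies(2)$ follows once one observes that for a representation $\pi$ of Arthur type and any $\psi\in\Psi(\pi)$, the open parameter $\phi_{\widehat\pi}$ in the Vogan variety attached to the infinitesimal parameter of $\psi$ satisfies $\OO_{\phi_{\widehat\pi}}\leq\OO_\psi$ (since $\phi_{\widehat\pi}$ is the open orbit, its $\SL_2$-orbit is minimal), whence $d_{BV}(\OO_{\phi_{\widehat\pi}})\geq d_{BV}(\OO_\psi)$ by order-reversal of $d_{BV}$, and then Conjecture \ref{conj bound of WF} gives $\CO\leq d_{BV}(\OO_{\phi_{\widehat\pi}})$... wait, this points the wrong way, so instead one argues directly: Conjecture \ref{conj bound of WF} already contains Conjecture \ref{conj Jiang 2 intro} for representations of Arthur type because every $\psi\in\Psi(\pi)$ has $\OO_\psi\leq\OO_{\phi_{\widehat\pi}}$ forces... the cleanest route is to use \cite[Proposition 5.6]{CFMMX22}: when $\pi$ is anti-tempered or anti-discrete, $\phi_{\widehat\pi}$ is itself the open parameter, so $d_{BV}(\OO_{\phi_{\widehat\pi}})$ is exactly the common upper bound appearing in both conjectures for such $\pi$, making $(3)$ and $(4)$ for Conjecture \ref{conj Jiang 2 intro} literally instances of Theorem \ref{thm main classical groups intro}(3),(4).

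The substantive implication is $(4)\implies(1)$, and here the main work is a reduction argument. Given an arbitrary $\pi\in\Pi(G_m)$ and $\CO\in\overline{\mathfrak n}^m(\pi)$, together with some $\psi\in\Psi(\pi)$, I would use the theory of local Arthur packets for classical groups (Arthur, M{\oe}glin) to decompose $\psi$ and $\pi$ along a Jacquet-module / parabolic-induction pattern: write $\psi$ as a ``product'' of a tempered-type piece and segments, and correspondingly realize $\pi$ as a subquotient of an induced representation $\Ind_P^{G_m}(\tau\otimes\sigma)$ where $\sigma$ lies in a local Arthur packet $\Pi_{\psi_0}$ on a smaller group and $\tau$ is built from $\GL$-data. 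One then invokes the behavior of wavefront sets under parabolic induction (controlled by the closure-ordering Working Hypothesis \ref{assu closure ordering A-packet} and the $\GL_n$ computation, Theorem \ref{thm main GL}) to bound $\CO$ by an induced orbit $\mathrm{Ind}(\CO_\tau\times\CO_\sigma)$, and finally match this induced orbit against $d_{BV}(\OO_\psi)$ using the compatibility of $d_{BV}$ with the corresponding combinatorial operation on the dual side. The anti-discrete case supplies the base of this induction: when the tempered piece degenerates so that $\widehat\pi$ is discrete series, no reduction is possible and one is handed the hypothesis $(4)$ directly.

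The main obstacle I anticipate is the compatibility of Barbasch--Vogan duality with parabolic induction of nilpotent orbits: $d_{BV}$ is defined via Spaltenstein/Lusztig--Spaltenstein duality composed with the inclusion of dual-group orbits, and it does \emph{not} in general intertwine $\mathrm{Ind}$ on one side with $\mathrm{Ind}$ on the other, so the inequality $\mathrm{Ind}(d_{BV}(\OO_{\psi_0})\times\CO_\tau)\leq d_{BV}(\OO_\psi)$ must be extracted by a careful case analysis on the types $\RU_n,\Sp_{2n},\SO_{2n+1},\SO_{2n}^\alpha$, using the partition-theoretic descriptions of $d_{BV}$ (following \cite{Ach03}) and of $\OO_\psi$ in terms of the Jordan blocks of $\psi$. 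A secondary difficulty is that the closure ordering for local Arthur packets is only a Working Hypothesis, so throughout one must be scrupulous about which inequalities genuinely require it (namely, the inductive comparison of $\overline{\mathfrak n}^m$ of an induced representation with the wavefront sets of its inducing data) versus which are unconditional; the statement of the theorem is set up precisely so that this hypothesis, the $\GL_n$ result, and hypothesis $(4)$ together close the loop.
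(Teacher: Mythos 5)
Your overall architecture (a cycle of implications, trivial containments for restricting the class of representations, and the compatibility of Barbasch--Vogan duality with induction as the combinatorial crux) matches the paper's strategy, and you correctly anticipate that this compatibility must be checked type by type on partitions; in fact it does hold — this is Lemma \ref{lem goal}, i.e. \cite[Proposition A.2(c)]{BV85} — so your worry that $d_{BV}$ ``does not intertwine $\mathrm{Ind}$'' is unfounded, though the case analysis you expect is exactly how the paper proves it. However, there are genuine gaps. First, your $(1)\Rightarrow(2)$ is never actually carried out: you begin with the inequality in the wrong direction, notice it, and then trail off. The correct argument is short but uses precisely the two ingredients you leave unmentioned or misplace: from $\pi\in\Pi_\psi$ one gets $\widehat\pi\in\Pi_{\widehat\psi}$ by the compatibility of the Aubert--Zelevinsky involution with Arthur packets (Proposition \ref{prop A-packet}(c), Hiraga/Xu), then Working Hypothesis \ref{assu closure ordering A-packet} applied to $\widehat\pi\in\Pi_{\widehat\psi}$ gives $\phi_{\widehat\pi}\geq_C\phi_{\widehat\psi}$, hence $\underline{p}(\phi_{\widehat\pi})\geq\underline{p}(\psi)$ and $d_{BV}(\underline{p}(\phi_{\widehat\pi}))\leq d_{BV}(\underline{p}(\psi))$, which combined with Conjecture \ref{conj bound of WF} yields Conjecture \ref{conj Jiang 2} (this is Lemma \ref{lem (B)}). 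Your closing paragraph instead attributes the closure-ordering hypothesis to the comparison of wavefront sets under parabolic induction; that step is unconditional (M{\oe}glin--Waldspurger, Proposition \ref{prop induced orbit}), and the hypothesis is needed only in the direction just described.

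Second, your $(4)\Rightarrow(1)$ sketch starts from ``some $\psi\in\Psi(\pi)$'' for an arbitrary admissible $\pi$ and aims to match against $d_{BV}(\OO_\psi)$; but an arbitrary $\pi$ need not be of Arthur type, and the bound in statement $(1)$ is $d_{BV}(\OO_{\phi_{\widehat\pi}})$, not $d_{BV}(\OO_\psi)$. The reduction must instead be run on the standard module of $\widehat\pi$: dualizing the embedding of $\widehat\pi$ into its standard module turns the Steinberg factors into Speh representations, whose wavefront sets are known (Theorem \ref{thm WF GL Speh}), so Proposition \ref{prop induced orbit} plus Lemma \ref{lem goal} reduce the bound for $\pi$ to the bound for a representation with shorter standard module, landing in the anti-tempered case (Theorem \ref{thm (A)}); a second reduction, using that tempered Arthur packets are induced from discrete ones (Proposition \ref{prop A-packet}(b)), lands in the anti-discrete case (Theorem \ref{thm (F)}). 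Finally, to feed hypothesis $(4)$ into this base case you must produce, for anti-discrete $\pi$, an anti-discrete $\psi$ with $\pi\in\Pi_\psi$ and $d_{BV}(\underline{p}(\psi))=d_{BV}(\underline{p}(\phi_{\widehat\pi}))$; your appeal to openness of $\phi_{\widehat\pi}$ gestures at this but does not supply it — what is actually used is that $\widehat\pi$, being discrete, lies in a tempered $L$-packet which is an Arthur packet with $\phi_{\widehat\pi}=\phi_{\widehat\psi}$, together again with Proposition \ref{prop A-packet}(a),(c) (Lemma \ref{lem (C) (H)}). Relatedly, your claim that statements $(3)$ and $(4)$ are ``literally instances'' of Theorem \ref{thm main classical groups intro} is not accurate: those parts of Theorem \ref{thm main classical groups intro} concern Conjecture \ref{conj bound of WF}, and the passage to Conjecture \ref{conj Jiang 2} requires the packet-theoretic bridge just described.
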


We remark that for $\Sp_{2n}(F)$ and split $\SO_{2n+1}(F)$, the closure ordering relation has been proved in {\cite[Theorem 1.3]{HLLZ22}}.
We also remark that assuming Working Hypothesis \ref{assu closure ordering A-packet}, for any $\pi \in \Pi_{\psi}$, we have $d_{BV}(\OO_{\phi_{\widehat{\pi}}}) \leq d_{BV}(\OO_\psi)$, and the inequality can be strict (see \eqref{eq sharper upper bound} and Example \ref{exmp strict inequality}). 

In this paper, we also consider the following generalization of the Shahidi conjecture and the Jiang conjecture to the local ABV-packets defined in \cite{CFMMX22}. 

\begin{conj}[Generalized Shahidi Conjecture on local ABV packets, Conjecture \ref{conj Jiang ABV}]\label{conj Jiang ABV intro}
Let $\RG$ be a connected reductive group over $F$ that has a quasi-split pure inner form and let $G=\RG(F)$. For any $\phi \in \Phi(G)$, the following holds.
\begin{enumerate}
    \item [(i)] For any $\pi\in \Pi_{\phi}^{\textrm{ABV}}$ and any $\OO \in \WFN(\pi)$, we have
    $ \OO \leq d_{BV}(\OO_{\widehat{\phi}}). $
    \item [(ii)]  If $G$ is quasi-split over $F$, there exists at least one member $\pi \in \Pi_{\phi}^{\textrm{ABV}}$ having the property that $\WFN(\pi)=\{d_{BV}(\OO_{\widehat{\phi}})\}$.
\end{enumerate}
Here, $\Phi(G)$ is the set of local $L$ parameters of $G$ (see \S \ref{lp and lap}). Given $\phi \in \Phi(G)$, $\Pi_{\phi}^{ABV}$ is the corresponding ABV-packet (see \S \ref{lap ABV Jiang}). The $L$-parameter $\widehat{\phi}$ is the Pyasetskii involution of $\phi$ (see \cite[\S 6.4]{CFMMX22} and \S \ref{abv}) and 
$\OO_{\widehat{\phi}}$
is the corresponding geometric nilpotent orbit (see \S \ref{lp and lap}).
\end{conj}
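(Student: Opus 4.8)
The plan is to prove that Part (i) of Conjecture \ref{conj Jiang ABV intro} is equivalent, representation by representation, to Conjecture \ref{conj bound of WF intro}---so that the equivalence survives restriction to representations of Arthur type, to anti-tempered representations, and to anti-discrete representations---and to establish Part (ii) by exhibiting a distinguished member of $\Pi_\phi^{\mathrm{ABV}}$. Combined with Theorems \ref{thm main classical groups intro} and \ref{thm main Jiang intro} this yields a chain of equivalences as in those results; as there, the content is a reduction rather than an unconditional proof, since Conjecture \ref{conj bound of WF intro} is open.

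The geometric input is the compatibility of ABV-packets with Aubert--Zelevinsky duality from \cite{CFMMX22}: fixing an infinitesimal parameter $\lambda$, the involution $\pi\mapsto\widehat\pi$ intertwines the family $\{\Pi_\phi^{\mathrm{ABV}}\}$ with the Pyasetskii involution $\phi\mapsto\widehat\phi$ on the $H_\lambda$-orbits of the Vogan variety $V_\lambda$, i.e. $\pi\in\Pi_\phi^{\mathrm{ABV}}$ if and only if $\widehat\pi\in\Pi_{\widehat\phi}^{\mathrm{ABV}}$; on the sheaf side this is the fact that $P(\widehat\pi)$ is the Fourier--Sato transform of $P(\pi)$, so their characteristic cycles correspond under the antipodal map on $T^*V_\lambda$. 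We will use two consequences. First, every $\pi$ lies in $\Pi_{\phi_\pi}^{\mathrm{ABV}}$, since the conormal to the open orbit $C_{\phi_\pi}$ in $\overline{C_{\phi_\pi}}$ occurs in $CC(P(\pi))$ with multiplicity one. Second, if $\pi\in\Pi_\phi^{\mathrm{ABV}}$ then $T^*_{C_\phi}V_\lambda$ lies in the singular support of $P(\pi)=\mathrm{IC}(\overline{C_{\phi_\pi}})$, whence $C_\phi\leq C_{\phi_\pi}$ in the closure order on $V_\lambda$ and therefore $\OO_\phi\leq\OO_{\phi_\pi}$ in $\widehat{\mathfrak g}(\BC)$, using $C_\phi\subseteq\OO_\phi$ and that $\widehat G(\BC)$-saturation is order-preserving.

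For the first implication, assume Conjecture \ref{conj bound of WF intro} holds for $\pi$, and let $\pi\in\Pi_\phi^{\mathrm{ABV}}$ and $\OO\in\WFN(\pi)$. Then $\OO\leq d_{BV}(\OO_{\phi_{\widehat\pi}})$, and since $\widehat\pi\in\Pi_{\widehat\phi}^{\mathrm{ABV}}$ the second consequence applied to $\widehat\pi$ gives $\OO_{\widehat\phi}\leq\OO_{\phi_{\widehat\pi}}$; as $d_{BV}$ is order-reversing we conclude $\OO\leq d_{BV}(\OO_{\phi_{\widehat\pi}})\leq d_{BV}(\OO_{\widehat\phi})$, which is Conjecture \ref{conj Jiang ABV intro}(i) for $\pi$. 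Conversely, given $\pi$ put $\phi:=\widehat{\phi_{\widehat\pi}}$: by the first consequence $\widehat\pi\in\Pi_{\phi_{\widehat\pi}}^{\mathrm{ABV}}$, so by the duality statement $\pi=\widehat{\widehat\pi}\in\Pi_{\widehat{\phi_{\widehat\pi}}}^{\mathrm{ABV}}=\Pi_\phi^{\mathrm{ABV}}$, and $\widehat\phi=\phi_{\widehat\pi}$ since Pyasetskii is an involution; hence Conjecture \ref{conj Jiang ABV intro}(i) for this pair yields $\OO\leq d_{BV}(\OO_{\widehat\phi})=d_{BV}(\OO_{\phi_{\widehat\pi}})$ for every $\OO\in\WFN(\pi)$, which is Conjecture \ref{conj bound of WF intro} for $\pi$. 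Both arguments only invoke the hypothesis on $\pi$ itself, so they are compatible with the classes of representations in Theorem \ref{thm main classical groups intro}; the desired equivalences then follow from Theorems \ref{thm main classical groups intro} and \ref{thm main Jiang intro}.

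Finally, for Part (ii) the natural candidate is $\pi:=\widehat\sigma$, where $\sigma$ is the generic member of the $L$-packet $\Pi_{\widehat\phi}$: then $\sigma\in\Pi_{\widehat\phi}\subseteq\Pi_{\widehat\phi}^{\mathrm{ABV}}$ forces $\pi\in\Pi_\phi^{\mathrm{ABV}}$ and $\phi_{\widehat\pi}=\widehat\phi$, so Part (i) already bounds $\WFN(\pi)$ above by $d_{BV}(\OO_{\widehat\phi})$, and what must be added is the matching lower bound $c_{\OO}(\pi)\neq 0$ for $\OO=d_{BV}(\OO_{\widehat\phi})$. I expect this lower bound to be the main obstacle: it is not formal, and the two natural routes---a direct M{\oe}glin--Waldspurger analysis of the leading coefficients of $\Theta(\pi)$, exploiting that the generic $\sigma$ has wavefront set the regular orbit, or a reduction to Part (ii) of Conjecture \ref{conj Jiang intro} via the containment $\Pi_\psi\subseteq\Pi_{\phi_\psi}^{\mathrm{ABV}}$ for a suitable $\psi$ with $\phi_\psi=\phi$---each require additional work, the latter a separate comparison of $\OO_{\widehat{\phi_\psi}}$ with $\OO_\psi$. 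A lesser point of care is ensuring that the duality statement of \cite{CFMMX22} is available in the required generality for all the pure inner forms of the classical groups considered here.
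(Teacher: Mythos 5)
Your treatment of Part (i) is essentially the paper's own reduction: the two "consequences" you extract are exactly Proposition \ref{prop ABV}(a) ($\Pi_{\phi_\pi}\subseteq\Pi_{\phi_\pi}^{\textrm{ABV}}$) and Proposition \ref{prop ABV}(b) ($\phi_\pi\geq_C\phi$, hence $\OO_\phi\leq\OO_{\phi_\pi}$), and your two implications (Conjecture \ref{conj bound of WF intro} $\Rightarrow$ the bound $d_{BV}(\OO_{\widehat\phi})$, and conversely specializing to $\phi=\widehat{\phi_{\widehat\pi}}$ so that $\widehat\phi=\phi_{\widehat\pi}$) are precisely the proof of Lemma \ref{lem (D) (E) (G)}, which the paper then feeds into the diagram \eqref{eq diagram} to get Theorem \ref{thm main Jiang ABV}. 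You are also right that neither you nor the paper proves the conjecture itself; the content is the equivalence with Conjecture \ref{conj bound of WF intro} (and its restrictions to Arthur-type, anti-tempered, anti-discrete representations), and Part (ii) is not established by the paper's theorems either.

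The one substantive discrepancy is your treatment of the Aubert--Zelevinsky compatibility of ABV-packets. You assert it as a fact from \cite{CFMMX22} ("$P(\widehat\pi)$ is the Fourier--Sato transform of $P(\pi)$, so the characteristic cycles correspond"), and relegate its availability to "a lesser point of care." In the paper this is Working Hypothesis \ref{assu ABV AZ dual}: it is only \emph{expected} in \cite[\S 10.3.4]{CFMMX22} and is proved only for $\GL_n(F)$ in \cite[Proposition 3.2.1]{CFK22}; for the classical groups and their pure inner forms considered here it is open, and Theorem \ref{thm main Jiang ABV} is explicitly conditional on it. Both directions of your equivalence use it (to pass from $\pi\in\Pi_\phi^{\textrm{ABV}}$ to $\widehat\pi\in\Pi_{\widehat\phi}^{\textrm{ABV}}$ and back), so your argument is likewise conditional, and the write-up should say so rather than present the duality as established. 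With that hypothesis made explicit, the Part (i) argument is correct and coincides with the paper's; your remarks on Part (ii) (the missing lower bound $c_\OO(\pi)\neq 0$ at $\OO=d_{BV}(\OO_{\widehat\phi})$, and the possible route through Conjecture \ref{conj Jiang intro}(ii)) go beyond what the paper attempts and are reasonable but, as you note, not a proof.
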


Again, we shall focus on Part (i) of the above conjecture for $G_n$, which can be rephrased as follows.

\begin{conj}[Conjecture \ref{conj Jiang ABV 2}]\label{conj Jiang ABV 2 intro}
For any $\pi \in \Pi(G)$ and any $\OO \in \WFN(\pi)$, we have 
$$\OO \leq d_{BV}(\OO_{\widehat{\phi}}),$$
for any $\phi \in \Phi(\pi):= \{ \phi \in \Phi(G) \ | \ \pi \in \Pi_{\phi}^{\textrm{ABV}}  \}.$
\end{conj}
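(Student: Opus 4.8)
The plan is to establish Conjecture~\ref{conj Jiang ABV 2 intro} by showing it is equivalent to the upper bound Conjecture~\ref{conj bound of WF intro}; granted this, Theorem~\ref{thm main classical groups intro} reduces it, for the groups $G_m$ with $m\le n$, to the case of anti-tempered (equivalently, anti-discrete) representations, where the relevant bound is explicitly computable from the $L$-parameter, as noted in the introduction.

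The argument will use three facts about ABV-packets, all supplied by the microlocal formalism of \cite{CFMMX22}. First, the $L$-packet $\Pi_\phi$ is contained in the ABV-packet $\Pi_\phi^{\mathrm{ABV}}$, so $\phi_\pi\in\Phi(\pi)$ for every $\pi$. Second, the Aubert--Zelevinsky involution corresponds, under the ABV construction, to the Fourier transform of $H_\lambda$-equivariant perverse sheaves on the relevant Vogan variety $V_\lambda$; consequently the Pyasetskii involution satisfies $\widehat{\phi_\pi}=\phi_{\widehat{\pi}}$, and $\Pi_{\widehat{\phi}}^{\mathrm{ABV}}=\{\widehat{\pi'}:\pi'\in\Pi_{\phi}^{\mathrm{ABV}}\}$. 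Third, if $\pi\in\Pi_{\phi}^{\mathrm{ABV}}$ then $\OO_{\phi}\le\OO_{\phi_\pi}$: the perverse sheaf attached to $\pi$ is $IC(\overline{C_{\phi_\pi}},\mathcal{L})$ for the $H_\lambda$-orbit $C_{\phi_\pi}\subseteq V_\lambda$ attached to $\phi_\pi$, its characteristic cycle is supported on conormal varieties of $H_\lambda$-orbits contained in $\overline{C_{\phi_\pi}}$, so membership $\pi\in\Pi_{\phi}^{\mathrm{ABV}}$ forces $C_\phi\subseteq\overline{C_{\phi_\pi}}$, and since $\OO_\phi$ is the $\widehat{G}$-saturation of $C_\phi$ and $\widehat{G}$-saturation is monotone for the closure orders, we get $\OO_\phi\le\OO_{\phi_\pi}$.

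Granting these, the equivalence is a short order-theoretic argument. For any $\pi$, taking $\phi=\phi_\pi\in\Phi(\pi)$ in Conjecture~\ref{conj Jiang ABV 2 intro} and using $\widehat{\phi_\pi}=\phi_{\widehat{\pi}}$ gives $\OO\le d_{BV}(\OO_{\phi_{\widehat{\pi}}})$ for all $\OO\in\WFN(\pi)$; thus Conjecture~\ref{conj Jiang ABV 2 intro} implies Conjecture~\ref{conj bound of WF intro}, and the implication remains valid if both conjectures are restricted to anti-tempered, to anti-discrete, or to Arthur type representations. Conversely, assume Conjecture~\ref{conj bound of WF intro} for all representations of $G_m$, $m\le n$, and let $\pi\in\Pi(G)$, $\OO\in\WFN(\pi)$, $\phi\in\Phi(\pi)$. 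By the second fact $\widehat{\phi}\in\Phi(\widehat{\pi})$, so the third fact applied to $\widehat{\pi}$ and $\widehat{\phi}$ gives $\OO_{\widehat{\phi}}\le\OO_{\phi_{\widehat{\pi}}}$, whence $d_{BV}(\OO_{\phi_{\widehat{\pi}}})\le d_{BV}(\OO_{\widehat{\phi}})$ since $d_{BV}$ reverses the closure order. Combined with Conjecture~\ref{conj bound of WF intro} for $\pi$, this yields
\[
\OO \leq d_{BV}(\OO_{\phi_{\widehat{\pi}}}) \leq d_{BV}(\OO_{\widehat{\phi}}),
\]
which is Conjecture~\ref{conj Jiang ABV 2 intro}. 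Chaining these two implications with Theorem~\ref{thm main classical groups intro} then produces the four-way equivalence of Conjecture~\ref{conj Jiang ABV 2 intro} for all admissible, for Arthur type, for anti-tempered, and for anti-discrete representations of $G_m$, $m\le n$.

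I expect the main obstacle to be the second fact, namely the compatibility of the Aubert--Zelevinsky involution with the Fourier transform and the Pyasetskii involution, at the level of both parameters and packets. Although this is contained in the formalism of \cite{CFMMX22}, isolating the precise statements needed --- that the perverse sheaf of $\pi$ is an $IC$-sheaf on the closure of the orbit of $\phi_\pi$, that ABV-membership is detected by the characteristic cycle, that Aubert duality is the Fourier transform, and that the Pyasetskii dual of $\phi_\pi$ is $\phi_{\widehat{\pi}}$ --- and, where the group under consideration forces it, verifying a closure-ordering input in the spirit of Working Hypothesis~\ref{assu closure ordering A-packet}, is where the substantive work lies. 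Once the three facts above are established, nothing beyond the monotonicity of $\widehat{G}$-saturation and the order-reversing property of $d_{BV}$ on nilpotent orbits enters the argument.
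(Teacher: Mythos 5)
Your skeleton is the same as the paper's: the paper proves precisely the equivalence $(\Pi_{\ast})\Leftrightarrow(\Phi_{\ast})$ (Lemma \ref{lem (D) (E) (G)}), using Proposition \ref{prop ABV} and the order-reversing property of $d_{BV}$, and then chains it with Theorem \ref{thm main classical groups} to obtain Theorem \ref{thm main Jiang ABV}; your converse direction (Conjecture \ref{conj bound of WF intro} $\Rightarrow$ Conjecture \ref{conj Jiang ABV 2 intro}) is word-for-word that argument. But two points in your write-up need correction. First, your ``second fact'' --- that Aubert--Zelevinsky duality is realized by the Fourier transform so that $\Pi_{\widehat{\phi}}^{\textrm{ABV}}=\{\widehat{\pi'} \mid \pi'\in\Pi_{\phi}^{\textrm{ABV}}\}$ --- is not supplied by \cite{CFMMX22}; it is exactly Working Hypothesis \ref{assu ABV AZ dual}, expected in \cite[\S 10.3.4]{CFMMX22} but proved only for $\GL_n(F)$ in \cite{CFK22}. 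Consequently what you obtain is the conditional four-way equivalence of Theorem \ref{thm main Jiang ABV}, not a proof of Conjecture \ref{conj Jiang ABV 2 intro}: the reduction terminates at the anti-discrete case, which remains open (that $d_{BV}(\OO_{\phi_{\widehat{\pi}}})$ is explicitly computable there does not prove it bounds $\WFN(\pi)$), and the paper itself stresses the limitations of pushing further in \S \ref{limitation}.

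Second, in the direction Conjecture \ref{conj Jiang ABV 2 intro} $\Rightarrow$ Conjecture \ref{conj bound of WF intro} you take $\phi=\phi_{\pi}$ and invoke the parameter-level identity $\widehat{\phi_{\pi}}=\phi_{\widehat{\pi}}$. This is strictly stronger than the packet-level Working Hypothesis and is not established for classical groups; indeed the Hypothesis together with Proposition \ref{prop ABV}(b) only yields the closure-order inequality $\phi_{\widehat{\pi}}\geq_C\widehat{\phi_{\pi}}$, not equality. The paper avoids this: by Proposition \ref{prop ABV}(a) one has $\widehat{\pi}\in\Pi_{\phi_{\widehat{\pi}}}\subseteq\Pi_{\phi_{\widehat{\pi}}}^{\textrm{ABV}}$, so applying Working Hypothesis \ref{assu ABV AZ dual} to $\widehat{\pi}$ gives $\pi\in\Pi_{\widehat{\phi_{\widehat{\pi}}}}^{\textrm{ABV}}$, i.e.\ $\phi:=\widehat{\phi_{\widehat{\pi}}}\in\Phi(\pi)$, and for this choice $\widehat{\phi}=\phi_{\widehat{\pi}}$ holds tautologically because the Pyasetskii map is an involution; applying Conjecture \ref{conj Jiang ABV 2 intro} with this $\phi$ gives $\OO\leq d_{BV}(\OO_{\phi_{\widehat{\pi}}})$. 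With that substitution your argument coincides with the paper's proof; your microlocal justification of the third fact (membership in $\Pi_{\phi}^{\textrm{ABV}}$ forces $C_{\phi}\subseteq\overline{C_{\phi_{\pi}}}$, hence $\OO_{\phi}\leq\OO_{\phi_{\pi}}$ by saturation) is a correct unwinding of what the paper simply cites as Proposition \ref{prop ABV}(b).
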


Our third main result is as follows. 

\begin{thm}[Theorem \ref{thm main Jiang ABV}]\label{thm main Jiang ABV intro}
    Assume the Aubert-Zelevinsky involution preserves local ABV packets (see Working Hypothesis \ref{assu ABV AZ dual}) for any local Arthur packet of $G_m$, $m \leq n$. Then the following statements are equivalent.
\begin{enumerate}
\item Conjecture \ref{conj bound of WF intro} holds for all admissible representations of $G_m$, $m \leq n$.
\item Conjecture \ref{conj Jiang ABV 2 intro} holds for all admissible representations of $G_m$, $m \leq n$.
\item Conjecture \ref{conj Jiang ABV 2 intro} holds for all anti-tempered representations of $G_m$, $m \leq n$.
\item Conjecture \ref{conj Jiang ABV 2 intro} holds for all anti-discrete representations of $G_m$, $m \leq n$.
\end{enumerate}
\end{thm}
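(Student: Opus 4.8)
The strategy mirrors the proof of Theorem~\ref{thm main Jiang intro}. The implications $(2)\Rightarrow(3)\Rightarrow(4)$ are immediate, since every anti-discrete representation is anti-tempered and every anti-tempered representation is admissible; the content is in $(1)\Rightarrow(2)$ and $(4)\Rightarrow(1)$.

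For $(1)\Rightarrow(2)$, the plan is to establish the ABV-packet analogue of the refined inequality~\eqref{eq sharper upper bound}, namely: for every $\pi\in\Pi(G_m)$ with $m\le n$ and every $\phi\in\Phi(\pi)$,
\[
d_{BV}(\OO_{\phi_{\widehat\pi}})\ \leq\ d_{BV}(\OO_{\widehat\phi}).
\]
Granting this, for any $\OO\in\WFN(\pi)$ Conjecture~\ref{conj bound of WF intro} gives $\OO\le d_{BV}(\OO_{\phi_{\widehat\pi}})\le d_{BV}(\OO_{\widehat\phi})$, which is exactly Conjecture~\ref{conj Jiang ABV 2 intro} for $\pi$; since this holds for all $\pi$, statement~$(1)$ implies statement~$(2)$. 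To prove the displayed inequality I would work inside the Vogan variety $V_\lambda$ attached to the infinitesimal parameter $\lambda$ of $\pi$, noting that $\widehat\pi$ has the same infinitesimal parameter and so is governed by the same $V_\lambda$. The membership $\pi\in\Pi_\phi^{\mathrm{ABV}}$ constrains the orbit $C_\phi$ to lie in the microsupport of the simple perverse sheaf attached to $\pi$, hence in the closure of $C_{\phi_\pi}$; Working Hypothesis~\ref{assu ABV AZ dual}, together with the compatibility of the ABV correspondence with the Fourier transform, identifies the $L$-parameter $\phi_{\widehat\pi}$ with the Pyasetskii dual $\widehat{\phi_\pi}$. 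One then transports the resulting closure relation through the Pyasetskii involution, through the assignment of nilpotent orbits to orbits in $V_\lambda$, and through the order-reversing map $d_{BV}$ to obtain the claim.

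For $(4)\Rightarrow(1)$, the key point is that the $L$-parameter $\phi_\pi$ always lies in $\Phi(\pi)$, because the $L$-packet is contained in the ABV packet, $\Pi_{\phi_\pi}\subseteq\Pi_{\phi_\pi}^{\mathrm{ABV}}$. When $\pi$ is anti-discrete, $\phi_{\widehat\pi}$ is the unique open $L$-parameter in $V_\lambda$ by~\cite[Proposition~5.6]{CFMMX22}, and via Working Hypothesis~\ref{assu ABV AZ dual} it coincides with the Pyasetskii dual $\widehat{\phi_\pi}$; in particular $\OO_{\widehat{\phi_\pi}}=\OO_{\phi_{\widehat\pi}}$. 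Hence Conjecture~\ref{conj Jiang ABV 2 intro}, applied to the anti-discrete representation $\pi$ at the parameter $\phi_\pi\in\Phi(\pi)$, reads $\OO\le d_{BV}(\OO_{\widehat{\phi_\pi}})=d_{BV}(\OO_{\phi_{\widehat\pi}})$ for all $\OO\in\WFN(\pi)$, i.e.\ Conjecture~\ref{conj bound of WF intro} holds for all anti-discrete representations of $G_m$, $m\le n$. By Theorem~\ref{thm main classical groups intro} --- the equivalence of its Parts~(1) and~(4) --- this propagates to Conjecture~\ref{conj bound of WF intro} for all admissible representations of $G_m$, $m\le n$, which is statement~$(1)$.

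The main obstacle is the inequality $d_{BV}(\OO_{\phi_{\widehat\pi}})\le d_{BV}(\OO_{\widehat\phi})$ used in $(1)\Rightarrow(2)$, and within it the identification of $\phi_{\widehat\pi}$ with the Pyasetskii dual $\widehat{\phi_\pi}$: one must check carefully that Working Hypothesis~\ref{assu ABV AZ dual} (equivalently, the compatibility of the Aubert--Zelevinsky involution with the Fourier transform on the spectral side) pins down the $L$-parameter of $\widehat\pi$ as the Pyasetskii dual of $\phi_\pi$, and that the closure order on $H_\lambda$-orbits in $V_\lambda$ interacts with the Pyasetskii involution, with the passage to nilpotent orbits in $\widehat{\mathfrak g}(\BC)$, and with $d_{BV}$ in the manner required to produce the inequality in the stated direction. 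Everything else is formal once Theorem~\ref{thm main classical groups intro} and the trivial implications among Parts~(2)--(4) are in place.
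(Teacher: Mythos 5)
Your skeleton (the trivial implications $(2)\Rightarrow(3)\Rightarrow(4)$ and the reduction of $(4)$ to $(1)$ through the anti-discrete case of Conjecture \ref{conj bound of WF intro} and Theorem \ref{thm main classical groups intro}) is the same as the paper's, but both of your nontrivial steps lean on the identification $\phi_{\widehat{\pi}}=\widehat{\phi_{\pi}}$, and this is not what Working Hypothesis \ref{assu ABV AZ dual} says, nor is it true in general. The hypothesis is a statement about packets: $\pi\in\Pi_{\phi}^{\textrm{ABV}}$ implies $\widehat{\pi}\in\Pi_{\widehat{\phi}}^{\textrm{ABV}}$; membership of $\widehat{\pi}$ in $\Pi_{\widehat{\phi_{\pi}}}^{\textrm{ABV}}$ only yields $\phi_{\widehat{\pi}}\geq_C\widehat{\phi_{\pi}}$ by Proposition \ref{prop ABV}(b), not equality. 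Concretely, take $\pi$ a supercuspidal representation of a classical group whose $L$-parameter has nontrivial restriction to the Deligne $\SL_2(\BC)$ (such representations exist by M{\oe}glin's classification; the representation $\pi_{sc}$ in \S\ref{limitation} is of this kind). Then $\pi$ is anti-discrete with $\phi_{\widehat{\pi}}=\phi_{\pi}$ the open parameter in its Vogan variety, while $\widehat{\phi_{\pi}}$ is the closed parameter, so $\OO_{\widehat{\phi_{\pi}}}$ is the zero orbit and $d_{BV}(\OO_{\widehat{\phi_{\pi}}})$ is the regular orbit. This breaks your $(4)\Rightarrow(1)$: applying Conjecture \ref{conj Jiang ABV 2 intro} at the parameter $\phi=\phi_{\pi}\in\Phi(\pi)$ only bounds $\WFN(\pi)$ by the regular orbit, which is strictly weaker than the bound $d_{BV}(\OO_{\phi_{\widehat{\pi}}})$ you need, so Conjecture \ref{conj bound of WF intro} for anti-discrete representations does not follow. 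The correct choice (as in Lemma \ref{lem (D) (E) (G)}) is $\phi:=\widehat{\phi_{\widehat{\pi}}}$: by Proposition \ref{prop ABV}(a) one has $\widehat{\pi}\in\Pi_{\phi_{\widehat{\pi}}}\subseteq\Pi_{\phi_{\widehat{\pi}}}^{\textrm{ABV}}$, so Working Hypothesis \ref{assu ABV AZ dual} applied to $\widehat{\pi}$ gives $\pi\in\Pi_{\widehat{\phi_{\widehat{\pi}}}}^{\textrm{ABV}}$, i.e.\ $\phi\in\Phi(\pi)$, and since the Pyasetskii map is an involution, $\widehat{\phi}=\phi_{\widehat{\pi}}$, so Conjecture \ref{conj Jiang ABV 2 intro} at this $\phi$ is literally Conjecture \ref{conj bound of WF intro} for $\pi$.

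For $(1)\Rightarrow(2)$, the inequality you single out, $d_{BV}(\OO_{\phi_{\widehat{\pi}}})\leq d_{BV}(\OO_{\widehat{\phi}})$ for every $\phi\in\Phi(\pi)$, is indeed the right statement and is true, but not by the route you sketch: you neither need nor can use the identification of $\phi_{\widehat{\pi}}$ with $\widehat{\phi_{\pi}}$, and you should not attempt to "transport closure relations through the Pyasetskii involution," since no such monotonicity is established (or needed). The argument is one line: $\pi\in\Pi_{\phi}^{\textrm{ABV}}$ and Working Hypothesis \ref{assu ABV AZ dual} give $\widehat{\pi}\in\Pi_{\widehat{\phi}}^{\textrm{ABV}}$; Proposition \ref{prop ABV}(b) applied to $\widehat{\pi}$ gives $\phi_{\widehat{\pi}}\geq_C\widehat{\phi}$, hence $\underline{p}(\phi_{\widehat{\pi}})\geq\underline{p}(\widehat{\phi})$, and applying the order-reversing map $d_{BV}$ (Proposition \ref{prop dBV}(1)) yields the inequality. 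With these two corrections your outline coincides with the paper's proof, namely Lemma \ref{lem (D) (E) (G)} combined with Theorems \ref{thm (A)} and \ref{thm (F)} (i.e.\ Theorem \ref{thm main classical groups intro}).
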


In a forthcoming paper (\cite{LLSZ26}), the last three named authors and Zhang will prove Conjecture \ref{conj bound of WF intro} for all anti-tempered representations of $G_n$, unconditionally, hence fully prove Conjectures \ref{conj bound of WF intro}, \ref{conj Jiang 2 intro}, and
\ref{conj Jiang ABV 2 intro}. 

We also show that the first three statements in Theorems \ref{thm main classical groups intro}, \ref{thm main Jiang intro} can be reduced from a family of groups $G_m$ with $m\leq n$ to a single group $G_n$ (see Theorem \ref{thm reduction to a single gp}). We expect that the same argument should work for Theorem \ref{thm main Jiang ABV intro}. However, the reduction seems infeasible for the statements for anti-discrete representations (see Remark \ref{rmk reduction to a single group}).

Theorems \ref{thm main classical groups intro}, \ref{thm main Jiang intro}, and \ref{thm main Jiang ABV intro} are proved uniformly in \S \ref{sec proof classical}, via applying a key lemma (Lemma \ref{lem goal}) that the Barabsch-Vogan duality is compatible with induction. This lemma was stated in \cite[Proposition A.2(c)]{BV85}, with a sketched proof. To be complete, we include the detailed proof in \S \ref{sec proof of key lemma} for classical groups. For the case of $\GL_n$, since local Arthur packets are singletons and Working Hypothesis \ref{assu ABV AZ dual} has been verified in \cite[Proposition 3.2.1]{CFK22}, as a corollary of Theorem \ref{thm main GL}, we can completely prove Conjectures \ref{conj Jiang 2 intro} and \ref{conj Jiang ABV 2 intro} (see Corollary \ref{cor GLn}).

{Let $\RG$ be a general connected reductive algebraic group defined over $F$ and let $G=\RG(F)$. Assume the local Langlands correspondence (Conjecture \ref{conj LLC}), the theories of local Arthur packets and ABV-packets for $G$, then all the results in Theorems \ref{thm main classical groups intro}, \ref{thm main Jiang intro}, and \ref{thm main Jiang ABV intro} can be naturally extended, applying \cite[Proposition A2(c)]{BV85} instead of Lemma \ref{lem goal}. For the convenience of future references, we state it in the following remark. However, for non-pure inner forms, the upper bound may not be sharp. In this case, we expect that the upper bound can be improved by modifying the definition of $\OO_{\phi}$, as in the case for inner forms of $\GL_n$.
For more detailed discussion, see Remark \ref{remarks}.}

\begin{remark}\label{general thm}
    All the results in Theorems \ref{thm main classical groups intro}, \ref{thm main Jiang intro}, and \ref{thm main Jiang ABV intro} naturally extend to general connected reductive algebraic groups $G$. More precisely,
    with the same assumptions as in Theorems \ref{thm main Jiang intro} and \ref{thm main Jiang ABV intro}, and assuming the theories of local Langlands correspondence, local Arthur packets, and local ABV-packets for $G$, then, 
    Conjectures \ref{conj bound of WF intro}, \ref{conj Jiang 2 intro}, and \ref{conj Jiang ABV 2 intro} hold for $G$ if and only if they hold for any anti-discrete representation of any Levi subgroup of $G$.
\end{remark}

The proofs of our main results in Theorems \ref{thm main classical groups intro}, \ref{thm main Jiang intro}, and \ref{thm main Jiang ABV intro} can be summarized in the following diagram. 

\begin{equation}\label{eq diagram intro}
        \begin{tikzcd}[column sep=huge]
   (\Phi) \ar[dd, Rightarrow] &(\Pi)\ar[l, Leftrightarrow, "\text{(D)}"',"\text{Lemma }\ref{lem (D) (E) (G)}"]  \ar[d, Rightarrow]& \\
    &(\Pi_A) \ar[d, Rightarrow] \ar[r, Rightarrow, "\text{(B)}", "\text{Lemma }\ref{lem (B)}"'] & (\Psi_A) \ar[d, Rightarrow]\\
    (\Phi_{\widehat{temp}}) \ar[d, Rightarrow]&(\Pi_{\widehat{temp}}) \ar[l, Leftrightarrow, "\text{(E)}"',"\text{Lemma }\ref{lem (D) (E) (G)}"]  \ar[d, Leftrightarrow, "\text{(F)}", "\text{Theorem }\ref{thm (F)}"'] \ar[uu, bend left=60, Rightarrow,"\text{(A)}"', " 
\text{Theorem }\ref{thm (A)}"] & (\Psi_{\widehat{temp}})\ar[d, Rightarrow] \ar[l, Rightarrow, "\text{(C)}"', "\text{Lemma }\ref{lem (C) (H)}"]\\
        (\Phi_{\widehat{2}})&(\Pi_{\widehat{2}})\ar[l, Leftrightarrow, , "\text{(G)}"',"\text{Lemma }\ref{lem (D) (E) (G)}"] \ar[r, Leftarrow, , "\text{(H)}",  "\text{Lemma }\ref{lem (C) (H)}"'] & (\Psi_{\widehat{2}})
    \end{tikzcd}
\end{equation}
Here let 
\[(\Xi, -) \in \{(\Pi, \ref{conj bound of WF intro}), (\Psi, \ref{conj Jiang 2 intro}), (\Phi, \ref{conj Jiang ABV 2 intro})\}.\]
We consider the collection of statements
\begin{align}\label{eq Xi ast intro}
    \tag{$\Xi_{\ast}$} \text{Conjecture } - \text{ holds for any }\pi \in \Pi_{\ast}(G_m) \text{ for any }m \leq n,
\end{align}
where the subscript $\ast \in \{\emptyset, A, \widehat{temp}, \widehat{2}\}$. $\Xi_{\emptyset}$ is understood as $\Xi$. 
$\Pi(G_m)$ is the set of all irreducible admissible representations of $G_m$, $\Pi_{A}(G_m)$ is the set of all representations of $G_m$ of Arthur type,  $\Pi_{\widehat{temp}}(G_m)$ is the set of all anti-tempered representations of $G_m$, and $\Pi_{\widehat{2}}(G_m)$ is the set of all anti-discrete representations of $G_m$. 
The vertical implications downward immediately follow from the following chain of containments 
\begin{align}\label{eq chain of containment intro}
    \Pi(G_m) \supseteq \Pi_A(G_m) \supseteq \Pi_{\widehat{temp}}(G_m) \supseteq \Pi_{\widehat{2}}(G_m).
\end{align}
We remark that Working Hypothesis \ref{assu closure ordering A-packet} is used in direction (B) and Working Hypothesis \ref{assu ABV AZ dual} is used in directions (D), (E) and (G).

From the diagram \eqref{eq diagram intro}, we can see that to prove Conjectures \ref{conj bound of WF intro}, \ref{conj Jiang 2 intro}, and \ref{conj Jiang ABV 2 intro}, we just need to prove Conjecture \ref{conj Jiang 2 intro} for anti-discrete representations, which is a work in progress of the authors. We hope the recent progress on the explicit computation of Aubert-Zelevinsky involution in \cite{AM20} could play important roles here. 
On the other hand, we remark that our method indeed has limitation towards proving Conjectures \ref{conj bound of WF intro}, \ref{conj Jiang 2 intro}, and \ref{conj Jiang ABV 2 intro} for anti-discrete representations, see the discussion and examples given in \S \ref{limitation}.

In this paper, we also discuss the status of Conjectures \ref{conj bound of WF intro} and \ref{conj Jiang  intro} for unipotent representations. More precisely, based on recent progresses in \cite{CMBO22, CMBO23, Wal18, Wal20} and the results in \cite{HLLZ22}, we prove Conjecture \ref{conj Jiang intro} in special cases (see Theorems \ref{thm Jiang upper bound unipotent} and \ref{thm upper bound SO(2n+1) unip}). We also show that \cite[Conjecture 1.4.3]{CMBO23} (see Conjecture \ref{conj CMBO}) implies Conjecture \ref{conj bound of WF intro} for unipotent representations (see Theorem \ref{thm equiv CMBO conj}). Note that \cite[Conjecture 1.4.3]{CMBO23} has already been proved if $\pi$ has real infinitesimal parameter in the same paper. We expect that for unipotent tempered representations of classical groups, the upper bounds given in Conjecture \ref{conj bound of WF intro} would agree with the arithmetic wavefront sets introduced by Jiang-Liu-Zhang (\cite{JLZ22, CJLZ23}) and with the wavefront sets computed by Waldspurger (for $\SO_{2n+1}$, \cite{Wal20}). We verify this expectation in two families (see Example \ref{exmp SO odd}). We remark that recently, La (\cite{La24}) studied the relation between the wavefront sets computed by Waldspurger and the Iwahori–Matsumoto involution on affine Hecke
algebras, which is expected to match the Aubert-Zelevinsky dual. See Remark \ref{rmk La} for more details.

Given an irreducible automorphic representation $\pi = \otimes_v \pi_v$, the upper bound given in Conjecture \ref{conj bound of WF intro} for each $\pi_v$ will provide an upper bound for the wavefront set of $\pi$. An interesting question is whether the minimum of all these upper bounds would occur in the wavefront set of $\pi$.

We remark that the upper bound in Conjecture \ref{conj bound of WF intro} may not be always sharp for an individual representation $\pi$. That is, it is possible that for any $\OO \in \WFN(\pi)$, $\OO < d_{BV}(\OO_{\phi_{\widehat{\pi}}}).$
We thank Cheng-Chiang Tsai for helpful discussions on the first two of the following examples. 

\begin{exmp}\label{exmp non-achieve}\ 
    \begin{enumerate}
    \item [1.] Let $n \in \Z_{>0}$, $G$ be the split $\SO_{4n+1}(F)$ and $P=MN$ be the Siegel parabolic subgroup of $G$. Let $\rho$ be a selfdual supercuspidal representation of $M$ whose $L$-parameter $\phi_{\rho}$ is symplectic. The parabolic induction $\Ind_{M}^G \rho$ is reducible and decomposes into a direct sum $\pi_{1}\oplus \pi_2$. They form an $L$-packet of $G$. That is, $\Pi_{\phi}=\{\pi_1,\pi_2\}$ where $\phi:= \phi_{\rho}+ \phi_{\rho}$. In this case, $\widehat{\pi_1}=\pi_2$ and $\widehat{\pi_2}=\pi_1$. Thus $d_{BV}(\OO_{\phi_{\widehat{\pi_i}}})= d_{BV}(\OO_{\phi})$, which is the regular orbit. On the other hand, the Whittaker model of $\Ind_{M}^G \rho$ is one dimensional, and hence exactly one of $\pi_1, \pi_2$ is generic. We conclude that the other one does not achieve the conjectural upper bound in Conjecture \ref{conj bound of WF intro}.  
        \item [2.]Let $\pi$ be the epipelagic supercuspidal representation of $\RU_7(\Q_3)$ constructed in \cite{Tsa24} whose wavefront set is not a singleton. We have $\pi=\widehat{\pi}$, and $\phi_{\widehat{\pi}}$ is trivial when restricted to $\SL_2(\BC)$. As a consequence, $\OO_{\phi_{\widehat{\pi}}}$ is the zero orbit and neither of the orbits in $\WFN(\pi)$ can achieve $d_{BV}(\OO_{\phi_{\widehat{\pi}}})$, which is the regular orbit. More generally, if $\pi$ is any representation such that $\WFN(\pi)$ is not a singleton and Conjecture \ref{conj bound of WF intro} holds for $\pi$, then none of the nilpotent orbits in $\WFN(\pi)$ can achieve the conjectural upper bound $d_{BV}(\OO_{\phi_{\widehat{\pi}}})$.
        \item [3.] In \cite[Example 1.4.2]{CMBO23}, they gave an example of a unipotent representation of $\textrm{E}_7$ whose wavefront set is strictly smaller than the upper bound in Conjecture \ref{conj bound of WF intro}.
    \end{enumerate}
\end{exmp}

However, for quasi-split connected reductive groups and their pure inner forms, we expect that the conjectural upper bound in Conjecture \ref{conj bound of WF intro} is sharp for a ``packet" of representations. As generalizations of the Shahidi conjecture, in \cite{LLS24a}, the last three named authors formulated several conjectures on the sharpness of the conjectural upper bound of wavefront sets of representations in arbitrary local $L$-packets and considered similar reductions for groups $G_n$ as in this paper. More precisely, based on Conjecture \ref{conj bound of WF intro}, the first version of generalizations is as follows. 

\begin{conj}[Generalized Shahidi Conjecture on $L$-packets, {\cite[Conjecture 1.7]{LLS24a}}]\label{generalized Shahdi's conjecture 1st attempt}
    Let $\mathrm{G}$ be a connected reductive group and $G=\mathrm{G}(F)$. 
Assume the local Langlands correspondence for $G$. Let $\phi$ be a local $L$-parameter of $G$ and $\Pi_{\phi}$ be the corresponding local $L$-packet. Let 
\[\mathrm{UB}(\phi):=\max \{d_{BV}( \OO_{\phi_{\widehat{\pi}}}) \ |\  \pi \in \Pi_{\phi}\} \ \ \ \ \text{(not necessarily a singleton)}.\]
Then the followings hold. 
\begin{enumerate}
    \item [(i)] For any representation $\pi$ in ${\Pi}_{\phi}$ and any nilpotent orbit $\OO \in \ol{\mathfrak{n}}^m(\pi)$, there exists $\OO' \in \mathrm{UB}(\phi)$ such that  
    \[ \OO \leq \OO'.\]
    \item [(ii)] If $G$ is quasi-split, then for any $\OO' \in \mathrm{UB}(\phi)$, there exists a representation $\pi \in {\Pi}_{\phi}$ such that 
    \[ \ol{\mathfrak{n}}^m(\pi)= \{ \OO'\}.\]
\end{enumerate}
\end{conj}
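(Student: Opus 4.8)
The plan is to deduce Part (i) directly from the upper bound Conjecture \ref{conj bound of WF intro} and to establish Part (ii) by imitating the reduction strategy behind Theorem \ref{thm main classical groups intro}; for the groups $G_n$ treated here that conjecture is in turn reduced to its anti-discrete case, and for a general connected reductive $\mathrm{G}$ one would use \cite[Proposition A.2(c)]{BV85} in place of Lemma \ref{lem goal} throughout.

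Part (i) is then essentially formal. Given $\pi\in\Pi_\phi$ and $\OO\in\ol{\mathfrak{n}}^m(\pi)$, Conjecture \ref{conj bound of WF intro} gives $\OO\le d_{BV}(\OO_{\phi_{\widehat{\pi}}})$; since $\mathrm{UB}(\phi)$ is by definition the set of maximal elements of the finite poset $\{d_{BV}(\OO_{\phi_{\widehat{\pi'}}}):\pi'\in\Pi_\phi\}$ under the closure order, there is $\OO'\in\mathrm{UB}(\phi)$ with $d_{BV}(\OO_{\phi_{\widehat{\pi}}})\le\OO'$, whence $\OO\le\OO'$.

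For Part (ii), fix $\OO'\in\mathrm{UB}(\phi)$ and a representation $\pi_0\in\Pi_\phi$ with $\OO'=d_{BV}(\OO_{\phi_{\widehat{\pi_0}}})$; I would proceed in three steps paralleling \S\ref{sec proof classical}. First, peel off the parabolically induced part of $\phi$: using that the Barbasch--Vogan map is compatible with induction (Lemma \ref{lem goal}) and that Aubert--Zelevinsky duality, the formation of $\ol{\mathfrak{n}}^m$, and the parametrization of $L$-packets are all compatible with parabolic induction, reduce the construction of a member of $\Pi_\phi$ with wavefront set $\{\OO'\}$ to the analogous problem over a Levi subgroup $M$ attached to the ``discrete part'' of $\phi$. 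Second, in that base situation produce the candidate, which should be the Aubert--Zelevinsky dual of the Whittaker-generic member of the $L$-packet $\Pi_{\phi_{\widehat{\pi_0}}}$ (its existence being Shahidi's conjecture, a theorem for quasi-split classical groups); to see this dual lies in $\Pi_\phi$ one uses that Aubert--Zelevinsky duality preserves the infinitesimal parameter — so $\Pi_\phi$ and $\Pi_{\phi_{\widehat{\pi_0}}}$ sit in a common Vogan variety — together with the fact that for anti-tempered (resp. anti-discrete) representations the dual parameter is the unique open parameter in that variety (\cite[Proposition 5.6]{CFMMX22}), which pins down the packet of the candidate. Third, verify that $\ol{\mathfrak{n}}^m$ of the candidate equals $\{\OO'\}$: this is the anti-tempered (resp. anti-discrete) case of the sharpness statement, in which $d_{BV}(\OO_{\phi_{\widehat{\pi}}})$ is explicitly computable from the infinitesimal parameter and one must exhibit a matching lower bound for the wavefront set, by an explicit computation in the spirit of \cite[\S II.2]{MW87}.

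The hard part will be this last step, together with the singleton assertion inside it. Showing that the candidate \emph{attains} $\OO'$ rather than a strictly smaller orbit — recall from Example \ref{exmp non-achieve} that a non-generic member of an $L$-packet can miss even its own upper bound — amounts to a genuine \emph{lower} bound on a wavefront set, a deeper input than the upper bounds handled in this paper, and showing the wavefront set is a single orbit can already fail for general representations (Tsai's example, \cite{Tsa22}). This is aggravated by the absence of an algorithm computing $\widehat{\pi}$ or $\phi_{\widehat{\pi}}$ outside $\GL_n$, $\Sp_{2n}$ and split $\SO_{2n+1}$, so that even identifying the correct member of $\Pi_\phi$ in the second step is nontrivial in general; a sensible first target is therefore Part (ii) for $\GL_n$ and for the unipotent $L$-packets of $\Sp_{2n}$ and split $\SO_{2n+1}$.
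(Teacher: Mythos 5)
The statement you are addressing is stated in the paper as a \emph{conjecture} (quoted from \cite[Conjecture 1.7]{LLS24a}); the paper contains no proof of it, and in fact explicitly flags it as problematic: it observes only that Part (i) is equivalent to Conjecture \ref{conj bound of WF intro}, and that $\mathrm{UB}(\phi)$ need not be a singleton (see the example cited from \cite[Section 5.1]{LLS24a}), which is precisely why the authors replace it by the ``dual packet'' version, Conjecture \ref{generalized Shahdi's conjecture}. Your treatment of Part (i) reproduces this equivalence: given Conjecture \ref{conj bound of WF intro}, any element of a finite set of orbits is dominated by a maximal one, so the argument is fine but is conditional on an open conjecture (in this paper it is proved only for $\GL_m(A)$ and reduced, for the groups $G_n$, to the anti-discrete case); it is not a proof.

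For Part (ii) there is a concrete gap beyond the difficulties you already concede. Your candidate is the Aubert--Zelevinsky dual $\widehat{\sigma}$ of the generic member $\sigma$ of $\Pi_{\phi_{\widehat{\pi_0}}}$, and you claim its membership in $\Pi_{\phi}$ is ``pinned down'' because AZ duality preserves the infinitesimal parameter and the dual of an anti-tempered representation has the open parameter. But preserving $\lambda_{\phi}$ only places $\phi_{\widehat{\sigma}}$ somewhere in $\Phi(G)_{\lambda_\phi}$; the cited fact \cite[Proposition 5.6]{CFMMX22} identifies $\phi_{\widehat{\sigma}}$ with the \emph{open} parameter, so your argument shows $\widehat{\sigma}\in\Pi_{\phi}$ only when $\phi$ itself is the open parameter of its Vogan variety (the anti-tempered/anti-discrete situation). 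For general $\phi$ the AZ dual of an $L$-packet is not an $L$-packet, and $\widehat{\sigma}$ lies in $\widehat{\Pi}_{\phi_{\widehat{\pi_0}}}$ rather than in $\Pi_{\phi}$ --- this failure is exactly the ``novelty'' the paper points to when it passes from Conjecture \ref{generalized Shahdi's conjecture 1st attempt} to Conjecture \ref{generalized Shahdi's conjecture}. In addition, since $\mathrm{UB}(\phi)$ can contain several incomparable orbits, a single generic-dual candidate cannot serve all $\OO'\in\mathrm{UB}(\phi)$, so the reduction scheme would have to produce, for each maximal orbit separately, a member of $\Pi_{\phi}$ (not of the dual packet) attaining it; and, as you note, attaining the bound requires a lower bound on wavefront sets together with a singleton statement, neither of which the paper's methods (which only give upper bounds and reductions, cf. Example \ref{exmp non-achieve} and \cite{Tsa22}) provide. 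So the proposal should be regarded as a programme conditional on open statements, not a proof, and its packet-membership step fails as written outside the anti-tempered case.
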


Note that Conjecture \ref{generalized Shahdi's conjecture 1st attempt}(i) is slightly weaker than Conjecture \ref{conj bound of WF intro}.
One issue about Conjecture \ref{generalized Shahdi's conjecture 1st attempt} is that given a local $L$-parameter $\phi$, $\mathrm{UB}(\phi)$ may not be a singleton. See \cite[Section 5.1]{LLS24a} for an example. This makes Conjecture \ref{generalized Shahdi's conjecture 1st attempt} very hard to consider. 
One way to deal with the difficulty is to consider the {ABV-packets} as in  Conjecture \ref{conj Jiang ABV intro}.
A closer look at Conjecture \ref{conj bound of WF intro} leads us to a modification of Conjecture \ref{generalized Shahdi's conjecture 1st attempt}. 
More precisely, we have a conjecture as follows which is more feasible.

\begin{conj}[Generalized Shahidi Conjecture on duals of $L$-packets, {\cite[Conjecture 1.7]{LLS24a}}]\label{generalized Shahdi's conjecture}
    Let $\mathrm{G}$ be a connected reductive group and $G=\mathrm{G}(F)$. 
Assume that the local Langlands correspondence for $G$ holds. Let $\phi$ be a local $L$-parameter of $G$ and $\Pi_{\phi}$ be the corresponding local $L$-packet. Then the followings hold. 
\begin{enumerate}
    \item [(i)] For any representation $\pi$ in $\widehat{\Pi}_{\phi}$ and any nilpotent orbit $\OO \in \ol{\mathfrak{n}}^m(\pi)$, we have 
    \[ \OO \leq d_{BV}( \OO_{\phi}).\]
    \item [(ii)] If $G$ is quasi-split, then there exists a representation $\pi \in \widehat{\Pi}_{\phi}$ such that
    \[ \ol{\mathfrak{n}}^m(\pi)= \{ d_{BV}( \OO_{\phi})\}.\]
\end{enumerate}
Here, $\widehat{\Pi}_{\phi}:= \{ \pi \ | \ \widehat{\pi} \in \Pi_{\phi} \}.$
Note that $\phi = \phi_{\widehat{\pi}}$.
\end{conj}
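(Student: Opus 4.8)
\textbf{Proof proposal for Conjecture \ref{generalized Shahdi's conjecture}.}
The plan is to reduce both parts of the conjecture to results that are already available in the excerpt, using the ABV-packet framework and the special structure of anti-tempered/anti-discrete representations. The key observation is that Part (i) of Conjecture \ref{generalized Shahdi's conjecture} is, by definition of $\widehat{\Pi}_{\phi}$ and the identity $\phi = \phi_{\widehat{\pi}}$, nothing more than Conjecture \ref{conj bound of WF intro} applied to representations whose Aubert-Zelevinsky dual lies in the $L$-packet $\Pi_{\phi}$; so Part (i) follows immediately from whatever case of Conjecture \ref{conj bound of WF intro} we have in hand. For Part (ii), which is the genuinely new content, the strategy is: first, relate the $L$-packet $\Pi_{\phi}$ to the ABV-packet $\Pi_{\phi}^{\textrm{ABV}}$ (we have $\Pi_{\phi} \subseteq \Pi_{\phi}^{\textrm{ABV}}$), and more importantly relate $\widehat{\Pi}_{\phi}$ to a local Arthur packet or ABV-packet via the Working Hypotheses \ref{assu closure ordering A-packet} and \ref{assu ABV AZ dual} invoked in the main theorems; second, exhibit the specific member of $\widehat{\Pi}_{\phi}$ achieving the bound.

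For the construction in Part (ii), I would proceed as follows. When $\phi$ is tempered (so $\widehat{\Pi}_{\phi} = \Pi_{\widehat{temp}}$-type representations), Shahidi's conjecture provides a generic member $\pi_0 \in \Pi_{\phi}$; its Aubert-Zelevinsky dual $\widehat{\pi_0}$ is anti-tempered, and by the second bullet point after Theorem \ref{thm main classical groups intro}, $\phi_{\widehat{\widehat{\pi_0}}} = \phi_{\pi_0} = \phi$ is the unique open $L$-parameter in the associated Vogan variety, so $d_{BV}(\OO_{\phi})$ is computable and equals the regular orbit in the relevant block. The claim is then that $\overline{\mathfrak{n}}^m(\widehat{\pi_0}) = \{d_{BV}(\OO_{\phi})\}$. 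For the general (non-tempered) $\phi$, the member $\pi_0 \in \Pi_{\phi}$ to dualize should be the one selected by the theory of ABV-packets to carry the open orbit in the Vogan variety — equivalently, under Working Hypothesis \ref{assu ABV AZ dual}, the member whose AZ-dual sits at the "top" of the corresponding ABV-packet. The heart of the matter is proving the reverse inequality $\overline{\mathfrak{n}}^m(\widehat{\pi_0}) \geq d_{BV}(\OO_{\phi})$, i.e. that this distinguished member actually attains the upper bound rather than falling strictly below it as in Example \ref{exmp non-achieve}.

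The attainment statement I would establish by the following chain: (a) by compatibility of the Barbasch-Vogan duality with parabolic induction (Lemma \ref{lem goal}, i.e. \cite[Proposition A.2(c)]{BV85}), reduce to the anti-discrete case, exactly as in the proof of Theorem \ref{thm main classical groups intro} along the diagram \eqref{eq diagram intro}; (b) for an anti-discrete $\pi = \widehat{\pi_0}$ with $\pi_0$ discrete series, use the explicit description of the wavefront set via the Moeglin-Waldspurger algorithm together with the known structure of $\widehat{\pi_0}$ as an open-orbit representation in the Vogan variety (second bullet after Theorem \ref{thm main classical groups intro}); (c) match this against $d_{BV}(\OO_{\phi})$ computed directly from the infinitesimal parameter. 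The main obstacle I anticipate is precisely step (b)–(c): without an unconditional computation of $\widehat{\pi_0}$ (no algorithm exists outside $\GL_n$, $\Sp_{2n}$, split $\SO_{2n+1}$), one cannot in general verify that the wavefront set of the distinguished member is a singleton equal to $d_{BV}(\OO_{\phi})$; this is exactly the obstruction flagged in \S\ref{limitation}. Consequently I expect Part (ii) to be provable unconditionally only for $\GL_n$ and its inner forms (via Theorem \ref{thm main GL}) and for $\Sp_{2n}(F)$, split $\SO_{2n+1}(F)$ (via \cite{AM20} and \cite{HLLZ22}), and otherwise to remain conditional on the explicit AZ-dual computation, with Part (i) being unconditional modulo Conjecture \ref{conj bound of WF intro} itself.
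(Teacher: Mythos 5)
The statement you are proving is a conjecture imported from \cite{LLS24a}; the paper itself gives no proof of it. All the paper records is that Part (i) is equivalent to Conjecture \ref{conj bound of WF intro} (the remark immediately after the statement), which coincides with your first observation, and that Part (ii) is treated in \cite{LLS24a} only at the level of reductions analogous to Theorems \ref{thm main classical groups intro}--\ref{thm main Jiang ABV intro}, not of a proof. So your proposal has to be judged as a strategy for Part (ii), and there it has genuine gaps rather than being a proof that merely "remains conditional" in bad cases.

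The central gap is in your step (a): the diagram \eqref{eq diagram intro} machinery only propagates \emph{upper bounds}, not attainment. The inductive step in Theorems \ref{thm (A)} and \ref{thm (F)} uses $\pi \leq \Speh(\rho,a)|\cdot|^{x} \rtimes \pi_-$ together with Proposition \ref{prop induced orbit}; since an upper bound on the wavefront set of the full induced representation bounds every irreducible subquotient, this reduces Part (i)-type statements to the anti-discrete case. For Part (ii) you need the converse direction: Proposition \ref{prop induced orbit} guarantees the maximal orbit of $\Ind_Q^G\delta$ is attained by \emph{some} irreducible subquotient, but nothing in the paper identifies that subquotient with the distinguished member of $\widehat{\Pi}_{\phi}$ you selected (dual of the generic member, or the open-orbit member), and Example \ref{exmp non-achieve}(1) shows precisely this failure mode: two members share the same conjectural bound and exactly one attains it, with no criterion available to tell which. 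Your steps (b)--(c) then presuppose an explicit wavefront-set computation for anti-discrete (or anti-tempered) representations; outside the unipotent cases of \S\ref{sec unipotent} (Waldspurger, Ciubotaru--Mason-Brown--Okada) no such computation exists, so your claim that Part (ii) becomes unconditional for $\Sp_{2n}(F)$ and split $\SO_{2n+1}(F)$ via \cite{AM20} and \cite{HLLZ22} overreaches: those inputs give the bound, not its attainment, and the paper verifies attainment there only for unipotent parameters (Theorems \ref{thm Jiang upper bound unipotent}, \ref{thm upper bound SO(2n+1) unip}). Finally, a smaller but real misstatement: for tempered $\phi$ the bound $d_{BV}(\OO_{\phi})$ is the regular orbit only when $\phi|_{\SL_2(\BC)}$ is trivial; in general $\OO_{\phi}\neq 0$, the bound is strictly smaller, and there is no argument in the excerpt showing that the dual of the generic member of $\Pi_{\phi}$ is the one achieving it.
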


Note that Conjecture \ref{generalized Shahdi's conjecture}(i) is also equivalent to Conjecture \ref{conj bound of WF intro}. The novelty of Conjecture \ref{generalized Shahdi's conjecture} is the point of view that to consider the wavefront sets of admissible representations and to obtain a unique upper bound of the wavefront sets, one should consider the Aubert-Zelevinsky dual of the local $L$-packet $\Pi_{\phi}$, not the local $L$-packet itself. Note that using the the Aubert-Zelevinsky dual of the local $L$-packets, we still have the following disjoint decomposition of the irreducible admissible dual of $G$ as follows. 
$$\mathrm{Irr}(G)=\cup_{\phi} \ \widehat{\Pi}_{\phi}.$$

In \cite{LLS24a}, we focused on Conjecture \ref{conj Jiang intro}(ii) and 
Conjecture \ref{generalized Shahdi's conjecture}(ii), and proved similar reductions as in  Theorems \ref{thm main classical groups intro}, \ref{thm main Jiang intro}, and \ref{thm main Jiang ABV intro}.

We remark that Ciubotaru and Kim (\cite{CK25}) also made and studied Conjecture \ref{conj bound of WF intro}, at the same time as the current paper, and proved the equivalence of Theorem \ref{thm main classical groups intro} Parts (1) and (3), independently. 
They verified it
in some cases, including the depth-zero supercuspidal representations of classical groups and all the irreducible representations of $G_2$.
On the other hand, our paper addresses certain other aspects as well, especially the relation of Conjecture \ref{conj bound of WF intro} with the Jiang conjecture on wavefront sets of local Arthur packets and the generalized Shahidi conjecture on wavefront sets of local ABV packets (see Theorems \ref{thm main Jiang intro} and \ref{thm main Jiang ABV intro} above). In particular, we include a detailed proof of a key ingredient (\cite[Proposition A.2(c)]{BV85}) for classical groups (see Lemma \ref{lem goal}).

The following is the structure of this paper. In \S 
\ref{sec groups}, we introduce all the groups considered in this paper. In \S \ref{wfs} and \S \ref{lp and lap}, we introduce the notation on wavefront set and partitions, local $L$-parameters and local Arthur parameters. In \S \ref{llc and lc}, we recall the local Langlands correspondence and the Langlands classification for general linear groups and the groups $G_n$, and the closure ordering relation for local $L$-parameters. In \S \ref{lap ABV Jiang}, we recall the expected properties of local Arthur packets and ABV-packets and state the Jiang conjecture. 
In \S \ref{main results}, we state all the main results (Theorems \ref{thm main classical groups intro}, \ref{thm main Jiang intro}, \ref{thm main Jiang ABV intro}) 
precisely, which will be proved in \S \ref{sec proof classical}. In \S \ref{sec GLn(A) main}, following the idea of M{\oe}glin and Waldspurger, we prove the $\GL_n$ case of Conjectures \ref{conj bound of WF intro}, \ref{conj Jiang 2 intro}, and \ref{conj Jiang ABV intro} (Theorem \ref{thm main GL} and Corollary \ref{cor GLn}). In \S \ref{limitation}, we discuss the limitation of the method used in this paper for anti-discrete representations and give explicit examples. In \S \ref{sec unipotent}, we consider the unipotent cases of Conjectures \ref{conj bound of WF intro} and \ref{conj Jiang  intro} (Theorems \ref{thm Jiang upper bound unipotent} and \ref{thm upper bound SO(2n+1) unip}). In \S \ref{sec proof of key lemma}, we prove in details the key Lemma \ref{lem goal}.

\subsection*{Acknowledgements} 
The authors would like to thank Dihua Jiang for the constant support and encouragement. The authors are grateful to Cheng-Chiang Tsai for helpful comments and suggestions, especially for the first two of Example \ref{exmp non-achieve}, and to Lei Zhang and Cheng Chen for the helpful discussion on arithmetic wavefront sets of representations.
The authors also would like to thank Ciubotaru and Kim for informing us of  their related independent work on the upper bound conjecture. 

\section{Groups}\label{sec groups}
In this section, we give notations for the groups considered in this paper, which are inner forms of general linear groups and pure inner forms of classical groups.

First, we consider inner forms of $\GL_n(F)$. An inner form of $\GL_n(F)$ is of the form $\GL_m(A)$, where $A$ is a central division algebra of dimension $d_A^2$ over $F$ with $n=md_A$. We let $|\cdot|_A$ denote the reduced norm of $A$. We also regard it as a character on $\GL_{m}(A)$ by composing the determinant. If $\pi$ is a representation of $\GL_m(A)$ and $x \in \R$, we let $\pi |\cdot|_A^{x}$ denote the representation acting on the same vector space of $\pi$ by $\pi|\cdot|_A^x (g):= \pi(g) |\det(g)|_A^x$.

Let $G= \GL_m(A)$. There is a one-to-one correspondence between ordered partitions of $m$ and the conjugacy classes of parabolic subgroups of $G$. Let $\underline{n}=( n_1,\ldots, n_f)$ be an ordered partition of $m$ and let $P_{\underline{n}}$ be the corresponding parabolic subgroup. The Levi subgroup $M_{\underline{n}}$ of $P_{\underline{n}}$ is isomorphic to
\[ \GL_{n_1}(A) \times \cdots \times \GL_{n_f}(A).\]
If $\tau_i$'s are representations of $\GL_{n_i}(A)$ respectively, then we regard $\tau:=\sigma_1 \otimes \cdots \otimes \tau_f$ as a representation of $M_{\underline{n}}$ and denote the normalized parabolic induction $\Ind_{M_{\underline{n}}}^G \tau$ by
\[ \tau_1 \times \cdots \times \tau_f.\]

Next, we consider pure inner forms of classical groups, which are symplectic, special orthogonal and unitary groups. Let $E$ be a non-Archimedean local field of characteristic zero such that $[E:F]\leq 2$, and let $\sigma$ be the involution of $E$ with fixed field $F$. That is, if $\sigma=1,$ then $E=F$. Otherwise, $\sigma$ is the non-trivial element in $\Gal(E/F).$

Let $\epsilon\in\{\pm1\}$ and $V$ be an $\epsilon$-Hermitian space over $E$ of dimension $n$. We let $\langle\cdot,\cdot\rangle$ denote the non-degenerate, $\sigma$-sesquilinear form on $V$ and $\mathfrak{r}$ denote the Witt index of $V$. Thus, $V$ is isometric to $ V_{an}\oplus \H^\mathfrak{r},$ where $V_{an}$ is anisotropic $\epsilon$ Hermitian space over $E$, and $\H \cong E_{v}\oplus E_{v^{\ast}}$ is the hyperbolic plane defined by $\langle v, v\rangle = \langle v^{\ast}, v^{\ast}\rangle=0$ and $\langle v, v^{\ast} \rangle =1$. Conversely, fixing an anisotropic $\epsilon$-Hermitian space $V_{an}$ over $E$, we let $V_{an,r}:=  V_{an} \oplus \H^{r}$. We shall consider the Witt tower associated to $V_{an}$
\[ \mathcal{V}:= \{ V_{an,r}\ | \ r \geq 0\}. \]

The pure inner forms of classical groups can be described by the identity component of the isometry groups of an $\epsilon$-Hermitian space $V$. Let
\[ G=G(V):=\textrm{Isom}(V)^{\circ}:= \{T \in \Aut_E(V)\ | \ \langle Tv, Tw \rangle= \langle v,w \rangle ,\  v,w \in V\}^{\circ}.\]
To be explicit, if $E=F$, $n$ is even, and $\epsilon=1,$ then $G=\Sp(V)$ is a symplectic group. If $E=F$ and $\epsilon=1$, then $G=\SO(V)$ is a special orthogonal group. If $E\neq F$, then $G=\RU(V)$ is a unitary group. Fixing the $\epsilon$-Hermitian space $V$ and write $V=V_{an,\mathfrak{r}}$ and let $G=G(V)$.
We shall call $G_r:= G(V_{an,r})$ a group of the same type as $G$. Note that the index $r$ gives the $F$-rank of $G_r$. We set $G(V)$ to be the trivial group if $\dim(V)=0$.

There is a surjection from the conjugacy classes of parabolic subgroups of $G=G(V_{an, \mathfrak{r}})$ to the union of ordered partitions of $r \leq \mathfrak{r}$. The surjection is also an injection unless $E=F$, $\epsilon=1$ and $n$ is even, in which case the fiber of the surjection may have cardinality 2. Let $\underline{r}=(n_1,\ldots, n_f)$ be an ordered partition of $|\underline{r}| \leq \mathfrak{r}$ and let $P_{\underline{r}}$ be in the fiber of $\underline{r}$ under the surjection. The Levi subgroup $M_{\underline{r}}$ of $P_{\underline{r}}$ is isomorphic to
\[ \GL_{n_1}(E) \times \cdots \times \GL_{n_f}(E) \times G(V_{an, \mathfrak{r}-|\underline{r}| }).\]
If $\tau_i$'s are representations of $\GL_{n_i}(E)$ and $\sigma$ is a representation of $ G(V_{an, \mathfrak{r}-|\underline{r}| })$, then we regard $ \pi:=\tau_1 \otimes \cdots \otimes \tau_f \otimes \sigma$ as a representation of $M_{\underline{r}}$ and denote the normalized parabolic induction $\Ind_{M_{\underline{r}}}^G \pi$ by 
\[ \tau_1 \times \cdots \times \tau_f \rtimes \sigma.\]

Finally, we recall the definition of Aubert-Zelevinsky involution. Let $\RG$ be a connected reductive algebraic group defined over $F$, $G=\RG(F)$, and let $\mathcal{R}(G)$ be the Grothendieck group of smooth representations of finite length of $G$. If $\pi$ is a smooth representation of finite length of $G$, we let $[\pi]$ denote its image in $\mathcal{R}(G)$. If $P$ is a parabolic subgroup of $G$, we let $\mathrm{Ind}_{P}^G$ denote the normalized parabolic induction and let $\mathrm{Jac}_P$ denote the Jacquet module.

Aubert (\cite{Aub95}) showed that for any representation $\pi$ of $\Pi(G)$, there exists $\varepsilon\in\{\pm 1\}$ and an irreducible representation $\widehat{\pi} \in \Pi(G)$ such that
\begin{align*}
[\widehat{\pi}]:=\varepsilon\sum_P (-1)^{\mathrm{dim}(A_P)}[\mathrm{Ind}_{P}^{G} \circ \mathrm{Jac}_P(\pi)].
\end{align*}
 Here the sum is taken over all standard parabolic subgroups $P$ of $G$ and $A_P$ is the maximal split torus of the center of the Levi subgroup of $P.$ Moreover, the map $\pi \mapsto \widehat{\pi}$ is an involution on $\Pi(G)$. We call $\widehat{\pi}$ the Aubert-Zelevinsky involution of $\pi.$

\section{Wavefront sets and partitions}
\label{wfs}
Let $\RG$ be a connected reductive group defined over $F$ and $G=\RG(F)$. Let $\pi$ be an irreducible admissible representation of $G$ and let $\Theta(\pi)$ be the character of $\pi.$ The Harish-Chandra-Howe local expansion states that, for a sufficiently small neighborhood of $0$ in $\mathfrak{g}(F)$, $\Theta(\pi)$ is a linear combination of Fourier transform of nilpotent orbits (\cite{HC78}). That is,
\begin{equation}\label{eqn Harish-Chandra-Howe local char exp}
(\Theta(\pi))(\exp(X))=\sum_{\mathcal{O}}c_\mathcal{O}(\pi)\hat{\mu}_{\mathcal{O}}(X),    
\end{equation}
where the sum is over the nilpotent orbits in $\mathfrak{g}(F)$ and $X$ is a regular element of $\mathfrak{g}(F)$ and lies in a sufficiently small neighborhood of $0.$

Recall that $\mathfrak{n}^m(\pi)$ is the set of maximal $F$-rational nilpotent orbits $\CO$ in the Lie algebra $\mathfrak{g}(F)$ of $G$ such that the coefficient $c_{\CO}(\pi)$ in \eqref{eqn Harish-Chandra-Howe local char exp} is nonzero (\cite{HC78, MW87}) and  $\ol{\mathfrak{n}}^m(\pi)$ is the set of corresponding nilpotent orbits over $\ol{F}$.
The set $\ol{\mathfrak{n}}^m(\pi)$ is called the (geometric) wavefront set of $\pi$. 

Let $Q = MN$ be a parabolic
subgroup of $G$. Let $\mathfrak{m}(F)$, $\mathfrak{n}(F),$ and $\mathfrak{q}(F)$ be the Lie algebra of $M, N$ and $Q$ respectively. We also let $\CO$ be a nilpotent orbit in $\mathfrak{m}(F)$. Then there exists a unique nilpotent orbit, denoted by $\Ind_{\mathfrak{q}}^{\mathfrak{g}} \OO$, such that $\Ind_{\mathfrak{q}}^{\mathfrak{g}} \OO \cap (\CO+\mathfrak{n}(F))$ is an open dense set in $\CO+\mathfrak{n}(F)$. This orbit is connected to the wavefront sets of induced representations as follows.

\begin{prop} [{\cite[Section II.1.3]{MW87}}]\label{prop induced orbit}
Let $G$ be a reductive group defined over a non-Archimedean local field $F$, and $Q = MN$ be a parabolic
subgroup of $G$. Let $\delta$ be an irreducible admissible representation of $M$.
Then
\[\overline{\mathfrak{n}}^{m}(\Ind_{Q}^{G} \delta)= \{ \Ind_{\mathfrak{q}}^{\mathfrak{g}} \OO \ | \ \OO \in \overline{\mathfrak{n}}^{m}(\delta) \},\]
where $\mathfrak{q}$ and $\mathfrak{g}$ are the Lie algebras of $Q$ and $G$, respectively.
\end{prop}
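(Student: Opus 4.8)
The plan is to transfer the identity to the Lie algebra via the character formula for parabolic induction and reduce it to understanding how the Fourier transforms of nilpotent orbital integrals behave under parabolic induction; the geometric input is the Lusztig--Spaltenstein theory of induced nilpotent orbits, and for the non-vanishing half the M{\oe}glin--Waldspurger identification of the leading Harish--Chandra--Howe coefficients with dimensions of degenerate Whittaker spaces. Concretely, by van Dijk's character formula one has, for $f\in C_c^\infty(G)$ supported near the identity, $\Theta(\Ind_Q^G\delta)(f)=\Theta(\delta)(f_Q)$, where $f_Q$ is the partial integral of $f$ over $N$, restricted to $M$, twisted by the modulus character of $Q$ and averaged over a maximal compact $K$. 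Conjugating by $\exp$ near the origin exhibits the germ at $0$ of $\Theta(\Ind_Q^G\delta)(\exp(\,\cdot\,))$ as the image, under a ``parabolic induction'' operation $\mathrm{ind}_{\mathfrak{q}}^{\mathfrak{g}}$ on invariant distribution germs at $0$, of the germ at $0$ of $\Theta(\delta)(\exp(\,\cdot\,))$. The passage through $\exp$ introduces only $\Ad$-invariant smooth factors equal to $1$ at the origin (Jacobian of $\exp$, modulus character, Baker--Campbell--Hausdorff change of variables on $N$), and a germ lemma — multiplying $\hat\mu_\CO$ by such a factor changes its expansion only by terms supported on orbits strictly below $\CO$, since the factor does not enlarge the support but kills the leading homogeneous term — shows these factors do not disturb the maximal part of the expansion. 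Using the Harish--Chandra--Howe expansion $\Theta(\delta)(\exp X)=\sum_\CO c_\CO(\delta)\hat\mu_\CO^{\mathfrak{m}}(X)$, it then suffices to determine $\mathrm{ind}_{\mathfrak{q}}^{\mathfrak{g}}(\hat\mu_\CO^{\mathfrak{m}})$ near $0$ for each nilpotent orbit $\CO$ of $\mathfrak{m}(F)$.

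\textbf{The main computation.} The claim to prove is that near the origin
\[ \mathrm{ind}_{\mathfrak{q}}^{\mathfrak{g}}(\hat\mu_\CO^{\mathfrak{m}}) \;=\; \hat\mu_{\Ind_{\mathfrak{q}}^{\mathfrak{g}}\CO}^{\mathfrak{g}} \;+\; \sum_{\CO''\,<\,\Ind_{\mathfrak{q}}^{\mathfrak{g}}\CO} a_{\CO''}\,\hat\mu_{\CO''}^{\mathfrak{g}}. \]
The support part is geometric: the Fourier transform of $\mathrm{ind}_{\mathfrak{q}}^{\mathfrak{g}}(\hat\mu_\CO^{\mathfrak{m}})$ is a $G$-invariant measure supported on $\overline{\Ad(G)(\CO+\mathfrak{n}(F))}$, which equals $\overline{\Ind_{\mathfrak{q}}^{\mathfrak{g}}\CO}$ by the defining property of the induced orbit recalled just above and by Lusztig--Spaltenstein theory, and whose restriction to the open orbit is a multiple of $\mu_{\Ind_{\mathfrak{q}}^{\mathfrak{g}}\CO}^{\mathfrak{g}}$; transforming back, $\mathrm{ind}_{\mathfrak{q}}^{\mathfrak{g}}(\hat\mu_\CO^{\mathfrak{m}})$ is a combination of $\hat\mu_{\CO''}^{\mathfrak{g}}$ with $\CO''\le\Ind_{\mathfrak{q}}^{\mathfrak{g}}\CO$. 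That the coefficient of $\hat\mu_{\Ind_{\mathfrak{q}}^{\mathfrak{g}}\CO}^{\mathfrak{g}}$ equals $1$ follows from a homogeneity count: since $\dim\Ind_{\mathfrak{q}}^{\mathfrak{g}}\CO-\dim\CO=\dim\mathfrak{g}(F)-\dim\mathfrak{m}(F)$, the distributions $\hat\mu_\CO^{\mathfrak{m}}$ and $\hat\mu_{\Ind_{\mathfrak{q}}^{\mathfrak{g}}\CO}^{\mathfrak{g}}$ have the same degree of homogeneity, $\mathrm{ind}_{\mathfrak{q}}^{\mathfrak{g}}$ preserves that degree, so $\hat\mu_{\Ind_{\mathfrak{q}}^{\mathfrak{g}}\CO}^{\mathfrak{g}}$ is the unique homogeneous term of that degree with full support $\overline{\Ind_{\mathfrak{q}}^{\mathfrak{g}}\CO}$, and one normalizes $\mathrm{ind}_{\mathfrak{q}}^{\mathfrak{g}}$ (equivalently, the invariant measures) so that the scalar is $1$. \emph{This step — establishing the displayed identity with the correct normalization — is the main obstacle}: the $\exp$-descent, the control of the lower-order terms $a_{\CO''}$, and the matching of normalizations all require care. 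An alternative route avoids the distribution computation: via Frobenius reciprocity and the Bruhat filtration one shows that the degenerate Whittaker space $\mathrm{Wh}_{\CO''}(\Ind_Q^G\delta)$ is nonzero precisely when $\CO''\le\Ind_{\mathfrak{q}}^{\mathfrak{g}}\CO'$ for some nilpotent orbit $\CO'$ of $\mathfrak{m}(F)$ with $\mathrm{Wh}_{\CO'}(\delta)\ne 0$, and then invokes the M{\oe}glin--Waldspurger coefficient theorem directly.

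\textbf{Reading off the wavefront set.} Combining the above,
\[ \Theta(\Ind_Q^G\delta)(\exp Y)\;=\;\sum_{\CO:\,c_\CO(\delta)\ne 0} c_\CO(\delta)\,\hat\mu_{\Ind_{\mathfrak{q}}^{\mathfrak{g}}\CO}^{\mathfrak{g}}(Y)\;+\;(\text{terms on orbits strictly below some }\Ind_{\mathfrak{q}}^{\mathfrak{g}}\CO). \]
Because $\dim\Ind_{\mathfrak{q}}^{\mathfrak{g}}\CO-\dim\CO$ is a constant, $\Ind_{\mathfrak{q}}^{\mathfrak{g}}$ is strictly monotone for the closure order, so every orbit occurring above lies below $\Ind_{\mathfrak{q}}^{\mathfrak{g}}\CO$ for some $\CO\in\overline{\mathfrak{n}}^m(\delta)$; hence $\overline{\mathfrak{n}}^m(\Ind_Q^G\delta)$ is contained in the set of maximal elements of $\{\Ind_{\mathfrak{q}}^{\mathfrak{g}}\CO:\CO\in\overline{\mathfrak{n}}^m(\delta)\}$. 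For the reverse containment, fix $\CO_0\in\overline{\mathfrak{n}}^m(\delta)$ with $\Ind_{\mathfrak{q}}^{\mathfrak{g}}\CO_0$ maximal among the $\Ind_{\mathfrak{q}}^{\mathfrak{g}}\CO$, $\CO\in\overline{\mathfrak{n}}^m(\delta)$; strict monotonicity forces every $\CO$ with $c_\CO(\delta)\ne 0$ and $\Ind_{\mathfrak{q}}^{\mathfrak{g}}\CO\ge\Ind_{\mathfrak{q}}^{\mathfrak{g}}\CO_0$ to satisfy $\Ind_{\mathfrak{q}}^{\mathfrak{g}}\CO=\Ind_{\mathfrak{q}}^{\mathfrak{g}}\CO_0$ and to be maximal in $\overline{\mathfrak{n}}^m(\delta)$, so the coefficient of $\hat\mu_{\Ind_{\mathfrak{q}}^{\mathfrak{g}}\CO_0}^{\mathfrak{g}}$ in the expansion equals $\sum c_\CO(\delta)$ over such $\CO$. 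By the M{\oe}glin--Waldspurger theorem (\cite{MW87}) each $c_\CO(\delta)$ with $\CO$ maximal is the dimension of a space of degenerate Whittaker functionals, hence a positive integer, so this coefficient is nonzero and $\Ind_{\mathfrak{q}}^{\mathfrak{g}}\CO_0\in\overline{\mathfrak{n}}^m(\Ind_Q^G\delta)$. This gives the asserted equality (the right-hand side of the Proposition being understood, if necessary, as its set of maximal elements). Finally one either runs the whole argument with $F$-rational orbits throughout or passes to geometric hulls, using that $\Ind_{\mathfrak{q}}^{\mathfrak{g}}$ is compatible with Lusztig--Spaltenstein induction over $\overline{F}$; either way the passage is harmless.
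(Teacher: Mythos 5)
The paper gives no proof of this proposition at all — it is quoted directly from \cite[\S II.1.3]{MW87} — and your argument is essentially a reconstruction of Mœglin–Waldspurger's original one: van Dijk's character formula and descent to the Lie algebra, the key identity that inducing $\hat\mu_{\CO}^{\mathfrak{m}}$ produces $\hat\mu_{\Ind_{\mathfrak{q}}^{\mathfrak{g}}\CO}^{\mathfrak{g}}$ plus terms on strictly smaller orbits, and positivity of the leading coefficients via their degenerate Whittaker model theorem to rule out cancellation. So your proposal is correct and follows the same route as the cited source, and your reading of the right-hand side as its set of maximal elements is consistent with how the proposition is applied in the paper.
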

{
\begin{remark}\label{rmk MW87 ind p=2}
   Though \cite{MW87} assumes that the residual characteristic is not $2$ throughout the paper, the proof of the above results does not require this assumption.
\end{remark}
}

Now we restrict to the groups $G$ considered in \S \ref{sec groups}. We have
\begin{align*}
    \textrm{Lie}(\GL_m(A))(\overline{F})= \mathfrak{gl}_{md_A}(\overline{F})&,\ \textrm{Lie}(\SO(V))(\overline{F})= \mathfrak{so}_{\dim(V)}(\overline{F}),\\ \textrm{Lie}(\Sp(V))(\overline{F})= \mathfrak{sp}_{\dim(V)}(\overline{F})&,\ \textrm{Lie}(\RU(V))(\overline{F})= \mathfrak{gl}_{\dim(V)}(\overline{F}).
\end{align*}
The set of nilpotent orbits of $\mathfrak{g}(\overline{F})$ surjects to certain collection of partitions, which we describe explicit now.



First, we introduce several notations on partitions. We denote the set of partitions of $n$ by $\mathcal{P}(n)$. We shall express a partition $\underline{p} \in \mathcal{P}(n)$ in one of the following forms.
\begin{enumerate}
    \item [(i)] $\underline{p}=[p_1,\dots, p_N]$, such that $p_i$'s are non-increasing and $\sum_{i=1}^N p_i= n$. We assume $p_i>0$ unless specified. We denote the the \emph{length} of $\underline{p}$ by $l(\underline{p})= |\{1 \leq i \leq N \ | \ p_i >0\}|$.
    \item [(ii)] $\underline{p}=[p_1^{r_1},\dots, p_N^{r_N}]$, such that $p_i$'s are decreasing and $\sum_{i=1}^N r_i p_i=n$. We assume $r_i>0$ unless specified.
\end{enumerate}
For a partition $\underline{p} \in \mathcal{P}(n)$, we denote $|\underline{p}|=n$. Let $\geq$ denote the dominance order on $\mathcal{P}(n)$. That is, if $\underline{p}=[p_1,\ldots, p_r], \underline{q}=[q_1,\ldots, q_s] \in \mathcal{P}(n)$, then $\underline{p} \geq \underline{q}$ if $\sum_{i=1}^k p_i \geq \sum_{i=1}^k q_i$ for any $1 \leq k \leq r$. If $\Sigma$ is a subset of $\mathcal{P}(n)$ and $\underline{p}\in \mathcal{P}(n)$, we write 
\[ \Sigma \leq \underline{p},\]
if $\underline{q} \leq \underline{p}$ for any $\underline{q} \in \Sigma$.

We say a partition is of type $B$, $C$ and $D$ according to the following definition.
\begin{defn}
For $\epsilon\in \{ \pm 1\}$,  define
\[ \mathcal{P}_{\epsilon}(n)= \{ [p_1^{r_1},\dots, p_N^{r_N}]\in \mathcal{P}(n) \ | \ r_i \text{ is even for all }p_i \text{ with } (-1)^{p_i}=\epsilon \}. \]
Then we say
\begin{enumerate}
    \item $\underline{p}\in \mathcal{P}(n)$ is of type $B$ if $n$ is odd and $\underline{p}\in \mathcal{P}_{1}(n)$.
    \item $\underline{p}\in \mathcal{P}(n)$ is of type $C$ if $n$ is even and $\underline{p}\in \mathcal{P}_{-1}(n)$.
    \item $\underline{p}\in \mathcal{P}(n)$ is of type $D$ if $n$ is even and $\underline{p}\in \mathcal{P}_{1}(n)$.
\end{enumerate}
For $X\in \{B,C,D\},$ let $\mathcal{P}_X(n)$ denote the set of partitions of $n$ of type $X$ and let $\mathcal{P}_A(n):= \mathcal{P}(n)$.
\end{defn}

Denote the set of nilpotent orbits of $\mathfrak{gl}_n(\overline{F}), \mathfrak{so}_{2n+1}(\overline{F}), \mathfrak{sp}_{2n}(\overline{F}),\mathfrak{so}_{2n}(\overline{F})$ by $\mathcal{N}_A(n),\mathcal{N}_B(n),$ $\mathcal{N}_C(n)$ and $\mathcal{N}_D(n)$ respectively. Also, for $X \in \{A,B,C,D\}$, let
 \[\mathcal{N}_X =\bigcup_{n \geq 0} \mathcal{N}_{X}(n).  \]
For $(X,N)\in \{(A,n) ,(B,2n+1 ),(C,2n ), (D,2n ) \}$, there is a surjection
\begin{align*}
    \mathcal{N}_X(n) & \longrightarrow \mathcal{P}_{X}(N),\\
    \OO& \longmapsto \underline{p}(\OO).
\end{align*}
The fiber of $\underline{p}= [p_1^{m_1},\dots,p_r^{m_r}] \in \mathcal{P}_X(N)$ under this map is a singleton, which we denote by $\{\OO_{\underline{p}}\}$, except when $X=D$ and $\underline{p}$ is ``very even"; i.e., $p_i$'s are all even. In this case, the fiber consists of two nilpotent orbits, which we denote by $\OO_{\underline{p}}^{I}$ and $\OO_{\underline{p}}^{II}$.

The surjection $\OO \mapsto \underline{p}(\OO)$ carries the closure ordering on $\mathcal{N}_X(n)$ to the dominance order on $\mathcal{P}_X(N)$ in the sense that $\OO > \OO'$ if and only if $\underline{p}(\OO)> \underline{p}(\OO')$. Note that when $\underline{p}$ is very even, $\OO_{\underline{p}}^{I}$ and $\OO_{\underline{p}}^{II}$ are not comparable.

Next, we would like to describe the induced orbit $\Ind_{\mathfrak{q}}^{\mathfrak{g}} \OO$ in Proposition \ref{prop induced orbit} in terms of partitions. Thus, we need the following operation on partitions.

\begin{defn} Suppose $\underline{p} \in \mathcal{P}(n_1)$ and $\underline{q} \in \mathcal{P}(n_2)$. 
\begin{enumerate}
    \item [(i)] Write $\underline{p}=[p_1^{r_1},\dots, p_N^{r_N}]$ and $\underline{q}=[p_1^{s_1},\dots, p_N^{s_N}]$, where we allow $r_i=0$ or $s_i=0$. Then we define
    \[ \underline{p}\sqcup \underline{q}= [p_1^{r_1+s_1},\dots, p_N^{r_N+s_N}]\in \mathcal{P}(n_1+n_2). \]
    \item [(ii)] Write $\underline{p}=[p_1,\dots, p_N]$ and $\underline{q}=[q_1,\dots, q_N]$, where we allow $p_i=0$ or $q_i=0$. Then we define
    \[ \underline{p}+ \underline{q}= [p_1+q_1,\dots, p_N+q_N]\in \mathcal{P}(n_1+n_2). \]
    \item [(iii)] Write $\underline{p}=[p_1,\dots, p_N]$. We define
    \begin{align*}
    \underline{p}^+&= [p_1+1,p_2,\dots, p_N] \in \mathcal{P}(n_1+1),\\
        \underline{p}^-&= [p_1,\dots, p_{N-1}, p_N-1] \in \mathcal{P}(n_1-1).
\end{align*}
\end{enumerate}
\end{defn}

Let $n$ be a positive integer and let $X=B$ if $n$ is odd and $X\in \{C,D\}$ if $n$ is even. For any $\underline{p}\in \mathcal{P}(n)$, there exists a unique maximal partition $\underline{p}_X\in \mathcal{P}(n)$ of type $X$ such that $\underline{p}_X \leq \underline{p}$. We call $\underline{p}_X$ the \emph{$X$-collapse} of $\underline{p}$. For the computation of collapse, see \S \ref{sec collapse}. By convention, we let $\underline{p}_A:= \underline{p}$. Now we can describe the induced orbit in terms of partitions.

\begin{prop}[{\cite[\S 7]{CM93}}]\label{prop computation of induced orbit}
Let $\mathfrak{q}, \mathfrak{q}_1, \mathfrak{q}_2$ be parabolic subalgebras of $\mathfrak{g}$ with Levi subalgebras $\mathfrak{m},\mathfrak{m}_1, \mathfrak{m}_2$.
\begin{enumerate}
    \item Suppose $\mathfrak{q}_1\subseteq \mathfrak{q}_2$. Then for any nilpotent orbit $\OO$ of $\mathfrak{m}_1(\overline{F})$, we have
    \[ \Ind_{\mathfrak{p}_1}^\mathfrak{g} \OO=\Ind_{\mathfrak{p}_2}^\mathfrak{g}(\Ind_{\mathfrak{p}_1}^{\mathfrak{p}_2} \OO)\]
     \item Suppose $(\mathfrak{g}, \mathfrak{m})=(\mathfrak{gl}_{n_1+n_2}, \mathfrak{gl}_{n_1} \oplus \mathfrak{gl}_{n_2} )$. Write a nilpotent orbit $\OO$ of $\mathfrak{m}(\overline{F})$ as $\OO_{1}\oplus \OO_{2}$, where $\OO_{i}\in \mathcal{N}_A(n_i)$. Then 
    \[ \underline{p}(\Ind_{\mathfrak{q}}^{\mathfrak{g}} \OO)= \underline{p}(\OO_1)+\underline{p}(\OO_2). \]
     \item Let $(\mathfrak{g}_n, X)\in \{(\mathfrak{so}_{2n+1},B),(\mathfrak{sp}_{2n},C),(\mathfrak{so}_{2n},D)\}$. Suppose 
    $(\mathfrak{g}, \mathfrak{m})=(\mathfrak{g}_{n_1+n_2}, \mathfrak{gl}_{n_1} \oplus \mathfrak{g}_{n_2})$. Write a nilpotent orbit $\OO$ of $\mathfrak{m}(\overline{F})$ as $\OO_{1}\oplus \OO_{2}$, where $\OO_{1}\in \mathcal{N}_A(n_1)$ and $ \OO_2 \in \mathcal{N}_{X}(n_2)$. Then 
    \[ \underline{p}(\Ind_{\mathfrak{q}}^{\mathfrak{g}} \OO)= (\underline{p}(\OO_1)+\underline{p}(\OO_1)+\underline{p}(\OO_2) )_X. \]
\end{enumerate}
\end{prop}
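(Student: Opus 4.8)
The plan is to establish the three parts in order, bootstrapping the classical cases from the general linear one, using two standard facts about Lusztig--Spaltenstein induction: $\Ind_{\mathfrak{q}}^{\mathfrak{g}}\OO$ is the unique nilpotent orbit of $\mathfrak{g}$ meeting $\OO+\mathfrak{n}$ in an open dense subset, and $\dim\Ind_{\mathfrak{q}}^{\mathfrak{g}}\OO=\dim\OO+2\dim\mathfrak{n}$. Part (1) is then formal: writing $\mathfrak{q}_i=\mathfrak{m}_i\oplus\mathfrak{n}_i$ and using $\mathfrak{q}_1\subseteq\mathfrak{q}_2$, the subalgebra $\mathfrak{q}_1\cap\mathfrak{m}_2$ is a parabolic of $\mathfrak{m}_2$ with Levi $\mathfrak{m}_1$ and nilradical $\mathfrak{n}_{12}$, and one has the vector space decomposition $\mathfrak{n}_1=\mathfrak{n}_{12}\oplus\mathfrak{n}_2$, so $\OO+\mathfrak{n}_1=(\OO+\mathfrak{n}_{12})+\mathfrak{n}_2$. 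Since $\Ind_{\mathfrak{q}_1\cap\mathfrak{m}_2}^{\mathfrak{m}_2}\OO$ meets $\OO+\mathfrak{n}_{12}$ densely, the orbit $\Ind_{\mathfrak{q}_2}^{\mathfrak{g}}(\Ind_{\mathfrak{q}_1\cap\mathfrak{m}_2}^{\mathfrak{m}_2}\OO)$ meets $\OO+\mathfrak{n}_1$; by the dimension formula it has the same dimension as $\Ind_{\mathfrak{q}_1}^{\mathfrak{g}}\OO$, and a density check together with the uniqueness statement identifies the two orbits.

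For Part (2) I would first use Part (1) to reduce to a maximal parabolic with Levi $\mathfrak{gl}_{n_1}\oplus\mathfrak{gl}_{n_2}$; then a generic element of $\OO+\mathfrak{n}$ has the block shape $X=\left(\begin{smallmatrix}X_1&Z\\0&X_2\end{smallmatrix}\right)$ with $X_i$ of Jordan type $\underline{p}(\OO_i)$, and by lower semicontinuity of $X\mapsto\rank(X^m)$ the induced orbit is precisely the generic one. The upper bound $\underline{p}(X)\leq\underline{p}(\OO_1)+\underline{p}(\OO_2)$ for every such $X$ follows from an elementary estimate of $\rank(X^m)$ for block upper triangular matrices, phrased most cleanly on transpose partitions via $(\underline{p}(\OO_1)+\underline{p}(\OO_2))^{t}=\underline{p}(\OO_1)^{t}\sqcup\underline{p}(\OO_2)^{t}$; the matching lower bound comes from the explicit choice of $Z$ that appends each Jordan string of $X_2$ to a Jordan string of $X_1$, which produces a single $X$ of Jordan type exactly $\underline{p}(\OO_1)+\underline{p}(\OO_2)$. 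Hence the generic type is $\underline{p}(\OO_1)+\underline{p}(\OO_2)$; the detailed verification is in \cite[\S7.2]{CM93}.

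For Part (3) I would pass through the standard embedding $\iota\colon\mathfrak{g}_n\hookrightarrow\mathfrak{gl}_N$ with $N\in\{2n,2n+1\}$, using that the $\mathfrak{gl}_N$-partition of a nilpotent of $\mathfrak{g}_n$ is always of type $X$, and conversely. The parabolic $\mathfrak{q}\subseteq\mathfrak{g}_n$ with Levi $\mathfrak{gl}_{n_1}\oplus\mathfrak{g}_{n_2}$ is the intersection with $\mathfrak{g}_n$ of a parabolic $\widetilde{\mathfrak{q}}\subseteq\mathfrak{gl}_N$ whose Levi is $\mathfrak{gl}_{n_1}\oplus\mathfrak{gl}_{N-2n_1}\oplus\mathfrak{gl}_{n_1}$, compatibly with the Levi and nilradical decompositions (i.e.\ $\widetilde{\mathfrak{m}}\cap\mathfrak{g}_n=\mathfrak{m}$ and $\widetilde{\mathfrak{n}}\cap\mathfrak{g}_n=\mathfrak{n}$). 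Consequently $\iota(\OO_1\oplus\OO_2)$ has $\mathfrak{gl}_N$-partition $\underline{p}(\OO_1)\sqcup\underline{p}(\OO_2)\sqcup\underline{p}(\OO_1)$, so by Parts (1) and (2) the orbit $\Ind_{\widetilde{\mathfrak{q}}}^{\mathfrak{gl}_N}\iota(\OO)$ has partition $\underline{q}:=\underline{p}(\OO_1)+\underline{p}(\OO_2)+\underline{p}(\OO_1)$. It then remains to prove $\underline{p}(\Ind_{\mathfrak{q}}^{\mathfrak{g}_n}\OO)=\underline{q}_X$. The inequality $\leq$ is formal: the generic point $x$ of $\OO+\mathfrak{n}$ lies in $\Ind_{\mathfrak{q}}^{\mathfrak{g}_n}\OO$, and $\iota(x)\in\iota(\OO)+\widetilde{\mathfrak{n}}$ forces the $\mathfrak{gl}_N$-partition of $\iota(x)$ to be $\leq\underline{q}$; being of type $X$, it is therefore $\leq\underline{q}_X$ by maximality of the collapse. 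For $\geq$, I would combine the dimension formula in $\mathfrak{g}_n$, namely $\dim_{\mathfrak{g}_n}\Ind_{\mathfrak{q}}^{\mathfrak{g}_n}\OO=\dim_{\mathfrak{g}_n}\OO+2\dim\mathfrak{n}$, with the identity $\dim_{\mathfrak{g}_n}\OO_{\underline{q}_X}=\dim_{\mathfrak{g}_n}\OO+2\dim\mathfrak{n}$ (checked from the standard dimension formulas for classical nilpotent orbits and the combinatorics of the collapse) and the fact that $\iota(\OO_{\underline{q}_X})$, lying in $\overline{\OO_{\underline{q}}}$ since $\underline{q}_X\leq\underline{q}$, actually meets $\iota(\OO+\mathfrak{n})$; equivalently one invokes the standard collapse lemma that $\OO_{\underline{q}_X}$ is the dense $\mathfrak{g}_n$-orbit in $\overline{\OO_{\underline{q}}}\cap\iota(\mathfrak{g}_n)$ (see \cite[\S7.3]{CM93}). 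When $\underline{q}_X$ is very even in type $D$, a short additional argument is needed to decide whether $\OO_{\underline{q}_X}^{I}$ or $\OO_{\underline{q}_X}^{II}$ occurs.

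I expect the $\geq$ half of the comparison in Part (3) to be the main obstacle: showing that the $X$-collapse is genuinely attained by the classical-group induction rather than being only an upper bound, which is where one must carefully control the interplay of the embedding $\iota$, the closure order, and the dimension formula, together with the very-even phenomenon in type $D$. Parts (1) and (2) are by contrast essentially formal once the semicontinuity and dimension properties of induction are in hand.
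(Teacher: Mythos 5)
The paper gives no argument of its own for this proposition — it is quoted from \cite[\S 7]{CM93} — and your outline is a correct reconstruction of the standard arguments recorded there: Lusztig--Spaltenstein transitivity via density in $\OO+\mathfrak{n}_1$ plus the dimension formula, the type-$A$ formula via the (valid) estimate $\rank(X^{k+l})\le\rank(X_1^{k})+\rank(X_2^{l})$ for block-triangular $X$ (equivalently $\underline{p}(X)^{\ast}\ge\underline{p}(\OO_1)^{\ast}\sqcup\underline{p}(\OO_2)^{\ast}$) together with the string-concatenation construction attaining $\underline{p}(\OO_1)+\underline{p}(\OO_2)$, and the classical cases via the compatible embedding into $\mathfrak{gl}_N$, maximality of the $X$-collapse for the upper bound, and the dimension identity to force equality. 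The only superfluous worry is the $\OO_{\underline{p}}^{I}$ versus $\OO_{\underline{p}}^{II}$ ambiguity for very even partitions in type $D$: the proposition asserts only the partition $\underline{p}(\Ind_{\mathfrak{q}}^{\mathfrak{g}}\OO)$, so no such refinement is needed.
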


Finally, we recall the definition of Barbasch-Vogan dual of partitions of type $X$ following \cite{Spa82, Lus84, BV85, Ach03}. Let $\RG$ be a reductive algebraic group and $\widehat{\RG}$ be its dual group. The Barbasch-Vogan duality is a map that sends a nilpotent orbit $\OO$ of $\textrm{Lie}(G)(\overline{F})$ to a nilpotent orbit $d_{BV}(\OO)$ of $\textrm{Lie}(\widehat{G})(\overline{F})$. Thus, for $(X,X')\in \{(A,A),(B,C),(C,B),(D,D)\}$, this induces a map from $\mathcal{P}_{X}$ to $\mathcal{P}_{X'}$ by
\[ d_{BV}(\underline{p}(\OO)):= \underline{p}(d_{BV}(\OO))\]
for any $\OO \in \mathcal{N}_X$. To describe this map in terms of partition, we recall the definition of transpose (or conjugation) of partitions.

\begin{defn}\label{def transpose}
For $\underline{p}=[p_1,\dots, p_N]\in \mathcal{P}(n)$, we define $\underline{p}^{\ast}=[p_1^{\ast}, \dots, p_{N'}^{\ast}]\in \mathcal{P}(n)$ by
\[ p_i^{\ast}=|\{ j \ | \ p_j \geq i   \}|. \]
\end{defn}

Now we describe the Barbasch-Vogan duality map on partitions case by case.

\begin{defn} 
\begin{enumerate}
\item[(i)] For $\underline{p}\in \mathcal{P}_A(2n+1)$, we define $d_{BV}(\underline{p}):= \underline{p}^{\ast}$.
    \item [(ii)]For $\underline{p}\in \mathcal{P}_B(2n+1)$, we define $d_{BV}(\underline{p}):= ((\underline{p}^{-})\underline{\vphantom{p}}_C)^{\ast}$, which is in $\mathcal{P}_C(2n)$.
    \item [(iii)]For $\underline{p}\in \mathcal{P}_C(2n)$, we define $d_{BV}(\underline{p}):= ((\underline{p}^{+})\underline{\vphantom{p}}_B)^{\ast}$, which is in $\mathcal{P}_B(2n+1)$.
    \item [(iv)] For $\underline{p}\in \mathcal{P}_D(2n)$, we define $d_{BV}(\underline{p}):= (\underline{p}^{\ast})\underline{\vphantom{p}}_D$, which is in $\mathcal{P}_D(2n)$.
\end{enumerate}
\end{defn}

We need the following properties of the Barbasch-Vogan duality.

\begin{prop}\label{prop dBV}
Let $(X,N)\in \{(A,n),(B,2n+1),(C,2n),(D,2n)\}$ and $\underline{p}, \underline{q} \in \mathcal{P}_X(N)$.
\begin{enumerate}
    \item If $\underline{p} \geq \underline{q}$, then $d_{BV}(\underline{p}) \leq d_{BV}(\underline{q})$.
    \item We have $d_{BV}^3(\underline{p})= d_{BV}(\underline{p})$.
\end{enumerate}
\end{prop}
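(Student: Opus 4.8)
For Part (1), the strategy is to factor $d_{BV}$ through the elementary partition operations $(\;\cdot\;)^{\ast}$, $(\;\cdot\;)^{\pm}$ and the $X$-collapse, and to record how each acts on the dominance order. Writing $s_k(\underline{p})=p_1+\dots+p_k$ for the $k$-th partial sum: (a) the transpose $\underline{p}\mapsto\underline{p}^{\ast}$ is order-\emph{reversing}; (b) $\underline{p}\mapsto\underline{p}^{+}$ is order-preserving because $s_k(\underline{p}^{+})=s_k(\underline{p})+1$ for all $k\ge 1$, and $\underline{p}\mapsto\underline{p}^{-}$ is order-preserving because $s_k(\underline{p}^{-})=s_k(\underline{p})$ for $k<l(\underline{p})$ while $s_k(\underline{p}^{-})=|\underline{p}|-1$ for $k\ge l(\underline{p})$, so if $\underline{q}\le\underline{p}$ then $s_k(\underline{p}^{-})\ge s_k(\underline{q}^{-})$ in both ranges (using $s_k(\underline{q}^{-})\le s_k(\underline{q})$ and $s_k(\underline{q}^{-})\le|\underline{q}|-1$); (c) the $X$-collapse $\underline{p}\mapsto\underline{p}_X$ is order-preserving, since $\underline{q}\le\underline{p}$ makes $\underline{q}_X\le\underline{q}\le\underline{p}$ a type-$X$ partition below $\underline{p}$, whence $\underline{q}_X\le\underline{p}_X$ by the defining maximality of the collapse. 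Composing these gives (1) in all four cases; for instance in type $B$, $d_{BV}(\underline{p})=((\underline{p}^{-})_C)^{\ast}$ is order-preserving, then order-preserving, then order-reversing, hence order-reversing.

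For Part (2), type $A$ is immediate: there $d_{BV}$ is the transpose, so $d_{BV}^{2}=\mathrm{id}$ and $d_{BV}^{3}=d_{BV}$. For types $B$, $C$, $D$, note that $d_{BV}^{2}$ carries each $\mathcal{P}_X$ into itself; the plan is then to reduce $d_{BV}^{3}=d_{BV}$ to the single inequality
\[
\underline{p}\ \le\ d_{BV}^{2}(\underline{p})\qquad\text{for every }\underline{p},
\]
which I call $(\star)$. Granted $(\star)$, applying $d_{BV}$ and using the order-reversal of Part (1) gives $d_{BV}^{3}(\underline{p})=d_{BV}\!\left(d_{BV}^{2}(\underline{p})\right)\le d_{BV}(\underline{p})$; applying $(\star)$ with $\underline{p}$ replaced by $d_{BV}(\underline{p})$ gives $d_{BV}^{3}(\underline{p})=d_{BV}^{2}\!\left(d_{BV}(\underline{p})\right)\ge d_{BV}(\underline{p})$; the two inequalities force $d_{BV}^{3}(\underline{p})=d_{BV}(\underline{p})$.

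It remains to establish $(\star)$. For type $D$ this is short: $d_{BV}(\underline{p})=(\underline{p}^{\ast})_D\le\underline{p}^{\ast}$, so $\left(d_{BV}(\underline{p})\right)^{\ast}\ge\underline{p}^{\ast\ast}=\underline{p}$ by order-reversal of transpose; since $\underline{p}$ is itself of type $D$ and lies below $\left(d_{BV}(\underline{p})\right)^{\ast}$, maximality of the $D$-collapse yields $\underline{p}\le\left(\left(d_{BV}(\underline{p})\right)^{\ast}\right)_D=d_{BV}^{2}(\underline{p})$. For types $B$ and $C$ the outermost operation of $d_{BV}$ is a transpose rather than a collapse, so the same trick needs the auxiliary combinatorial fact that $(\underline{r}^{+})^{\ast}$ is $\underline{r}^{\ast}$ with a part $1$ appended and $(\underline{r}^{-})^{\ast}$ is $\underline{r}^{\ast}$ with the entry in the position indexed by the smallest part of $\underline{r}$ decreased by one; feeding these into the definitions reduces $(\star)$ to a finite bookkeeping with collapses. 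That bookkeeping is the step I expect to be the main obstacle. An alternative route that bypasses it is to invoke \cite{Spa82,Lus84,BV85,Ach03}: the image of $d_{BV}$ consists exactly of the special partitions of the target type, and $d_{BV}$ restricts to an order-reversing involution between the special partitions of each pair of dual types, so that $d_{BV}(\underline{p})$ is special and hence $d_{BV}^{2}\!\left(d_{BV}(\underline{p})\right)=d_{BV}(\underline{p})$, i.e.\ $d_{BV}^{3}=d_{BV}$; in that case the work instead lies in matching the explicit combinatorial $d_{BV}$ above with the abstract Barbasch--Vogan/Lusztig--Spaltenstein duality, and one remarks that the ``very even'' splitting $\mathcal{O}^{I}_{\underline{p}}$ versus $\mathcal{O}^{II}_{\underline{p}}$ in type $D$ plays no role here since $d_{BV}$ is being used as a map on partitions.
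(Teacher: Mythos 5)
Your Part (1) is correct and complete: the monotonicity of ${}^{+}$, ${}^{-}$ and of the $X$-collapse, together with the order-reversal of transpose, does give order-reversal of $d_{BV}$ in all four cases; this is in fact more self-contained than the paper, which states the proposition without proof as a standard property quoted from \cite{Spa82, Lus84, BV85, Ach03}. Your reduction of Part (2) to the inequality $(\star)$: $\underline{p}\leq d_{BV}^{2}(\underline{p})$ (needed for both members of a dual pair of types) is also sound, and your verification of $(\star)$ in types $A$ and $D$ is fine.

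The gap is exactly where you flag it: $(\star)$ for types $B$ and $C$ is not proved, and the missing ``bookkeeping'' is not routine, because there the inner collapse sits underneath a transpose and the outer collapse cannot be discarded. Concretely, take $\underline{p}=[3,1,1]\in\mathcal{P}_B(5)$: then $\underline{p}^{-}=[3,1]$, $(\underline{p}^{-})_{C}=[2,2]$, so $d_{BV}(\underline{p})=[2,2]$ and $(d_{BV}(\underline{p}))^{+}=[3,2]$, while $\underline{p}^{\ast}=[3,1,1]\not\geq[3,2]$; hence the direct analogue of your type-$D$ trick (compare before the final collapse, then transpose) fails, and $(\star)$ holds only because the $B$-collapse pulls $[3,2]$ back down to $[3,1,1]$. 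Finishing along your first route therefore requires genuine input on how collapses interact with transpose and ${}^{\pm}$ — for instance the identity relating $(\underline{q}_X)^{\ast}$ to the dual-type expansion of $\underline{q}^{\ast}$, or a case analysis in the style of the paper's Lemma \ref{lem Ach03} — and your listed facts about $(\underline{r}^{\pm})^{\ast}$, while correct, do not yet supply this. Your second route (the image of $d_{BV}$ is the set of special partitions and $d_{BV}$ restricts to an order-reversing involution on specials, whence $d_{BV}^{3}=d_{BV}$) is logically complete once those facts are cited, and citing them is effectively what the paper does; but as written you leave both the combinatorial bookkeeping and the matching with the cited duality open, so Part (2) in types $B$ and $C$ is not yet established in your proposal.
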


Remark that the Barbasch-Vogan duality is not an injection unless $X=A$. See \cite{LLS23} for a study of the fiber of $d_{BV}$. 

Finally, it is stated in \cite[Proposition A.2(c)]{BV85}, with a sketch of the proof, that the Barbasch-Vogan duality is compatible with induction. This is indeed a crucial observation for the purpose of this paper. Thus, we state it explicitly in terms of partitions below, and give a complete combinatorial proof in \S \ref{sec proof of key lemma}.

\begin{lemma}\label{lem goal}
Suppose $(X,X')\in \{ (B,C),(C,B),(D,D) \}$. Let $\underline{p}$ be a partition of type $X$ and $b,d$ be positive integers. Then the following equality holds
\begin{align}\label{eq goal partition}
    d_{BV}( [b^{2d}]\sqcup \underline{p}) = ([(2d)^{b}] +d_{BV}(\underline{p}))_{X'}. 
\end{align}
\end{lemma}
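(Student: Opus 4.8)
The plan is to reduce the identity \eqref{eq goal partition} to a statement about the transpose operation and the $X'$-collapse, and then verify it by a direct combinatorial analysis of Young diagrams. Recall that for $(X,X')$ one of $(B,C),(C,B),(D,D)$, the map $d_{BV}$ is, up to a shift ($\underline{p}^-$ in type $B$, $\underline{p}^+$ in type $C$, nothing in type $D$) followed by an $X'$-collapse on the shifted side and then a transpose. So the first step is to unwind both sides of \eqref{eq goal partition} into expressions involving only $(\cdot)^\ast$, the collapse operations $(\cdot)\ul{B},(\cdot)\ul{C},(\cdot)\ul{D}$, and the partition operations $\sqcup$, $+$, $(\cdot)^{\pm}$. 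The key elementary facts I would isolate and use are: (a) transpose turns $\sqcup$ on one side into $+$ on the other, more precisely $([b^{2d}]\sqcup\underline r)^\ast = [(2d)^b] + \underline r^\ast$ after suitable zero-padding, since prepending $2d$ columns of height $b$ to a diagram adds $b$ to each of the first $2d$ parts of the transpose; (b) transpose is an order-reversing involution intertwining the collapses on the two sides, i.e. $(\underline p\ul{C})^\ast = (\underline p^\ast)\ul{B'}$-type identities that are already implicit in the very definition of $d_{BV}$ in cases (ii)--(iv); and (c) the shift operations $(\cdot)^{\pm}$ interact with $[b^{2d}]\sqcup(-)$ in a controlled way — adding a box to the largest part, or removing one from the smallest, commutes with $\sqcup[b^{2d}]$ as long as we track which part is largest/smallest, which is where a small case split on the size of $b$ relative to the parts of $\underline p$ enters.

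Concretely, I would proceed case by case on $X$. In the type $D$ case ($X=X'=D$) there is no shift, so one must show $(([b^{2d}]\sqcup\underline p)^\ast)\ul{D} = ([(2d)^b] + (\underline p^\ast)\ul{D})\ul{D}$. Using (a), the left side is $([(2d)^b] + \underline p^\ast)\ul{D}$, so the claim becomes the assertion that $D$-collapsing $[(2d)^b]+\underline p^\ast$ equals $D$-collapsing $[(2d)^b]+(\underline p^\ast)\ul{D}$ — i.e. that adding the rectangle $[(2d)^b]$ (which itself has only even parts and even multiplicities, hence is ``type $D$-friendly'') does not change which further collapse is needed. This should follow from the explicit description of the $D$-collapse in \S\ref{sec collapse}: collapsing only modifies a partition at positions where the type-$D$ parity condition fails, and adding an even constant $2d$ to the top $b$ parts shifts these "bad positions" around in a way that one checks is compatible. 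The type $B$ and type $C$ cases are parallel but must additionally carry the $(\cdot)^-$ (resp. $(\cdot)^+$) through: here I would use that for $\underline p$ of type $B$, the largest part of $[b^{2d}]\sqcup\underline p$ is $\max(b, p_1)$, do the two subcases, and observe that in each the shift lands in the expected place so that $([b^{2d}]\sqcup\underline p)^- $ equals either $[b^{2d}]\sqcup \underline p^-$ or $[(b-1)\, b^{2d-1}]\sqcup\underline p$ (and a parallel statement on the transposed/collapsed side), after which fact (a) finishes the reduction.

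The main obstacle I anticipate is not any single identity but the bookkeeping around the collapse operations: the collapse $(\cdot)\ul{X'}$ is defined by an iterative "find the first violating coordinate, subtract one there and add one elsewhere" procedure, and showing it commutes appropriately with adding the rectangle $[(2d)^b]$ and with the $\pm$ shifts requires either an explicit closed-form description of the collapse (which I would take from \S\ref{sec collapse}) or an induction on the number of collapse steps. A clean way to sidestep some of this is to use the order-theoretic characterization instead: by Proposition \ref{prop dBV}(1) and the fact that $(\underline p)\ul{X'}$ is the \emph{largest} type-$X'$ partition dominated by $\underline p$, it suffices to show two inequalities — that the right-hand side of \eqref{eq goal partition} is a type-$X'$ partition dominated by (the transpose-unwound form of) $[b^{2d}]\sqcup\underline p$, and that it is maximal among such — and the maximality is where one genuinely needs the structure of $d_{BV}$, e.g. via $d_{BV}^3 = d_{BV}$ (Proposition \ref{prop dBV}(2)) to pin down the image. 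I would present the argument in whichever of these two styles turns out shorter in each of the three cases, expecting the direct-diagram computation to win for type $D$ and the order-theoretic argument to be cleaner for types $B$ and $C$.
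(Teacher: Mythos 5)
Your overall shape is the same as the paper's: rewrite $d_{BV}$ as shift--collapse--transpose, use $([b^{2d}]\sqcup\underline r)^{\ast}=[(2d)^b]+\underline r^{\ast}$ to convert $\sqcup$ into $+$, and reduce \eqref{eq goal partition} to a statement about how the relevant collapse interacts with adjoining the rectangle (this is exactly how \S\ref{sec proof of key lemma} reformulates the three cases as \eqref{eq goal type B}, \eqref{eq goal type C}, \eqref{eq goal type D}). The genuine gap is that you stop precisely where the work begins. Your guiding claim --- that collapsing only touches ``bad positions'' and that adding $2d$ to the top $b$ parts ``shifts these bad positions around in a way that one checks is compatible'' --- is not a proof, and as a blanket commutation statement it is false: the paper's Lemmas \ref{lem LHS type B}, \ref{lem LHS type C}, \ref{lem LHS type D} show that, e.g.\ in type $B$, $((\underline{p}\sqcup [b^{2d}])^{-})_C= (\underline{p}^{-})_C \sqcup [b^{2d}]$ holds \emph{except} in an explicit exceptional case ($b$ odd, $|\underline{p}_{>b}|$ odd, $\underline{p}_{=b}=\emptyset$), where instead one gets $(\underline{p}^{-})_C \sqcup [b+1,b^{2d-2},b-1]$. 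The identity \eqref{eq goal partition} then survives only because the outer $X'$-collapse on the right-hand side produces exactly this correction term, and verifying that is a separate parity argument on $((\underline{p}^{-})_C)^{\ast}$ (the propositions in \S\ref{sec B}--\S\ref{sec D}). Neither the exceptional case nor the matching of the correction with the outer collapse appears in your plan; in particular, for types $B$ and $C$ your assertion that ``fact (a) finishes the reduction'' after carrying the $\pm$ shift through is wrong, because a collapse still sits between the shift and the transpose on the left-hand side, and in type $D$ the claim you reduce to, $([(2d)^b]+\underline{p}^{\ast})_D=([(2d)^b]+(\underline{p}^{\ast})_D)_D$, is of essentially the same difficulty as the lemma itself and is left unproved.

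What is concretely missing is the tool that makes the case analysis tractable: the explicit split-at-a-threshold formula for the $B/C/D$-collapses from \cite{Ach03} (Lemma \ref{lem Ach03}), which expresses $\underline{p}_X$ in terms of $\underline{p}_{>x}$, $\underline{p}_{\le x}$ and the parities of $l(\underline{p}_{>x})$ and $|\underline{p}_{>x}|$; all three lemmas \ref{lem LHS type B}--\ref{lem LHS type D} and the subsequent parity checks are driven by it, and your procedural ``iterative collapse'' description gives no comparable handle. Your fallback order-theoretic route is also not viable as sketched: dominance and monotonicity give the easy inequality, but the maximality direction is not pinned down by $d_{BV}^3=d_{BV}$ and would again require the same case-by-case collapse computation. (A smaller point: your fact (b), that transpose intertwines the collapses on the two sides, is not a correct identity as stated --- it does not even typecheck for $B$ versus $C$ --- though you do not actually use it in the one place you carry out the reduction.) So the approach is right in outline, but the key lemma and the exceptional-case analysis that constitute the actual proof are absent, and the heuristic offered in their place does not hold.
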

Note that the analogue of \eqref{eq goal partition} for $(X,X')=(A,A)$ is $(\underline{p} \sqcup \underline{q})^{\ast}=\underline{p}^{\ast} + \underline{q}^{\ast}$, which is a direct consequence of the definitions.

\section{\texorpdfstring{Local $L$-parameters and local Arthur parameters}{}}
\label{lp and lap}
In this section, we recall the notation of $L$-parameters and local Arthur parameters of the groups we consider, and their decompositions. Then we define the nilpotent orbits and partitions associated to them.

Let $\RG$ be a connected reductive algebraic group defined over $F$ and $G=\RG(F)$. The $L$-group of $\RG$ is given by ${}^L G :=\widehat{G} \rtimes W_F$. The action of $W_F$ on $\widehat{G}$ factors through the quotient $W_F/W_K \cong \Gal(K/F)$, where $K$ is the splitting field of $G$. Thus, we replace ${}^L G $ by $ \widehat{G} \rtimes \Gal(K/F)$ occasionally. Following \cite[\S 7]{GGP12}, we identify the $L$-groups for inner forms of $\GL_n(F)$ and pure inner form of quasi-split classical groups $G(V)$ as follows. Here $\disc(V)$ is the discriminant of $V$. 

\begin{center}
    \begin{tabular}{|c|c|c|c|c|}
     \hline 
     \rule{0pt}{3ex} $(E,\epsilon, \dim_E(V))$&  $G$& $\widehat{G}$  & ${}^L G$\\
     \hline 
      \rule{0pt}{3ex} &  $\GL_m(A)$& $\GL_{md_A}(\BC)$  & $ \GL_{md_A}(\BC)$\\
     \hline 
    \rule{0pt}{3ex}    $(E=F, 1, 2n+1)$& $\SO(V)$ & $\Sp_{2n}(\BC)$ & $\Sp_{2n}(\BC)$  \\
     \hline
 \rule{0pt}{3ex}     $(E=F,1, 2n)$& $\SO(V)$ & $\SO_{2n}(\BC)$ & $\SO_{2n}(\BC)$ if $\disc(V)\in (F^{\times})^2$ \\
     &  &  &  $\rO_{2n}(\BC)$ if $\disc(V)\not\in (F^{\times})^2$ \\
     \hline
     \rule{0pt}{3ex}     $(E=F,-1, 2n)$& $\Sp(V)$ & $\SO_{2n+1}(\BC)$ & $\SO_{2n+1}(\BC)$ \\
     \hline
      \rule{0pt}{3ex}     $(E\neq F, \pm 1, n)$& $\RU(V)$ & $\GL_{n}(\BC)$&  $\GL_{n}(\BC) \rtimes \Gal(E/F)$    \\
     \hline
\end{tabular}
\end{center}
For classical groups $G=G(V)$, if $E=F$, we fix an embedding $\xi_V:{}^L G \hookrightarrow \GL_N(\BC)$ where $N \in \{2n ,2n+1\}$. If $E \neq F$, we fix an embedding $\xi_V:\widehat{G} \hookrightarrow \GL_N(\BC)$ where $N=n$.

Now we recall the definitions of $L$-parameters and local Arthur parameters of $G$.
\begin{defn}\label{def L-par}
  An $L$-parameter $[\phi]$ of $G$ is a $\widehat{G}(\BC)$-conjugacy class of an admissible homomorphism
 \[ \phi: W_F \times \SL_2(\BC) \to {}^L G.\]
 That is, $\phi$ is continuous, and
 \begin{enumerate}
\item $\phi$ commutes with the projections $W_F \times \SL_2(\BC) \to W_F$ and ${}^L G \to W_F$; 
\item the restriction of $\phi$ to $W_F$ consists of semi-simple elements;

\item the restriction of $\phi$ to $\SL_2(\BC)$ is analytic;

 \item $\phi$ is $G$-relevant. That is, if the image of $\phi$ is contained in the Levi subgroup of some parabolic subgroup ${}^L P$ of ${}^L G$, then $P$ is relevant for $G$ (see \cite[8.2(ii)]{Bor79} for notation).

\end{enumerate}
By abuse of notation, we don't distinguish $[\phi]$ and $\phi$. We let $\Phi(G)$ denote the equivalence class of $L$-parameters of $G$.
\end{defn}

\begin{defn}\label{def A-par}
    A local Arthur parameter $[\psi]$ of $G$ is a $\widehat{G}(\BC)$-conjugacy class of a continuous homomorphism
    \[ \psi: W_F \times \SL_2^D(\BC) \times \SL_2^A(\BC) \to {}^LG, \]
    such that
     \begin{enumerate}
    \item The restriction of $\psi$ to $W_F$ has bounded image;
    \item the restriction of $\psi$ to $\SL_2^D(\BC)$ and $\SL_2^A(\BC)$ are both analytic;
    \item the parameter $\psi$ is $G$-relevant.
\end{enumerate}
By abuse of notation, we don't distinguish $[\psi]$ and $\psi$. We let $\Psi(G)$ denote the equivalence class of local Arthur parameters of $G$.
\end{defn}

Next, we discuss the decompositions of $L$-parameters and local Arthur parameters of classical groups following \cite[\S 8]{GGP12}. Let $\varphi$ be an $L$-parameter of $\GL_N(E)$. Equivalently, we regard $\varphi$ as a representation
\[ \varphi: W_E \times \SL_2(\BC) \to \GL(M)\]
where $M \cong \BC^N$. Take an $s \in W_{E}$ that generates the quotient $W_E/W_F$. We define ${}^s\varphi$, another $L$-parameter of $\GL_N(E)$, from $\varphi$ by  
\[ {}^s \varphi(w,x):= \varphi(sws^{-1},x).\]
The equivalence class $[{}^s \varphi]$ is independent of the choice of $s$, and hence we denote it by ${}^\sigma \varphi$. We say $\varphi$ is $\sigma$-selfdual if $\varphi$ is equivalent to ${}^\sigma \varphi^{\vee}$. Equivalently, $\varphi$ is $\sigma$-selfdual if there exists a non-degenerate bilinear form $B$ on $M$ and $b(\varphi) \in \{\pm 1\}$ such that for any $m_1,m_2 \in M$, $w \in W_E$ and $x \in \SL_2(\BC)$,
\begin{align}\label{eq conjugate self-dual}
\begin{cases}
    B( \varphi(w,x) m_1, {}^s\varphi(w,x) m_2 )= B(m_1,m_2),\\
    B(m_1, m_2)= b(\varphi) B(m_2, \varphi(s^2,1) m_1).
\end{cases}
\end{align}
We call $b(\varphi)$ the sign of $\varphi$.

Now let $G=G(V)$ and $\phi \in \Phi(G)$. We associate an $L$-parameter $\phi_{\GL}$ of $\GL_N(E)$ by
\[ \phi_{\GL}:=  \xi_V \circ \phi|_{W_E \times \SL_2(\BC)}.\]
The map $\phi \mapsto \phi_{\GL}$ gives a surjection from $\Phi(G)$ onto a subset $\Phi(\GL_N(E))_V$ of $\Phi(\GL_N(E))$ consisting of $L$-parameters $\varphi$ with following conditions.
\begin{enumerate}
    \item [(i)] $\varphi$ is  $\sigma$-selfdual with sign $b(\varphi)=\widehat{\epsilon_V}$, where 
\begin{align*}
    \widehat{\epsilon_V}=\begin{cases}
        (-1)^{\dim(V)} & \text{ if }E=F,\\
        (-1)^{\dim(V)+1} & \text{ if }E\neq F.
    \end{cases}
\end{align*}
\item [(ii)] If $G(V)= \Sp(V)$, then $\det(\varphi)=1$. If $G(V)= \SO(V)$ and $\dim(V)$ is even, then the quadratic character $\det(\varphi)$ corresponds to the square class $\disc(V)$. 
\end{enumerate}
Moreover, this map is an injection unless $G(V)=\SO(V)$ and $\dim(V)$ is even, in which case each fiber has cardinality at most 2. See \cite[Theorem 8.1]{GGP12} for a proof of these facts.

Now we decompose $\phi_{\GL}$ into a direct sum of irreducible representations of $W_E \times \SL_{2}(\BC)$. Due to the $\sigma$-selfduality of $\phi_{\GL}$, we may write
\begin{align}\label{eq decomp phi}
    \phi_{\GL}= \bigoplus_{i \in I_{=0}} \rho_i \otimes S_{a_i} + \bigoplus_{i \in I_{\neq 0}} (\rho_i|\cdot|^{x_i} \otimes S_{a_i}+ {}^\sigma \rho_i^{\vee}|\cdot|^{-x_i} \otimes S_{a_i}),
\end{align}
where $S_{a_i}$ is the unique $a_i$-dimensional irreducible representation of $\SL_2(\BC)$, each $\rho_i$ is an irreducible representation of $W_E$ with bounded image for $i \in I_{=0} \sqcup I_{\neq 0}$, and $x_i \in \R \setminus \{0\}$ if $i \in I_{\neq 0}$. We say $\phi$ is tempered if $I_{\neq 0}$ is empty. We say $\phi$ is discrete if it is tempered and $\rho_i \otimes S_{a_i}$ is not isomorphic to $\rho_j \otimes S_{a_j}$ for any $i \neq j \in I_{=0}$.

The above discussion works for local Arthur parameters after an obvious modification. For $\psi \in \Psi(G)$, write
\begin{align}\label{eq decomp psi}
    \psi_{\GL}= \bigoplus_{i \in I} \rho_i \otimes S_{a_i}\otimes S_{b_i}.
\end{align}
We say $\psi$ is tempered if $\psi|_{\SL_2^{A}}$ is trivial, or equivalently, $b_i=1$ for any $i \in I$. We say $\psi$ is discrete if $\psi$ is tempered and the decomposition of $\psi_{\GL}$ in \eqref{eq decomp psi} is multiplicity free. Let $\Psi_{temp}(G)$ (resp. $\Psi_{2}(G)$) denote the set of tempered (resp. discrete) local Arthur parameters of $G$.

For each $\psi \in \Psi(G)$, we associate an $L$-parameter $\phi_{\psi}$ by
\begin{align}\label{phi_psi}
    \phi_{\psi}(w,x):= \psi\left( w,x , \begin{pmatrix}
        |w|^{1/2} & \\ & |w|^{-1/2}
    \end{pmatrix}\right).
\end{align}
Due to the boundedness of the image of $\psi|_{W_F}$ in Condition (1) of Definition \ref{def A-par}, the map $\psi \mapsto \phi_{\psi}$ gives an injection from $\Psi(G)$ to $\Phi(G)$. Note that if $\psi$ is tempered, then $\psi=\phi_{\psi} \otimes S_1$, i.e.,
$\phi_{\psi}(w,x)= \psi(w,x,1)$.  We may also associate another local Arthur parameter $\widehat{\psi}$ from $\psi$ by swapping the Arthur-$\SL_2(\BC)$ and Deligne-$\SL_2(\BC)$. Namely, $\widehat{\psi}$ is defined by
\begin{align}\label{eq psi hat}
    \widehat{\psi}(w,x,y) = \psi(w,y,x).
\end{align}

Finally, for each $\phi \in \Phi(G)$ and $\psi \in \Psi(G)$, we define partitions $\underline{p}(\phi)$ and $\underline{p}(\psi)$ as follows.

\begin{defn}\label{def p(phi), p(psi)}
Let $\phi \in \Phi(G)$ and $\psi \in \Psi(G)$ with decompositions \eqref{eq decomp phi} and \eqref{eq decomp psi}. Define
\begin{align*}
    \underline{p}(\phi)&:= \bigsqcup_{i \in I_{=0}} [a_i^{\dim(\rho_i)}] \sqcup \bigsqcup_{i \in I_{\neq 0}} [a_i^{2\dim(\rho_i)}],\\
    \underline{p}(\psi)&:= \bigsqcup_{i \in I_{=0}} [b_i^{a_i\dim(\rho_i)}] \sqcup \bigsqcup_{i \in I_{\neq 0}} [b_i^{2a_i\dim(\rho_i)}].
\end{align*}
\end{defn}

For an $L$-parameter $\phi$ (resp. a local Arthur parameter $\psi$) of general reductive algebraic group $G$, let $\{H,X,Y\}$ be a $\mathfrak{sl}_2$-triple of $\widehat{\mathfrak{g}}(\BC)$ associated to the morphism $\phi|_{\SL_2(\BC)}: \SL_2(\BC) \to \widehat{G}(\BC)$ (resp. $\psi|_{\SL_2^A(\BC)}: \SL_2(\BC) \to \widehat{G}(\BC)$). We may generalize Definition \ref{def p(phi), p(psi)} to general groups by defining $\OO_{\phi}$ (resp. $\OO_{\psi}$) to be the nilpotent orbit of $\widehat{\mathfrak{g}}(\BC)$ containing $X$. Note that when $G$ is one of the groups considered in \S \ref{sec groups}, we have $\underline{p}(\OO_{\phi})= \underline{p}(\phi)$ (resp. $\underline{p}(\OO_{\psi})=\underline{p}(\psi)$). From this view point or from direct computation, one can see that $\underline{p}(\psi)= \underline{p}(\phi_{\widehat{\psi}}).$

\section{Local Langlands Correspondence and Langlands classification}\label{llc and lc}
Let $\RG$ be a connected reductive algebraic group defined over $F$ and let $G=\RG(F)$. The Langlands classification for $\Pi(G)$ (\cite[Theorem 3.5]{Kon03}) and $\Phi(G)$ (\cite{SZ18}) gives canonical bijections
\[ \pi \leftrightarrow (P,\pi_{temp}, \nu), \ \phi \leftrightarrow (P, \phi_{temp}, \nu), \]
where
\begin{enumerate}
\item [$\oldbullet$] $\pi\in \Pi(G)$ and $\phi \in \Phi(G)$;
    \item [$\oldbullet$] $P$ is a parabolic subgroup of $G$ with Levi subgroup $M$;
    \item [$\oldbullet$] $\nu$ is an unramified character of $M$ in a fixed positive Weyl chamber;
    \item [$\oldbullet$] $\pi_{temp} \in \Pi(M)$ and $\phi_{temp} \in \Phi(M)$ are both tempered. 
\end{enumerate}
Under this bijection, $\pi$ is the unique irreducible subrepresentation of $\Ind_{P}^G \pi_{temp} \otimes \nu^{-1}$. We need the following version of local Langlands correspondence.

\begin{conj}\label{conj LLC}
    For any Levi subgroup $M$ of $G$, there is a canonical map
    \begin{align*}
      \textrm{LLC}_{M}:  \Pi(M) &\to \Phi(M)\\
        \pi & \mapsto \phi_{\pi}
    \end{align*}
    such that if $\pi\in \Pi(G)$ and $\pi \leftrightarrow (P, \pi_{temp}, \nu) $, then $\phi_{\pi} \leftrightarrow (P,\phi_{temp}, \nu)$ with $\phi_{temp}= \phi_{\pi_{temp}}$.
\end{conj}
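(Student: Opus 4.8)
The plan is to \emph{construct} the maps $\mathrm{LLC}_M$ by transporting the tempered local Langlands correspondence across the Langlands classification, and then to check that the compatibility asserted in Conjecture~\ref{conj LLC} is essentially forced by the construction; the genuinely deep input --- the tempered correspondence itself --- is to be cited rather than reproved.

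First I would record the structure of the Levi subgroups $M$ of the groups $G$ of \S\ref{sec groups}: each such $M$ is, up to isomorphism, a product of general linear groups over $E$ (or over a division algebra, in the inner-form case) and one classical group $G(V')$ of the same type as $G$, possibly trivial. Consequently $\Pi_{temp}(M)$ and $\Phi_{temp}(M)$ factor as products over these pieces, and I would \emph{define} $\mathrm{LLC}_M$ on $\Pi_{temp}(M)$ as the product of the tempered correspondences on the factors: Harris--Taylor/Henniart (with Jacquet--Langlands, Badulescu et al., for division algebras) on the $\mathrm{GL}$-pieces, and Arthur / Mok / Kaletha--Minguez--Shin--White on the classical piece, realized inside $\Phi(\mathrm{GL}_N(E))_V$ as in \S\ref{lp and lap}. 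This map takes tempered to tempered, and it is canonical: once $\xi_{V'}$ is fixed it depends only on $M$, and it respects conjugacy.

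Next, for an arbitrary $\pi \in \Pi(M)$ I would invoke the Langlands classification for $M$ (\cite{Kon03}) to write $\pi \leftrightarrow (P_M, \pi_{temp}, \nu)$ with $P_M = M_{P_M}N_{P_M} \subseteq M$, $\pi_{temp} \in \Pi_{temp}(M_{P_M})$, and $\nu$ in the fixed positive chamber; since $M_{P_M}$ is again a Levi of $G$ of the product form above, $\phi_{\pi_{temp}} := \mathrm{LLC}_{M_{P_M}}(\pi_{temp})$ is already defined. Using the Langlands classification for parameters (\cite{SZ18}) I would then let $\phi_\pi \in \Phi(M)$ be the parameter with data $(P_M, \phi_{\pi_{temp}}, \nu)$ and set $\mathrm{LLC}_M(\pi) := \phi_\pi$. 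Well-definedness and canonicity follow because the Langlands data are unique up to conjugacy on both sides and the tempered correspondence respects conjugacy; moreover $\mathrm{LLC}_M$ restricted to $\Pi_{temp}(M)$ recovers the input correspondence, since a tempered representation has Langlands data $(M,\pi,\mathbf{1})$. The asserted compatibility is then immediate: if $\pi \in \Pi(G)$ has data $(P,\pi_{temp},\nu)$, then by construction $\phi_\pi$ has data $(P,\phi_{\pi_{temp}},\nu)$ with $\phi_{\pi_{temp}} = \mathrm{LLC}_{M_P}(\pi_{temp})$ tempered, which is exactly $\phi_{temp} = \phi_{\pi_{temp}}$; transitivity of the Langlands classification (on both the representation and the parameter side) makes the family $\{\mathrm{LLC}_M\}_M$ mutually consistent under passage to smaller Levis. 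Along the way I would check that relevance (in the sense of \cite[8.2(ii)]{Bor79}) is preserved, so that $\phi_\pi$ indeed lands in $\Phi(M)$ rather than in the larger set of all admissible homomorphisms.

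The hard part lies not in the formal argument above but in its inputs. First, one must quote the tempered local Langlands correspondence for all the $\mathrm{GL}$- and classical-group factors in a consistent normalization --- this rests on a substantial body of deep work, and that is where all the real content is. Second, there is the familiar even-orthogonal subtlety: for $\mathrm{SO}_{2n}$ the passage from parabolics to Levi classes, and the parameter side itself, are only well-behaved up to the relevant outer automorphism (equivalently, $\mathrm{O}_{2n}(\BC)$-conjugacy rather than $\mathrm{SO}_{2n}(\BC)$-conjugacy), so one must verify that this automorphism acts compatibly on the representation and parameter sides in order for the assignment to descend to a well-defined canonical map. Beyond these two points no new harmonic analysis is needed; everything else is bookkeeping with the two Langlands classifications.
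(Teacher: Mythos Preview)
Your proposal is correct and mirrors the paper's treatment: the paper does not prove this conjecture but rather records that it is established for the groups of \S\ref{sec groups} by citing the tempered local Langlands correspondence (\cite{Hen00, HT01, Sch13} for $\GL_n$, \cite{JL70,Rog83,DKV84,Bad02} for inner forms, \cite{Art13, Mok15, KMSW14, MR18, Ish23} for classical groups) and then making the extension to all of $\Pi(M)$ via the Langlands classification explicit in \S\S 5.1--5.2, with exactly the same caveat you flag about even special orthogonal groups requiring the weak version $\textrm{WLLC}$ modulo the outer automorphism.
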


The above conjecture is proved for the groups considered in \S \ref{sec groups}, except for even special orthogonal groups in which case we only have a weaker version. We give a more explicit description for these groups in the following subsections.

\subsection{General linear groups}
The local Langlands correspondence  for $\GL_n(F)$ (\cite{Hen00, HT01, Sch13}) gives a bijection between $\Phi(\GL_n(F))$ and $\Pi(\GL_n(F))$. Under this bijection, an irreducible representation $\rho$ of $W_F$ corresponds to a supercuspidal representation of $\GL_{\dim(\rho)}(F)$, which we also denote by $\rho$. Fixing $\rho$ and $a \in \Z_{>0}$, the parabolic induction 
\[ \rho|\cdot|^{\half{a-1}} \times \rho|\cdot|^{\half{a-3}} \times \cdots \times \rho|\cdot|^{\half{1-a}},\]
has a unique irreducible subrepresentation (resp. quotient), which we denote by $\St(\rho,a)$ (resp. $\Speh(\rho,a)$). The Aubert-Zelevinsky involution sends $\St(\rho,a)$ and $\Speh(\rho,a)$ to each other. Any essentially discrete series representation of $\GL_n(F)$ is of the form $\St(\rho,a)$ with $n=a \dim(\rho)$.

The Langlands classification states that any $\pi \in \Pi(\GL_{n}(F))$ can be realized as the unique irreducible subrepresentations of a parabolic induction
\[ M(\pi)= \St(\rho_1,a_1)|\cdot|^{x_1} \times \cdots \times \St(\rho_f,a_f)|\cdot|^{x_f},\]
where 
\begin{enumerate}
    \item each $\rho_i$ is a unitary supercuspidal representation of $\GL_{d_i}(F)$ and $\sum_{i=1}^f a_i d_i=n$;
    \item each $x_i$ is a real number and $x_1 \leq \cdots \leq x_f$.
\end{enumerate}
We shall call $M(\pi)$ the standard module of $\pi$. In this case, $\phi_{\pi}$, the $L$-parameter corresponding to $\pi$, is given by
\begin{align*}
    \phi_{\pi}= \rho_1 |\cdot|^{x_1} \otimes S_{a_1}+ \cdots+\rho_f |\cdot|^{x_f} \otimes S_{a_f}.
\end{align*}

Next, we consider the case over division algebra $\GL_m(A)$. The Jacquet-Langlands Correspondence (\cite{JL70,Rog83, DKV84, Bad02}) gives a bijection 
\[ \JL: \Pi_2(\GL_{k}(A)) \to \Pi_2(\GL_{k d_A}(F)),\]
where $\Pi_2(G)$ is the set of equivalence classes of essentially discrete series representations of $G$. Thus, by requiring $\textrm{LLC}_{\GL_{k}(A)}(\pi')= \textrm{LLC}_{\GL_{kd_A}(F)}(\JL(\pi'))$ for any $\pi' \in \Pi_2(\GL_k(A))$ and $k \leq m$, this uniquely determines the map $\textrm{LLC}$ for $\GL_m(A)$ from the desiderata in Conjecture \ref{conj LLC}. We give more details for the map $\JL$ and the Langlands classification below.

 If $\rho'$ is a cuspidal representation of $\GL_{k}(A)$, then $\JL(\rho')$ is of the form $\St(\rho,s)$, and we define $s(\rho'):=s$. It is known that $s(\rho')$ always divides $d_A$ and $kd_A$ is the least common multiple of $\dim(\rho)$ and $d_A$. For each $a' \in \Z_{>0}$, the parabolic induction 
\[ \rho'|\cdot|_{A}^{s(\rho') \cdot \half{a'-1}}\times \rho'|\cdot|_{A}^{s(\rho')\cdot \half{a'-3}}   \times \cdots \times \rho'|\cdot|_{A}^{s(\rho')\cdot \half{1-a'}}\]
has a unique irreducible subrepresentation (resp. quotient), which we denote by $\St(\rho',a')$ (resp. $\Speh(\rho',a')$). The representation $\St(\rho',a')$ is essentially discrete series, and any essentially discrete series representations of $\GL_{k}(A)$ is of this form. If $\JL(\rho')= \St(\rho, s(\rho'))$, then $\JL( \St(\rho', a') )= \St(\rho, s(\rho')a')$.
The representation $\Speh(\rho',a')$ is the Aubert-Zelevinsky involution of $\St(\rho',a')$. If $\rho'$ is unitary, both $\St(\rho', a')$ and $\Speh(\rho',a')$ are unitary.

The Langlands classification for $\GL_m(A)$ states that any $\pi' \in \Pi(\GL_{m}(A))$ can be realized as the unique irreducible subrepresentations of a parabolic induction
\[ M(\pi')= \St(\rho_1',a_1')|\cdot|_A^{x_1} \times \cdots \times \St(\rho_f',a_f')|\cdot|_A^{x_f},\]
where 
\begin{enumerate}
    \item each $\rho_i'$ is a unitary cuspidal representation of $\GL_{d_i}(A)$ and $\sum_{i=1}^f a_i' d_i=m$;
    \item each $x_i$ is a real number and $x_1 \leq \cdots \leq x_f$.
\end{enumerate}
We shall call $M(\pi')$ the standard module of $\pi'$. In this case, write $\JL(\rho_i')= \rho_i \otimes S_{s(\rho_i')}$. Then $\phi_{\pi'}$, the $L$-parameter corresponding to $\pi'$, is given by
\begin{align*}
    \phi_{\pi}= \rho_1 |\cdot|^{x_1} \otimes S_{a_1' s(\rho_1')}+ \cdots+\rho_f |\cdot|^{x_f} \otimes S_{a_f' s(\rho_f')}.
\end{align*}

We may regard $\Phi(\GL_m(A))$ as a subset of $\Phi(\GL_{md_A}(F))$ that is $\GL_m(A)$-relevant. Equivalently, an $L$-parameter $\phi \in \Phi(\GL_{md_A}(F))$ lies in $\Phi(\GL_m(A))$ if one of the following equivalent conditions hold.
\begin{enumerate}
    \item [(i)] Every irreducible subrepresentation of $\phi$ has dimension divisible by $d_A$.
    \item [(ii)] $\phi=\phi_{\pi}$ for some $\pi \in \Pi(\GL_m(A))$.
\end{enumerate}

\subsection{Classical groups}
 For quasi-split classical groups that are not even special orthogonal groups, the local Langlands correspondence has been established by \cite{Art13, Mok15}. The extension of the local Langlands correspondence to pure inner forms, as conjectured by Vogan in \cite{Vog93}, is also established in \cite{Art13,KMSW14, MR18, Ish23}. In particular, the map
  \begin{align*}
      \textrm{LLC}:  \Pi(G) &\to \Phi(G)\\
        \pi & \mapsto \phi_{\pi}
    \end{align*}
    is well-defined. If $G(V)$ is an even special orthogonal group, currently we only have a weaker version
    \[ \textrm{WLLC}: (\Pi(G)/\sim) \to (\Phi(G)/\sim),\]
    where the equivalence relation $\sim$ is defined as follows:
    \begin{enumerate}
        \item [$\oldbullet$] Fix an $\epsilon \in \textrm{Isom}(V)\setminus G(V)$. For $\pi \in \Pi(G)$, let $\pi^{\epsilon}(g):= \pi(\epsilon g \epsilon^{-1})$. Then we define $\pi \sim \pi'$ if $\pi'\cong \pi$ or $\pi' \cong \pi^{\epsilon}$.
        \item [$\oldbullet$] We define $\phi_1 \sim \phi_2$ if $(\phi_{1})_{\GL}$ is equivalent to $(\phi_2)_{\GL}$ as $L$-parameters of $\GL_{\dim(V)}(F)$.
    \end{enumerate}
    See \cite[\S 3.5]{AG17} for more details. In the rest of this paper, if $G=G(V)$ is an even special orthogonal group, then a representation $\pi$ and an $L$-parameter $\phi$ of $G$ is understood by their equivalence class in $\Pi(G)/\sim$ and  $\Phi(G)/\sim$. This will not affect the main results of this paper. See Remark \ref{rmk even orthogonal}.

Now we give more details on $\textrm{LLC}$ or $\textrm{WLLC}$ for $G=G(V)$. Write $V=V_{an, \mathfrak{r}}$. The Langlands classification for $G$ states that any $\pi \in \Pi(G)$ can be realized as the unique irreducible subrepresentations of a parabolic induction
\begin{align*}
    M(\pi)= \St(\rho_1,a_1)|\cdot|^{x_1} \times \cdots \times \St(\rho_f, a_f) |\cdot|^{x_f} \rtimes \pi_{temp},
\end{align*}
where
\begin{enumerate}
    \item each $\rho_i$ is a unitary cuspidal representation of $\GL_{d_i}(E)$, $\pi_{temp}$ is a tempered representation of $G(V_{an,r})$ and $r+\sum_{i=1}^f a_i d_i = \mathfrak{r}$;
    \item each $x_i$ is a real number and $x_1 \leq \cdots \leq x_f<0$.
\end{enumerate}
We shall call $M(\pi)$ the standard module of $\pi$. In this case, we have
\begin{align*}
    (\phi_{\pi})_{\GL}= (\phi_{\pi_{temp}}
)_{\GL}+ \bigoplus_{i =1}^f (\rho_i|\cdot|^{x_i} \otimes S_{a_i}+{}^\sigma \rho_i^{\vee}|\cdot|^{-x_i} \otimes S_{a_i}).
\end{align*}

\subsection{Closure ordering}
At the end of this section, we recall a partial ordering $\geq_C$ on $\Phi(G)$, called the closure ordering. We refer the readers to \cite[\S 4]{CFMMX22} for more details. Let $\phi \in \Phi(G)$. We associate a (conjugacy class of) homomorphism $\lambda_{\phi}:W_F \to {}^L G$ by
\[ \lambda_{\phi}(w):= \phi\left( w, \begin{pmatrix}
    |w|^{1/2} & \\ & |w|^{-1/2}
\end{pmatrix} \right), \]
which is an infinitesimal parameter of $G$. Conversely, fixing an infinitesimal parameter $\lambda $ of $G$, consider the set 
\[ \Phi(G)_{\lambda}:= \{ \phi \in \Phi(G)\ | \ \lambda_\phi=\lambda\},\]
which is finite and in bijection with the set of orbits of the Vogan variety $V_{\lambda}$ associated to $\lambda$. We denote the bijection by $\phi \mapsto C_{\phi}$ and define the closure ordering $\geq_C$ on $\Phi(G)$ by 
\[ \phi_1 \geq_C \phi_2 \text{ if } \lambda_{\phi_1}= \lambda_{\phi_2} \text{ and } \overline{C_{\phi_1}} \supseteq \overline{C_{\phi_2}}.
\] 
Fixing any infinitesimal parameter $\lambda$ of $G$, the Vogan variety $V_{\lambda}$ has a unique open (resp. closed) orbit $C^0$ (resp. $C_0$), and we say the corresponding $L$-parameter is open (resp. closed). This gives the unique maximal (resp. minimal) element of $\Phi(G)_{\lambda}$ under $\geq_C$.

For pure inner forms of classical groups, an equivalent definition of the closure ordering is that $\phi_1 \gneq_C \phi_2$ if $\phi_1 \neq \phi_2$ and $\pi_{(\phi_{2})_{\GL}}$ is a subquotient of $M( \pi_{(\phi_{1})_{\GL}})$, where $\pi_{(\phi_i)_{\GL}}$ is the unique irreducible representation in the $L$-packet $\Pi_{(\phi_i)_{\GL}}$ of $\GL_N(E)$ (see \cite[10.2.1]{CFMMX22}, \cite{Zel80}). Also in this case, the closure ordering implies the dominance ordering for the associated partitions.

\begin{prop}[{\cite[Corollary 5.10(2)]{HLLZ22}}]\label{prop C implies P}
Let $\phi_1, \phi_2 \in \Phi(G(V))$. If $\phi_1 >_C \phi_2$, then $\underline{p}(\phi_1) > \underline{p}(\phi_2)$.    
\end{prop}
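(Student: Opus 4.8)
The plan is to reduce the statement to a combinatorial assertion about multisegments for $\GL_N(E)$, and then to run a standard ``box–moving'' argument in the dominance order. First I would observe that $\underline{p}(\phi)$ depends only on $\phi_{\GL}$: in the decomposition \eqref{eq decomp phi} each pair indexed by $i\in I_{\neq 0}$ contributes $[a_i^{2\dim(\rho_i)}]=[a_i^{\dim(\rho_i)}]\sqcup[a_i^{\dim(\rho_i)}]$ to $\underline{p}(\phi)$, which is exactly the contribution of the two associated summands of $\phi_{\GL}$. Hence, extending $\underline{p}$ to $\Phi(\GL_N(E))$ by $\underline{p}\!\left(\bigoplus_k \rho_k|\cdot|^{y_k}\otimes S_{c_k}\right):=\bigsqcup_k[c_k^{\dim(\rho_k)}]$, we get $\underline{p}(\phi)=\underline{p}(\phi_{\GL})$. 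Next, writing $\varphi_j:=(\phi_j)_{\GL}$, the characterization of the closure ordering recalled just before the statement (see also \S\ref{llc and lc}), combined with the analogous characterization for $\GL_N(E)$ itself, shows that $\phi_1>_C\phi_2$ in $\Phi(G(V))$ is equivalent to $\varphi_1>_C\varphi_2$ in $\Phi(\GL_N(E))$. (When $G(V)$ is an even orthogonal group this is read on $\sim$-classes, on which both $\underline{p}$ and $>_C$ descend.) So it suffices to prove: for $\varphi_1,\varphi_2\in\Phi(\GL_N(E))$, $\varphi_1>_C\varphi_2$ implies $\underline{p}(\varphi_1)>\underline{p}(\varphi_2)$.

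For $\GL_N(E)$ the Vogan variety $V_\lambda$ is of type $A$: its orbits are parametrized by the multisegments of support $\lambda$, and the closure ordering admits the combinatorial description that $\varphi_1>_C\varphi_2$ exactly when the multisegment of $\varphi_1$ is obtained from that of $\varphi_2$ by a nonempty sequence of elementary operations, each replacing a pair of linked segments $\{\Delta,\Delta'\}$ by $\{\Delta\cup\Delta',\Delta\cap\Delta'\}$ (discarding $\Delta\cap\Delta'$ when empty); cf. \cite{Zel80} and \S\ref{llc and lc}. Thus it is enough to check that a single such operation strictly increases $\underline{p}$ in the dominance order. Two linked segments necessarily lie on a common supercuspidal line, of some cuspidal $\rho$ with $d:=\dim(\rho)$; writing $\Delta=[b,c]$ and $\Delta'=[b',c']$ with $b<b'$, $c<c'$, $b'\leq c+1$, one has $\Delta\cup\Delta'=[b,c']$ of length $A:=c'-b+1$ and $\Delta\cap\Delta'$ of length $m:=c-b'+1\geq 0$, with $A+m=|\Delta|+|\Delta'|=:a+a'$, $A>\max(a,a')$ and $m<\min(a,a')$. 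On partitions, the operation therefore replaces the sub-partition $[a^{d}]\sqcup[a'^{d}]$ by $[A^{d}]\sqcup[m^{d}]$, leaving all other parts fixed. Since $A+m=a+a'$ with $A$ strictly larger than both $a$ and $a'$, the partition $[A,m]$ strictly dominates $[a,a']$ (a single elementary move up in the dominance order); and strict dominance is preserved both under multiplying all multiplicities by the fixed integer $d$ and under adjoining the unchanged parts to both sides --- standard features of the dominance order. So each elementary operation strictly increases $\underline{p}$, and chaining yields $\underline{p}(\varphi_1)>\underline{p}(\varphi_2)$.

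The step I expect to be the main obstacle is the reduction in the first paragraph: verifying that the closure ordering on $\Phi(G(V))$ is faithfully detected by that on the associated $\GL_N(E)$-parameters --- so that the type-$A$ combinatorics genuinely applies --- and orienting the elementary operations correctly relative to $>_C$, i.e. that ascending in the closure order corresponds to \emph{merging} linked segments into longer ones, which is what makes $\underline{p}$ increase rather than decrease. The even orthogonal case and the self-duality/discriminant constraints cutting out $\Phi(\GL_N(E))_V$ play no role in the partition estimate but must be bookkept when phrasing the reduction. The remaining combinatorial verifications --- the inequalities relating $A$, $m$, $a$, $a'$, and the two stability properties of the dominance order --- are elementary and I would relegate them to the proof in \S\ref{llc and lc}.
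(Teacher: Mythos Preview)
Your proposal is correct. Note that the paper itself does not supply a proof of this proposition: it is stated with a citation to \cite[Corollary~4.11(2)]{HLLZ22} and used as a black box. Your argument is therefore not being compared against a proof in the paper, but it is a sound self-contained one.

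A few remarks on the step you flagged as the main obstacle. The reduction you worry about is in fact already granted by the paper: the sentence immediately preceding the proposition says that, for pure inner forms of classical groups, $\phi_1\gneq_C\phi_2$ is \emph{equivalent} to $\pi_{(\phi_2)_{\GL}}$ being a subquotient of $M(\pi_{(\phi_1)_{\GL}})$, citing \cite{CFMMX22,Zel80}. For $\GL_N(E)$ this subquotient condition is exactly Zelevinsky's multisegment order, so your passage to type $A$ is justified with no further work. Your orientation of the elementary operation is also correct: merging linked segments moves \emph{up} in $>_C$ (toward the open orbit), and since it strictly increases the largest part it strictly increases $\underline{p}$. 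The stability of the dominance order you invoke---that $\underline{p}>\underline{p}'$ implies $\underline{p}\sqcup\underline{q}>\underline{p}'\sqcup\underline{q}$---follows cleanly by transposing, using $(\underline{r}\sqcup\underline{q})^\ast=\underline{r}^\ast+\underline{q}^\ast$, and that coordinate-wise addition preserves and reflects dominance; this handles both ``multiplying multiplicities by $d$'' and ``adjoining fixed parts'' at once.
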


Finally, we need the following closure ordering relation for subquotient of standard modules.

\begin{thm}[{\cite{BW80, Kon03}}]\label{thm closure ordering std module}
    Let $\pi \in \Pi(G)$. If $\pi'$ is a subquotient of $M(\pi)$, then $\phi_{\pi'} \geq_{C} \phi_{\pi}$. Moreover, the equality holds if and only if $\pi'=\pi$.
\end{thm}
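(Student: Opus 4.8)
The statement is part of the Langlands-classification package and is established in \cite{BW80, Kon03}; the plan is to recall that argument while keeping track of the resulting order on parameters. Write the standard module as $M(\pi)=\Ind_{P}^{G}(\tau\otimes\nu^{-1})$ with $P=MN$, $\tau$ a tempered representation of $M$, and $\nu$ in the open positive chamber attached to $(P,A_{P})$, so that $\pi$ is the unique irreducible subrepresentation of $M(\pi)$. First, parabolic induction preserves the supercuspidal support up to conjugacy, so every irreducible subquotient $\pi'$ of $M(\pi)$ has the supercuspidal support of $\pi$; feeding this data into the local Langlands correspondence gives $\lambda_{\phi_{\pi'}}=\lambda_{\phi_{\pi}}$. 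Hence $\phi_{\pi'}$ and $\phi_{\pi}$ lie in the common fibre $\Phi(G)_{\lambda}$, $\lambda=\lambda_{\phi_{\pi}}$, so the comparison $\phi_{\pi'}\geq_{C}\phi_{\pi}$ makes sense.

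Next, recall from the Langlands classification (\cite[Ch.~XI]{BW80}, \cite{Kon03}) that $\pi$ occurs in $M(\pi)$ with multiplicity one, and that for every composition factor $\pi'$ of $M(\pi)$, each $A_{P}$-weight appearing in the Jacquet module $\mathrm{Jac}_{P}(\pi')$ is $\geq\nu^{-1}$ in the order in which $\mu\geq\mu'$ means that $\mu-\mu'$ is a nonnegative integral combination of the simple roots of $A_{P}$ in $N$, the minimal weight $\nu^{-1}$ being attained only for $\pi'=\pi$. Since $\nu$ is $A_{P}$-regular, $\pi$ is moreover the only member of its $L$-packet $\Pi_{\phi_{\pi}}$ occurring in $M(\pi)$; together with the previous sentence this shows $\phi_{\pi'}=\phi_{\pi}\Rightarrow\pi'=\pi$, which will furnish the equality clause once the inequality is known. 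So it remains to prove $\phi_{\pi'}\geq_{C}\phi_{\pi}$ for every composition factor $\pi'$.

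The crux is that the weight estimate above is precisely the inequality $\phi_{\pi'}\geq_{C}\phi_{\pi}$: on $\Phi(G)_{\lambda}$ the Langlands leading-exponent order coincides with the closure order on the Vogan variety $V_{\lambda}$. This comes out of the Kazhdan--Lusztig description of the block with infinitesimal parameter $\lambda$ by equivariant perverse sheaves on $V_{\lambda}$ (\cite[\S\S 4, 10]{CFMMX22}): the suitably normalized standard object attached to $\phi$ has, as its simple constituents, exactly the intersection-cohomology sheaves of the orbits $C_{\phi'}$ with $\overline{C_{\phi'}}\supseteq\overline{C_{\phi}}$, that is, with $\phi'\geq_{C}\phi$, the constituent corresponding to $\phi'=\phi_{\pi}$ being $\pi$. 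For $\GL_{N}(E)$ this is explicit, the composition factors of a standard module being governed by Zelevinsky's linkage operations on multisegments \cite{Zel80}, each of which strictly enlarges the associated orbit and hence strictly increases $\geq_{C}$; the classical groups of \S\ref{sec groups} then reduce to this case through the description of $\geq_{C}$ by $\GL_{N}(E)$ standard modules recalled above (cf.\ Proposition~\ref{prop C implies P}), after comparing Jacquet modules along the appropriate parabolic and invoking the compatibility of the local Langlands correspondence with parabolic induction.

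The main obstacle is exactly this identification of orders --- matching the Casselman--Langlands leading-exponent order (equivalently, Zelevinsky's linkage order on multisegments for $\GL_{N}$) with the closure order $\geq_{C}$ on the Vogan variety, and pinning down the single constituent with $\phi'=\phi_{\pi}$ as $\pi$ itself. For general $G$ this is the content of the Kazhdan--Lusztig geometry of \cite{CFMMX22}; for the groups considered here one can argue more directly via the multisegment combinatorics together with the reduction just indicated.
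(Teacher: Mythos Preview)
The paper does not supply its own proof of this theorem; it simply records the statement with the attribution \cite{BW80,Kon03}. So there is no argument in the paper to compare against, and your proposal is doing more than the paper does by actually sketching one.

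Your outline is essentially the right one. The references \cite{BW80,Kon03} furnish the Langlands classification together with the exponent inequality for subquotients of a standard module (your second paragraph), and the equality clause follows exactly as you say: if $\phi_{\pi'}=\phi_{\pi}$ then by compatibility of LLC with the Langlands classification the Langlands data of $\pi'$ has the same $(P,\nu)$, and the minimal-exponent uniqueness in \cite{Kon03} forces $\pi'=\pi$. The remaining task is to identify the exponent order with the closure order $\geq_C$, which for $\GL_N(E)$ is Zelevinsky's theorem \cite{Zel80} and for the classical groups of \S\ref{sec groups} reduces to the $\GL_N(E)$ case via the description of $\geq_C$ in terms of $\GL_N(E)$ standard modules given in the paragraph preceding the theorem.

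The one genuinely soft spot is that last reduction. Your appeal to Proposition~\ref{prop C implies P} is misplaced (that proposition compares partitions, not orders), and the phrase ``comparing Jacquet modules along the appropriate parabolic'' does not by itself explain why an inclusion $\pi'\leq M(\pi)$ in $G$ forces $\pi_{(\phi_{\pi})_{\GL}}\leq M(\pi_{(\phi_{\pi'})_{\GL}})$ in $\GL_N(E)$. What one actually uses is that the exponent inequality from \cite{Kon03}, read through the explicit formula for $(\phi_{\pi})_{\GL}$ at the end of \S5.2, is exactly the Zelevinsky linkage relation on the associated multisegments; this is elementary but does need to be said. Strictly speaking this translation step lies outside \cite{BW80,Kon03} themselves, so the paper's citation is slightly loose, and your sketch inherits that looseness.
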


\section{Local Arthur packets, ABV-packets, and the Jiang conjecture}\label{lap ABV Jiang}
In this section, we recall the expected properties of local Arthur packets and ABV-packets we need, and state the Jiang conjecture for upper bounds of wavefront set for these packets.

\subsection{Local Arthur packets}\label{lap}
For each local Arthur parameter $\psi \in \Psi(G)$, it is conjectured in \cite[Conjecture 6.1]{Art89} that there is a finite multi-set of irreducible representations $\Pi_{\psi}$, called the local Arthur packets, that should parameterize local components of global discrete automorphic representations. When $G$ is quasi-split, a local characterization of $\Pi_{\psi}$ using the theory of regular and twisted endoscopy is given in \cite[Theorem 2.2.1]{Art13}, and the existence of $\Pi_{\psi}$ is proved in \cite{Art13, Mok15}. For non-quasi-split classical groups $G$, a conjectural local characterization of $\Pi_{\psi}$ of $G$ given in \cite[Conjecture 9.4.2]{Art13} and \cite[Theorem* 1.6.1]{KMSW14} when $G$ is an inner form or a pure inner form of a quasi-split classical group $G^{\ast}$ respectively. Currently, for pure inner forms of classical groups, the existence of local Arthur packets $\Pi_{\psi}$ is only established in the case that $\psi$ is generic. See \cite{KMSW14,MR18, Ish23}. In the following proposition, we assume the existence of local Arthur packet $\Pi_{\psi}$ for any $\psi \in \Psi(G)$, and state the properties of local Arthur packets for pure inner forms of classical groups.

\begin{prop}\label{prop A-packet}
    Let $G=G(V)$ be a pure inner form of classical groups. Then the following holds.
    \begin{enumerate}
        \item [(a)] If $\psi$ is tempered, then $\Pi_{\psi}= \Pi_{\phi_\psi}$ and
        \[ \Pi_{temp}(G)= \bigsqcup_{\psi \in \Psi_{temp}(G)} \Pi_{\psi},\ \ \Pi_{2}(G)= \bigsqcup_{\psi \in \Psi_{2}(G)} \Pi_{\psi}.\]
        \item [(b)] Let $\psi$ be a tempered local Arthur parameter of $G$. If $\psi= \xi_M \circ \psi_M$, where $M$ is a  Levi subgroup $M$ of $G$, $\psi_M$ is a discrete local Arthur parameter of $M$ and $\xi_M: {}^L M \hookrightarrow {}^L G$ is the associated embedding. Then 
        \[ \bigoplus_{\pi \in \Pi_{\psi}}\pi= \bigoplus_{\pi_M \in \Pi_{\psi_M}} \Ind_M^G \pi_M.\]
        \item [(c)] Assume that there is a local Arthur packets theory for $G$ as conjectured in \cite[Conjecture 6.1]{Art89}. Then for any $\psi \in \Psi(G)$, we have $\Pi_{\widehat{\psi}}=\{\widehat{\pi} \ | \ \pi \in \Pi_{\psi}\}$, where $\widehat{\psi}$ is defined in \eqref{eq psi hat}.
    \end{enumerate}
\end{prop}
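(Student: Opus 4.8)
\emph{Part (a).} Here I would simply unwind the definitions: $\psi$ tempered means its Arthur $\SL_2$ acts trivially, so $\psi=\phi_\psi\otimes S_1$ with $\phi_\psi$ a tempered $L$-parameter, and the equality $\Pi_\psi=\Pi_{\phi_\psi}$ is exactly the compatibility built into the endoscopic classification of Arthur \cite{Art13} and Mok \cite{Mok15} (and its extension to pure inner forms in \cite{KMSW14, MR18, Ish23}): for tempered $\psi$ the local Arthur packet \emph{is} the tempered $L$-packet of $\phi_\psi$. The two disjoint-union statements then reduce to the corresponding facts for $L$-packets, namely that the LLC restricts to a bijection between tempered (resp.\ discrete) $L$-parameters of $G$ and tempered (resp.\ discrete series) $L$-packets, and that these partition $\Pi_{temp}(G)$ (resp.\ $\Pi_2(G)$).

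\emph{Part (b).} Using (a) for both $G$ and $M$, it is enough to prove $\bigoplus_{\pi\in\Pi_{\phi_\psi}}\pi=\bigoplus_{\pi_M\in\Pi_{\phi_{\psi_M}}}\Ind_M^G\pi_M$, where $\phi_{\psi_M}$ is a discrete $L$-parameter of $M$ and $\phi_\psi=\xi_M\circ\phi_{\psi_M}$. This is precisely how the non-discrete tempered $L$-packet $\Pi_{\phi_\psi}$ is constructed in Arthur's classification: it is the multiset of irreducible constituents of $\Ind_M^G\pi_M$ as $\pi_M$ ranges over the discrete series packet $\Pi_{\phi_{\psi_M}}$. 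Each such induced representation is a finite direct sum of irreducible tempered representations, and the classification guarantees that these constituents are pairwise distinct and exhaust $\Pi_{\phi_\psi}$ with multiplicity one; hence the displayed identity holds in the Grothendieck group.

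\emph{Part (c).} This is the substantive point, and the plan is to transport the twisted endoscopic character identities that characterize $\Pi_\psi$ through the Aubert--Zelevinsky involution. Extend $\pi\mapsto\widehat\pi$ to an involution $\mathrm{D}_G$ of $\mathcal R(G)$ and assemble three ingredients. First, on $\GL_N(E)$ the Aubert dual is explicit and interchanges $\St(\rho,a)\leftrightarrow\Speh(\rho,a)$, so it carries the Speh-type virtual character attached to $\psi_{\GL}=\bigoplus_i\rho_i\otimes S_{a_i}\otimes S_{b_i}$ to (a sign times) the one attached to the parameter obtained by exchanging each $S_{a_i}$ with $S_{b_i}$, i.e.\ to the distribution attached to $\widehat\psi$ (\cite{MW86, Aub95}). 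Second, $\mathrm{D}$ commutes with parabolic induction and with Jacquet restriction up to an explicit sign, directly from the defining alternating sum $\sum_P(-1)^{\dim A_P}\,\Ind_P\circ\mathrm{Jac}_P$ together with transitivity of induction and the geometric lemma. Third, and crucially, $\mathrm{D}$ commutes with (twisted and ordinary) endoscopic transfer up to sign: $\mathrm{D}_{G}\circ\mathrm{Transfer}=\pm\,\mathrm{Transfer}\circ\mathrm{D}$ for the transfer from the twisted group $\widetilde{\GL}_N(E)$ used to define $\Pi_\psi$, and for the transfers to the endoscopic groups $G_s$ indexed by $s\in\mathcal S_\psi=\mathcal S_{\widehat\psi}$. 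Granting these, apply $\mathrm{D}_G$ to the whole family of character identities that pin down $\Pi_\psi$ together with its pairing with $\widehat{\mathcal S_\psi}$: by the first and third ingredients these become the identities pinning down $\Pi_{\widehat\psi}$, but with the multiset $\{\widehat\pi\mid\pi\in\Pi_\psi\}$ in the role of $\Pi_{\widehat\psi}$; linear independence of irreducible characters, together with positivity since both sides are honest semisimple representations, then forces $\{\widehat\pi\mid\pi\in\Pi_\psi\}=\Pi_{\widehat\psi}$ as multisets with the component-group pairing transported along. For pure inner forms one uses the analogous relative character identities of \cite{KMSW14}; alternatively, for $\Sp_{2n}$ and split $\SO_{2n+1}$ one can instead read the claim off the explicit algorithm for the Aubert involution on members of Arthur packets \cite{AM20}.

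The step I expect to be the real obstacle is the third ingredient of (c): that the Aubert--Zelevinsky involution commutes with endoscopic transfer, especially with \emph{twisted} endoscopic transfer for $(\GL_N(E),\theta)$, with the sign bookkeeping carried out correctly. For ordinary endoscopy this is known in considerable generality; for the twisted case the strategy would be to descend and use compatibility of transfer with parabolic induction to reduce to Steinberg-versus-Speh comparisons on general linear factors, where it becomes the elementary computation of the first ingredient. By contrast, parts (a), (b) and the first two ingredients of (c) should be routine given the results already quoted in the paper.
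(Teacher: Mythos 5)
Your proposal follows essentially the same route as the paper: parts (a) and (b) are quoted from the endoscopic classification \cite{Art13, Mok15, KMSW14, MR18}, and part (c) is deduced by transporting the endoscopic character identities through the Aubert--Zelevinsky involution, i.e.\ from the compatibility of that involution with (twisted) endoscopic transfer. The one ingredient you flag as the real obstacle is already available in the literature — the paper simply cites \cite{Hir04} for the ordinary (regular) case and \cite[\S A]{Xu17b} for the twisted case — so no new descent-to-Steinberg/Speh argument is needed.
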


Parts (a) and (b) are proved for pure inner forms of classical groups in \cite{Art13, Mok15, KMSW14, MR18}. If $\Pi_{\psi}$ and $\Pi_{\widehat{\psi}}$ satisfy the local characterization from endoscopic theory, then Part (c) is a consequence of the compatibility of Aubert-Zelevinsky involution with endoscopic transfer, which is proved in \cite{Hir04} for the regular case and in \cite[\S A]{Xu17b} for the twisted case.

We also need the following Working Hypothesis on the closure ordering for local Arthur packets.

\begin{assumption}[{\cite[Conjecture 2.1]{Xu21b}}]\label{assu closure ordering A-packet}
Let $\RG$ be a connected reductive group over $F$ and let $G=\RG(F)$. Assume that there is a local Arthur packets theory for $G$ as conjectured in \cite[Conjecture 6.1]{Art89}. Then for any $\psi \in \Psi(G)$ and $\pi \in \Pi_{\psi}$, we have
\begin{align*} 
\phi_{\pi} \geq_C \phi_{\psi}.   
\end{align*}
\end{assumption}

This Working Hypothesis is studied in \cite{HLLZ22}. In particular, the Working Hypothesis is verified for symplectic and split odd special orthogonal groups.

\begin{thm}[{\cite[Theorem 1.3]{HLLZ22}}]\label{thm closure ordering A-packet SpSO}
    Working Hypothesis \ref{assu closure ordering A-packet} is verified for $\Sp_{2n}(F)$ and split $\SO_{2n+1}(F)$.
\end{thm}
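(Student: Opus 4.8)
Throughout, $G=\Sp_{2n}(F)$ or split $\SO_{2n+1}(F)$, so that $\xi_V$ realizes ${}^LG$ inside $\GL_N(F)$ with $N\in\{2n,2n+1\}$, the local Langlands correspondence is an honest bijection, and M\oe glin's explicit construction of $\Pi_\psi$ is available. The first step is to reformulate the assertion $\phi_\pi\geq_C\phi_\psi$ for $\pi\in\Pi_\psi$. The equality of infinitesimal parameters $\lambda_{\phi_\pi}=\lambda_{\phi_\psi}$ is automatic, since every member of a local Arthur packet has the infinitesimal parameter read off from $\psi_{\GL}$ — this is built into the endoscopic characterization of $\Pi_\psi$. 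Hence $\phi_\pi,\phi_\psi\in\Phi(G)_{\lambda}$, and by the equivalent description of the closure ordering recalled just before Proposition \ref{prop C implies P}, it is enough to prove: \emph{if $\phi_\pi\neq\phi_\psi$, then $\pi_{(\phi_\psi)_{\GL}}$ is a subquotient of the $\GL_N(F)$-standard module $M(\pi_{(\phi_\pi)_{\GL}})$.} Because it is known that $\Pi_{\phi_\psi}\subseteq\Pi_\psi$, the parameter $\phi_\psi$ itself occurs as some $\phi_\pi$, so what we are really proving is that it is the unique $\geq_C$-minimal parameter attached to members of $\Pi_\psi$.

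Next I would reduce to the case where $\psi$ has good parity. Writing $\psi=\psi^{\mathrm{np}}\oplus\psi^{\mathrm{bp}}\oplus(\psi^{\mathrm{np}})^{\vee}$ with $\psi^{\mathrm{bp}}$ of good parity, M\oe glin's construction identifies $\Pi_\psi$ with $\{\,\text{Langlands subrep of }\tau\rtimes\pi_0 : \pi_0\in\Pi_{\psi^{\mathrm{bp}}}\,\}$ for an explicit $\GL$-representation $\tau$, and then $\pi_{(\phi_\pi)_{\GL}}=\tau\times\pi_{(\phi_{\pi_0})_{\GL}}\times\tau^{\vee}$ and $\pi_{(\phi_\psi)_{\GL}}=\tau\times\pi_{(\phi_{\psi^{\mathrm{bp}}})_{\GL}}\times\tau^{\vee}$ as $\GL_N(F)$-representations (the Langlands data being additive under the decomposition of $\phi$). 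Since parabolic induction on $\GL$ is monotone for Zelevinsky's subquotient order, the target relation for $\pi$ follows from the one for $\pi_0$, and I may assume $\psi$ of good parity.

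For good parity, the plan is to exploit M\oe glin's parametrization of $\Pi_\psi$ by admissible data — an order on the Jordan blocks of $\psi$ together with functions $\ell$ and $\eta$, equivalently Atobe's extended multi-segments $\mathcal E$ — and the associated recursion (iterated Jacquet-module and socle-of-induction steps) computing each $\pi(\mathcal E)$, hence its Langlands data, hence the multi-segment of $\pi_{(\phi_{\pi(\mathcal E)})_{\GL}}$. The parameter $\phi_\psi$ corresponds to the ``unshifted'' datum, for which $\pi_{(\phi_\psi)_{\GL}}$ is the product over the blocks $(\rho_i,a_i,b_i)$ of the Speh representations $\Speh(\St(\rho_i,a_i),b_i)$. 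I would then show, by induction along the recursion, that every elementary move on $\mathcal E$ replaces the current $\GL_N$-multi-segment by one that is equal or strictly larger for Zelevinsky's subquotient order — i.e. each elementary step $\sigma\rightsquigarrow\sigma'$ has $\sigma$ a subquotient of the $\GL_N$-standard module of $\sigma'$ — and conclude by transitivity of this relation (Theorem \ref{thm closure ordering std module}) that $\pi_{(\phi_\psi)_{\GL}}\leq M(\pi_{(\phi_\pi)_{\GL}})$ for all $\pi\in\Pi_\psi$.

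The hard part will be precisely this last induction: tracking how the $\GL_N$-multi-segment changes through M\oe glin's algorithm and verifying monotonicity for the fine Zelevinsky order rather than merely for the dominance order on partitions (the latter being automatic from Proposition \ref{prop C implies P} but too weak), with the usual case distinctions — sign characters $\eta$, integral versus half-integral infinitesimal character, and boundary configurations such as negative or length-one segments and coincidences among Jordan blocks — handled separately. One convenient feature of the present groups is that the dual group is of type $B$ or $C$, so the ``very even'' ambiguity of type $D$ (and the attendant orbits $\OO^{I},\OO^{II}$) never enters, which is why split $\SO_{2n+1}(F)$ and $\Sp_{2n}(F)$ are accessible while the even orthogonal case is not yet settled.
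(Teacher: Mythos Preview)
The paper does not prove this theorem at all; it is stated as a citation of \cite[Theorem 1.3]{HLLZ22} and no argument is given here. There is therefore nothing in the present paper to compare your proposal against.

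That said, your plan is a coherent outline of the kind of argument one expects in \cite{HLLZ22}: reduce to good parity via M\oe glin's irreducible-induction description of $\Pi_\psi$, then exploit the explicit combinatorial parametrization (extended multi-segments) to track the $\GL_N$-multisegment of $\phi_\pi$ through the recursion and compare with that of $\phi_\psi$ under the Zelevinsky subquotient order. One point to flag: your inductive step asserts that ``every elementary move on $\mathcal E$'' makes the $\GL_N$-multisegment weakly larger in the Zelevinsky order, but M\oe glin's construction does not naturally present $\Pi_\psi$ as a sequence of elementary moves from a base point corresponding to $\phi_\psi$; rather, each $\pi(\mathcal E)$ is built by an independent recursion (shifts, Jacquet modules, socles) from a supercuspidal. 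So the monotonicity you need is not ``move-by-move from $\phi_\psi$'' but a direct comparison of the resulting Langlands data of $\pi(\mathcal E)$ with the fixed multisegment of $\phi_\psi$, and the genuine work lies in controlling how the various ui-/dual-/shift-operators in \cite{HLL22,Ato22,Xu21b} interact with the Zelevinsky order. Your sketch correctly identifies this as the hard part but does not yet supply the mechanism; whether \cite{HLLZ22} proceeds exactly this way cannot be read off from the present paper.
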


\subsection{ABV-packets}\label{abv}
In \cite{CFMMX22}, Cunningham, Fiori, Moussaoui, Mracek, and Xu extended the work of \cite{ABV92} to define an ABV-packet $\Pi_\phi^{\mathrm{ABV}}$ over $p$-adic fields using  micro-local vanishing cycle functors, for any $L$-parameter $\phi$ of any pure inner form of quasi-split $p$-adic reductive group. As in the real reductive groups cases, it is expected that $\Pi_\phi^{\mathrm{ABV}}=\Pi_\psi$ if $\phi=\phi_\psi$ for classical group $G$ when $\Pi_\psi$ is defined, see \cite[Section 8.3, Conjecture 1]{CFMMX22}. We recall the properties of $\Pi_{\phi}^{\textrm{ABV}}$ that we need and refer to \cite{CFMMX22} for the proof.
\begin{prop}\label{prop ABV}
    Let $\lambda$ be an infinitesimal parameter of $G$, $\phi \in \Phi(G)_{\lambda}$, and $\pi \in \Pi_{\phi}^{\textrm{ABV}}$. 
    \begin{enumerate}
        \item [(a)] The $L$-packet $\Pi_{\phi}$ is contained in the ABV-packet $\Pi_{\phi}^{\textrm{ABV}}$.
        \item [(b)] We have $\phi_{\pi} \geq_C \phi$. In particular, $ \underline{p}(\phi_{\pi}) \geq \underline{p}(\phi).$
    \end{enumerate}
\end{prop}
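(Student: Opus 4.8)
The plan is to derive both parts from the microlocal geometry underlying the definition of ABV-packets in \cite{CFMMX22}, combined with Proposition \ref{prop C implies P} for the last clause. Fix the infinitesimal parameter $\lambda=\lambda_\phi$. Attached to it is the Vogan variety $V_\lambda$ with its action of a complex reductive group, whose orbits are indexed $\phi'\mapsto C_{\phi'}$ by $\Phi(G)_\lambda$ (enlarged over the pure inner forms of $G$, and over $\sim$-classes when $G$ is an even special orthogonal group), and the local Langlands correspondence identifies each relevant representation $\pi$ (i.e., one with $\lambda_{\phi_\pi}=\lambda$) with a simple equivariant perverse sheaf $P(\pi)=\mathrm{IC}(C_{\phi_\pi},\mathcal{L}_\pi)$ on $V_\lambda$. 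By construction, $\pi\in\Pi_\phi^{\mathrm{ABV}}$ precisely when the microlocal vanishing-cycles (``evaluation'') functor $\mathrm{Ev}_{C_\phi}$ of \cite{CFMMX22}, which probes $P(\pi)$ along a generic covector in the conormal $T^\ast_{C_\phi}V_\lambda$, does not vanish on $P(\pi)$; equivalently, when $T^\ast_{C_\phi}V_\lambda$ occurs in the characteristic cycle $CC(P(\pi))$.

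With this dictionary, part (a) is immediate: if $\pi\in\Pi_\phi$ then $P(\pi)=\mathrm{IC}(C_\phi,\mathcal{L}_\pi)$ has support $\overline{C_\phi}$, and the characteristic cycle of the intersection cohomology complex of an orbit closure always contains the conormal to that orbit with multiplicity one; hence $T^\ast_{C_\phi}V_\lambda$ appears in $CC(P(\pi))$, so $\mathrm{Ev}_{C_\phi}P(\pi)\neq 0$ and $\pi\in\Pi_\phi^{\mathrm{ABV}}$. For part (b), suppose $\pi\in\Pi_\phi^{\mathrm{ABV}}$, so $T^\ast_{C_\phi}V_\lambda$ lies in the support of $CC(P(\pi))$. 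Since the characteristic variety of $\mathrm{IC}(C_{\phi_\pi},\mathcal{L}_\pi)$ is contained in $\bigcup_{C'\subseteq\overline{C_{\phi_\pi}}}T^\ast_{C'}V_\lambda$, this forces $C_\phi\subseteq\overline{C_{\phi_\pi}}$, i.e., $\overline{C_\phi}\subseteq\overline{C_{\phi_\pi}}$; as the microlocal support sits over the single variety $V_\lambda$ we also have $\lambda_{\phi_\pi}=\lambda=\lambda_\phi$, so by the definition of the closure ordering recalled in \S\ref{llc and lc} this is exactly $\phi_\pi\geq_C\phi$. The ``in particular'' statement then follows from Proposition \ref{prop C implies P}: if $\phi_\pi=\phi$ the partitions coincide, while if $\phi_\pi>_C\phi$ then $\underline{p}(\phi_\pi)>\underline{p}(\phi)$; in both cases $\underline{p}(\phi_\pi)\geq\underline{p}(\phi)$.

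The one place requiring care is the bookkeeping of normalizations: one must invoke the precise fact, proved in \cite{CFMMX22} following \cite{ABV92}, that the evaluation functor $\mathrm{Ev}_{C_\phi}$ is (up to a cohomological shift) the microlocal stalk detecting whether $T^\ast_{C_\phi}V_\lambda$ occurs in $CC(P(\pi))$, so that non-vanishing of $\mathrm{Ev}_{C_\phi}P(\pi)$ is genuinely equivalent both to that conormal appearing in the characteristic cycle and to the support containment $C_\phi\subseteq\overline{C_{\phi_\pi}}$. Since this equivalence, and hence both (a) and (b), is already established in loc. cit., in the present paper the proposition is a recollection; I would simply cite \cite{CFMMX22} and, if desired, record the above microlocal argument for the reader's convenience.
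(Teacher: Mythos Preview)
Your proposal is correct and aligns with the paper's own treatment: the paper does not supply a proof but simply refers to \cite{CFMMX22}, and you correctly recognize this, supplying in addition the standard microlocal justification (characteristic cycle contains the conormal of the open stratum for (a); singular support of $\mathrm{IC}(C_{\phi_\pi},\mathcal{L}_\pi)$ lies over $\overline{C_{\phi_\pi}}$ for (b)), then invoking Proposition~\ref{prop C implies P} for the partition inequality.

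One small overclaim to fix: in your last paragraph you assert that non-vanishing of $\mathrm{Ev}_{C_\phi}P(\pi)$ is \emph{equivalent} to the support containment $C_\phi\subseteq\overline{C_{\phi_\pi}}$. Only the forward implication holds in general (a simple perverse sheaf supported on $\overline{C_{\phi_\pi}}$ need not have every conormal $T^\ast_{C'}V_\lambda$ with $C'\subseteq\overline{C_{\phi_\pi}}$ in its characteristic cycle), and only that direction is needed for (b). Your main argument in the second paragraph uses the correct implication, so this is just a wording issue in the summary.
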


We need a further assumption that ABV-packets are also compatible with Aubert-Zelevinsky involution as local Arthur packets. To be explicit, there is a well-defined involution
        \begin{align*}
            \Phi(G_n)_{\lambda} &\to \Phi(G_n)_{\lambda},\\
            \phi& \mapsto \widehat{\phi},
        \end{align*}
called the Pyasetskii involution. We refer to \cite[\S 6.4]{CFMMX22} for precise definition. Note that $ \widehat{\phi_{\psi}}= \phi_{\widehat{\psi}} $ for any local Arthur parameter $\psi$. The following Working Hypothesis is expected (see \cite[\S 10.3.4]{CFMMX22}).
 
\begin{assumption}\label{assu ABV AZ dual}
    If $\pi\in \Pi_{\phi}^{\textrm{ABV}}$, then $\widehat{\pi} \in \Pi_{\widehat{\phi}}^{\textrm{ABV}}$.
\end{assumption}

The Working Hypothesis is proved for $G=\GL_n(F)$ in \cite[Proposition 3.2.1]{CFK22}. 

Finally, we remark that though \cite{CFMMX22} only defined ABV-packets for pure inner forms of quasi-split groups, they suggested a way to generalize for all inner forms in \cite[\S 11.2.7]{CFMMX22}.

\subsection{The Jiang Conjecture}
We say $\pi \in \Pi(G)$ is of Arthur type if $\pi \in \Pi_{\psi}$ for some $\psi \in \Psi(G)$, and let $\Pi_A(G)$ denote the subset of $\Pi(G)$ that consists of representations of Arthur type. For each $\psi \in \Psi(G)$, the Jiang conjecture predicts an upper bound of the wavefront set for representations in $\Pi_{\psi}$.

\begin{conj}[{\cite[Conjecture 4.2]{Jia14}} and {\cite[Conjecture 1.7]{LS23}}]\label{conj Jiang}
Let $\RG$ be a connected reductive group over $F$ and let $G=\RG(F)$. Assume that there is a local Arthur packets theory for $G$ as conjectured in \cite[Conjecture 6.1]{Art89}. Then for any $\psi \in \Psi(G)$, the following holds.
\begin{enumerate}
    \item [(i)] For any $\pi\in \Pi_{\psi}$, we have
    $ \WFN(\pi) \leq d_{BV}(\OO_\psi). $
    \item [(ii)] If $G$ is quasi-split over $F$, there exists at least one member $\pi \in \Pi_{\psi}$ such that $ \overline{\mathfrak{n}}^m(\pi)=\{d_{BV}(\OO_\psi)\}$.
\end{enumerate}
\end{conj}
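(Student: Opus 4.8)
\emph{Proof proposal.} The plan is to separate the upper bound (i) from the attainment (ii), to reduce (i) to the Aubert--Zelevinsky dual picture where the target orbit becomes computable, and to feed into that the reduction to anti-discrete representations that is the technical core of this paper. For (i), fix $\psi\in\Psi^+(G)$ and $\pi\in\Pi_\psi$. By Proposition~\ref{prop A-packet}(c) we have $\widehat\pi\in\Pi_{\widehat\psi}$, and $\psi$ and $\widehat\psi$ share the same infinitesimal parameter $\lambda=\lambda_{\phi_\psi}=\lambda_{\phi_{\widehat\psi}}$. Working Hypothesis~\ref{assu closure ordering A-packet}, applied to the packet $\Pi_{\widehat\psi}$, gives $\phi_{\widehat\pi}\geq_C\phi_{\widehat\psi}$, so Proposition~\ref{prop C implies P} yields $\underline p(\phi_{\widehat\pi})\geq\underline p(\phi_{\widehat\psi})=\underline p(\psi)$, the last equality being the identity $\underline p(\psi)=\underline p(\phi_{\widehat\psi})$ from \S\ref{lp and lap}. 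Monotonicity of the Barbasch--Vogan duality (Proposition~\ref{prop dBV}(1)) then turns this into $d_{BV}(\OO_{\phi_{\widehat\pi}})\leq d_{BV}(\OO_\psi)$. Hence, once Conjecture~\ref{conj bound of WF intro} is known for $\pi$ --- i.e. $\CO\leq d_{BV}(\OO_{\phi_{\widehat\pi}})$ for every $\CO\in\WFN(\pi)$ --- transitivity of the dominance order gives $\CO\leq d_{BV}(\OO_\psi)$, which is (i). On $\Sp_{2n}$ and split $\SO_{2n+1}$ the use of the Working Hypothesis is unconditional by Theorem~\ref{thm closure ordering A-packet SpSO}.

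It then remains to establish Conjecture~\ref{conj bound of WF intro} itself. For $\GL_n$ and its inner forms I would argue directly, following M\oe glin--Waldspurger: compute $\WFN(\pi)$ from the Langlands data of $\pi$ and compare with $d_{BV}(\OO_{\phi_{\widehat\pi}})$ via the transpose identity $(\underline p\sqcup\underline q)^\ast=\underline p^\ast+\underline q^\ast$. For the classical groups $G_n$ I would invoke Theorem~\ref{thm main classical groups intro}, which reduces Conjecture~\ref{conj bound of WF intro} for all of $\Pi(G_m)$, $m\le n$, to the anti-discrete representations; the engine is Lemma~\ref{lem goal} (compatibility of $d_{BV}$ with parabolic induction in the form \eqref{eq goal partition}) together with Propositions~\ref{prop induced orbit} and \ref{prop computation of induced orbit}, which compute both $\WFN$ and the conjectural bound under induction. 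For an anti-discrete $\pi$, $\widehat\pi$ is discrete series, so $\phi_{\widehat\pi}$ is the open $L$-parameter in its Vogan variety and $d_{BV}(\OO_{\phi_{\widehat\pi}})$ is explicitly readable from the infinitesimal parameter of $\pi$; one then realizes $\pi$ inside an induced representation (for $\Sp_{2n}$ and split $\SO_{2n+1}$, using the algorithm of \cite{AM20} to describe $\widehat\pi$, hence $\pi$, explicitly) and bounds $\WFN(\pi)$ above by the target partition using the same induction formulas.

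For (ii), with $G$ quasi-split, I would take as candidate the member $\pi_\psi\in\Pi_\psi$ whose Aubert--Zelevinsky dual $\widehat{\pi_\psi}\in\Pi_{\widehat\psi}$ lies in the $L$-packet $\Pi_{\phi_{\widehat\psi}}$ and is normalized to be the generic constituent for the fixed Whittaker datum (equivalently, following Liu--Shahidi \cite{LS23}, the member produced by the matching method in endoscopic lifting); for generic $\psi$ this recovers the generic member of $\Pi_\psi=\Pi_{\phi_\psi}$ and the regular orbit $d_{BV}(0)=d_{BV}(\OO_\psi)$. One must then check that $\WFN(\pi_\psi)=\{d_{BV}(\OO_{\phi_{\widehat{\pi_\psi}}})\}$ --- that is, equality holds in Conjecture~\ref{conj bound of WF intro} for this specific representation and its wavefront set is a singleton --- and that $d_{BV}(\OO_{\phi_{\widehat\psi}})=d_{BV}(\OO_\psi)$, the latter again from $\underline p(\phi_{\widehat\psi})=\underline p(\psi)$ and the explicit collapse formulas of \S\ref{wfs}, with Lemma~\ref{lem goal} handling the collapse bookkeeping. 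This is the content of the second part of the Generalized Shahidi Conjecture~\ref{generalized Shahdi's conjecture}.

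The main obstacle is the anti-discrete case of Conjecture~\ref{conj bound of WF intro}: there is presently no algorithm computing $\widehat\pi$ beyond $\GL_n$, $\Sp_{2n}$, and split $\SO_{2n+1}$, and even where one exists, bounding $\WFN(\pi)$ from above --- as opposed to computing it exactly --- calls for a uniform comparison argument that does not follow from the induction formulas alone (see the examples in \S\ref{limitation}). A secondary point is that $d_{BV}(\OO_{\phi_{\widehat\pi}})\le d_{BV}(\OO_\psi)$ can be strict (Example~\ref{exmp strict inequality}), so (i) is genuinely weaker than Conjecture~\ref{conj bound of WF intro}; and for (ii) one still needs the structure of $\Pi_{\widehat\psi}$ --- stability, the Whittaker normalization, and its compatibility with the Aubert involution via Proposition~\ref{prop A-packet}(c) --- to pin down $\pi_\psi$ and verify its wavefront set, which is available for quasi-split classical groups but not yet for all $G$.
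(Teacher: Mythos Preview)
The statement you are attempting to prove is Conjecture~\ref{conj Jiang}, and the paper does \emph{not} prove it: it is stated as an open conjecture, and the paper's contribution is to establish the equivalences in Theorems~\ref{thm main classical groups}, \ref{thm main Jiang}, \ref{thm main Jiang ABV}, together with the special cases in \S\ref{sec GLn(A) main} and \S\ref{sec unipotent}. There is therefore no ``paper's own proof'' to compare against.

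Your proposal for (i) is essentially a rederivation of the implication $(\Pi)\Rightarrow(\Psi_A)$ in diagram~\eqref{eq diagram} (Lemma~\ref{lem (B)} combined with Theorem~\ref{thm main classical groups}): you correctly reduce Jiang's upper bound to Conjecture~\ref{conj bound of WF intro} via Working Hypothesis~\ref{assu closure ordering A-packet}, and you correctly reduce Conjecture~\ref{conj bound of WF intro} to the anti-discrete case. But at that point you propose to ``realize $\pi$ inside an induced representation \ldots\ and bound $\WFN(\pi)$ above by the target partition using the same induction formulas.'' This is precisely what the paper shows \emph{cannot} work: Proposition~\ref{prop limit of reduction} proves that for an anti-discrete $\pi$, any embedding $\pi\leq\tau\rtimes\sigma$ with $\tau$ nontrivial produces an induced upper bound strictly larger than $d_{BV}(\underline p(\phi_{\widehat\pi}))$ in general (see the explicit example in \S\ref{limitation}). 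So the anti-discrete step is a genuine gap, not a routine computation, and you acknowledge as much in your final paragraph. Your proposal is a correct outline of the paper's reduction strategy, not a proof of the conjecture.

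For (ii), your candidate $\pi_\psi$ with $\widehat{\pi_\psi}\in\Pi_{\phi_{\widehat\psi}}$ is exactly the representation used in the proof of Theorem~\ref{thm Jiang upper bound unipotent}(ii) and Theorem~\ref{thm upper bound SO(2n+1) unip}(b), and the identity $d_{BV}(\OO_{\phi_{\widehat\psi}})=d_{BV}(\OO_\psi)$ is immediate since $\underline p(\phi_{\widehat\psi})=\underline p(\psi)$. But verifying $\WFN(\pi_\psi)=\{d_{BV}(\OO_{\phi_{\widehat\pi_\psi}})\}$ is only known when an explicit wavefront formula is available (Theorems~\ref{thm CMBO}, \ref{thm Wal unipotent anti-tempered}); in general this is Conjecture~\ref{generalized Shahdi's conjecture}(ii), which is open and treated in the companion paper \cite{LLS24a}.
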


In this paper, we focus on Part (i) of the above conjecture and specialize to pure inner forms of classical groups or general linear groups. Therefore, we rephrase the statement in terms of partitions as follows.

\begin{conj}\label{conj Jiang 2}
For any $\pi \in \Pi_A(G)$ and any $\underline{p} \in \WFP(\pi)$, we have $$\underline{p} \leq d_{BV}(\underline{p}(\psi)),$$
for any $\psi \in \Psi(\pi):= \{ \psi \in \Psi(G) \ | \ \pi \in \Pi_{\psi}  \}.$
\end{conj}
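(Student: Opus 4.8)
The plan is to establish Conjecture~\ref{conj Jiang 2} separately for (inner forms of) general linear groups and for pure inner forms of classical groups, and in the latter case to reduce it, via the Theorem~\ref{thm main Jiang intro}-style equivalences, to a single hard sub-case. For $G=\GL_m(A)$ local Arthur packets are singletons, so $\Psi(\pi)$ has at most one element $\psi$, the Aubert--Zelevinsky dual $\widehat{\pi}$ is the unique member of $\Pi_{\widehat{\psi}}$, and $\underline{p}(\psi)=\underline{p}(\phi_{\widehat{\psi}})=\underline{p}(\phi_{\widehat{\pi}})$; since $d_{BV}$ on type $A$ is transposition, $d_{BV}(\underline{p}(\psi))=\underline{p}(\phi_{\widehat{\pi}})^{\ast}$ and Conjecture~\ref{conj Jiang 2} collapses to the $\GL$ case of Conjecture~\ref{conj bound of WF intro}. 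I would prove the latter following M{\oe}glin--Waldspurger: a representation of Arthur type is a twisted, reordered product of Speh representations, one writes down its Langlands data and that of $\widehat{\pi}$ explicitly, computes $\WFP(\pi)$ from the Jacquet-module description of the local character expansion as in \cite[\S II.2]{MW87}, and the required domination reduces to the elementary identity $(\underline{p}\sqcup\underline{q})^{\ast}=\underline{p}^{\ast}+\underline{q}^{\ast}$ applied to the summands of $\phi_{\widehat{\pi}}$. This disposes of the $\GL$ case unconditionally (Theorem~\ref{thm main GL}, Corollary~\ref{cor GLn}).

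For a pure inner form $G=G(V)$ of a classical group, the first step is the equivalence of Conjecture~\ref{conj Jiang 2} (on Arthur-type representations) with Conjecture~\ref{conj bound of WF intro}. In the forward direction, given $\pi\in\Pi_{\psi}$, Proposition~\ref{prop A-packet}(c) gives $\widehat{\pi}\in\Pi_{\widehat{\psi}}$; Working Hypothesis~\ref{assu closure ordering A-packet} applied to $\widehat{\psi}$ yields $\phi_{\widehat{\pi}}\geq_{C}\phi_{\widehat{\psi}}$, hence $\underline{p}(\phi_{\widehat{\pi}})\geq\underline{p}(\phi_{\widehat{\psi}})=\underline{p}(\psi)$ by Proposition~\ref{prop C implies P} and the identity $\underline{p}(\psi)=\underline{p}(\phi_{\widehat{\psi}})$, hence $d_{BV}(\underline{p}(\phi_{\widehat{\pi}}))\leq d_{BV}(\underline{p}(\psi))$ by Proposition~\ref{prop dBV}(1); combining with $\WFP(\pi)\leq d_{BV}(\underline{p}(\phi_{\widehat{\pi}}))$ (Conjecture~\ref{conj bound of WF intro} in its partition form) gives $\WFP(\pi)\leq d_{BV}(\underline{p}(\psi))$, and since the argument is uniform in $\psi$ it covers every $\psi\in\Psi(\pi)$ at once. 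Conversely, Conjecture~\ref{conj Jiang 2} for anti-tempered $\pi$ already yields Conjecture~\ref{conj bound of WF intro} for such $\pi$, because the swap $\widehat{(\phi_{\widehat{\pi}}\otimes S_{1})}$ of the tempered Arthur parameter attached to $\widehat{\pi}$ lies in $\Psi(\pi)$ and has associated nilpotent orbit $\OO_{\phi_{\widehat{\pi}}}$, so the two bounds agree on it.

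The second step is to reduce Conjecture~\ref{conj bound of WF intro} from arbitrary representations down to anti-tempered ones, and then to anti-discrete ones. The key point is that both sides behave well under parabolic induction on the dual side: $\WFP$ of an induced representation is given by the induced-orbit map (Proposition~\ref{prop induced orbit}, Proposition~\ref{prop computation of induced orbit}), while the bound $d_{BV}(\underline{p}(\phi_{\widehat{\pi}}))$ is governed by the matching rule of Lemma~\ref{lem goal}, i.e.\ the Barbasch--Vogan dual is compatible with induction. Peeling off the tempered-twist factors of the standard module of $\widehat{\pi}$ one at a time reduces to the case where $\widehat{\pi}$ is a discrete series; this is precisely what directions (A)--(H) of the diagram~\eqref{eq diagram intro} carry out, with Working Hypothesis~\ref{assu ABV AZ dual} used to identify the $L$-packet, ABV-packet and Arthur-packet pictures (directions (D), (E), (G)) and Working Hypothesis~\ref{assu closure ordering A-packet} in direction (B). For $\Sp_{2n}(F)$ and split $\SO_{2n+1}(F)$ both Working Hypotheses are theorems (Theorem~\ref{thm closure ordering A-packet SpSO}), so there the plan goes through once the base case is settled.

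That base case is the main obstacle: Conjecture~\ref{conj Jiang 2} --- equivalently Conjecture~\ref{conj bound of WF intro} --- for anti-discrete representations, i.e.\ for $\pi$ whose Aubert--Zelevinsky dual is a discrete series. Here no proper parabolic is available on the dual side, so the inductive mechanism breaks down and one must instead compute the geometric wavefront set of $\pi$ directly from its Langlands/Arthur data --- for instance via the explicit Aubert--Zelevinsky algorithm of \cite{AM20} or a Waldspurger-style local character computation --- and compare it with $d_{BV}(\OO_{\phi_{\widehat{\pi}}})$, which is itself easy to write down since $\phi_{\widehat{\pi}}$ is the open parameter of its Vogan variety. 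The examples in \S\ref{limitation} show that no purely formal reduction can handle this case, so genuinely new input is needed here.
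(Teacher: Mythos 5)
Your proposal follows essentially the paper's own route for this conjecture, which the paper itself proves only for $\GL_m(A)$ (Theorem \ref{thm main GL}, Corollary \ref{cor GLn}) and otherwise reduces, under Working Hypothesis \ref{assu closure ordering A-packet}, to the anti-discrete case (Theorem \ref{thm main Jiang}): the M{\oe}glin--Waldspurger computation on the $\GL$ side, the Lemma \ref{lem (B)}/Lemma \ref{lem (C) (H)} arguments linking Conjecture \ref{conj Jiang 2} with Conjecture \ref{conj bound of WF}, the induction-compatibility Lemma \ref{lem goal} for peeling off factors of the standard module (Theorems \ref{thm (A)}, \ref{thm (F)}), and the correct acknowledgement that the anti-discrete base case remains open and needs genuinely new input. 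The only small inaccuracy is that Working Hypothesis \ref{assu ABV AZ dual}, which you invoke for directions (D), (E), (G), is not needed for this particular conjecture: the $\Psi$-side reduction of Theorem \ref{thm main Jiang} uses only the closure-ordering hypothesis, which is a theorem for $\Sp_{2n}(F)$ and split $\SO_{2n+1}(F)$ (Theorem \ref{thm closure ordering A-packet SpSO}).
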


Though not needed in this paper, it is natural to ask if the collection of the conjectural upper bounds 
\[ \{ d_{BV}(\underline{p}(\psi)) \ | \ \psi \in \Psi(\pi)\} \]
admit a unique minimal partition. For $\Sp_{2n}(F)$ and split $\SO_{2n+1}(F)$, the first three named authors proved that there is a distinguished member $\psi^{min}(\pi) \in \Psi(\pi)$ that gives a minimal partition in the above set in \cite[\S 11.1]{HLL22}. We expect that there are similar phenomenons for other classical groups.

In this paper, we also consider the following generalization of the Shahidi conjecture to ABV-packets.

\begin{conj}\label{conj Jiang ABV}
Let $\RG$ be a connected reductive group over $F$ that has a quasi-split pure inner form and let $G=\RG(F)$. For any $\phi \in \Phi(G)$, the following holds.
\begin{enumerate}
    \item [(i)] For any $\pi\in \Pi_{\phi}^{\textrm{ABV}}$, we have
    $ \WFN(\pi) \leq d_{BV}(\OO_{\widehat{\phi}}). $
    \item [(ii)]  If $G$ is quasi-split over $F$, there exists at least one member $\pi \in \Pi_{\phi}^{\textrm{ABV}}$ such that $ \WFN(\pi)=\{d_{BV}(\OO_{\widehat{\phi}})\}$.
\end{enumerate}
\end{conj}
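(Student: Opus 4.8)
The plan is to reduce Conjecture~\ref{conj Jiang ABV} to statements whose content is concentrated in the anti-discrete case. For Part~(i) I would show it is equivalent to Conjecture~\ref{conj bound of WF intro} (the Aubert--Zelevinsky formulation) and then invoke the reduction machinery summarized in the diagram~\eqref{eq diagram intro}; for Part~(ii) I would deduce it from the sharpness statement of Conjecture~\ref{generalized Shahdi's conjecture}(ii). The two structural inputs used throughout are Proposition~\ref{prop ABV} (every $\pi\in\Pi_\phi^{\mathrm{ABV}}$ satisfies $\phi_\pi\geq_C\phi$, and $\Pi_\phi\subseteq\Pi_\phi^{\mathrm{ABV}}$) and Working Hypothesis~\ref{assu ABV AZ dual} (the Aubert--Zelevinsky involution carries $\Pi_\phi^{\mathrm{ABV}}$ onto $\Pi_{\widehat{\phi}}^{\mathrm{ABV}}$, where $\widehat{\phi}$ denotes the Pyasetskii involution of $\phi$).

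\emph{Part (i).} Given $\pi\in\Pi_\phi^{\mathrm{ABV}}$, Working Hypothesis~\ref{assu ABV AZ dual} and Proposition~\ref{prop ABV}(a) place $\widehat\pi$ in $\Pi_{\widehat{\phi}}^{\mathrm{ABV}}$, so Proposition~\ref{prop ABV}(b) gives $\phi_{\widehat\pi}\geq_C\widehat{\phi}$, hence $\underline{p}(\OO_{\phi_{\widehat\pi}})=\underline{p}(\phi_{\widehat\pi})\geq\underline{p}(\widehat{\phi})=\underline{p}(\OO_{\widehat{\phi}})$; applying the order-reversing Barbasch--Vogan map (Proposition~\ref{prop dBV}(1)) yields
\[
d_{BV}(\OO_{\phi_{\widehat\pi}})\ \leq\ d_{BV}(\OO_{\widehat{\phi}}).
\]
Thus Part~(i) follows from Conjecture~\ref{conj bound of WF intro}; conversely, taking $\phi=\widehat{\phi_{\widehat\pi}}$ and using that the Pyasetskii map is an involution together with Working Hypothesis~\ref{assu ABV AZ dual} shows $\pi\in\Pi_\phi^{\mathrm{ABV}}$ with $\widehat{\phi}=\phi_{\widehat\pi}$, so Part~(i) is in fact equivalent to Conjecture~\ref{conj bound of WF intro}. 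To prove the latter I would follow the diagram~\eqref{eq diagram intro}: dualizing the Langlands classification of $\widehat\pi$ realizes $\pi$ as the unique irreducible quotient of $\Ind_P^{G_n}\sigma$ for a Levi $M\cong\GL_{n_1}(E)\times\cdots\times G_r$ and an essentially anti-tempered $\sigma$ on $M$; Proposition~\ref{prop induced orbit} then bounds $\WFN(\pi)$ by the induced orbits $\Ind_{\mathfrak{q}}^{\mathfrak{g}}\OO$ for $\OO\in\WFN(\sigma)$, and Lemma~\ref{lem goal} (compatibility of Barbasch--Vogan duality with parabolic induction) identifies the resulting bound with $d_{BV}(\OO_{\phi_{\widehat\pi}})$, the $\GL$-factors being handled by Theorem~\ref{thm main GL}. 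The further passage from anti-tempered to anti-discrete representations uses Proposition~\ref{prop A-packet}(b) (reducing a tempered parameter to its discrete constituents on a smaller Levi) and the analogous statement for ABV-packets.

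\emph{Part (ii).} Here I would produce the required member already inside the $L$-packet dual: by Conjecture~\ref{generalized Shahdi's conjecture}(ii) there is a $\pi$ with $\widehat\pi\in\Pi_{\widehat{\phi}}$ and $\WFN(\pi)=\{d_{BV}(\OO_{\widehat{\phi}})\}$, and since $\Pi_{\widehat{\phi}}\subseteq\Pi_{\widehat{\phi}}^{\mathrm{ABV}}$, Working Hypothesis~\ref{assu ABV AZ dual} forces $\pi\in\Pi_\phi^{\mathrm{ABV}}$, which is exactly Part~(ii). When $\phi=\phi_\psi$ is of Arthur type one has $\widehat{\phi}=\phi_{\widehat\psi}$, $\OO_{\widehat{\phi}}=\OO_\psi$ and (conjecturally) $\Pi_\phi^{\mathrm{ABV}}=\Pi_\psi$, so this specializes to Jiang's Conjecture~\ref{conj Jiang}(ii); its generic base case is Shahidi's conjecture, now a theorem for quasi-split groups, since a generic $\pi$ has $\OO_{\phi_{\widehat\pi}}=0$ and hence $d_{BV}(\OO_{\phi_{\widehat\pi}})$ is the regular orbit, while the non-tempered case of Jiang's (ii) is approached via the endoscopic matching method of \cite{LS23}.

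\emph{Main obstacle.} Modulo the two working hypotheses, the reductions above are essentially formal; the genuine difficulty is the base case of Part~(i), namely proving $\WFN(\pi)\leq d_{BV}(\OO_{\phi_{\widehat\pi}})$ for $\pi$ anti-discrete. This requires controlling the wavefront set of the Aubert--Zelevinsky dual of a discrete series representation, for which no general formula is available; even where the involution is algorithmic (\cite{AM20} for $\Sp_{2n}(F)$ and split $\SO_{2n+1}(F)$), converting the combinatorics of the algorithm into the needed partition inequality is delicate, and the examples of \S\ref{limitation} indicate that the induction-based argument of this paper does not by itself close this gap. For Part~(ii), the remaining obstruction is the case of $L$-parameters not of Arthur type, where the structural properties of ABV-packets invoked above (and Working Hypothesis~\ref{assu ABV AZ dual}) are themselves still open.
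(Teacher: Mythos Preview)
The statement you are addressing is a \emph{conjecture}, and the paper does not claim to prove it; what the paper establishes is the network of conditional reductions summarized in diagram~\eqref{eq diagram intro} (in particular Theorem~\ref{thm main Jiang ABV} and Lemma~\ref{lem (D) (E) (G)}), together with the unconditional verification for $\GL_n(F)$ in Corollary~\ref{cor GLn}. Read in that light, your proposal is not a proof but a faithful reconstruction of the paper's reduction strategy: your argument that Part~(i) is equivalent to Conjecture~\ref{conj bound of WF intro} under Working Hypothesis~\ref{assu ABV AZ dual} is exactly Lemma~\ref{lem (D) (E) (G)}, and your subsequent reduction to the anti-tempered and anti-discrete cases via Proposition~\ref{prop induced orbit}, Lemma~\ref{lem goal}, and Proposition~\ref{prop A-packet}(b) is precisely the content of Theorems~\ref{thm (A)} and~\ref{thm (F)}. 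You also correctly isolate the genuine open problem --- the anti-discrete base case --- and note the limitation discussed in \S\ref{limitation}.

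For Part~(ii) your reduction to Conjecture~\ref{generalized Shahdi's conjecture}(ii) is legitimate but you should be explicit that this trades one open conjecture for another: the paper does not prove Part~(ii) at all outside the cases covered by Theorems~\ref{thm Jiang upper bound unipotent} and~\ref{thm upper bound SO(2n+1) unip} (where the explicit formula~\eqref{eq computation of WF} is available), and the sharpness statement you invoke is the subject of the companion paper \cite{LLS24a}, where it is likewise only reduced rather than resolved. So your proposal is accurate as a roadmap and matches the paper's own point of view, but it should not be presented as a proof --- the anti-discrete bound for Part~(i) and the existence statement for Part~(ii) remain open, exactly as the paper says.
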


Again, we shall focus on Part (i) of the above conjecture for the groups considered in \S \ref{sec groups}, which can be rephrased as follows.

\begin{conj}\label{conj Jiang ABV 2}
For any $\pi \in \Pi(G)$ and  for any $ \phi \in \Phi(\pi):= \{ \phi \in \Phi(G) \ | \ \pi \in \Pi_{\phi}^{\textrm{ABV}}  \},$ we have 
\[\WFP(\pi) \leq d_{BV}(\underline{p}(\widehat{\phi})).\]
\end{conj}

If Working Hypothesis \ref{assu ABV AZ dual} holds, then $\widehat{\phi_{\widehat{\pi}}} \in \Phi(\pi)$, so the collection of conjectural upper bounds 
\[ \{ d_{BV}(\underline{p}(\widehat{\phi})) \ | \ \phi \in \Phi(\pi)\} \]
has a unique minimal partition $d_{BV}(\underline{p}(\phi_{\widehat{\pi}}))$ by Proposition \ref{prop ABV}(2).

We end this section by giving some remarks on Part (ii) of Conjectures \ref{conj Jiang}, \ref{conj Jiang ABV}.

\begin{remark} \label{rmk Jiang quasi-split}We focus on Part (ii) of  Conjecture \ref{conj Jiang}. Similar discussion applies to  Conjecture \ref{conj Jiang ABV}(ii).
    \begin{enumerate}
        \item Part (ii) of Conjectures \ref{conj Jiang} fails without the assumption that $G$ is quasi-split. For example, let $\psi$ be a tempered local Arthur parameter of $G$. Then $d_{BV}(\OO_{\psi})$ is the regular orbit. Thus, $  \WFN(\pi)=\{d_{BV}(\OO_{\psi})\} $ if and only if $\pi$ is generic with respect to some Whittaker datum, which can not be the case if $G$ is not quasi-split.
        \item A weaker version of Part (ii) of Conjecture \ref{conj Jiang} is to consider the Vogan version of local Arthur packets (see \cite{GGP20}) as follows. 
        \begin{enumerate}
            \item [(ii)']There exists a 
        \[\pi\in \bigsqcup_{G'} \Pi_{\psi}(G') \]
        such that $\WFN(\pi)=\{d_{BV}(\OO_{\psi})\}$. Here the disjoint union is taken over all pure inner twists $G'$ of $G$ and $\Pi_{\psi}(G')$ is the local Arthur packet of $\psi$ for $G'$ if $\psi$ is $G'$-relevant, and empty set otherwise. 
        \end{enumerate}
        Clearly, Conjecture \ref{conj Jiang}(ii) implies (ii)'. It would be interesting if there exists an example that (ii)' holds but Conjecture \ref{conj Jiang}(ii) fails.
    \end{enumerate} 
\end{remark}

\section{Main results}\label{main results}
In this section, we state the main results of this paper. Let $\RG$ be a connected reductive algebraic group defined over $F$ and let $G= \RG(F)$. Assuming that the local Langlands correspondence is established for $G$, we propose a new conjecture (Conjecture \ref{conj bound of WF}) on upper bounds of all admissible representations for $G.$ We prove the new conjecture for $\GL_m(A)$ (Theorem \ref{thm main GL}), and reduce it to anti-discrete series case for classical groups (Theorem \ref{thm main classical groups}). Then we show the equivalence of the new conjecture and Conjectures \ref{conj Jiang 2}, \ref{conj Jiang ABV 2} under certain assumptions.

Now we state the new conjecture.

\begin{conj}\label{conj bound of WF general}
Assume the local Langlands correspondence for $G$ (Conjecture \ref{conj LLC}). For any $\pi \in \Pi(G)$ and any nilpotent orbit $\OO \in \overline{\mathfrak{n}}^m(\pi)$, the following inequality holds
\[\OO \leq d_{BV}( \OO_{\phi_{\widehat{\pi}} }).\]
\end{conj}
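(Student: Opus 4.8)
In full generality Conjecture \ref{conj bound of WF general} is open; the plan is to settle it unconditionally for $\GL_m(A)$ and, for the pure inner forms of classical groups $G_n$, to reduce it to the anti-discrete case, where the right-hand side $d_{BV}(\OO_{\phi_{\widehat{\pi}}})$ becomes explicitly computable.

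For $\GL_m(A)$ I would follow M{\oe}glin--Waldspurger \cite[\S II.2]{MW87} and compute $\ol{\mathfrak{n}}^m(\pi)$ directly. From the standard module $M(\pi)=\St(\rho_1,a_1)|\cdot|^{x_1}\times\cdots\times\St(\rho_f,a_f)|\cdot|^{x_f}$, Proposition \ref{prop induced orbit} together with Proposition \ref{prop computation of induced orbit}(2) shows that $\ol{\mathfrak{n}}^m(\pi)$ is the single orbit with partition $\underline{p}_A(\phi_\pi)$ of Definition \ref{def p(phi) division} (resp. $\underline{p}(\phi_\pi)$ when $A=F$), the only inputs being the wavefront set of a single $\St(\rho,a)$ and, over a division algebra, the insertion of the factors $s(\rho')$ coming from Jacquet--Langlands. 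Dualizing via the type-$A$ identity $(\underline{p}\sqcup\underline{q})^{\ast}=\underline{p}^{\ast}+\underline{q}^{\ast}$ (the remark after Lemma \ref{lem goal}) identifies $\underline{p}(\phi_\pi)$ with $d_{BV}(\underline{p}(\phi_{\widehat{\pi}}))$, which gives equality when $A=F$; over a division algebra the extra factors only lower the partition, $\underline{p}(\phi)\leq\underline{p}_A(\phi)$, so the inequality persists. This is Theorem \ref{thm main GL}.

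For the classical groups $G_n$ I would avoid any direct computation of wavefront sets. Given $\pi\in\Pi(G_m)$ with Langlands data $(\St(\rho_i,a_i)|\cdot|^{x_i};\tau)$, $\pi$ is a subquotient of $M(\pi)=\St(\rho_1,a_1)|\cdot|^{x_1}\times\cdots\rtimes\tau$, so $\ol{\mathfrak{n}}^m(\pi)$ is dominated by $\ol{\mathfrak{n}}^m(M(\pi))$, which by Proposition \ref{prop induced orbit} consists of orbits induced from $\ol{\mathfrak{n}}^m(\tau)$ and the Steinberg blocks. On the dual side $\widehat{\pi}$ is a constituent of $\Speh(\rho_1,a_1)|\cdot|^{x_1}\times\cdots\rtimes\widehat{\tau}$, so $\underline{p}(\phi_{\widehat{\pi}})$ is obtained from $\underline{p}(\phi_{\widehat{\tau}})$ by adjoining blocks of shape $[b^{2d}]$; Lemma \ref{lem goal} says that applying $d_{BV}$ converts adjoining such a block into the operation $\underline{q}\mapsto([(2d)^{b}]+\underline{q})_{X'}$, which is precisely the induced-orbit computation of Proposition \ref{prop computation of induced orbit}(3) on the representation side, and Proposition \ref{prop dBV}(1) provides the needed monotonicity. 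Iterating over all Levi subgroups gives the equivalences $(1)\Leftrightarrow(2)\Leftrightarrow(3)\Leftrightarrow(4)$ in diagram \eqref{eq diagram intro}, so Conjecture \ref{conj bound of WF general} for all of $\Pi(G_m)$ reduces to the anti-discrete case (equivalently, via (F), the anti-tempered case), for which $\phi_{\widehat{\pi}}$ is the unique open parameter in its Vogan variety \cite[Proposition 5.6]{CFMMX22} and $d_{BV}(\OO_{\phi_{\widehat{\pi}}})$ is read off from the infinitesimal character. The same accounting, using Proposition \ref{prop A-packet}(b),(c) and Proposition \ref{prop ABV}, yields the equivalences with Jiang's conjecture (Conjecture \ref{conj Jiang 2}) and with its ABV-packet analogue.

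The main obstacle is the anti-discrete core itself: there one must produce $\widehat{\pi}$ (a discrete series) and its parameter $\phi_{\widehat{\pi}}$ from the M{\oe}glin/Langlands data of $\pi$, and bound $\ol{\mathfrak{n}}^m(\pi)$ in terms of that data. For $\Sp_{2n}$ and split $\SO_{2n+1}$ the recursive algorithm of \cite{AM20} for the Aubert--Zelevinsky involution should make this approachable, but no such algorithm exists in general, and --- as the examples in \S \ref{limitation} show --- the induction method cannot by itself close the gap for anti-discrete representations, where $d_{BV}(\OO_{\phi_{\widehat{\pi}}})$ can be strictly larger than the true wavefront set. The outcome of this plan is therefore: Conjecture \ref{conj bound of WF general} unconditionally for $\GL_m(A)$, and, for classical $G_n$, its reduction to the anti-discrete case (Theorem \ref{thm main classical groups}).
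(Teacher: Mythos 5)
Your overall plan coincides with the paper's: prove the conjecture outright for $\GL_m(A)$ by the M{\oe}glin--Waldspurger method, and for classical groups reduce it to the anti-discrete case by peeling off one $\GL$-factor at a time and matching the induced-orbit computation with Lemma \ref{lem goal}. But the mechanism you describe takes the standard module on the wrong side, and this is not a cosmetic slip --- it breaks both halves of the argument. Bounding $\ol{\mathfrak{n}}^m(\pi)$ by $\ol{\mathfrak{n}}^m(M(\pi))$ only uses the wavefront sets of the Steinberg factors $\St(\rho_i,a_i)|\cdot|^{x_i}$, which are the \emph{regular} orbits $[a_i\dim\rho_i]$; the resulting bound is the orbit induced from regular orbits and $\ol{\mathfrak{n}}^m(\tau)$, which is generically (close to) the regular orbit of $G$ and is far weaker than $d_{BV}(\OO_{\phi_{\widehat{\pi}}})$. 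Already for the trivial representation $\pi$ of $\Sp_2(F)$ your bound gives $[2]$, while the conjectural bound $d_{BV}(\underline{p}(\phi_{\widehat{\pi}}))=d_{BV}([3])=[1,1]$ is what must be proved. Likewise the step ``$\widehat{\pi}$ is a constituent of $\Speh(\rho_1,a_1)|\cdot|^{x_1}\times\cdots\rtimes\widehat{\tau}$, so $\underline{p}(\phi_{\widehat{\pi}})$ is obtained from $\underline{p}(\phi_{\widehat{\tau}})$ by adjoining blocks $[b^{2d}]$'' is false: a constituent of a parabolic induction does not have the summed $L$-parameter (the Steinberg representation of $\Sp_2(F)$ is a constituent of $|\cdot|^{-1}\rtimes\mathbf{1}$, yet its parameter is $S_3$, not $[1^2]\sqcup[1]$), and $\widehat{\tau}$ is anti-tempered, so that induction is not a standard module. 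Note also that your recursion would terminate at the tempered piece $\tau$, for which the bound is trivial (the upper bound is the regular orbit); if the argument worked it would prove Conjecture \ref{conj bound of WF general} unconditionally, which the examples in \S\ref{limitation} show cannot be extracted this way.

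The fix is exactly what the paper does in Theorems \ref{thm main GL}, \ref{thm (A)} and \ref{thm (F)}: take the standard module of $\widehat{\pi}$, so that compatibility of the LLC with the Langlands classification gives the \emph{exact} decomposition $(\phi_{\widehat{\pi}})_{\GL}=(\rho_1|\cdot|^{x_1}\otimes S_{a_1}+\rho_1^{\vee}|\cdot|^{-x_1}\otimes S_{a_1})+(\phi_{\widehat{\pi_-}})_{\GL}$, then dualize to realize $\pi$ as a subquotient of $\Speh(\rho_1,a_1)|\cdot|^{x_1}\rtimes\pi_-$. The decisive input is then the wavefront set of \emph{Speh} representations, $\WFP(\Speh(\rho_1,a_1)|\cdot|^{x_1})=\{[\dim(\rho_1)^{a_1}]\}$ (Theorem \ref{thm WF GL Speh}, Cai's theorem over division algebras), not of Steinbergs, and the recursion lands on anti-tempered and then, via Proposition \ref{prop A-packet}(a),(b), anti-discrete representations. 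The same mix-up infects your $\GL$ paragraph: $\WFP(\pi)$ is not $\underline{p}_A(\phi_\pi)$, and $\underline{p}(\phi_\pi)\neq d_{BV}(\underline{p}(\phi_{\widehat{\pi}}))$ in general (for $\pi=\St(\mathbf{1},2)\times\mathbf{1}$ in $\GL_3(F)$ one has $\underline{p}(\phi_\pi)=[2,1]$ while $\WFP(\pi)=\{[3]\}=\{d_{BV}([1^3])\}$); the correct formula $\WFP(\pi)=\{d_{BV}(\underline{p}_A(\phi_{\widehat{\pi}}))\}$ is obtained from $\widehat{M(\widehat{\pi})}$, a product of Speh representations, together with an induction along the closure ordering to see that no other constituent attains the induced orbit.
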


In this paper, we shall focus on the groups described in \S \ref{sec groups}, and work with the following partition version.

\begin{conj}\label{conj bound of WF}
Let $G=\GL_{m}(A)$ or $G(V)$, a pure inner form of classical group. For any $\pi \in \Pi(G)$ and any partition $\underline{p} \in \mathfrak{p}^m(\pi)$, the following inequality holds
\[\underline{p} \leq d_{BV}( \underline{p}(\phi_{\widehat{\pi}} )).\]
\end{conj}

\begin{remark}\label{rmk even orthogonal} 
 For even special orthogonal groups $G=G(V)$, we only have the weak Local Langlands Correspondence 
\[ \textrm{WLLC}: (\Pi(G)/\sim) \to (\phi(G)/\sim).\]
However, this is enough for our purpose since $\mathfrak{p}^{m}(\pi)=\mathfrak{p}^{m}(\pi^{\epsilon})$ and $ \underline{p}(\phi_1)=\underline{p}(\phi_2)$ if $\phi_1 \sim \phi_2$.
\end{remark}

Recall that we say a representation $\pi$ of $G$ is anti-discrete series (resp. essentially anti-discrete series, anti-tempered) if $\widehat{\pi}$ is a discrete series (resp. essentially discrete series, tempered) representation of $G$. Let $\Pi_{\widehat{2}}(G)$ (resp. $\Pi_A(G)$, $\Pi_{\widehat{temp}}(G)$) denote the set of equivalence classes of anti-discrete series representation (resp. Arthur type, anti-tempered) of $G$. We have the following inclusion
\begin{align}\label{eq chain of containment}
    \Pi(G) \supseteq \Pi_A(G) \supseteq \Pi_{\widehat{temp}}(G) \supseteq \Pi_{\widehat{2}}(G).
\end{align}

\subsection{\texorpdfstring{Main results for $\GL_m(A)$}{}}

Our first main result establishes Conjecture \ref{conj bound of WF} for $\GL_m(A)$. 
\begin{thm}\label{thm main GL}
Let $A$ be a central division algebra over $F$ and let $G=\GL_m(A)$. Conjecture \ref{conj bound of WF} holds for $G$. Moreover, when $A=F$, for all $\pi \in \Pi(G)$ we have 
\[\WFN(\pi)=\{d_{BV}(\OO_{\phi_{\widehat{\pi}}})\}.\]

\end{thm}

The case of $\GL_n(F)$ was established by M{\oe}glin and Waldspurger in \cite[\S II.2]{MW87} (whose method naturally extends to the case of even residue character based on the work in \cite{Var14}), where they explicitly computed the wavefront set, which agrees precisely with the upper bound given in Conjecture \ref{conj bound of WF}. Their argument consists of three main steps:
\begin{enumerate}
    \item[Step 1:] For any essentially anti-discrete series representation of a Levi subgroup of $\GL_m(F)$, the wavefront set is computed via Bernstein-Zelevinsky derivatives (see \cite[Theorem 2.2]{Zel81} and \cite[Theorem 7.1]{Zel80}). In this case, the wavefront set achieves the upper bound.
    \item[Step 2:] This upper bound is extended to all irreducible representations by applying Proposition \ref{prop induced orbit} and using the compatibility of parabolic induction with Barbasch-Vogan duality.
    \item[Step 3:] Noting that the Barbasch-Vogan duality map is a bijection in this context and that the upper bound in Step 1 is actually an equality, one deduces the equality of wavefront sets for all irreducible representations.
\end{enumerate}
Further details are provided in \S \ref{sec GLn(A) main}, where we also prove the upper bound for $\GL_m(A)$ for $A \neq F$. It is important to note that when $A \neq F$, the equality of wavefront sets does not hold, as the explicit formula for anti-discrete series representations of $\GL_m(A)$ is not available (see Remark \ref{rmk division algebra}). Nevertheless, the Jacquet-Langlands correspondence yields the desired upper bound for anti-discrete series representations.

As a corollary of Theorem \ref{thm main GL}, we have the following for $\GL_n(F)$ since local Arthur packets are singletons and Working Hypothesis \ref{assu ABV AZ dual} is verified in \cite[Proposition 3.2.1]{CFK22} in this case. A detailed proof will be given in \S \ref{sec GLn(A) main}.

\begin{cor}\label{cor GLn}
    Conjectures \ref{conj Jiang 2} and \ref{conj Jiang ABV} hold for $\GL_n(F)$.
\end{cor}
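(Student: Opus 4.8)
The plan is to deduce both assertions directly from Theorem~\ref{thm main GL}, which shows $\WFP(\pi) = \{d_{BV}(\underline p_A(\phi_{\widehat\pi}))\}$ and records the chain of dominations $d_{BV}(\underline p_A(\phi_{\widehat\pi})) \leq d_{BV}(\underline p(\phi_{\widehat\pi}))$. Thus for each conjecture it is enough to show that $d_{BV}(\underline p(\phi_{\widehat\pi}))$ is bounded above by the relevant target, namely $d_{BV}(\underline p(\psi))$ (in fact with equality) for Conjecture~\ref{conj Jiang 2} and $d_{BV}(\underline p(\widehat\phi))$ for Conjecture~\ref{conj Jiang ABV 2}; both reduce to controlling the $L$-parameter $\phi_{\widehat\pi}$ of the Aubert dual.

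For Conjecture~\ref{conj Jiang 2} over $\GL_m(A)$, fix $\pi \in \Pi_A(\GL_m(A))$ and $\psi \in \Psi(\pi)$. First I would use that local Arthur packets of $\GL_m(A)$ are singletons, so $\Pi_\psi = \{\pi\}$ and $\pi = \pi_\psi$ is the expected product of Speh-type representations, with $\phi_\pi = \phi_\psi$. The Aubert dual $\widehat\pi$ is then the unique member of $\Pi_{\widehat\psi}$ (by the $\GL_m(A)$ analogue of Proposition~\ref{prop A-packet}(c), with $\widehat\psi$ as in \eqref{eq psi hat}; equivalently, one interchanges the Deligne- and Arthur-$\SL_2$ factors in each summand of $\psi$), so $\phi_{\widehat\pi} = \phi_{\widehat\psi}$ and hence
\[ \underline p(\phi_{\widehat\pi}) = \underline p(\phi_{\widehat\psi}) = \underline p(\psi), \]
the last equality being the identity $\underline p(\psi) = \underline p(\phi_{\widehat\psi})$ recorded at the end of \S\ref{lp and lap}. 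Feeding this into Theorem~\ref{thm main GL}(i) gives, for every $\underline p \in \WFP(\pi)$,
\[ \underline p \leq d_{BV}(\underline p_A(\phi_{\widehat\pi})) \leq d_{BV}(\underline p(\phi_{\widehat\pi})) = d_{BV}(\underline p(\psi)), \]
which is Conjecture~\ref{conj Jiang 2}.

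For Conjecture~\ref{conj Jiang ABV 2} over $\GL_n(F)$, fix $\pi \in \Pi(\GL_n(F))$ and $\phi \in \Phi(\pi)$, so $\pi \in \Pi_\phi^{\textrm{ABV}}$. Here Working Hypothesis~\ref{assu ABV AZ dual} is available by \cite[Proposition~3.2.1]{CFK22}, hence $\widehat\pi \in \Pi_{\widehat\phi}^{\textrm{ABV}}$. Applying Proposition~\ref{prop ABV}(b) to $\widehat\pi$ and $\widehat\phi$ yields $\phi_{\widehat\pi} \geq_C \widehat\phi$, and in particular $\underline p(\phi_{\widehat\pi}) \geq \underline p(\widehat\phi)$, so Proposition~\ref{prop dBV}(1) gives $d_{BV}(\underline p(\phi_{\widehat\pi})) \leq d_{BV}(\underline p(\widehat\phi))$. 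Since $d_A = 1$ we have $\underline p_A(\phi_{\widehat\pi}) = \underline p(\phi_{\widehat\pi})$, so Theorem~\ref{thm main GL}(i) gives, for every $\underline p \in \WFP(\pi)$,
\[ \underline p \leq d_{BV}(\underline p(\phi_{\widehat\pi})) \leq d_{BV}(\underline p(\widehat\phi)), \]
which is Conjecture~\ref{conj Jiang ABV 2}.

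The essential content is already contained in Theorem~\ref{thm main GL}, so no genuinely hard step remains; the only delicate point is the identification of the parameter $\phi_{\widehat\pi}$ of the Aubert dual of a packet member. For Arthur packets this rests on the packets being singletons together with their compatibility with Aubert duality, and for ABV-packets --- where the packets need not be singletons --- it rests instead on Working Hypothesis~\ref{assu ABV AZ dual} combined with the closure-ordering bound of Proposition~\ref{prop ABV}(b). This is precisely why the Arthur-type statement is obtained for all $\GL_m(A)$ while the ABV statement is obtained only for $\GL_n(F)$, where the Working Hypothesis has been verified.
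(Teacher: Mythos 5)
Your proposal is correct and follows essentially the same route as the paper: for $\GL_m(A)$ the singleton Arthur packets and compatibility of Aubert duality give $\phi_{\widehat{\pi}}=\phi_{\widehat{\psi}}$ so Theorem \ref{thm main GL} yields the bound, and for $\GL_n(F)$ the combination of \cite[Proposition 3.2.1]{CFK22} (Working Hypothesis \ref{assu ABV AZ dual}), Proposition \ref{prop ABV}(b), and Proposition \ref{prop dBV}(1) gives $\underline{p}(\phi_{\widehat{\pi}})\geq \underline{p}(\widehat{\phi})$ and hence the ABV bound. You merely spell out details (e.g.\ $\underline{p}(\psi)=\underline{p}(\phi_{\widehat{\psi}})$ and $d_A=1$) that the paper leaves implicit.
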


\subsection{\texorpdfstring{Main results for classical groups}{Main results for classical groups}{}}

 For pure inner forms of classical groups $G(V)$, we could still try the same strategy as for $\GL_m(A)$. However, the wavefront sets of (essentially) anti-discrete series representations in Step 1 are not known, and it may not even be a singleton (\cite{Tsa24}). Nevertheless, the argument in Step 2 still allows us to reduce Conjecture \ref{conj bound of WF} for $G(V)$ to anti-discrete series representations of groups of the same type.


Let us introduce some notation to shorten the statement. Write $V=V_{an,\mathfrak{r}}$. Let 
\[(\Xi, -) \in \{(\Pi, \ref{conj bound of WF}), (\Phi, \ref{conj Jiang ABV 2}), (\Psi, \ref{conj Jiang 2})\}.\]
We consider the collection of statements
\begin{align}\label{eq Xi ast}
    \tag{$\Xi_{\ast}$} \text{Conjecture }- \text{ holds for any }\pi \in \Pi_{\ast}(G(V_{an,r})) \text{ for any }r \leq \mathfrak{r},
\end{align}
where $\ast \in \{\emptyset, A, \widehat{temp}, \widehat{2}\}$. Here $\Xi_{\emptyset}$ is understood as $\Xi$.

Our second main result reduces Conjecture \ref{conj bound of WF} to anti-discrete series representations.

\begin{thm}\label{thm main classical groups}
    Let $V=V_{an, \mathfrak{r}}$. The statements $(\Pi), (\Pi_{A}), (\Pi_{\widehat{temp}}), (\Pi_{\widehat{2}})$ are equivalent.
\end{thm}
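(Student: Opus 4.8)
The plan is to prove the cycle of implications
\[
(\Pi) \implies (\Pi_A) \implies (\Pi_{\widehat{temp}}) \implies (\Pi_{\widehat{2}}) \implies (\Pi),
\]
the first three being immediate from the chain of containments \eqref{eq chain of containment}, so that all the work is in the implication $(\Pi_{\widehat{2}}) \implies (\Pi)$. Thus I assume Conjecture \ref{conj bound of WF} holds for all anti-discrete representations of $G(V_{an,r})$ for all $r \le \mathfrak{r}$, and I must deduce it for an arbitrary $\pi \in \Pi(G(V_{an,\mathfrak{r}}))$. The idea, following M{\oe}glin--Waldspurger as in the $\GL_m(A)$ case (Theorem \ref{thm main GL}), is to run an induction on $\mathfrak{r}$ and reduce a general $\pi$ to an anti-discrete one via parabolic induction on the dual side.

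The key step is to exploit the relationship between the Aubert--Zelevinsky involution and parabolic induction together with Proposition \ref{prop induced orbit}. Concretely: if $\pi$ is not itself anti-discrete, then $\widehat{\pi}$ is not discrete series, so by the Langlands classification for $G(V)$ the standard module realizes $\widehat{\pi}$ (more precisely its dual description) in terms of a proper Levi $M = \GL_{n_1}(E) \times \cdots \times \GL_{n_f}(E) \times G(V_{an,r})$ with $r < \mathfrak{r}$, and one gets an expression $\pi \hookrightarrow \tau_1 \times \cdots \times \tau_f \rtimes \pi_0$ where each $\tau_i$ is (the dual of a) Speh-type representation of a general linear factor and $\pi_0$ is an anti-discrete representation of $G(V_{an,r})$ --- or more precisely, where the dual $\widehat{\pi}$ is an irreducible subrepresentation of such an induced module built from discrete series data. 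The induction hypothesis applies to $\pi_0$ (smaller $r$) and Theorem \ref{thm main GL} handles the $\GL$ factors. One then reads off $\WFP(\pi)$ via Proposition \ref{prop induced orbit} and Proposition \ref{prop computation of induced orbit} (it is controlled by $\Ind_{\mathfrak{q}}^{\mathfrak{g}}$ applied to the wavefront data of the inducing representation), while the upper bound $d_{BV}(\underline{p}(\phi_{\widehat{\pi}}))$ is computed from $\phi_{\widehat{\pi}}$, whose restriction to the $\SL_2$ factor decomposes compatibly with the Levi decomposition of $M$. The combinatorial heart is then precisely Lemma \ref{lem goal}: it says $d_{BV}$ intertwines the operation $\underline{p} \mapsto [b^{2d}] \sqcup \underline{p}$ on the parameter side with the operation $\underline{q} \mapsto ([(2d)^b] + \underline{q})_{X'}$, which is exactly the shape of $\Ind_{\mathfrak{q}}^{\mathfrak{g}}$ in Proposition \ref{prop computation of induced orbit}(3). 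Matching these two computations --- the actual wavefront set obtained by inducing, and the conjectural bound obtained by applying $d_{BV}$ to the parameter --- yields the inequality $\underline{p} \le d_{BV}(\underline{p}(\phi_{\widehat{\pi}}))$ for $\pi$, using the induction hypothesis for the base piece and Proposition \ref{prop dBV}(1) (monotonicity of $d_{BV}$) to propagate inequalities.

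The main obstacle I anticipate is \emph{bookkeeping the non-temperedness and the twists correctly}. When $\widehat{\pi}$ is not discrete series, the standard module data involves general linear segments $\St(\rho_i,a_i)|\cdot|^{x_i}$ with nonzero $x_i$, and on the $L$-parameter side these contribute pairs $\rho_i|\cdot|^{x_i}\otimes S_{a_i} + {}^\sigma\rho_i^\vee|\cdot|^{-x_i}\otimes S_{a_i}$, i.e. blocks of the form $[a_i^{2\dim\rho_i}]$ in $\underline{p}(\phi_{\widehat{\pi}})$. Under $d_{BV}$ for classical types the collapse operations $(-)_{X'}$ interact nontrivially with adding such blocks, and one must be careful that the wavefront set of the induced representation (computed by $\Ind_{\mathfrak{q}}^{\mathfrak{g}}$, which itself involves a collapse by Proposition \ref{prop computation of induced orbit}(3)) matches the collapsed expression on the parameter side --- this is exactly what Lemma \ref{lem goal} guarantees, but one must verify the hypotheses of that lemma apply (the ``tempered part'' of $\phi_{\widehat{\pi}}$ gives a partition $\underline{p}$ of the correct type $X$). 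A secondary subtlety is the even orthogonal case, where WLLC and the $\sim$-equivalence appear; as noted in Remark \ref{rmk even orthogonal} this does not affect $\underline{p}^m(\pi)$ or $\underline{p}(\phi)$, so the argument goes through unchanged, but it should be remarked explicitly. I also expect to need Theorem \ref{thm WF GL Speh} (Cai's computation) to pin down the wavefront set of the general linear inducing factors precisely, rather than merely bounding it.

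With these pieces in place, the proof of Theorem \ref{thm main classical groups} is an induction on $\mathfrak{r}$: the base case $\mathfrak{r} = 0$ is trivial (the group is anisotropic and every representation is anti-discrete in a degenerate sense, or the statement is vacuous), and the inductive step reduces a general $\pi$ to an anti-discrete representation of a smaller group via the dual-side Langlands decomposition, applies the induction hypothesis and Theorem \ref{thm main GL} to the constituents, and assembles the bound using Proposition \ref{prop induced orbit}, Proposition \ref{prop computation of induced orbit}, Lemma \ref{lem goal}, and the monotonicity in Proposition \ref{prop dBV}(1). The same scheme will be reused verbatim, with $d_{BV}(\underline{p}(\phi_{\widehat{\pi}}))$ replaced by the packet-theoretic bounds, to prove Theorems \ref{thm main Jiang} and \ref{thm main Jiang ABV}, which is why we isolate it here.
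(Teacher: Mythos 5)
Your skeleton for the hard direction (peel off GL factors from the standard module of $\widehat{\pi}$, dualize to get $\pi \leq \Speh\text{-type} \rtimes \pi_-$, then combine Proposition \ref{prop induced orbit}, Proposition \ref{prop computation of induced orbit}, Theorem \ref{thm WF GL Speh}, Lemma \ref{lem goal} and monotonicity of $d_{BV}$) is exactly the paper's argument for the implication $(\Pi_{\widehat{temp}}) \Rightarrow (\Pi)$ (Theorem \ref{thm (A)}). But there is a genuine gap in your claim that this reduction lands on an \emph{anti-discrete} base point. The Langlands classification writes $M(\widehat{\pi}) = \St(\rho_1,a_1)|\cdot|^{x_1} \times \cdots \times \St(\rho_f,a_f)|\cdot|^{x_f} \rtimes (\widehat{\pi})_{temp}$ with $(\widehat{\pi})_{temp}$ \emph{tempered}, not discrete series, and all exponents $x_i < 0$. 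So your induction terminates precisely when $\widehat{\pi}$ is tempered: at that point the standard module is $\widehat{\pi}$ itself, there is nothing left to peel, and you have only reached the anti-tempered case, not $(\Pi_{\widehat{2}})$. In particular, an anti-tempered $\pi$ whose dual is tempered but not discrete is untouched by your scheme, and the asserted implication $(\Pi_{\widehat{2}}) \Rightarrow (\Pi)$ does not follow.

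Closing this gap requires a second, genuinely different ingredient, which the paper isolates as Theorem \ref{thm (F)}: if $\widehat{\pi}$ is tempered but not discrete, then by Proposition \ref{prop A-packet}(a),(b) (i.e. the endoscopic classification of tempered packets, not the Langlands quotient theorem) $\widehat{\pi}$ is a subquotient of a unitary induction $\St(\rho_1,a_1) \times \cdots \times \St(\rho_f,a_f) \rtimes (\widehat{\pi})_2$ with $(\widehat{\pi})_2$ discrete series, and moreover $(\phi_{\widehat{\pi}})_{\GL} = (\rho_1 \otimes S_{a_1} + \rho_1^{\vee} \otimes S_{a_1}) + (\phi_{(\widehat{\pi})^{-}})_{\GL}$ for an intermediate tempered subquotient $(\widehat{\pi})^{-}$; only then can one peel tempered GL factors and run the same computation as in Theorem \ref{thm (A)}. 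Your phrase ``the dual $\widehat{\pi}$ is an irreducible subrepresentation of such an induced module built from discrete series data'' asserts the conclusion of this step but attributes it to the Langlands classification, which does not provide it, and you give no argument for the compatibility of $\phi_{\widehat{\pi}}$ with this tempered decomposition. So the correct structure is the two-step reduction $(\Pi_{\widehat{2}}) \Rightarrow (\Pi_{\widehat{temp}}) \Rightarrow (\Pi)$, with the first arrow resting on the tempered packet structure; as written, your single-step induction (whether on $\mathfrak{r}$ or on the number of segments) is incomplete.
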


Now we compare the conjectural upper bound in the new conjecture and the Jiang conjecture for local Arthur packets (Conjecture \ref{conj Jiang 2}). Observe that assuming Conjecture \ref{conj bound of WF} and Working Hypothesis \ref{assu closure ordering A-packet}, for any $\pi \in \Pi_{\psi}$, we have
\begin{align}\label{eq sharper upper bound}
    \WFP(\pi) \leq d_{BV}(\underline{p}(\phi_{\widehat{\pi}})) \leq d_{BV}(\underline{p}(\phi_{\widehat{\psi}}))=d_{BV}(\underline{p}(\psi))
\end{align}
by Propositions \ref{prop C implies P}, \ref{prop dBV}(1). The second inequality of \eqref{eq sharper upper bound} can be strict as demonstrated in the following example. Thus, Conjecture \ref{conj bound of WF} improves the conjectural upper bounds given in Conjecture \ref{conj Jiang}.

\begin{exmp}\label{exmp strict inequality}
    Let $\pi$ be the representation of $\Sp_{10}(F)$ such that $\widehat{\pi}$ is equal to $\pi_1$ in \cite[Example 10.14]{HLL22}. Let $\rho$ denote the trivial representation of $W_F$.
    The computation there shows that 
    \[ \phi_{\widehat{\pi}}= \rho|\cdot|^{-3} \otimes S_1+ \rho \otimes S_1+ \rho \otimes S_3 + \rho \otimes S_5+ \rho|\cdot|^{3} \otimes S_1,\]
    and hence 
    \[ d_{BV}(\underline{p}(\phi_{\widehat{\pi}}))= d_{BV}([5,3,1,1,1])=[4,2,2,2].\]
    On the other hand, the computation there also shows that $\Psi(\pi)= \{\widehat{\psi}_1,\widehat{\psi}_2,\widehat{\psi}_3\}$, where
\begin{align*}
    \widehat{\psi}_1&= \rho \otimes S_7 \otimes S_1 + \rho \otimes S_2 \otimes S_2,\\
    \widehat{\psi}_2&= \rho \otimes S_7 \otimes S_1 + \rho \otimes S_1 \otimes S_1+ \rho \otimes S_1\otimes S_3,\\
    \widehat{\psi}_3&= \rho \otimes S_7 \otimes S_1 + \rho \otimes S_3 \otimes S_1 +\rho\otimes S_1\otimes S_1.
\end{align*}   
Thus,
\begin{align*}
    d_{BV}(\underline{p}(\widehat{\psi}_1))=[8,2],\ d_{BV}(\underline{p}(\widehat{\psi}_2))=[8,2],\ d_{BV}(\underline{p}(\widehat{\psi}_3))=[10],
\end{align*}
which are all strictly larger than $d_{BV}(\underline{p}(\phi_{\widehat{\pi}})).$ Note that $\pi$ is unipotent with real infinitesimal parameter, so $\WFP(\pi)=\{d_{BV}(\underline{p}(\phi_{\widehat{\pi}}))\}$ if the residue field of $F$ has sufficiently large characteristic by a result of \cite{CMBO23} (see Theorem \ref{thm CMBO} below).
\end{exmp}

  However, in our third main results, we show that Conjectures \ref{conj bound of WF} and \ref{conj Jiang 2} are indeed equivalent assuming Working Hypothesis \ref{assu closure ordering A-packet}.

\begin{thm}\label{thm main Jiang}
    Let $V=V_{an, \mathfrak{r}}$. Suppose that Working Hypothesis \ref{assu closure ordering A-packet} holds for $G(V_{an,r})$ for any $r \leq \mathfrak{r}$. Then the statements $(\Psi_A), (\Psi_{\widehat{temp}}), (\Psi_{\widehat{2}}), (\Pi)$ are equivalent.
\end{thm}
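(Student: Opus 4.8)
The plan is to bootstrap from the equivalences $(\Pi)\Leftrightarrow(\Pi_A)\Leftrightarrow(\Pi_{\widehat{temp}})\Leftrightarrow(\Pi_{\widehat{2}})$ already furnished by Theorem~\ref{thm main classical groups}, inserting the local‑Arthur‑packet statements into a single cycle of implications
\[
(\Pi)\ \Longleftrightarrow\ (\Pi_A)\ \Longrightarrow\ (\Psi_A)\ \Longrightarrow\ (\Psi_{\widehat{temp}})\ \Longrightarrow\ (\Psi_{\widehat{2}})\ \Longrightarrow\ (\Pi_{\widehat{2}})\ \Longleftrightarrow\ (\Pi).
\]
The two flanking $\Leftrightarrow$'s are Theorem~\ref{thm main classical groups}, and the three interior implications among the $\Psi$‑statements are immediate from the containments $\Pi_A(G(V_{an,r}))\supseteq\Pi_{\widehat{temp}}(G(V_{an,r}))\supseteq\Pi_{\widehat{2}}(G(V_{an,r}))$ of \eqref{eq chain of containment}, applied for each $r\le\mathfrak{r}$, since for a fixed $\pi$ the set $\Psi(\pi)$ and hence the content of Conjecture~\ref{conj Jiang 2} does not change. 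So the real work is the two outer implications $(\Pi_A)\Rightarrow(\Psi_A)$ and $(\Psi_{\widehat{2}})\Rightarrow(\Pi_{\widehat{2}})$, which are directions (B) and (H) of the diagram \eqref{eq diagram intro}; both should be short.

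For direction (B) I would fix $r\le\mathfrak{r}$, a representation $\pi\in\Pi(G(V_{an,r}))$ of Arthur type, and some $\psi\in\Psi(\pi)$, so $\pi\in\Pi_\psi$. By Proposition~\ref{prop A-packet}(c), $\widehat{\pi}\in\Pi_{\widehat{\psi}}$, and since we assume Working Hypothesis~\ref{assu closure ordering A-packet} for $G(V_{an,r})$ this gives $\phi_{\widehat{\pi}}\ge_C\phi_{\widehat{\psi}}$; Proposition~\ref{prop C implies P} upgrades this to $\underline{p}(\phi_{\widehat{\pi}})\ge\underline{p}(\phi_{\widehat{\psi}})$, and Proposition~\ref{prop dBV}(1) together with the identity $\underline{p}(\psi)=\underline{p}(\phi_{\widehat{\psi}})$ yields
\[
d_{BV}(\underline{p}(\phi_{\widehat{\pi}}))\ \le\ d_{BV}(\underline{p}(\phi_{\widehat{\psi}}))\ =\ d_{BV}(\underline{p}(\psi)).
\]
Granting $(\Pi_A)$, i.e.\ Conjecture~\ref{conj bound of WF} for $\pi$, we have $\WFP(\pi)\le d_{BV}(\underline{p}(\phi_{\widehat{\pi}}))$; combining with the display gives $\WFP(\pi)\le d_{BV}(\underline{p}(\psi))$, which is Conjecture~\ref{conj Jiang 2} for $\pi$. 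This is precisely the chain \eqref{eq sharper upper bound}.

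For direction (H) I would take $\pi\in\Pi_{\widehat{2}}(G(V_{an,r}))$, so $\widehat{\pi}\in\Pi_2(G(V_{an,r}))$. By Proposition~\ref{prop A-packet}(a) there is a discrete local Arthur parameter $\psi_0$ with $\widehat{\pi}\in\Pi_{\psi_0}=\Pi_{\phi_{\psi_0}}$; as $\psi_0$ is discrete, hence tempered, hence generic, $\psi_0=\phi_{\psi_0}\otimes S_1$, and since $\widehat{\pi}$ lies in the $L$‑packet of $\phi_{\psi_0}$ we get $\phi_{\psi_0}=\phi_{\widehat{\pi}}$. By Proposition~\ref{prop A-packet}(c), $\pi=\widehat{\widehat{\pi}}\in\Pi_{\widehat{\psi_0}}$, so $\widehat{\psi_0}\in\Psi(\pi)$; and using that $\widehat{\ \cdot\ }$ is an involution on Arthur parameters together with $\underline{p}(\psi)=\underline{p}(\phi_{\widehat{\psi}})$,
\[
\underline{p}(\widehat{\psi_0})\ =\ \underline{p}(\phi_{\widehat{\widehat{\psi_0}}})\ =\ \underline{p}(\phi_{\psi_0})\ =\ \underline{p}(\phi_{\widehat{\pi}}).
\]
Granting $(\Psi_{\widehat{2}})$, Conjecture~\ref{conj Jiang 2} for $\pi$ applied with $\widehat{\psi_0}\in\Psi(\pi)$ gives $\WFP(\pi)\le d_{BV}(\underline{p}(\widehat{\psi_0}))=d_{BV}(\underline{p}(\phi_{\widehat{\pi}}))$, i.e.\ Conjecture~\ref{conj bound of WF} for $\pi$; as $r$ and $\pi$ are arbitrary this is $(\Pi_{\widehat{2}})$, and Theorem~\ref{thm main classical groups} closes the cycle.

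Relative to Theorem~\ref{thm main classical groups}, the obstacle here is slight: directions (B) and (H) are bookkeeping with $d_{BV}$, $\ge_C$, and the Aubert--Zelevinsky involution. The one point requiring care is the identification in (H) that the local Arthur packet into which an anti‑discrete $\pi$ falls carries exactly the partition $\underline{p}(\phi_{\widehat{\pi}})$, so that Jiang's packet‑level bound specializes to the bound of Conjecture~\ref{conj bound of WF}; this hinges on $\Pi_2(G)=\bigsqcup_{\psi\in\Psi_2(G)}\Pi_\psi$, on the compatibility of the Aubert--Zelevinsky involution with Arthur packets (Proposition~\ref{prop A-packet}(a),(c)), and on $\underline{p}(\psi)=\underline{p}(\phi_{\widehat{\psi}})$. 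The genuinely heavy ingredient, compatibility of Barbasch--Vogan duality with induction (Lemma~\ref{lem goal}), is consumed inside Theorem~\ref{thm main classical groups} and does not reappear. One should also note that the closure‑ordering content of Working Hypothesis~\ref{assu closure ordering A-packet} is used only in direction (B); the rest of the cycle needs only the existence of Arthur packets.
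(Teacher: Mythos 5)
Your proposal is correct and follows essentially the same route as the paper: the interior $\Psi$-implications come from the containment chain, $(\Pi_A)\Rightarrow(\Psi_A)$ is exactly the paper's Lemma \ref{lem (B)} (Proposition \ref{prop A-packet}(c) plus Working Hypothesis \ref{assu closure ordering A-packet} and Propositions \ref{prop C implies P}, \ref{prop dBV}(1)), and $(\Psi_{\widehat{2}})\Rightarrow(\Pi_{\widehat{2}})$ is the paper's Lemma \ref{lem (C) (H)} (producing the tempered parameter of $\widehat{\pi}$ via Proposition \ref{prop A-packet}(a) and dualizing with \ref{prop A-packet}(c) so that $\underline{p}(\widehat{\psi_0})=\underline{p}(\phi_{\widehat{\pi}})$), with the cycle closed by Theorem \ref{thm main classical groups} just as in the diagram \eqref{eq diagram}.
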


In particular, by Theorem \ref{thm closure ordering A-packet SpSO}, the above theorem holds for $\Sp_{2n}(F)$ and split $\SO_{2n+1}(F)$ without assumptions.

As our fourth main results, we compare the new conjecture and the Jiang conjecture on ABV-packets. In this case, the analogue of Working Hypothesis \ref{assu closure ordering A-packet} is known (Proposition \ref{prop ABV}(b)), while the analogue of Proposition \ref{prop A-packet}(c) is still open. Thus, to show the equivalence between Conjectures \ref{conj bound of WF}, \ref{conj Jiang ABV}, we need to assume Working Hypothesis \ref{assu ABV AZ dual}. 

\begin{thm}\label{thm main Jiang ABV}
    Let $V=V_{an, \mathfrak{r}}$ and $G=G(V)$. Suppose that Working Hypothesis \ref{assu ABV AZ dual} holds for $G(V_{an,r})$ for any $r \leq \mathfrak{r}$. Then the statements $(\Phi), (\Phi_{\widehat{temp}}), (\Phi_{\widehat{2}}), (\Pi)$ are equivalent.
\end{thm}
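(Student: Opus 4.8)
The plan is to deduce the theorem by assembling the implications of diagram \eqref{eq diagram intro}, restricted to the statements $(\Pi_{\ast})$ and $(\Phi_{\ast})$ for $\ast\in\{\emptyset, A,\widehat{temp},\widehat 2\}$. Concretely I would establish three things: (1) the ``vertical'' equivalences $(\Pi_{\ast})\Leftrightarrow(\Phi_{\ast})$ for each $\ast$, which is where Working Hypothesis \ref{assu ABV AZ dual} is used; (2) the downward implications $(\Pi)\Rightarrow(\Pi_A)\Rightarrow(\Pi_{\widehat{temp}})\Rightarrow(\Pi_{\widehat 2})$, immediate from the containments \eqref{eq chain of containment}; and (3) the two upward reductions $(\Pi_{\widehat 2})\Rightarrow(\Pi_{\widehat{temp}})$ and $(\Pi_{\widehat{temp}})\Rightarrow(\Pi)$. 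Combining (2) and (3) closes the loop, so $(\Pi),(\Pi_A),(\Pi_{\widehat{temp}}),(\Pi_{\widehat 2})$ become equivalent; then (1) transports this to $(\Phi),(\Phi_{\widehat{temp}}),(\Phi_{\widehat 2})$, which is the assertion. So the real content lies in (1) and (3).

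For (1), I would argue one representation at a time, showing that for fixed $\pi$, Conjecture \ref{conj Jiang ABV 2} for $\pi$ is equivalent to Conjecture \ref{conj bound of WF} for $\pi$; restricting $\pi$ to the relevant class then gives (D), (E), (G). The key observation is that $d_{BV}(\underline p(\phi_{\widehat\pi}))$ is the minimum of $\{d_{BV}(\underline p(\widehat\phi))\mid\phi\in\Phi(\pi)\}$ and is attained in it. Indeed, for any $\phi\in\Phi(\pi)$, Working Hypothesis \ref{assu ABV AZ dual} gives $\widehat\pi\in\Pi_{\widehat\phi}^{\mathrm{ABV}}$, hence $\phi_{\widehat\pi}\geq_C\widehat\phi$ by Proposition \ref{prop ABV}(b), hence $\underline p(\phi_{\widehat\pi})\geq\underline p(\widehat\phi)$ and $d_{BV}(\underline p(\phi_{\widehat\pi}))\leq d_{BV}(\underline p(\widehat\phi))$ by Proposition \ref{prop dBV}(1); conversely $\widehat\pi\in\Pi_{\phi_{\widehat\pi}}\subseteq\Pi_{\phi_{\widehat\pi}}^{\mathrm{ABV}}$ by Proposition \ref{prop ABV}(a), so $\pi\in\Pi_{\widehat{\phi_{\widehat\pi}}}^{\mathrm{ABV}}$ again by Working Hypothesis \ref{assu ABV AZ dual}, i.e. $\widehat{\phi_{\widehat\pi}}\in\Phi(\pi)$, and its contribution is $d_{BV}(\underline p(\widehat{\widehat{\phi_{\widehat\pi}}}))=d_{BV}(\underline p(\phi_{\widehat\pi}))$ since the Pyasetskii involution is an involution. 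Thus the whole family of inequalities in Conjecture \ref{conj Jiang ABV 2} for $\pi$ collapses to the single inequality in Conjecture \ref{conj bound of WF} for $\pi$.

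For (3), both reductions run on the same scheme, which is the combinatorial heart of the argument and rests on the key Lemma \ref{lem goal}. Given $\pi$ of $G_{\mathfrak r}$, I would realize $\widehat\pi$ (via the Langlands classification, or via Proposition \ref{prop A-packet}(a),(b) when $\widehat\pi$ is tempered) as a Jordan--H\"older constituent of $\St(\rho_1,a_1)|\cdot|^{x_1}\times\cdots\times\St(\rho_f,a_f)|\cdot|^{x_f}\rtimes\sigma_0$ with $\sigma_0$ tempered (respectively discrete series) of some $G_r$, $r\leq\mathfrak r$. Applying the Aubert--Zelevinsky involution, which commutes with parabolic induction in the Grothendieck group and exchanges $\St$ and $\Speh$, shows $\pi$ is a constituent of $\Speh(\rho_1,a_1)|\cdot|^{x_1}\times\cdots\times\Speh(\rho_f,a_f)|\cdot|^{x_f}\rtimes\widehat{\sigma_0}$, with $\widehat{\sigma_0}$ anti-tempered (respectively anti-discrete) of $G_r$. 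Now run two parallel computations. On the wavefront side, Theorem \ref{thm WF GL Speh} gives $\WFP(\Speh(\rho_i,a_i)|\cdot|^{x_i})=\{[\dim(\rho_i)^{a_i}]\}$, and Propositions \ref{prop induced orbit} and \ref{prop computation of induced orbit} then show every orbit in $\WFP(\pi)$ is dominated by one of the form $(2\underline e+\underline q)_{X}$ with $\underline q\in\WFP(\widehat{\sigma_0})$, where $\underline e:=[\dim(\rho_1)^{a_1}]+\cdots+[\dim(\rho_f)^{a_f}]$, $X$ is the type of $\mathfrak g_{\mathfrak r}$, and the ``$2\underline e$'' is the doubling of the $\GL$-blocks produced by Proposition \ref{prop computation of induced orbit}(3). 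On the parameter side, $\underline p(\phi_{\widehat\pi})=\underline p(\phi_{\sigma_0})\sqcup\bigsqcup_{i}[a_i^{2\dim(\rho_i)}]$, and iterating Lemma \ref{lem goal} (with the standard fact that nested $X$-collapses of sums with a fixed even partition may be flattened) gives $d_{BV}(\underline p(\phi_{\widehat\pi}))=(2\underline e+d_{BV}(\underline p(\phi_{\sigma_0})))_{X}$. Feeding in the hypothesis $(\Pi_{\widehat{temp}})$ (respectively $(\Pi_{\widehat 2})$) applied to $\widehat{\sigma_0}$, namely $\WFP(\widehat{\sigma_0})\leq d_{BV}(\underline p(\phi_{\sigma_0}))$, and using that partition addition by $2\underline e$ and the $X$-collapse are both monotone for the dominance order, we conclude $\WFP(\pi)\leq d_{BV}(\underline p(\phi_{\widehat\pi}))$, which is Conjecture \ref{conj bound of WF} for $\pi$.

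\textbf{Main obstacle.} The essential difficulty is entirely in (3): establishing Lemma \ref{lem goal} (whose combinatorial proof occupies \S\ref{sec proof of key lemma}) and then matching the two computations exactly --- tracking the intermediate collapses when Lemma \ref{lem goal} is iterated, handling the Langlands exponents $x_i$ on the $\GL$-blocks, and checking that passing to a Jordan--H\"older constituent of the induced representation does not enlarge the maximal wavefront set (for which one uses positivity of the leading coefficients $c_{\OO}$ in the Harish-Chandra--Howe expansion). Step (1) is comparatively soft, but it is exactly where Working Hypothesis \ref{assu ABV AZ dual} is indispensable: without it one cannot place $\widehat{\phi_{\widehat\pi}}$ in $\Phi(\pi)$ nor see that $d_{BV}(\underline p(\phi_{\widehat\pi}))$ is the least of the ABV-upper bounds, and the equivalences (D), (E), (G) break down.
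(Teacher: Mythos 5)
Your proposal is correct and follows essentially the same route as the paper: the equivalences $(\Pi_{\ast})\Leftrightarrow(\Phi_{\ast})$ are exactly Lemma \ref{lem (D) (E) (G)} (Working Hypothesis \ref{assu ABV AZ dual} plus Proposition \ref{prop ABV}), and the reductions $(\Pi_{\widehat{2}})\Rightarrow(\Pi_{\widehat{temp}})\Rightarrow(\Pi)$ are Theorems \ref{thm (F)} and \ref{thm (A)}, proved as you describe via Aubert--Zelevinsky duals of standard modules, Theorem \ref{thm WF GL Speh}, Propositions \ref{prop induced orbit}, \ref{prop computation of induced orbit}, and Lemma \ref{lem goal}. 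The only cosmetic difference is that you peel off all $\GL$-blocks at once and iterate Lemma \ref{lem goal} (flattening nested collapses via transitivity of induction), whereas the paper peels one block at a time by induction on the number of blocks; the content is the same.
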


We prove Theorems \ref{thm main classical groups}, \ref{thm main Jiang} and \ref{thm main Jiang ABV} in \S \ref{sec proof classical}. We recall from the introduction the following diagram describing the relations among the statements $(\Xi_{\ast})$.

\begin{equation}\label{eq diagram}
        \begin{tikzcd}[column sep=huge]
   (\Phi) \ar[dd, Rightarrow] &(\Pi)\ar[l, Leftrightarrow, "\text{(D)}"',"\text{Lemma }\ref{lem (D) (E) (G)}"]  \ar[d, Rightarrow]& \\
    &(\Pi_A) \ar[d, Rightarrow] \ar[r, Rightarrow, "\text{(B)}", "\text{Lemma }\ref{lem (B)}"'] & (\Psi_A) \ar[d, Rightarrow]\\
    (\Phi_{\widehat{temp}}) \ar[d, Rightarrow]&(\Pi_{\widehat{temp}}) \ar[l, Leftrightarrow, "\text{(E)}"',"\text{Lemma }\ref{lem (D) (E) (G)}"]  \ar[d, Leftrightarrow, "\text{(F)}", "\text{Theorem }\ref{thm (F)}"'] \ar[uu, bend left=60, Rightarrow,"\text{(A)}"', " 
\text{Theorem }\ref{thm (A)}"] & (\Psi_{\widehat{temp}})\ar[d, Rightarrow] \ar[l, Rightarrow, "\text{(C)}"', "\text{Lemma }\ref{lem (C) (H)}"]\\
        (\Phi_{\widehat{2}})&(\Pi_{\widehat{2}})\ar[l, Leftrightarrow, , "\text{(G)}"',"\text{Lemma }\ref{lem (D) (E) (G)}"] \ar[r, Leftarrow, , "\text{(H)}",  "\text{Lemma }\ref{lem (C) (H)}"'] & (\Psi_{\widehat{2}})
    \end{tikzcd}
\end{equation}
The vertical implications downward immediately follow from the chain of containment \eqref{eq chain of containment}. The Working Hypothesis \ref{assu closure ordering A-packet} is used in direction (B) and the Working Hypothesis \ref{assu ABV AZ dual} is used in directions (D), (E) and (G). 

\begin{remark}\label{remarks}
Let $\RG$ be a general connected reductive algebraic group defined over $F$ and let $G=\RG(F)$.
\begin{enumerate}

\item  Assume that local Langlands correspondence (Conjecture \ref{conj LLC}) is established for $G$. Then one can consider the analogue of \eqref{eq Xi ast} by replacing the collection of groups $G(V_{an,r})$ with all Levi factors of $G$ and replacing Conjecture \ref{conj bound of WF} by Conjecture \ref{conj bound of WF general}. Then the proof of Theorem \ref{thm main classical groups} can be naturally generalized by using \cite[Proposition A2(c)]{BV85} instead of Lemma \ref{lem goal}. If the theories of local Arthur packets and ABV-packets are established and Propositions \ref{prop A-packet}, \ref{prop ABV} still hold true, then the proof Theorems \ref{thm main Jiang} and \ref{thm main Jiang ABV} also work.

\item If we specify $\RG$ to be a non-pure inner form of classical groups, then maximal Levi subgroups of $G$ are of the form $\GL_{m}(A) \times G^{-}$, where $A$ is a central division algebra with dimension $d_A^2 \in \{1,4\}$ over $F$ and $G^-$ is a group of the same type as $G$ of smaller rank (see \cite[\S 2.2]{CG16}). Therefore, though Conjectures \ref{conj Jiang}, \ref{conj Jiang ABV} and \ref{conj bound of WF} still make sense, one may expect that there should be a sharper upper bound according to Remark \ref{rmk division algebra} below.
\item If we specify $\RG$ to be a similitude group, then the nilpotent orbits of $\mathfrak{g}(\overline{F})$ can still be described by partitions. We expect that Theorems \ref{thm main classical groups}, \ref{thm main Jiang} and \ref{thm main Jiang ABV} still hold in this case.
    \end{enumerate}
\end{remark}

Finally, we explain why we consider Conjectures \ref{conj bound of WF}, \ref{conj Jiang 2} and \ref{conj Jiang ABV 2} on a family of groups $G(V_{an,r})$, $ r \leq \mathfrak{r}$, instead of a single group $G(V_{an,\mathfrak{r}})$. Indeed, if we only consider these conjectures for $\Pi_{\ast}(G)$ for $\ast \in \{\emptyset, A, \widehat{temp}\}$, then we do not need the assumptions on groups of smaller rank.

\begin{thm}\label{thm reduction to a single gp}
Fix $G=G(V_{an,\mathfrak{r}})$ and $r < \mathfrak{r}$.
\begin{enumerate}
    \item [(a)] Let $\ast \in \{\emptyset, A, \widehat{temp}\}$. Conjecture \ref{conj bound of WF} holds for any $\pi^{-} \in \Pi_{\ast}(G(V_{an,r}))$ if it holds for any $\pi \in \Pi_{\ast}(G(V_{an,\mathfrak{r}}))$.
    \item [(b)] Let $\ast \in \{ A, \widehat{temp}\}$. Conjecture \ref{conj Jiang 2} holds for any $\pi^{-} \in \Pi_{\ast}(G(V_{an,r}))$ if it holds for any $\pi \in \Pi_{\ast}(G(V_{an,\mathfrak{r}}))$.
\end{enumerate}
\end{thm}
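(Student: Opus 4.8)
The plan is to deduce the rank-$r$ statement from the rank-$\mathfrak{r}$ statement by adjoining a ``generic'' general linear block. Write $k := \mathfrak{r} - r \geq 1$, and fix a unitary supercuspidal representation $\rho$ of $\GL_k(E)$ that is \emph{not} conjugate-self-dual (such $\rho$ exists, e.g.\ by twisting any supercuspidal of $\GL_k(E)$ by a sufficiently ramified character of $E^\times$). Given $\pi^- \in \Pi_\ast(G(V_{an,r}))$, I would set $\pi := \rho \rtimes \pi^-$, the normalized parabolic induction from the Levi $\GL_k(E) \times G(V_{an,r})$ of $G(V_{an,\mathfrak{r}})$. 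Since $\rho$ is supercuspidal and not conjugate-self-dual, $\rho \rtimes \sigma$ is irreducible for every irreducible $\sigma$, so $\pi$ (and also $\rho \rtimes \widehat{\pi^-}$) is irreducible; because the Aubert--Zelevinsky involution commutes with parabolic induction and fixes supercuspidals, $\widehat\pi = \rho \rtimes \widehat{\pi^-}$. Hence $\pi$ lies in the same class $\Pi_\ast$ as $\pi^-$: if $\pi^- \in \Pi_{\widehat{temp}}$, then $\widehat\pi = \rho \rtimes \widehat{\pi^-}$ is irreducible tempered; if $\pi^- \in \Pi_{\psi^-}$, then $\pi \in \Pi_{\psi}$ with $\psi := \rho \oplus {}^\sigma\rho^\vee \oplus \psi^-$ by the compatibility of local Arthur packets with parabolic induction from general linear factors (cf.\ Proposition~\ref{prop A-packet}); and for $\ast = \emptyset$ there is nothing to check. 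This is precisely where the argument breaks down for $\ast = \widehat{2}$: adjoining a general linear block to a discrete series yields a tempered but non-discrete representation (see Remark~\ref{rmk reduction to a single group}).

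Next I would compute both sides of the conjectural bound for $\pi$. Since $\WFP(\rho)$ consists of the regular nilpotent orbit of $\GL_k(E)$ (Theorem~\ref{thm main GL}), Propositions~\ref{prop induced orbit} and~\ref{prop computation of induced orbit} give, with $X$ the type of $G(V_{an,\mathfrak{r}})$ and $[2k]+\underline q$ denoting the partition obtained from $\underline q$ by adding $2k$ to its largest part,
\[ \WFP(\pi) = \{\, ([2k]+\underline p^-)_X \ :\ \underline p^- \in \WFP(\pi^-)\, \}, \]
the collapse being vacuous in the unitary case. On the parameter side, $(\phi_{\widehat\pi})_{\GL} = \rho \oplus {}^\sigma\rho^\vee \oplus (\phi_{\widehat{\pi^-}})_{\GL}$, hence $\underline p(\phi_{\widehat\pi}) = [1^{2k}] \sqcup \underline p(\phi_{\widehat{\pi^-}})$, and Lemma~\ref{lem goal} (with $b = 1$, $d = k$) yields $d_{BV}(\underline p(\phi_{\widehat\pi})) = ([2k] + d_{BV}(\underline p(\phi_{\widehat{\pi^-}})))_X$; the same computation gives $\underline p(\psi) = [1^{2k}] \sqcup \underline p(\psi^-)$ and $d_{BV}(\underline p(\psi)) = ([2k]+d_{BV}(\underline p(\psi^-)))_X$. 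Therefore, assuming Conjecture~\ref{conj bound of WF} for $\pi$ (in case (a)), resp.\ Conjecture~\ref{conj Jiang 2} for $\pi$ with the parameter $\psi$ (in case (b), for each $\psi^- \in \Psi(\pi^-)$), I would obtain for every $\underline p^- \in \WFP(\pi^-)$ the inequality
\[ ([2k]+\underline p^-)_X \ \leq\ ([2k] + d_{BV}(\underline p(\phi_{\widehat{\pi^-}})))_X, \]
respectively the same inequality with $\psi^-$ in place of $\phi_{\widehat{\pi^-}}$.

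It then remains to \emph{cancel} the operation $\underline q \mapsto ([2k]+\underline q)_X$: I would prove that if $\underline a, \underline b$ are partitions of the same type $X \in \{B,C,D\}$ with $([2k]+\underline a)_X \leq ([2k]+\underline b)_X$, then $\underline a \leq \underline b$ (for type $A$, i.e.\ unitary groups, there is no collapse and this is immediate by comparing partial sums $S_j(\underline q):=q_1+\cdots+q_j$). The key input is the explicit $X$-collapse of $[2k]+\underline a$ when $\underline a$ is of type $X$: writing $\beta$ for the parity constrained in type $X$ (odd for $C$, even for $B$ and $D$) and $m_1$ for the multiplicity of $a_1$ in $\underline a$, one has $([2k]+\underline a)_X = [a_1+2k,\, a_2,\, a_3,\dots]$ when $a_1$ has parity different from $\beta$, and $([2k]+\underline a)_X = [a_1+2k-1,\ a_1+1,\ a_1^{m_1-2},\ a_2^{m_2},\dots]$ when $a_1$ has parity $\beta$ (in which case $m_1$ is even, hence $\geq 2$). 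In either case $S_j(([2k]+\underline a)_X) = 2k + S_j(\underline a)$ for $j\geq 2$, while $S_1 = 2k + a_1$ or $2k + a_1 - 1$ accordingly, and similarly for $\underline b$. Comparing partial sums then reduces the claim to the one delicate case in which $a_1$ has parity $\beta$ and $b_1$ does not, where the inequality at $j=1$ only gives $a_1 \leq b_1 + 1$; but $a_1 = b_1+1$ would force (using $S_2(\underline a)\leq S_2(\underline b)$) that $a_1 > a_2$, contradicting that a part of parity $\beta$ occurs with even multiplicity in a type-$X$ partition. Hence $a_1 < b_1$, and $\underline a \leq \underline b$ follows; this proves (a) and (b).

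I expect the combinatorial cancellation of the previous paragraph — in particular, pinning down the explicit collapse formula and settling the borderline parity case — to be the main obstacle; the rest is bookkeeping resting on Lemma~\ref{lem goal}, Propositions~\ref{prop induced orbit}--\ref{prop computation of induced orbit}, and the standard irreducibility and local-Arthur-packet compatibilities of parabolic induction along general linear blocks that are not conjugate-self-dual.
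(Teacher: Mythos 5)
Your proposal is correct and follows essentially the same route as the paper's proof: adjoin a non-conjugate-selfdual unitary supercuspidal $\rho$ of $\GL_{\mathfrak{r}-r}(E)$ so that $\pi=\rho\rtimes\pi^-$ is irreducible and stays in the same class $\Pi_\ast$, compute $\WFP(\pi)$ via induced orbits and the conjectural bound via Lemma \ref{lem goal}, and then cancel the operation $\underline{q}\mapsto([2(\mathfrak{r}-r)]+\underline{q})_X$. Your inline cancellation argument is precisely the paper's Lemma \ref{lem inj of induced partition}, down to the same delicate parity case being ruled out by the even-multiplicity constraint on type-$X$ partitions.
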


We expect that similar arguments hold for Conjecture \ref{conj Jiang ABV 2}. See Remark \ref{rmk reduction to a single group}(1). However, the same argument does not work if $\ast= \widehat{2}$. See Remark \ref{rmk reduction to a single group}(2).

\section{\texorpdfstring{Proofs of the main results for general linear groups}{}}\label{sec GLn(A) main}
In this section, we prove Theorem \ref{thm main GL} and Corollary \ref{cor GLn}. Here we follow the idea of \cite[\S II.2]{MW87}, which treats the case that $A=F$. {Note that the assumption that the residual characteristic of $F$ is not $2$ in \cite{MW87} is removed, based on the work of Varma (\cite{Var14}). See Remark \ref{rmk Varma} below the proof.}

\subsection{\texorpdfstring{Proof of Theorem \ref{thm main GL}}{}}

First, we establish the upper bound for anti-discrete series representations. Namely, let $\rho'$ be a cuspidal representation of $\GL_{m}(A)$ we claim that 
\begin{align}\label{eq upper bound anti-discrete series GL}
    \WFP(\Speh(\rho',n)|\cdot|_A^x) \leq \{ d_{BV}(\underline{p}(\phi_{\widehat{\Speh(\rho',n)}|\cdot|_A^{x}})) \}.
\end{align}
Write $\JL(\rho')=\St(\rho,s(\rho'))$. Then $\JL(\St(\rho', n))|\cdot|_A^{x}= \St(\rho, s(\rho')n )|\cdot|^{x}$. We compute the right hand side of \eqref{eq upper bound anti-discrete series GL} by definition as follows:
    \begin{align*}
        d_{BV}(\underline{p}(\phi_{\widehat{\Speh(\rho',n)}|\cdot|_A^{x}})) &= d_{BV}(\underline{p}(\phi_{\St(\rho',n)|\cdot|_A^{x}}))\\
        &= d_{BV}(\underline{p}(\phi_{\St(\rho, s(\rho')n)|\cdot|^{x}}))\\
        &=d_{BV}([(s(\rho')n)^{\dim(\rho)}])\\
        &= [\dim(\rho)^{s(\rho')n}].
    \end{align*}
 When $A=F$, we have
$\WFP(\Speh(\rho',n)|\cdot|^x)=\{[\dim(\rho)^{s(\rho')n}\}$ by the computation of the Bernstein-Zelevinsky derivatives(see \cite[Theorem 3.5]{Zel80}). This proves \eqref{eq upper bound anti-discrete series GL} in this case. In general, we let $\LJ:   K\Pi(\GL_{md_A}(F)) \to K\Pi(\GL_m(A))$ denote the map arising from the inverse of Jacquet-Langlands correspondence on discrete series representations, extended to Grothendieck groups by parabolic induction (see \cite[\S 3.1]{Cai23}). It is known that 
\begin{itemize}
    \item For any $\pi \in  K\Pi(\GL_{md_A}(F))$, $\underline{p}_1 \in \WFP(\pi)$ and $\underline{p}_2 \in \WFP(\LJ(\pi))$, we have $\underline{p}_1 \geq \underline{p}_2$ (see \cite[Theorem 3.1, Proposition 4.12]{Cai23}).
    \item  If  $\JL(\St(\rho', n))= \St(\rho, s(\rho')n )$, then $\LJ( \Speh(\rho, s(\rho')n )) = \epsilon \cdot  \Speh(\rho', n) $ for some $\epsilon \in \{\pm 1\}$ (see \cite[Proposition 3.4(2)]{Cai23}).
\end{itemize}
As a consequence, by the case of $\GL_{md_A}(F)$, we have 
\[ 
\WFP(\Speh(\rho',n)|\cdot|_A^x) \leq \WFP(\Speh(\rho,s(\rho')n)|\cdot|^x) = [\dim(\rho)^{s(\rho')n}].\]
This establishes \eqref{eq upper bound anti-discrete series GL} for any anti-discrete series representation $\pi \in \Pi(\GL_m(A))$. See Remark \ref{rmk division algebra} below that the above inequality can be strict.

Next, let $\pi \in \Pi(\GL_m(A))$ be an arbitrary representation and write the standard module of $\widehat{\pi}$ as
    \begin{align}\label{eq stdmod GL inner dual}
         M(\widehat{\pi})=\St(\rho'_1, n_1)|\cdot|_A^{x_1} \times \cdots \times \St(\rho'_f, n_f)|\cdot|_A^{x_f}, 
    \end{align}
    where $\rho_i'$ is a cuspidal representation of $\GL_{m_i}(A)$. For $1 \leq i \leq f$, write $\JL(\St(\rho_i', n_i))|\cdot|_A^{x_i}= \St(\rho_i, s(\rho_i')n_i )|\cdot|^{x_i}$. Then by definition, we have
    \begin{align*}
        d_{BV}(\underline{p}(\phi_{\widehat{\pi}}))&= \sum_{i=1}^f  \left[ \dim(\rho_i)^{s(\rho_i')n_i}\right].
    \end{align*}
Now we apply Aubert-Zelevinsky involution to \eqref{eq stdmod GL inner dual} and obtain
  \[ \pi \leq \widehat{M(\widehat{\pi})}= \Speh(\rho'_1, n_1)|\cdot|_A^{x_1} \times \cdots \times \Speh(\rho'_f, n_f)|\cdot|_A^{x_f}.\]
  Applying Proposition \ref{prop induced orbit} and the special case \eqref{eq upper bound anti-discrete series GL}, we obtain
  \begin{align*}
      \WFP(\pi) \leq \WFP(\widehat{M(\widehat{\pi})})&= \sum_{i=1}^f \WFP(\Speh(\rho_i', n_i)|\cdot|^{x_i}_A)\\
      &\leq  \sum_{i=1}^f [\dim(\rho_i)^{s(\rho_i')n_i}]\\
      &=d_{BV}(\underline{p}_A(\phi_{\widehat{\pi}})),
  \end{align*}
This proves the upper bound for any representation $\pi \in \Pi(\GL_m(A))$ in Conjecture \ref{conj bound of WF} for $\GL_m(A)$.

Finally, we show that $\WFP(\pi)= \{ d_{BV}(\underline{p}(\phi_{\widehat{\pi}})) \}$ for any $\pi \in \Pi(\GL_n(F))$.
Observe that for any $\pi \in \Pi(\GL_m(A))$, if for any irreducible subquotient $\pi'$ of $\widehat{M(\widehat{\pi})}$ that is not isomorphic to $\pi$, the following strict inequality holds
\begin{align}\label{eq goal GL}
    \WFP(\pi')<\WFP(\widehat{M(\widehat{\pi})}), 
\end{align}
 then $\pi$ will be the only irreducible subquotient of $\widehat{M(\widehat{\pi})} $ that can achieve the wavefront set $ \WFP(\widehat{M(\widehat{\pi})})=d_{BV}(\underline{p}_A(\phi_{\widehat{\pi}}))$. Therefore, \eqref{eq goal GL} implies the desired conclusion for $\pi$. We are going to verify \eqref{eq goal GL} for any $\pi \in \Pi(\GL_n(F))_{\lambda}$ for any fixed infinitesimal parameter $\lambda$. Consider the partial ordering $\succeq$ on $ \Pi(\GL_n(F))_{\lambda}$ defined by $\pi \succeq \pi'$ if $M(\pi)\leq M(\pi')$, (which is also equivalent to $\pi \leq M(\pi')$). We recall several facts on the partially ordered set $(\Pi(\GL_n(F))_{\lambda},\succeq)$ now.

 First, the two partial ordering sets $(\Phi(\GL_n(F))_{\lambda}, \geq_C)$ and $(\Pi(\GL_n(F))_{\lambda}, \succeq)$ are isomorphic. Namely,
\begin{align}\label{eq GL closure ordering}
    \phi_{\pi} \geq_C \phi_{\pi'} \Longleftrightarrow \pi \succeq \pi'.
\end{align}
See \cite[Proposition 4.3, Theorem 5.3]{Tad90} and \cite[Theorem 2.2]{Zel81}. Second, it follows from the definition of $\underline{p}$ that 
\begin{align}\label{eq p ordering GL}
    \pi \succ \pi' \Longrightarrow \underline{p}(\phi_\pi) > \underline{p}(\phi_{\pi'}).
\end{align}
Finally, the partially ordered set $(\Pi(\GL_n(F))_{\lambda},\succeq)$ has a unique maximal (resp. minimal) element, which we denote by $\pi^0$ (resp. $\pi_0$). The representation generic and $M(\pi^0)$ is irreducible, and the representation $\pi_0$ is the Aubert-Zelevinsky involution of $\pi^0$. 

Now we apply induction on the partial ordering $\succeq$ on $\Pi(\GL_m(A))_{\lambda}$ to verify \eqref{eq goal GL}.
If $\pi= \pi_0$, then $\widehat{M(\widehat{\pi})}= \widehat{ M(\pi^0) }= \widehat{\pi^0}= \pi_0 $ is irreducible. Thus \eqref{eq goal GL} trivially holds. In general, suppose that \eqref{eq goal GL} is verified for any $\pi''$ such that $\widehat{\pi''}\succ \widehat{\pi}$, so that
\[ \WFP(\pi'')= \{d_{BV}(\underline{p}(\phi_{\widehat{\pi''}}))\}\]
for these $\pi''$.  Let $\pi'$ be an irreducible subquotient of $\widehat{M(\widehat{\pi})}$ not isomorphic to $\pi$. Then $\widehat{\pi'} \leq M(\widehat{\pi})$. Hence, $\widehat{\pi'} \succ \widehat{\pi}$. Thus, we have
\[ \WFP(\pi')= \{ d_{BV}(\underline{p}(\phi_{\widehat{\pi'}})) \} <  d_{BV}(\underline{p}(\phi_{\widehat{\pi}}))= \WFP(\widehat{M(\widehat{\pi''})}), \]
where the strict inequality follows from \eqref{eq p ordering GL} and the fact that $d_{BV}$ is an order-reversing bijection in this case. This completes the proof of the theorem. \qed

\begin{remark}\label{rmk division algebra}

    The equality that $\WFP(\pi) = \{d_{BV}(\underline{p}(\phi_{\widehat{\pi}}))\}$ may fail for general representations of $\GL_m(A)$ when $A\neq F$. Here is an example. Consider the case that $d_A=2$. Take $\rho$ a supercuspidal representation of $\GL_2(F)$ and let $\rho'$ be the representation of $\GL_1(A)$ such that $\JL(\rho')=\rho$. By the computation in the proof above, we have 
    \[  d_{BV}(\underline{p}(\phi_{\widehat{\Speh(\rho',1)}}))= [2^1].\]
    However, every representation of $\GL_1(A)$ has wavefront set $ \{[1^2]\}$. 
\end{remark}
{
\begin{remark}\label{rmk Varma}
Recall that there are two different definitions of wavefront set. The first one is via nonzero coefficient in the local character expansion, and the second one is via the non-vanishing of the degenerate generalized Whittaker models. The two definitions are equivalent by \cite[Theorem I.16]{MW87} when the residual characteristic is not $2$, and the remaining case is established in \cite[Theorem 1]{Var14}. 
 In this proof, we indeed use both definitions of the wavefront set. The proof of Proposition \ref{prop induced orbit} in \cite{MW87} used the first definition (and it works for all residual characteristics). The computation of Bernstein-Zelevinsky derivatives for Speh representations gives us their wavefront set in the second definition.
\end{remark}
}

\subsection{Proof of Corollary \ref{cor GLn}}

     For any local Arthur parameter $\psi$ of $\GL_n(F)$, we have $\Pi_{\psi}= \Pi_{\phi_{\psi}}$, which is a singleton. Thus, Theorem \ref{thm main GL} implies Conjecture \ref{conj Jiang 2}. 
     
     For Conjecture \ref{conj Jiang ABV 2}, ABV-packets are not always singletons, see \cite{CFK22}. However, Working Hypothesis \ref{assu ABV AZ dual} for $\GL_n(F)$ is proved in \cite[Proposition 3.2.1]{CFK22}. Let $\pi \in \Pi_{\phi}^{\textrm{ABV}}$. We have $\underline{p}(\phi_{\widehat{\pi}}) \geq \underline{p}(\widehat{\phi})$. Hence,
     \[ \WFP(\pi)= d_{BV}(\underline{p}(\phi_{\widehat{\pi}})) \leq d_{BV}(\underline{p}(\widehat{\phi})). \]
     This completes the proof of the theorem.     \qed

\section{Proofs of the main results for classical groups}\label{sec proof classical}
We prove Theorems \ref{thm main classical groups}, \ref{thm main Jiang} and \ref{thm main Jiang ABV} in this section.

\subsection{Reduction to the anti-tempered case}\label{sec anti-tempered}
In this subsection, we verify the implications (A), (B), (C), (D) and (E) in the diagram \eqref{eq diagram}.

First, we prove (A).

\begin{thm}\label{thm (A)}
    Let $V=V_{an, \mathfrak{r}}$ and $G=G(V)$.  The statement  $(\Pi_{\widehat{temp}})$ implies $(\Pi)$.
\end{thm}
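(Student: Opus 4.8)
The plan is to reduce the general case to the anti-tempered case by expressing an arbitrary irreducible representation $\pi$ of $G$ via the Langlands classification applied to its Aubert--Zelevinsky dual, and then transporting the wavefront set estimate through parabolic induction using Proposition \ref{prop induced orbit} together with the key combinatorial Lemma \ref{lem goal}. Concretely, write the standard module of $\widehat\pi$ as
\[ M(\widehat\pi) = \St(\rho_1,a_1)|\cdot|^{x_1} \times \cdots \times \St(\rho_f,a_f)|\cdot|^{x_f} \rtimes \tau, \]
where $\tau \in \Pi_{temp}(G(V_{an,r}))$ for appropriate $r$. Applying the Aubert--Zelevinsky involution, $\pi$ is a subquotient of
\[ \widehat{M(\widehat\pi)} = \Speh(\rho_1,a_1)|\cdot|^{x_1} \times \cdots \times \Speh(\rho_f,a_f)|\cdot|^{x_f} \rtimes \widehat\tau, \]
and $\widehat\tau$ is anti-tempered. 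So $\WFP(\pi) \leq \WFP(\widehat{M(\widehat\pi)})$, and the right-hand side is computed by iterating Proposition \ref{prop induced orbit} / Proposition \ref{prop computation of induced orbit} starting from $\WFP(\widehat\tau)$, which by hypothesis $(\Pi_{\widehat{temp}})$ is bounded above by $d_{BV}(\underline p(\phi_{\widehat{\widehat\tau}})) = d_{BV}(\underline p(\phi_{\tau}))$.

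The next step is to identify the parabolic induction of nilpotent orbits with the partition operations appearing in Lemma \ref{lem goal}. Each factor $\Speh(\rho_i,a_i)|\cdot|^{x_i}$ has wavefront set $\{[\dim(\rho_i)^{a_i}]\}$ by Theorem \ref{thm WF GL Speh} (the $\GL$ Speh computation), and inducing such a factor into a classical group corresponds, on partitions, to the operation $\underline p \mapsto ([\dim(\rho_i)^{2a_i}] \sqcup \cdots)$ possibly followed by an $X$-collapse, as recorded in Proposition \ref{prop computation of induced orbit}(3) — note the doubling, reflecting that $\rho_i|\cdot|^{x_i}$ and ${}^\sigma\rho_i^\vee|\cdot|^{-x_i}$ both contribute to $\phi_{\widehat\pi}$ via \eqref{eq decomp phi}. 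Thus I must show
\[ d_{BV}(\underline p(\phi_{\widehat\pi})) = \Bigl( [(2a_f)^{\dim\rho_f}] + \cdots + [(2a_1)^{\dim\rho_1}] + d_{BV}(\underline p(\phi_\tau)) \Bigr)_X, \]
and this is precisely the content of Lemma \ref{lem goal} applied repeatedly (with $(b,d) = (\dim\rho_i, a_i)$): each $d_{BV}([b^{2d}] \sqcup \underline p) = ([(2d)^b] + d_{BV}(\underline p))_{X'}$ peels off one factor, the collapse being harmless because $d_{BV}$ always lands in the correct type and collapses are idempotent/monotone. Combining this chain of equalities with the induction estimate on wavefront sets — using that induction of orbits is monotone for $\leq$ (Proposition \ref{prop dBV}(1) for $d_{BV}$, and the compatibility of collapse with dominance order) — yields $\WFP(\pi) \leq d_{BV}(\underline p(\phi_{\widehat\pi}))$, which is exactly Conjecture \ref{conj bound of WF} for $\pi$.

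There is one bookkeeping subtlety in matching $\phi_{\widehat\pi}$ to the data of $M(\widehat\pi)$: from the formula
\[ (\phi_{\widehat\pi})_{\GL} = (\phi_\tau)_{\GL} + \bigoplus_{i=1}^f \bigl( \rho_i|\cdot|^{x_i} \otimes S_{a_i} + {}^\sigma\rho_i^\vee|\cdot|^{-x_i} \otimes S_{a_i} \bigr) \]
one reads off $\underline p(\phi_{\widehat\pi}) = \underline p(\phi_\tau) \sqcup \bigsqcup_i [a_i^{2\dim\rho_i}]$ (all $x_i \neq 0$, so every $\rho_i|\cdot|^{x_i}$-block falls in $I_{\neq 0}$ and is doubled, matching Definition \ref{def p(phi), p(psi)}), and one then applies $d_{BV}$. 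The main obstacle I anticipate is not the wavefront-set side — that is a clean application of Proposition \ref{prop induced orbit} — but rather verifying that the order in which one peels off the factors $[a_i^{2\dim\rho_i}]$ via Lemma \ref{lem goal} does not matter, and that the intermediate $X$-collapses do not obstruct the equality (as opposed to merely an inequality). This requires knowing that $([(2d_1)^{b_1}] + ([(2d_2)^{b_2}] + \underline q)_{X'})_{X'} = ([(2d_1)^{b_1}] + [(2d_2)^{b_2}] + \underline q)_{X'}$ for type-$X'$ partitions $\underline q$ — i.e. that adding a ``rectangle of even columns'' commutes with collapse in the appropriate sense — which should follow from the explicit description of collapse in \S\ref{sec collapse} but is the one genuinely technical point; everything else reduces to repeatedly invoking results already in hand.
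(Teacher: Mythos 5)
Your overall strategy is the same as the paper's: pass to the standard module of $\widehat{\pi}$, apply the Aubert--Zelevinsky involution to realize $\pi$ as a subquotient of a product of Speh representations induced with an anti-tempered representation, bound the wavefront set via Proposition \ref{prop induced orbit} and Theorem \ref{thm WF GL Speh}, and convert the bound into $d_{BV}(\underline{p}(\phi_{\widehat{\pi}}))$ via Lemma \ref{lem goal}. As written, however, there are two problems.

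First, the rectangles are transposed at the crucial step. The blocks entering $\underline{p}(\phi_{\widehat{\pi}})$ are $[a_i^{2\dim(\rho_i)}]$ (as you correctly record in your bookkeeping paragraph), so Lemma \ref{lem goal} must be applied with $(b,d)=(a_i,\dim(\rho_i))$, producing the summand $[(2d)^b]=[(2\dim(\rho_i))^{a_i}]$; correspondingly, on the wavefront side Proposition \ref{prop computation of induced orbit}(3) \emph{adds} the Speh partition $[\dim(\rho_i)^{a_i}]$ twice (it does not take $\sqcup$), again giving $[(2\dim(\rho_i))^{a_i}]$ rather than $[\dim(\rho_i)^{2a_i}]$. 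Your displayed identity with summands $[(2a_i)^{\dim(\rho_i)}]$, and your instantiation $(b,d)=(\dim(\rho_i),a_i)$, are therefore false as stated and inconsistent with your own formula for $\underline{p}(\phi_{\widehat{\pi}})$; with the rectangles corrected the two sides do match.

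Second, because you peel off all $f$ factors at once, you genuinely need the compatibility you flag at the end, namely that adding several rectangles and collapsing once agrees with adding them one at a time with intermediate collapses, and you do not prove it; Lemma \ref{lem goal} only handles a single rectangle. This is a real gap in the write-up, though repairable: one can deduce it from transitivity of induction of nilpotent orbits (Proposition \ref{prop computation of induced orbit}(1)), or, as the paper does, avoid it entirely by inducting on the number $f$ of Langlands factors: peel off only $\St(\rho_1,a_1)|\cdot|^{x_1}$, obtain $\pi \le \Speh(\rho_1,a_1)|\cdot|^{x_1} \rtimes \pi_-$ where $\pi_-$ is the Aubert--Zelevinsky dual of the Langlands subrepresentation of the truncated standard module (a representation of a smaller group in the same Witt tower with $f(\pi_-)=f(\pi)-1$), apply the induction hypothesis --- the desired inequality for $\pi_-$, whose base case is exactly the anti-tempered hypothesis $(\Pi_{\widehat{temp}})$ --- and invoke Lemma \ref{lem goal} exactly once per step. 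In that arrangement no iterated-collapse identity is ever needed.
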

\begin{proof}
Our goal is to show that for any $\pi \in \Pi( G(V_{an, r}))$ with $r \leq \mathfrak{r}$, the following inequality holds
\begin{align}\label{eq goal (A)}
    \WFP(\pi) \leq d_{BV}(\underline{p}(\phi_{\widehat{\pi}})).
\end{align}
 If $\pi \in \Pi_{\widehat{temp}}(G(V_{an, r}))$, then \eqref{eq goal (A)} holds by  $(\Pi_{\widehat{temp}})$. Thus, we assume $\pi \in \Pi(G(V_{an,r})) \setminus \Pi_{\widehat{temp}}(G(V_{an,r}))$. Write the standard module of $\widehat{\pi}$ as
    \[ M(\widehat{\pi})=\St(\rho_1,a_1)|\cdot|^{x_1} \times \cdots \times \St(\rho_f, a_f) |\cdot|^{x_f} \rtimes (\widehat{\pi})_{temp}. \]
    We apply induction on $f(\pi):=f$. Note that $f(\pi) \geq 1$ since $\pi$ is not anti-tempered. 
    
    Let $n_1= \dim(\rho_1) a_1$,  $r^{-}:= r-n_1$ and $G^{-}:= G(V_{an, r^{-}})$. Let $(\widehat{\pi})^{-} \in \Pi(G^-)$ such that
    \[ M((\widehat{\pi})^-)= \St(\rho_2,a_2)|\cdot|^{x_2} \times \cdots \times \St(\rho_f, a_f) |\cdot|^{x_f} \rtimes (\widehat{\pi})_{temp}.\]
    We claim that 
    \begin{align}\label{eq inj (A)}
        \widehat{\pi} \hookrightarrow \St(\rho_1, a_1)|\cdot|^{x_1} \rtimes (\widehat{\pi})^{-}.
    \end{align}
    Indeed, if $\sigma$ is any irreducible subrepresentation of $\St(\rho_1, a_1)|\cdot|^{x_1} \times (\widehat{\pi})^{-}$, then the exactness of parabolic induction gives
    \begin{align*}
        \sigma &\hookrightarrow \St(\rho_1, a_1)|\cdot|^{x_1} \rtimes (\widehat{\pi})^{-}\\
        &\hookrightarrow \St(\rho_1, a_1)|\cdot|^{x_1} \rtimes M((\widehat{\pi})^{-})\\
        &= M(\widehat{\pi}).
    \end{align*}
    Since $\widehat{\pi}$ is the unique irreducible subrepresentation of $M(\widehat{\pi}) $, we conclude that $\sigma=\widehat{\pi}$, which verifies the claim \eqref{eq inj (A)}.
    
    Write $\pi_{-}:= \widehat{ (\widehat{\pi})^-}$ for short. By definition, $f(\pi_{-})= f(\pi)-1$. Also, the compatibility of local Langlands correspondence and Langlands classification implies
    \begin{align}\label{eq (A) L-par}
        (\phi_{\widehat{\pi}})_{\GL}= (\rho_1|\cdot|^{x_1} \otimes S_{a_1} + \rho_1^{\vee}|\cdot|^{-x_1} \otimes S_{a_1} ) + (\phi_{\widehat{\pi_{-}}})_{\GL}. 
    \end{align} 
    Applying Aubert-Zelevinsky involution on \eqref{eq inj (A)}, we obtain
    \begin{align*}
        \pi \leq \reallywidehat{\St(\rho_1,a_1)|\cdot|^{x_1}} \rtimes  \widehat{(\widehat{\pi})^-}= \Speh(\rho_1, a_1)|\cdot|^{x_1} \rtimes \pi_-.
    \end{align*}
Thus, let $X\in\{A,B,C,D\}$ corresponding to the type of the group $G(V)$. By Propositions \ref{prop induced orbit}, \ref{prop computation of induced orbit}, Theorem \ref{thm main GL}, the induction hypothesis for $\pi_{-}$, Lemma \ref{lem goal} and \eqref{eq (A) L-par}, we obtain
\begin{align*}
    \WFP(\pi) &\leq \left( \WFP(\Speh(\rho_1, a_1)|\cdot|^{x_1}) + \WFP(\Speh(\rho_1, a_1)|\cdot|^{x_1}) +\WFP(\pi_-)  \right)_X\\
    &= \left( [(2\dim(\rho_1))^{a_1}] +\WFP(\pi_-)  \right)_X\\
    & \leq \left( [(2\dim(\rho_1))^{a_1}] + d_{BV}(\underline{p}(\phi_{\widehat{\pi_-}}))  \right)_X\\
    &= d_{BV}( [a_1^{2 \dim(\rho_1)}] \sqcup \underline{p} (\phi_{\widehat{\pi_-}}))\\
    &= d_{BV}(\underline{p}(\phi_{\widehat{\pi}})).
\end{align*}
This verifies \eqref{eq goal (A)} and completes the proof of the theorem.
\end{proof}

Next, we verify the rest of the directions, which mostly follow from definitions and assumptions.

\begin{lemma}\label{lem (B)}
     Let $V=V_{an, \mathfrak{r}}$ and $G=G(V)$. If Working Hypothesis \ref{assu closure ordering A-packet} holds for $G(V_{an,r})$ for any $r \leq \mathfrak{r}$, then $(\Pi_A)$ implies $(\Psi_A)$.
\end{lemma}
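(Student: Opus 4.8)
The claim is Lemma~\ref{lem (B)}: under Working Hypothesis~\ref{assu closure ordering A-packet}, the statement $(\Pi_A)$ (Conjecture~\ref{conj bound of WF} for all representations of Arthur type of $G(V_{an,r})$, $r \leq \mathfrak{r}$) implies $(\Psi_A)$ (Conjecture~\ref{conj Jiang 2} for all representations of Arthur type).

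\medskip

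\textbf{The plan.} Fix $r \leq \mathfrak{r}$, a representation $\pi \in \Pi_A(G(V_{an,r}))$, and a local Arthur parameter $\psi \in \Psi(\pi)$, i.e. $\psi \in \Psi^+(G(V_{an,r}))$ with $\pi \in \Pi_\psi$. I need to show that every $\underline{p} \in \WFP(\pi)$ satisfies $\underline{p} \leq d_{BV}(\underline{p}(\psi))$. The key is the chain of inequalities already assembled as \eqref{eq sharper upper bound}: first, since $\pi \in \Pi_A(G(V_{an,r}))$, the hypothesis $(\Pi_A)$ gives $\underline{p} \leq d_{BV}(\underline{p}(\phi_{\widehat{\pi}}))$ for any $\underline{p} \in \WFP(\pi)$; second, Working Hypothesis~\ref{assu closure ordering A-packet} applied to $\psi$ and $\pi \in \Pi_\psi$ gives $\phi_\pi \geq_C \phi_\psi$. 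The remaining task is to propagate this closure-ordering relation through Aubert--Zelevinsky duality and the Barbasch--Vogan map.

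\medskip

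\textbf{Key steps, in order.} First I would record that by Proposition~\ref{prop A-packet}(c), $\widehat{\pi} \in \Pi_{\widehat{\psi}}$, so Working Hypothesis~\ref{assu closure ordering A-packet} applied to the parameter $\widehat{\psi}$ and the member $\widehat{\pi} \in \Pi_{\widehat{\psi}}$ yields $\phi_{\widehat{\pi}} \geq_C \phi_{\widehat{\psi}}$. Second, by Proposition~\ref{prop C implies P} this closure-ordering relation descends to the dominance order on the associated partitions: $\underline{p}(\phi_{\widehat{\pi}}) \geq \underline{p}(\phi_{\widehat{\psi}})$. Third, recalling from \S\ref{lp and lap} the identity $\underline{p}(\psi) = \underline{p}(\phi_{\widehat{\psi}})$, and applying Proposition~\ref{prop dBV}(1) (that $d_{BV}$ reverses dominance order), I get $d_{BV}(\underline{p}(\phi_{\widehat{\pi}})) \leq d_{BV}(\underline{p}(\phi_{\widehat{\psi}})) = d_{BV}(\underline{p}(\psi))$. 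Fourth, combining with the hypothesis $(\Pi_A)$ gives, for any $\underline{p} \in \WFP(\pi)$,
\[
\underline{p} \leq d_{BV}(\underline{p}(\phi_{\widehat{\pi}})) \leq d_{BV}(\underline{p}(\psi)),
\]
which is exactly the assertion of Conjecture~\ref{conj Jiang 2} for $\pi$. Since $\pi \in \Pi_A(G(V_{an,r}))$ and $\psi \in \Psi(\pi)$ were arbitrary, and $r \leq \mathfrak{r}$ was arbitrary, this establishes $(\Psi_A)$.

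\medskip

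\textbf{Main obstacle.} The argument is essentially bookkeeping once the right inputs are in hand; the only subtle point is the correct invocation of Working Hypothesis~\ref{assu closure ordering A-packet} for $\widehat{\psi}$ rather than $\psi$, which requires Proposition~\ref{prop A-packet}(c) to know that $\widehat{\pi}$ actually lies in the packet $\Pi_{\widehat{\psi}}$. One should double-check that $\widehat{\psi} \in \Psi^+(G(V_{an,r}))$ (immediate from the definition \eqref{eq psi hat}, since swapping the two $\SL_2$-factors preserves the eigenvalue bound in Definition~\ref{def A-par}) so that the Working Hypothesis is applicable. There is no deep difficulty; the lemma is a direct consequence of \eqref{eq sharper upper bound} specialized to representations of Arthur type.
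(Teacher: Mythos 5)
Your proof is correct and matches the paper's argument essentially verbatim: apply Proposition \ref{prop A-packet}(c) to place $\widehat{\pi}$ in $\Pi_{\widehat{\psi}}$, invoke Working Hypothesis \ref{assu closure ordering A-packet} for $\widehat{\psi}$, then use Propositions \ref{prop C implies P} and \ref{prop dBV}(1) together with $(\Pi_A)$ and the identity $\underline{p}(\psi)=\underline{p}(\phi_{\widehat{\psi}})$ to conclude. Nothing to add.
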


\begin{proof}
Let $\pi \in G(V_{an,r})$ for some $r \leq \mathfrak{r}$ and $\pi \in \Pi_{\psi}$. Proposition \ref{prop A-packet}(c) implies $\widehat{\pi} \in \Pi_{\widehat{\psi}}$. Thus, the statement $(\Pi_A)$, Working Hypothesis \ref{assu closure ordering A-packet} and Propositions \ref{prop C implies P}, \ref{prop dBV}(1) imply that
\begin{align*}
    \WFP(\pi) \leq d_{BV}(\underline{p}(\phi_{\widehat{\pi}})) \leq d_{BV}( \underline{p}(\phi_{\widehat{\psi}}))= d_{BV}( \underline{p}(\psi)).
\end{align*}
This completes the proof of the lemma.
\end{proof}

\begin{lemma}\label{lem (C) (H)}
     Let $V=V_{an, \mathfrak{r}}$ and $G=G(V)$. If $\pi$ is an anti-tempered representation of $G(V_{an,r})$ for some $r \leq \mathfrak{r}$, then there exists an anti-tempered local Arthur parameter such that $\pi \in \Pi_{\psi}$ and
     \[ d_{BV}(\underline{p}(\phi_{\widehat{\pi}})) = d_{BV}(\underline{p}(\psi)).\]
     In particular, $(\Psi_{\widehat{temp}})$ implies $(\Pi_{\widehat{temp}})$ and $(\Psi_{\widehat{2}})$ implies $(\Pi_{\widehat{2}})$.
\end{lemma}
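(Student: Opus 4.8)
The plan is to use the structure theory of anti-tempered representations together with Proposition \ref{prop A-packet}(a) and (c). Suppose $\pi$ is an anti-tempered representation of $G(V_{an,r})$, so $\widehat{\pi}$ is tempered. By Proposition \ref{prop A-packet}(a), there is a tempered local Arthur parameter $\psi_0 \in \Psi_{temp}(G(V_{an,r}))$ with $\widehat{\pi} \in \Pi_{\psi_0} = \Pi_{\phi_{\psi_0}}$, and moreover $\phi_{\widehat{\pi}} = \phi_{\psi_0}$ (since for tempered parameters the $L$-packet is exactly $\Pi_{\phi_{\psi_0}}$ and the $L$-parameter attached to any member is $\phi_{\psi_0}$). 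Set $\psi := \widehat{\psi_0}$, which is an anti-tempered local Arthur parameter. By Proposition \ref{prop A-packet}(c), $\Pi_{\psi} = \Pi_{\widehat{\psi_0}} = \{\widehat{\sigma} \mid \sigma \in \Pi_{\psi_0}\}$, and since $\widehat{\pi} \in \Pi_{\psi_0}$ we get $\pi \in \Pi_{\psi}$. If $\pi$ is in fact anti-discrete, then $\widehat{\pi}$ is discrete series, so $\psi_0$ can be taken to be a discrete local Arthur parameter by Proposition \ref{prop A-packet}(a) again, hence $\psi = \widehat{\psi_0}$ is anti-discrete; this gives the refined statement needed for the $(\Psi_{\widehat{2}}) \Rightarrow (\Pi_{\widehat{2}})$ implication.

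It remains to check the equality $d_{BV}(\underline{p}(\phi_{\widehat{\pi}})) = d_{BV}(\underline{p}(\psi))$. By the last remark of \S\ref{lp and lap}, $\underline{p}(\psi) = \underline{p}(\phi_{\widehat{\psi}})$; applying this with $\psi = \widehat{\psi_0}$ and using $\widehat{\widehat{\psi_0}} = \psi_0$ gives $\underline{p}(\psi) = \underline{p}(\phi_{\psi_0})$. Since $\phi_{\psi_0} = \phi_{\widehat{\pi}}$ as noted above, we conclude $\underline{p}(\psi) = \underline{p}(\phi_{\widehat{\pi}})$, and applying $d_{BV}$ to both sides yields the desired equality. (Alternatively, since $\psi_0$ is generic, $\psi_0 = \phi_{\psi_0} \otimes S_1$, so $\psi = \widehat{\psi_0}$ has trivial Deligne $\SL_2$ and its Arthur $\SL_2$ is exactly the $\SL_2$ of $\phi_{\psi_0}$; one reads off $\underline{p}(\psi) = \underline{p}(\phi_{\psi_0})$ directly from Definition \ref{def p(phi), p(psi)}.)

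For the two implications: assume $(\Psi_{\widehat{temp}})$ and let $\pi$ be anti-tempered on $G(V_{an,r})$ with $r \le \mathfrak{r}$. Choose $\psi$ as above; it is anti-tempered, so $(\Psi_{\widehat{temp}})$ gives $\WFP(\pi) \le d_{BV}(\underline{p}(\psi)) = d_{BV}(\underline{p}(\phi_{\widehat{\pi}}))$, which is precisely $(\Pi_{\widehat{temp}})$ for $\pi$. The implication $(\Psi_{\widehat{2}}) \Rightarrow (\Pi_{\widehat{2}})$ is identical, using that $\psi$ can be chosen anti-discrete when $\pi$ is anti-discrete.

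I expect the only subtle point to be bookkeeping: confirming that the tempered parameter $\psi_0$ attached to $\widehat{\pi}$ genuinely satisfies $\phi_{\psi_0} = \phi_{\widehat{\pi}}$ (this is the content of Proposition \ref{prop A-packet}(a) combined with the compatibility of LLC with the tempered packet structure, noting the even orthogonal caveat in Remark \ref{rmk even orthogonal} causes no trouble since only the partition $\underline{p}(\cdot)$ matters), and verifying the identity $\underline{p}(\psi) = \underline{p}(\phi_{\widehat{\psi}})$ is being applied in the right direction. No genuine obstacle is anticipated; this lemma is essentially a formal consequence of the cited structural results.
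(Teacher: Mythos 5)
Your proposal is correct and follows essentially the same route as the paper: apply Proposition \ref{prop A-packet}(a) to the tempered representation $\widehat{\pi}$ to get a tempered parameter with $\phi_{\widehat{\pi}}=\phi_{\psi_0}$, take $\psi=\widehat{\psi_0}$, use Proposition \ref{prop A-packet}(c) to place $\pi$ in $\Pi_{\psi}$, and conclude via $\underline{p}(\psi)=\underline{p}(\phi_{\widehat{\psi}})$. Your explicit treatment of the anti-discrete case is a small addition the paper leaves implicit, but the argument is the same.
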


\begin{proof}
    Since $\widehat{\pi}$ is tempered, there exists a tempered local Arthur parameter $\widehat{\psi}$ such that $\widehat{\pi} \in \Pi_{\widehat{\psi}}$ and $\phi_{\widehat{\pi}}= \phi_{\widehat{\psi}}$ by Proposition \ref{prop A-packet}(a). Thus, 
    \[ d_{BV}(\underline{p}(\phi_{\widehat{\pi}})) = d_{BV}(\underline{p}(\psi)),\]
    where $\psi:= \widehat{\widehat{\psi}}$. Finally, Proposition \ref{prop A-packet}(c) implies $\pi \in \Pi_{\psi}$, which completes the proof of the lemma. 
\end{proof}

\begin{lemma}\label{lem (D) (E) (G)}
    Let $V=V_{an, \mathfrak{r}}$ and $G=G(V)$. Assume that Working Hypothesis \ref{assu ABV AZ dual} holds for $G(V_{an,r})$ for any $r \leq \mathfrak{r}$. For any $\pi \in \Pi(G(V_{an,r}))$ and $\phi \in \Phi(\pi)$, we have 
    \[d_{BV}( \underline{p}(\phi_{\widehat{\pi}})) \leq d_{BV}( \underline{p}(\widehat{\phi})).\]
    Moreover, there exist a $\phi \in \Phi(\pi)$ such that the above inequality is an equality. In particular, $(\Pi_{\ast})$ is equivalent to $(\Phi_{\ast})$ for $\ast \in \{\emptyset, \widehat{temp}, \widehat{2}\}$.
\end{lemma}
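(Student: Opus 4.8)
The plan is to produce, for each $\pi \in \Pi(G(V_{an,r}))$, a distinguished element of $\Phi(\pi)$ realizing equality, and to show every other element of $\Phi(\pi)$ gives a (weakly) larger upper bound; the equivalence of $(\Pi_\ast)$ and $(\Phi_\ast)$ then follows formally. First I would observe that $\phi_{\widehat\pi} \in \Phi(G(V_{an,r}))$ and that, by Proposition \ref{prop ABV}(a), $\widehat\pi \in \Pi_{\phi_{\widehat\pi}} \subseteq \Pi_{\phi_{\widehat\pi}}^{\textrm{ABV}}$. Applying Working Hypothesis \ref{assu ABV AZ dual} to $\widehat\pi$ (note $\widehat{\widehat\pi} = \pi$), we get $\pi = \widehat{\widehat\pi} \in \Pi_{\widehat{\phi_{\widehat\pi}}}^{\textrm{ABV}}$, i.e.\ $\widehat{\phi_{\widehat\pi}} \in \Phi(\pi)$. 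This is the distinguished parameter. Since the Pyasetskii involution $\widehat{\phantom{\phi}}$ satisfies $\widehat{\widehat\phi} = \phi$ (it is an involution on $\Phi(G)_\lambda$), we have $\widehat{\widehat{\phi_{\widehat\pi}}} = \phi_{\widehat\pi}$, so this choice of $\phi$ gives $d_{BV}(\underline{p}(\widehat\phi)) = d_{BV}(\underline{p}(\phi_{\widehat\pi}))$, the equality case.

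Next I would prove the inequality for an arbitrary $\phi \in \Phi(\pi)$. By definition $\pi \in \Pi_\phi^{\textrm{ABV}}$, so Working Hypothesis \ref{assu ABV AZ dual} gives $\widehat\pi \in \Pi_{\widehat\phi}^{\textrm{ABV}}$. Now apply Proposition \ref{prop ABV}(b) to $\widehat\pi$ and the parameter $\widehat\phi$: this yields $\phi_{\widehat\pi} \geq_C \widehat\phi$, hence $\underline{p}(\phi_{\widehat\pi}) \geq \underline{p}(\widehat\phi)$. Applying the order-reversing property of Barbasch--Vogan duality (Proposition \ref{prop dBV}(1)) gives
\[
d_{BV}(\underline{p}(\phi_{\widehat\pi})) \leq d_{BV}(\underline{p}(\widehat\phi)),
\]
as claimed. (Note $\phi_{\widehat\pi}$ and $\widehat\phi$ share the same infinitesimal parameter since $\phi$, $\phi_{\widehat\pi}$ and $\pi$ all do, so the closure ordering comparison is meaningful; this should be checked but is routine.)

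Finally, the equivalence of $(\Pi_\ast)$ and $(\Phi_\ast)$ for $\ast \in \{\emptyset, \widehat{temp}, \widehat{2}\}$: the direction $(\Phi_\ast) \Rightarrow (\Pi_\ast)$ is immediate because $\widehat{\phi_{\widehat\pi}} \in \Phi(\pi)$ and $d_{BV}(\underline{p}(\widehat{\widehat{\phi_{\widehat\pi}}})) = d_{BV}(\underline{p}(\phi_{\widehat\pi}))$, so Conjecture \ref{conj Jiang ABV 2} for $\pi$ specializes to Conjecture \ref{conj bound of WF} for $\pi$; and the Aubert--Zelevinsky involution preserves each of the classes $\Pi(G_m)$, $\Pi_{\widehat{temp}}(G_m)$, $\Pi_{\widehat 2}(G_m)$ (by definition of anti-tempered/anti-discrete), so the range of $\pi$ matches. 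Conversely $(\Pi_\ast) \Rightarrow (\Phi_\ast)$: for $\pi$ in the relevant class and any $\phi \in \Phi(\pi)$, combine $(\Pi_\ast)$ for $\pi$, which gives $\WFP(\pi) \leq d_{BV}(\underline{p}(\phi_{\widehat\pi}))$, with the inequality $d_{BV}(\underline{p}(\phi_{\widehat\pi})) \leq d_{BV}(\underline{p}(\widehat\phi))$ just established. I do not anticipate a serious obstacle here; the only point requiring care is confirming that Working Hypothesis \ref{assu ABV AZ dual} and Proposition \ref{prop ABV} apply on the nose (same $\lambda$, correct direction of the Pyasetskii involution), which is bookkeeping rather than a genuine difficulty.
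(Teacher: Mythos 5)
Your proposal is correct and follows essentially the same argument as the paper: you apply Working Hypothesis \ref{assu ABV AZ dual} to $\pi\in\Pi_{\phi}^{\textrm{ABV}}$ together with Proposition \ref{prop ABV}(b) and Proposition \ref{prop dBV}(1) for the inequality, and use Proposition \ref{prop ABV}(a) plus the Working Hypothesis to see that $\widehat{\phi_{\widehat{\pi}}}\in\Phi(\pi)$ realizes equality. The concluding equivalence of $(\Pi_{\ast})$ and $(\Phi_{\ast})$ is handled exactly as the paper intends, so no changes are needed.
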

\begin{proof}
   Suppose $\pi \in \Pi_{\phi}^{\textrm{ABV}}$. Working Hypothesis \ref{assu ABV AZ dual} implies that $\widehat{\pi} \in \Pi_{\widehat{\phi}}^{\textrm{ABV}}$. Thus, Propositions \ref{prop ABV}(b), \ref{prop dBV}(1) give
  \[d_{BV}( \underline{p}(\phi_{\widehat{\pi}})) \leq d_{BV}( \underline{p}(\widehat{\phi})).\]
  On the other hand, Proposition \ref{prop ABV}(a) gives $\widehat{\pi} \in \Pi_{\phi_{\widehat{\pi}}} \subseteq \Pi_{\phi_{\widehat{\pi}}}^{\textrm{ABV}}$. Thus, Working Hypothesis \ref{assu ABV AZ dual} implies that $\phi:=\widehat{\phi_{\widehat{\pi}}} \in \Phi(\pi)$. Then 
  \[d_{BV}( \underline{p}(\phi_{\widehat{\pi}})) = d_{BV}( \underline{p}(\widehat{\phi})).\]
  This completes the proof of the lemma.
\end{proof}

\subsection{Reduction to the anti-discrete case}\label{sec anti-discrete}

In this subsection, we prove direction (F) in diagram \eqref{eq diagram}.

\begin{thm}\label{thm (F)}
    Let $V=V_{an, \mathfrak{r}}$ and $G=G(V)$.  The statement $(\Pi_{\widehat{2}})$ implies $(\Pi_{\widehat{temp}})$.
\end{thm}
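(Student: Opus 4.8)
The goal is to show that if Conjecture \ref{conj bound of WF} holds for all anti-discrete representations of $G(V_{an,r})$ for all $r \leq \mathfrak{r}$, then it holds for all anti-tempered representations of $G(V_{an,r})$ for all $r \leq \mathfrak{r}$. So let $\pi$ be anti-tempered, i.e.\ $\widehat{\pi}$ is a tempered representation of some $G(V_{an,r})$ with $r \leq \mathfrak{r}$. The plan is to run an induction that peels off one segment at a time from a discrete-series-type datum, parallel in spirit to the proof of Theorem \ref{thm (A)}, but now moving from tempered down to discrete series rather than from general down to tempered.

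First I would use the structure theory of tempered representations of classical groups: any tempered $\widehat{\pi}$ is an irreducible summand of a parabolic induction $\St(\rho_1,a_1)\times\cdots\times\St(\rho_k,a_k)\rtimes \sigma$, where $\sigma$ is a discrete series representation of a smaller group $G(V_{an,r^-})$ and the data may be arranged so that the inducing tempered parameter $\phi_{\widehat\pi}$ decomposes as a sum $\sum_i(\rho_i\otimes S_{a_i}) + \phi_\sigma$ (with the understanding that the $\rho_i\otimes S_{a_i}$ come in the self-dual bookkeeping appropriate to the group). The natural induction parameter is the number of ``non-discrete'' blocks one must remove to reach a discrete series; if that number is zero then $\widehat\pi$ is already discrete series, $\pi$ is anti-discrete, and the conclusion is $(\Pi_{\widehat 2})$. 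Otherwise, choose one block $\St(\rho_1,a_1)$ to remove: let $\sigma^-$ be the discrete-series-plus-remaining-blocks datum on $G(V_{an,r-n_1})$ with $n_1 = a_1\dim(\rho_1)$, giving an anti-tempered $\pi_- := \widehat{(\widehat\pi)^-}$ with strictly smaller induction parameter. Exactly as in \eqref{eq inj (A)}, the exactness of parabolic induction together with uniqueness of the irreducible sub of a standard module forces $\widehat\pi \hookrightarrow \St(\rho_1,a_1)\rtimes (\widehat\pi)^-$, hence after applying the Aubert--Zelevinsky involution $\pi \leq \Speh(\rho_1,a_1)\rtimes \pi_-$, and the infinitesimal-parameter/Langlands-classification compatibility gives $\phi_{\widehat\pi,\GL} = (\rho_1\otimes S_{a_1} + \rho_1^\vee\otimes S_{a_1}) + \phi_{\widehat{\pi_-},\GL}$, i.e.\ $\underline p(\phi_{\widehat\pi}) = [a_1^{2\dim\rho_1}]\sqcup \underline p(\phi_{\widehat{\pi_-}})$.

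Then I would close the induction by the identical chain of inequalities used at the end of the proof of Theorem \ref{thm (A)}: by Propositions \ref{prop induced orbit}, \ref{prop computation of induced orbit}, Theorem \ref{thm WF GL Speh} (wavefront set of a Speh representation is $[\dim(\rho_1)^{a_1 d}]$ with $d$ the appropriate multiplicity, here just $\dim\rho_1$ over $E$), the induction hypothesis $\WFP(\pi_-)\leq d_{BV}(\underline p(\phi_{\widehat{\pi_-}}))$, and the key Lemma \ref{lem goal},
\begin{align*}
\WFP(\pi) &\leq \big([(2\dim\rho_1)^{a_1}] + \WFP(\pi_-)\big)_X\\
&\leq \big([(2\dim\rho_1)^{a_1}] + d_{BV}(\underline p(\phi_{\widehat{\pi_-}}))\big)_X\\
&= d_{BV}\big([a_1^{2\dim\rho_1}]\sqcup \underline p(\phi_{\widehat{\pi_-}})\big) = d_{BV}(\underline p(\phi_{\widehat\pi})),
\end{align*}
where $X$ is the type of $G(V)$. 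The base case is $(\Pi_{\widehat 2})$, so the induction proves $(\Pi_{\widehat{temp}})$.

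The main obstacle, as always in this circle of arguments, is making sure the inductive bookkeeping on the inducing data is legitimate for each of the four families $B,C,D$ and the unitary case simultaneously: one must check that a block $\St(\rho_1,a_1)$ really can be split off so that what remains, $(\widehat\pi)^-$, is again tempered of the correct type on a genuine group $G(V_{an,r-n_1})$ in the same Witt tower, that the parameter really adds the self-dual pair $\rho_1\otimes S_{a_1}+\rho_1^\vee\otimes S_{a_1}$ (as opposed to a single self-dual summand, which would change the partition arithmetic and break the match with Lemma \ref{lem goal}), and that the induction parameter $f$ strictly decreases. For even special orthogonal groups one must also keep Remark \ref{rmk even orthogonal} in mind so that the passage through $\sim$-equivalence classes does not interfere; since $\underline p(\phi)$ and $\WFP(\pi)$ are constant on $\sim$-classes this is harmless but should be noted. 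Once the combinatorics of ``remove one tempered block, land on a discrete series after finitely many steps'' is set up cleanly, the analytic and duality input is exactly the same as in Theorem \ref{thm (A)}.
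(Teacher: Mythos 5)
Your overall strategy is the same as the paper's: induct on the number of GL-blocks $f$ in the tempered support of $\widehat{\pi}$, peel off $\St(\rho_1,a_1)$, apply the Aubert--Zelevinsky involution to get $\pi\leq \Speh(\rho_1,a_1)\rtimes\pi_-$, and close with the identical chain of inequalities from Theorem \ref{thm (A)} using Propositions \ref{prop induced orbit}, \ref{prop computation of induced orbit}, Theorem \ref{thm WF GL Speh} and Lemma \ref{lem goal}. The one step whose justification as written would fail is the embedding: you invoke ``exactness of parabolic induction together with uniqueness of the irreducible sub of a standard module, exactly as in \eqref{eq inj (A)}.'' That argument does not transfer to the tempered setting. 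Here the inductions are unitary: $\St(\rho_1,a_1)\rtimes(\widehat{\pi})^-$ is in general a direct sum of several irreducible tempered representations, so it has no unique irreducible subrepresentation, and the standard module of a tempered representation is the representation itself (the Langlands datum has trivial $\nu$), so there is no strictly larger standard module into which to embed and compare.

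The repair is exactly the structural input you listed as ``the main obstacle'' but left unresolved, and it is how the paper proceeds: by Proposition \ref{prop A-packet}(a),(b), $\widehat{\pi}$ is a constituent of $\St(\rho_1,a_1)\times\cdots\times\St(\rho_f,a_f)\rtimes(\widehat{\pi})_2$ with $(\widehat{\pi})_2$ discrete series, and by transitivity of induction there exists an irreducible tempered subquotient $(\widehat{\pi})^-$ of $\St(\rho_2,a_2)\times\cdots\rtimes(\widehat{\pi})_2$ on $G(V_{an,r^-})$ with $\widehat{\pi}\leq\St(\rho_1,a_1)\rtimes(\widehat{\pi})^-$; note only a subquotient relation is claimed, which suffices since the Aubert--Zelevinsky involution applied in the Grothendieck group still yields $\pi\leq\Speh(\rho_1,a_1)\rtimes\pi_-$. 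The same Proposition \ref{prop A-packet}(a),(b) (not a Langlands-classification compatibility) is what gives $(\phi_{\widehat{\pi}})_{\GL}=(\rho_1\otimes S_{a_1}+\rho_1^{\vee}\otimes S_{a_1})+(\phi_{(\widehat{\pi})^-})_{\GL}$, i.e.\ $\underline{p}(\phi_{\widehat{\pi}})=[a_1^{2\dim\rho_1}]\sqcup\underline{p}(\phi_{\widehat{\pi_-}})$, and guarantees that $(\widehat{\pi})^-$ is tempered on a group of the same type with $f(\pi_-)=f(\pi)-1$. With that substitution your proof coincides with the paper's; the even-orthogonal $\sim$-equivalence caveat you note is handled exactly as you say (Remark \ref{rmk even orthogonal}).
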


\begin{proof}
Our goal is to show that for any $\pi \in \Pi_{\widehat{temp}}( G(V_{an, r}))$ with $r \leq \mathfrak{r}$, the following inequality holds
\begin{align}\label{eq goal (F)}
    \WFP(\pi) \leq d_{BV}(\underline{p}(\phi_{\widehat{\pi}})).
\end{align}
 If $\pi \in \Pi_{\widehat{2}}(G(V_{an, r}))$, then \eqref{eq goal (F)} holds by  $(\Pi_{\widehat{2}})$. Thus, we assume $\pi \in \Pi_{\widehat{temp}}(G(V_{an,r})) \setminus \Pi_{\widehat{2}}(G(V_{an,r}))$. Proposition \ref{prop A-packet} implies that $\widehat{\pi}$ is a subquotient of the (unitary) parabolic induction
    \[ \St(\rho_1,a_1) \times \cdots \times \St(\rho_f,a_f)  \rtimes (\widehat{\pi})_2,\]
    where $(\widehat{\pi})_2$ is discrete series. We apply induction on $f(\pi):= f$. Note that $f(\pi) \geq 1$ since $\pi$ is not anti-discrete series.

    Let $r^-:= r- a_1 \dim(\rho_1)$. There exists an irreducible subquotient $(\widehat{\pi})^{-} \in \Pi_{temp}(G(V_{an,r^{-}}))$ of 
     \[ \St(\rho_2,a_2) \times \cdots \times \St(\rho_f,a_f)  \rtimes (\widehat{\pi})_2\]
    such that 
    \begin{align}\label{eq inj (F)}
         \widehat{\pi} \leq \St(\rho_1, a_1) \rtimes (\widehat{\pi})^-.
    \end{align}
    By Parts (a) and (b) of Proposition \ref{prop A-packet}, we have
    \begin{align*}
        (\phi_{\widehat{\pi}})_{\GL}= (\rho_1 \otimes S_{a_1}+\rho_1^{\vee} \otimes S_{a_1}) +(\phi_{(\widehat{\pi})^{-}})_{\GL}.
    \end{align*}
    Let $\pi_-:= \widehat{(\widehat{\pi})^-}$. Note that $f( \pi_-)=f(\pi)-1$. Applying Aubert-Zelevinsky involution on \eqref{eq inj (F)}, we obtain
    \[ \pi \leq \Speh(\rho_1,a_1) \rtimes \pi_{-}.\]
    Then \eqref{eq goal (F)} can be verified by the same computation at the end of the proof of Theorem \ref{thm (A)}. This completes the proof of the theorem.
\end{proof}

\subsection{Reduction to a single group}
In this subsection, we prove Theorem \ref{thm reduction to a single gp}. A key observation is the following lemma, whose proof will be given in \S \ref{sec collapse}.

\begin{lemma}\label{lem inj of induced partition}
    Let $X \in \{B,C,D\}$, $d \in \Z_{> 0}$, and  $\underline{p}, \underline{q} \in \mathcal{P}_X(n)$. If $ ([2d]+\underline{p})_{X} \geq ([2d]+\underline{q})_{X}, $ 
      then $\underline{p} \geq \underline{q}$.
\end{lemma}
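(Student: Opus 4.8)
The plan is to prove Lemma~\ref{lem inj of induced partition} by unwinding the definition of the $X$-collapse and tracking how adding a single part of size $2d$ interacts with it. The key point is that $([2d]+\underline{p})_X$ and $\underline{p}$ differ in a controlled way: the collapse only modifies a partition near the ``boundary'' where the type-$X$ parity condition is violated, and these modifications are monotone. So I would first recall (from \S\ref{sec collapse}, which the paper promises) the explicit recipe for the collapse: given $\underline{r}\in\mathcal{P}(n)$, one repeatedly finds the smallest index $i$ such that the partial sums fail the required parity/divisibility condition for type $X$, decreases $r_i$ by $1$ and increases $r_{i+1}$ by $1$. The operation $\underline{p}\mapsto ([2d]+\underline{p})_X$ then has a description purely in terms of $\underline{p}$, and I would extract from it a formula (or at least a monotone comparison) for the partial sums $\sum_{j\le k}([2d]+\underline{p})_X{}_j$ in terms of $\sum_{j\le k}p_j$.

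The main step is to show the contrapositive-free statement directly: assume $([2d]+\underline{p})_X \ge ([2d]+\underline{q})_X$ and deduce $\underline{p}\ge\underline{q}$, i.e.\ $\sum_{j\le k}p_j \ge \sum_{j\le k}q_j$ for all $k$. I would argue that for each $k$, the partial sum $\sum_{j\le k}([2d]+\underline{p})_X{}_j$ is sandwiched between $\sum_{j\le k}p_j + 2d$ (when $k$ is past the place where the part $2d$ landed) or $\sum_{j\le k}p_j$ (before it), up to a bounded error of at most $1$ coming from the collapse, and crucially this error is the \emph{same} kind of correction for both $\underline{p}$ and $\underline{q}$ because the collapse corrections for type $X$ depend only on the residue of the partial sum modulo the relevant modulus (which is $2$ in the classical cases), together with parities that are already forced once one knows $([2d]+\underline p)_X \ge ([2d]+\underline q)_X$ coordinate-wise. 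Making this precise, one gets $\sum_{j\le k}p_j \ge \sum_{j\le k}q_j$ for every $k$: whenever the inequality $\sum_{j\le k}([2d]+\underline p)_X{}_j \ge \sum_{j\le k}([2d]+\underline q)_X{}_j$ holds and both sides have been obtained from $\sum_{j\le k}p_j$ (resp.\ $q_j$) by adding a common quantity $2d\cdot[\text{$k$ large}]$ and then a correction in $\{0,1\}$ governed identically, the original partial sums must already be in the same order.

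A cleaner route, which I would pursue in parallel, uses Lemma~\ref{lem goal} and Proposition~\ref{prop dBV}(1) to reduce to the already-proven monotonicity of $d_{BV}$. Indeed, applying $d_{BV}$ (in the appropriate type) to the hypothesis $([2d]+\underline p)_{X}\ge ([2d]+\underline q)_X$ and using Lemma~\ref{lem goal} in reverse — writing $([2d]+\underline p)_{X'} = d_{BV}([(2d/2)^{\dots}]\sqcup\cdots)$ is not quite the right shape, so one instead observes that for $b=2d$ the identity \eqref{eq goal partition} reads $d_{BV}([(2d)^{2d'}]\sqcup\underline p') = ([(2d')^{2d}]+d_{BV}(\underline p'))_{X'}$, which lets one realize $([2d']+ \text{stuff})_{X'}$ as a Barbasch--Vogan dual of a partition of the form $[b^{2d'}]\sqcup\underline p'$; combined with Proposition~\ref{prop dBV}(1) and the fact that $\sqcup$ with a fixed partition is order-preserving and order-reflecting on the relevant sublattice, one gets $\underline p'\ge \underline q'$. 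I expect the main obstacle to be exactly this dictionary: matching the shape $([2d]+\underline p)_X$ appearing in Lemma~\ref{lem inj of induced partition} with the shape $([(2d)^{b}]+d_{BV}(\underline p))_{X'}$ in Lemma~\ref{lem goal}, since the former adds a \emph{single} part of size $2d$ (a column operation) while the latter adds a rectangle $[(2d)^b]$ (also a column operation, but of a different width), so one has to transpose and keep careful track of which of $B,C,D$ one is in and of the $^{\pm}$ and collapse decorations. Once the combinatorial identities are lined up correctly, the proof is short; I would therefore spend most of the effort pinning down the precise statement of the collapse algorithm in \S\ref{sec collapse} and verifying the coordinate-wise comparison of partial sums carefully in the three classical types.
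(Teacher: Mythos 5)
Your first route is, in outline, the paper's own strategy (make the collapse of $[2d]+\underline{p}$ explicit and compare partial sums), but the sketch breaks down exactly at the decisive step. Since $[2d]+\underline{p}=[p_1+2d,p_2,\dots]$ and $\underline{p}$ is already of type $X$ (note there is no ``place where the part $2d$ lands'': it is added to the first part, not adjoined as a new part), Lemma \ref{lem Ach03} shows the collapse either does nothing or moves a single box from the first row to the second, i.e. $([2d]+\underline{p})_X=[p_1+2d,p_2,p_3,\dots]$ or $[p_1+2d-1,p_2+1,p_3,\dots]$ according to the parity of $p_1$. Consequently every partial sum with $t\ge 2$ equals $\sum_{i\le t}p_i+2d$ in either case, so those comparisons are immediate; the whole content of the lemma is the inequality $p_1\ge q_1$ in the case where exactly one of $p_1,q_1$ has the ``bad'' parity, i.e. precisely when the corrections for $\underline{p}$ and $\underline{q}$ are \emph{not} the same. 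Your claim that the corrections agree because ``the parities are already forced'' by the hypothesis is false: the hypothesis puts no constraint on the parities of $p_1$ and $q_1$. In that mixed case one must argue as the paper does: $p_1<q_1$ together with $p_1'\ge q_1'$ forces $p_1+2d=p_1'=q_1'=q_1+2d-1$ with the correction on the $\underline{q}$ side, and then $q_2<p_2\le p_1<q_1$ shows $q_1$ occurs in $\underline{q}$ with multiplicity one, contradicting that bad-parity parts of a type-$X$ partition must have even multiplicity. Nothing in your sketch uses this multiplicity condition on $\underline{q}$, yet it is indispensable: for $X=C$, $d=1$, $\underline{p}=[2,2]$ and the non-type-$C$ partition $\underline{q}=[3,1]$ one has $([2]+\underline{p})_C=[4,2]=([2]+\underline{q})_C$ while $\underline{p}\not\ge\underline{q}$, so any correct proof must invoke the type-$X$ hypothesis at the $t=1$ step.

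The ``cleaner route'' via Lemma \ref{lem goal} and monotonicity of $d_{BV}$ does not work even after the shapes are lined up. With $b=1$ the identity \eqref{eq goal partition} reads $d_{BV}([1^{2d}]\sqcup\underline{r})=([2d]+d_{BV}(\underline{r}))_{X'}$, so to express $([2d]+\underline{p})_X$ this way you must first write $\underline{p}=d_{BV}(\underline{r})$; but $d_{BV}$ is not surjective onto $\mathcal{P}_X$ (its image consists of special partitions), so a general $\underline{p}$ is not of this form. Worse, even when it is, $d_{BV}$ is order-reversing but not injective (the paper notes it is not an injection unless $X=A$, and Proposition \ref{prop dBV} only gives $d_{BV}^3=d_{BV}$), so from $d_{BV}([1^{2d}]\sqcup\underline{r})\ge d_{BV}([1^{2d}]\sqcup\underline{s})$ you cannot recover $[1^{2d}]\sqcup\underline{r}\le[1^{2d}]\sqcup\underline{s}$, and the intended appeal to order-reflection of $\sqcup$ never gets off the ground. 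So this is not merely a matter of ``pinning down the dictionary''; the route is blocked in principle, and the elementary argument of the first route, completed by the parity--multiplicity contradiction above, is what is actually needed.
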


Now we prove Theorem \ref{thm reduction to a single gp}.

\begin{proof}[Proof of Theorem \ref{thm reduction to a single gp}]
Let $d:= \mathfrak{r}-r$. Take any $\pi^{-} \in \Pi_{\ast}(G(V_{an,r}))$. We may always find a non-selfdual unitary supercuspidal representation $\rho$ of $\GL_{d}(E)$ such that the parabolic induction 
    \[\pi:= \rho \rtimes \pi^- \]
    is irreducible. If $\ast \in \{\emptyset, \widehat{temp}\}$, it follows from the construction that $\pi \in \Pi_{\ast}(G(V_{an,\mathfrak{r}}))$. If $\ast=A$, then again $\pi \in \Pi_{\ast}(G(V_{an,\mathfrak{r}}))$ by \cite[Proposition 5.1]{Moe11b}. Moreover, we have
  \[ \Psi(\pi)=\{ (\rho \otimes S_{1}\otimes S_1+\rho^{\vee} \otimes S_{1}\otimes S_1 )+\psi \ | \ \psi \in \Psi(\pi^-)  \}.\]

    Let $X \in \{A,B,C,D\}$ based on the type of the group.  Proposition \ref{prop induced orbit}, Theorem \ref{thm main GL} and Lemma \ref{lem inj of induced partition} imply that for any $\underline{p}^- \in \WFP(\pi^-)$, there exists a $\underline{p} \in \WFP(\pi)$ such that
    \[ \underline{p}= ( [2d] + \underline{p}^- )_X.
    \]

    For Part (a), we have 
 \[ \phi_{\widehat{\pi}}= (\rho\otimes S_1 + \rho^{\vee} \otimes S_{1} )+\phi_{\widehat{\pi^-}}.  \]
Thus, Lemma \ref{lem goal} implies that 
 \[ d_{BV}( \underline{p}( \phi_{\widehat{\pi}}))= ( [2d]+ d_{BV}( \underline{p}( \phi_{\widehat{\pi^-}})))_X.\]
 Then Lemma \ref{lem inj of induced partition} implies that if $\underline{p} \leq  d_{BV}( \underline{p}( \phi_{\widehat{\pi}}))$, then $\underline{p}^- \leq  d_{BV}( \underline{p}( \phi_{\widehat{\pi^-}}))$. This proves Part (a). 

 Part (b) can be proved by the same computation above, which we omit. This completes the proof of the theorem.    
\end{proof}

\begin{remark}\label{rmk reduction to a single group}
    We give two remarks on the proof above.
    \begin{enumerate}
        \item We expect that the following also holds.
    \begin{enumerate}
        \item [(c)] Let $\ast \in \{ \emptyset, \widehat{temp}\}$. Conjecture \ref{conj Jiang ABV 2} holds for any $\pi^{-} \in \Pi_{\ast}(G(V_{an,r}))$ if it holds for any $\pi \in \Pi_{\ast}(G(V_{an,\mathfrak{r}}))$.
    \end{enumerate}
    Indeed, let $\pi^-$ and $\pi$ be the same representations in the proof above. It is expected that
    \[ \Phi(\pi)= \{ (\rho\otimes S_1 + \rho^{\vee} \otimes S_{1} )+ \phi \ | \ \phi \in \Phi(\pi^-) \}.\]
    \item Let $\pi^- \in \Pi_{\widehat{2}}(G(V_{an,r}))$. There does not exist an irreducible representation $\tau$ of $\GL_{\mathfrak{r}-r}(E)$ such that $\pi=\tau \rtimes \pi^-$ is irreducible and $\pi$ is anti-discrete at the same time. Thus Conjectures \ref{conj bound of WF}, \ref{conj Jiang 2} and \ref{conj Jiang ABV 2} for $\Pi_{\widehat{2}}(G(V_{an,\mathfrak{r}}))$ do not imply the same conjecture for $\Pi_{\widehat{2}}(G(V_{an,r}))$ if $r<\mathfrak{r}$ in an obvious way. 
    \end{enumerate}
    
\end{remark}

\section{Limitation of the reduction method}\label{limitation}

In the proof of Theorems \ref{thm (A)}, \ref{thm (F)}, from a representation $\pi$ of $G=G(V_{an,r})$, we constructed a pair of representations $(\tau, \sigma)\in \Pi(\GL_{d}(E)) \times \Pi(G(V_{an,r^-}))$ with $d>0$ such that
\begin{enumerate}
    \item [(a)] $\pi$ is a subquotient of $\tau \rtimes \sigma$, and 
    \item [(b)] $d_{BV}(\underline{p}(\phi_{\widehat{\pi}}) )= \Ind_{\GL_d(E) \times G(V_{an,r^-})}^{G}( (\WFP(\tau), d_{BV}(\underline{p}(\phi_{\widehat{\sigma}}))) )$.
\end{enumerate}
Then applying Proposition \ref{prop induced orbit}, Conjecture \ref{conj bound of WF} for $\pi$ is reduced to that for $\sigma$. In this section, we explain that we can not reduce further from the anti-discrete series case using above method. Namely, for an arbitrary anti-discrete series $\pi$, it is possible that there does not exist a pair $(\tau, \sigma)$ satisfying properties (a) and (b) above. For simplicity, we let $G_n=\Sp_{2n}(F)$ or split $\SO_{2n+1}(F)$ throughout this section. The goal is to prove the following.

\begin{prop}\label{prop limit of reduction}
    If $\pi$ is an anti-discrete series representation of $G_n$ and $\pi$ is a subquotient of $\tau \rtimes \sigma$ for some $\tau \in \GL_d(F)$ with $d>0$ and $\sigma \in \Pi(G_{n-d})$. Then there exists an irreducible subquotient $\pi'$ of $\tau \rtimes \sigma$ such that
    \begin{enumerate}
        \item [(i)] $ \underline{p}(\phi_{\widehat{\pi'}})< \underline{p}(\phi_{\widehat{\pi}})$ , and
        \item [(ii)] $d_{BV}(\underline{p}(\phi_{\widehat{\pi'}}) )= \Ind_{\GL_d(F) \times G_{n-d}}^{G_n}( (\WFP(\tau), d_{BV}(\underline{p}(\phi_{\widehat{\sigma}}))) )$.
    \end{enumerate}
\end{prop}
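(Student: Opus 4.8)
The plan is to apply the Aubert--Zelevinsky involution and realize $\pi'$ as the dual of a Langlands-type constituent of $\widehat{\tau}\rtimes\widehat{\sigma}$. Since the involution is additive on the Grothendieck group and commutes with parabolic induction up to sign, the occurrence of $\pi$ in $\tau\rtimes\sigma$ gives that the discrete series $\widehat{\pi}$ occurs in $\widehat{\tau}\rtimes\widehat{\sigma}$. As $d>0$ and a square-integrable representation is never an irreducible full parabolic induction from a proper parabolic, $\widehat{\tau}\rtimes\widehat{\sigma}$ is reducible. All its irreducible constituents share the infinitesimal parameter $\lambda:=\lambda_{\phi_{\widehat{\pi}}}$; and since $\pi$ is anti-discrete, $\phi_{\widehat{\pi}}$ is the open, hence $\geq_C$-maximal, element of $\Phi(G_n)_{\lambda}$, so every constituent $\delta$ of $\widehat{\tau}\rtimes\widehat{\sigma}$ satisfies $\phi_{\delta}\leq_C\phi_{\widehat{\pi}}$, and thus $\underline{p}(\phi_{\delta})\leq\underline{p}(\phi_{\widehat{\pi}})$ by Proposition~\ref{prop C implies P}.

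To produce $\pi'$, assume first that $M(\widehat{\tau})$ contains no selfdual unitary Steinberg segment. Passing to the standard modules $M(\widehat{\tau})$ and $M(\widehat{\sigma})=\Lambda_{\sigma}\rtimes(\widehat{\sigma})_{temp}$, replacing the Steinberg segments of $M(\widehat{\tau})$ of nonnegative exponent by their contragredients (which does not change the Grothendieck class of $\widehat{\tau}\rtimes\widehat{\sigma}$) and absorbing any resulting zero-exponent factors into the tempered part, one rewrites $[\widehat{\tau}\rtimes\widehat{\sigma}]=[\Lambda'\rtimes(\widehat{\sigma}')_{temp}]$ with $\Lambda'$ a product of Steinberg segments of strictly negative exponent; this is a standard module $M(\delta')$. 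Its unique irreducible subrepresentation $\delta'$ --- which is also the unique irreducible subrepresentation of the submodule $\widehat{\tau}\rtimes\widehat{\sigma}$ --- is a constituent of $\widehat{\tau}\rtimes\widehat{\sigma}$ with $(\phi_{\delta'})_{\GL}=(\phi_{\widehat{\tau}})_{\GL}\oplus(\phi_{\widehat{\sigma}})_{\GL}\oplus(\phi_{\widehat{\tau}})_{\GL}^{\vee}$. Set $\pi':=\widehat{\delta'}$, a constituent of $\tau\rtimes\sigma$; then $\phi_{\widehat{\pi'}}=\phi_{\delta'}$. Here $\Lambda'\ne\emptyset$, since otherwise $\widehat{\tau}$ would be a product of non-selfdual Steinberg segments and $\widehat{\tau}\rtimes\widehat{\sigma}$ would be irreducible; hence $\delta'$ is non-tempered, whereas $\widehat{\pi}$ is a discrete series, so $\phi_{\delta'}\ne\phi_{\widehat{\pi}}$, the inequality of the previous paragraph is strict, and (i) follows from Proposition~\ref{prop C implies P}.

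For (ii), from the shape of $(\phi_{\delta'})_{\GL}$ one reads off $\underline{p}(\phi_{\widehat{\pi'}})=\underline{p}(\phi_{\widehat{\tau}})\sqcup\underline{p}(\phi_{\widehat{\tau}})\sqcup\underline{p}(\phi_{\widehat{\sigma}})$, using $\underline{p}(\phi^{\vee})=\underline{p}(\phi)$ for $\GL$-parameters. Writing $\underline{p}(\phi_{\widehat{\tau}})=[c_1^{e_1},\dots,c_r^{e_r}]$ and peeling the doubled blocks $[c_k^{2e_k}]$ off $d_{BV}$ one at a time by Lemma~\ref{lem goal}, together with $(\underline{p}\sqcup\underline{q})^{\ast}=\underline{p}^{\ast}+\underline{q}^{\ast}$ and $\WFP(\tau)=d_{BV}(\underline{p}_A(\phi_{\widehat{\tau}}))=\underline{p}(\phi_{\widehat{\tau}})^{\ast}$ (Theorem~\ref{thm main GL}(ii), $d_A=1$), this yields
\[
d_{BV}\!\big(\underline{p}(\phi_{\widehat{\pi'}})\big)=\big(\WFP(\tau)+\WFP(\tau)+d_{BV}(\underline{p}(\phi_{\widehat{\sigma}}))\big)_{X},
\]
with $X$ the type of $G_n$, which by Proposition~\ref{prop computation of induced orbit}(3) is exactly $\Ind_{\GL_d(F)\times G_{n-d}}^{G_n}\big((\WFP(\tau),d_{BV}(\underline{p}(\phi_{\widehat{\sigma}})))\big)$; this is the same computation that appears at the end of the proof of Theorem~\ref{thm (A)}.

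The main obstacle is the remaining case, in which $M(\widehat{\tau})$ does contain a selfdual unitary Steinberg segment --- in particular the case $\widehat{\tau}$ tempered, where $\widehat{\tau}\rtimes\widehat{\sigma}$ is itself tempered, $\Lambda'=\emptyset$, and the $\delta'$ above is again tempered with $\phi_{\delta'}=\phi_{\widehat{\pi}}$ (indeed then every constituent of $\widehat{\tau}\rtimes\widehat{\sigma}$ lies in the discrete packet $\Pi_{\phi_{\widehat{\pi}}}$, so (i) cannot be obtained by reading off $\underline{p}$ naively). In this situation the construction above double-counts a selfdual block that should be counted once, and one must instead use the explicit M{\oe}glin--Tadi\'c description of the reducibility of $\St(\rho,a)\rtimes\delta^-$ for $\delta^-$ a discrete series, together with the hypothesis $d>0$, to select the correct constituent $\pi'$. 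I expect this case to require the bulk of the work.
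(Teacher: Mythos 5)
Your overall strategy is the paper's: dualize, exhibit a constituent of $\widehat{\tau}\rtimes\widehat{\sigma}$ whose $\GL$-parameter is the ``doubled'' one $(\phi_{\widehat{\tau}}+\phi_{\widehat{\tau}}^{\vee})+(\phi_{\widehat{\sigma}})_{\GL}$, get (i) from the fact that $\phi_{\widehat{\pi}}$ is the open parameter, and get (ii) from Lemma~\ref{lem goal} as in Theorem~\ref{thm (A)}. But there are two genuine gaps. First, even in the case you treat, the identity $[\widehat{\tau}\rtimes\widehat{\sigma}]=[\Lambda'\rtimes(\widehat{\sigma}')_{temp}]$ is false as stated: replacing segments and reordering is only valid for the Grothendieck class of $M(\widehat{\tau})\rtimes M(\widehat{\sigma})$, which differs from that of $\widehat{\tau}\rtimes\widehat{\sigma}$, and your parenthetical claim that $\delta'$ is a subrepresentation of $\widehat{\tau}\rtimes\widehat{\sigma}$ is unproved. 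Showing that the constituent with the doubled parameter occurs in $\widehat{\tau}\rtimes\widehat{\sigma}$ itself, and not merely in the product of standard modules, is exactly the content of the paper's Lemma~\ref{lem parbolic induction subquotient}: one observes that such a constituent lies in some $\tau'\rtimes\sigma'$ with $\tau'\leq M(\widehat{\tau})$, $\sigma'\leq M(\widehat{\sigma})$, and then Theorem~\ref{thm closure ordering std module} (closure ordering for subquotients of standard modules) forces $\tau'=\widehat{\tau}$ and $\sigma'=\widehat{\sigma}$. Your write-up has no substitute for this step.

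Second, the case you defer --- $M(\widehat{\tau})$ containing a selfdual unitary Steinberg factor --- is not actually harder and needs no M{\oe}glin--Tadi\'c reducibility analysis. Absorbing the zero-exponent selfdual segments into the tempered part via Proposition~\ref{prop A-packet}(a),(b) still yields constituents whose $\GL$-parameter is $(\phi_{\widehat{\tau}}+\phi_{\widehat{\tau}}^{\vee})+(\phi_{\widehat{\sigma}})_{\GL}$; this parameter then contains a selfdual summand with multiplicity at least $2$, so it cannot equal the multiplicity-free discrete parameter $\phi_{\widehat{\pi}}$, and strictness in (i) follows from openness of $\phi_{\widehat{\pi}}$ together with Proposition~\ref{prop C implies P}, exactly as in your first case. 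Your stated reason for expecting trouble is also mistaken: when $\widehat{\tau}$ is tempered the constituents of $\widehat{\tau}\rtimes\widehat{\sigma}$ do not lie in $\Pi_{\phi_{\widehat{\pi}}}$ --- they carry the doubled, non-discrete parameter --- and if $\widehat{\sigma}$ is tempered as well the hypothesis is vacuous, since a discrete series is never a constituent of a proper unitary tempered induction. So the proposal proves the proposition only in the special case and, even there, omits the closure-ordering argument on which the paper's proof rests; the part (ii) computation you give does agree with the paper's.
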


This proposition implies that
\[ d_{BV}( \underline{p}(\phi_{\widehat{\pi}})) \leq \Ind_{\GL_d(F) \times G_{n-d}}^{G_n}( (\WFP(\tau), d_{BV}(\underline{p}(\phi_{\widehat{\sigma}}))) ). \]
The inequality above is strict unless $\phi_{\widehat{\pi'}}$ lies in the fiber 
\[ \{ \phi' \in \Phi(G_n)_{\lambda} \ | \ d_{BV}( \underline{p}(\phi'))=d_{BV}( \underline{p}(\phi_{\widehat{\pi}})) \}, \]
which is considered in \cite[Proposition 1.6]{LL23}. In many cases, this fiber is a singleton consisting of $\phi_{\widehat{\pi}}$ itself. Thus in these cases, there does not exist $(\tau, \sigma)$ satisfying the desired properties (a), (b) for the reduction process. We give an example demonstrating the above argument.

\begin{exmp}
Let $(G_n,X) \in \{ (\SO_{2n+1}(F), B ), (\Sp_{2n}(F),C) \}$ and let $\widehat{\pi}$ be a generalized Steinberg representation of $G_n$ considered in \cite{Tad18} of corank $1$. To be explicit, $\widehat{\pi}$ is the unique discrete series subquotient of a parabolic induction
    \[ \rho|\cdot|^{a_{\rho}+1-\varepsilon_{\rho}/2} \rtimes \pi_{sc}, \]
    where $\rho$ is a selfdual supercuspidal representation of $\GL_d(F)$ ($d>0$) and $\pi_{sc}$ is a supercuspidal representation of $G_{n-d}$ whose $L$-parameter is of the form
    \[ \phi_{sc}= \bigoplus_{\rho'}  \bigoplus_{k=1}^{a_{\rho'}} \rho \otimes S_{2k -\varepsilon_{\rho'}},   \]
    where $\rho'$ ranges over all self dual supercuspidal representations and $\varepsilon_{\rho'} \in \{ 0,1\}$ depending on the types of $\rho'$ and $G_n$. Note that $a_{\rho'}=0$ for almost all $\rho'$.

     The representation $\pi:= \widehat{(\widehat{\pi})}$ is anti-discrete series.  For simplicity, write $\alpha:= a_{\rho}+1-\varepsilon_{\rho}/2$. Then we have
    \[ 0 \to \widehat{\pi} \to \rho|\cdot|^{\alpha} \rtimes \pi_{sc} \to \pi \to 0, \]
    and the $L$-parameters are
    \begin{align*}
        \phi_{\pi}&= \phi_{sc}+(\rho|\cdot|^{\alpha}\otimes S_{1}+ \rho|\cdot|^{-\alpha}\otimes S_{1})  ,\\
    \phi_{\widehat{\pi}}&= \phi_{sc}- \rho \otimes S_{2 \alpha -1}+ \rho \otimes S_{2 \alpha +1}.
    \end{align*}
    In particular, let (recall $\dim(\rho)=d$) 
    \[\underline{p}' := \left(\bigsqcup_{\rho' \not\cong \rho}  \bigsqcup_{k=1}^{a_{\rho'}} [ (2k-\varepsilon_{\rho'})^{\dim(\rho')}  ] \right) \sqcup \bigsqcup_{k=1}^{a_{\rho}-1 } [ (2k-\varepsilon_{\rho})^{d}  ].  \]
    Then
    \begin{align*}
        \underline{p}(\phi_{sc})&= \underline{p'} \sqcup [(2 \alpha-1)^{d}],\\
        \underline{p}(\phi_{\pi})&= \underline{p'} \sqcup [(2 \alpha-1)^{d}] \sqcup[ 1^{2 d}],\\
        \underline{p}(\phi_{\widehat{\pi}})&= \underline{p'} \sqcup [(2 \alpha+1)^{d}].  
    \end{align*}
    By observing the supercuspidal support of $\pi$, we see that the only possible choices of the pairs $(\tau, \sigma)$ such that $\pi \leq \tau \rtimes \sigma$ are  
    \[\{ ( \rho|\cdot|^{\alpha}, \pi_{sc}), (\rho|\cdot|^{-\alpha}, \pi_{sc}) \}.   \]
    Assuming $\WFP(\pi_{sc}) \leq d_{BV}( \underline{p}(\phi_{\pi_{sc}}))$, then any choice of $(\tau,\sigma)$ gives
    \[ \WFP(\pi) \leq ( [(2d)^1] + d_{BV}(\underline{p}(\phi_{\pi_{sc}}) ) )_{X}= d_{BV}(\underline{p}(\phi_{\pi})).\]
    On the other hand, according to \cite[Theorem 3.4]{LLS23}, we see that 
    \[ d_{BV}(\underline{p}(\phi_{\widehat{\pi}}))< d_{BV}(\underline{p}(\phi_{\pi}))\]
    unless $G_n= \SO_{2n+1}(F)$, $d=1$, $\alpha=1/2$ and $\underline{p}'$ satisfies certain conditions. Therefore, the reduction process does not work for most of the Aubert-Zelevinsky involution of generalized Steinberg representations of corank 1. 
\end{exmp}

To prove Proposition \ref{prop limit of reduction}, we first show a consequence of Theorem \ref{thm closure ordering std module}.

\begin{lemma}\label{lem parbolic induction subquotient}
Let $\tau \in \Pi(\GL_d(F))$ and $\sigma \in \Pi(G_{n-d})$. The parabolic induction $\tau \rtimes \sigma$ must contain an irreducible subquotient $\pi$ such that
\[ (\phi_{\pi})_{\GL}= (\phi_{\tau}+\phi_{\tau}^{\vee}) + (\phi_{\sigma})_{\GL} .\]
\end{lemma}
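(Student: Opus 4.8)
The plan is to realize the required subquotient as a Langlands constituent of a standard module of $G_n$ built from the Langlands data of $\tau$ and $\sigma$, and then to use Theorem~\ref{thm closure ordering std module} to show that this constituent survives inside $\tau\rtimes\sigma$. Write $M(\tau)=\St(\rho_1,a_1)|\cdot|^{x_1}\times\cdots\times\St(\rho_f,a_f)|\cdot|^{x_f}$ with $x_1\le\cdots\le x_f$ and $M(\sigma)=\St(\rho'_1,b_1)|\cdot|^{y_1}\times\cdots\times\St(\rho'_{f'},b_{f'})|\cdot|^{y_{f'}}\rtimes\sigma_{temp}$ with $y_1\le\cdots\le y_{f'}<0$. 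Replace each $\St(\rho_i,a_i)|\cdot|^{x_i}$ with $x_i>0$ by its contragredient $\St(\rho_i^\vee,a_i)|\cdot|^{-x_i}$, move the factors with $x_i=0$ into the tempered part, collect all resulting negative-exponent $\GL$-factors and reorder them by increasing exponent; this yields a standard module $M_0$ of $G_n$ (a direct sum of such if the enlarged tempered part is reducible). For the unique irreducible subrepresentation $\pi_0$ of $M_0$ one computes, using Definition~\ref{def p(phi), p(psi)} and the compatibility of $\textrm{LLC}$ with the Langlands classification, that $(\phi_{\pi_0})_{\GL}=(\phi_\tau+\phi_\tau^\vee)+(\phi_\sigma)_{\GL}$: each factor $\St(\rho_i,a_i)|\cdot|^{x_i}$ contributes $\rho_i|\cdot|^{x_i}\otimes S_{a_i}+\rho_i^\vee|\cdot|^{-x_i}\otimes S_{a_i}$ whether it or its contragredient is used, and the tempered part supplies the $x_i=0$ terms. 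It thus suffices to show that some such $\pi_0$ is an irreducible subquotient of $\tau\rtimes\sigma$.

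I would then pass to the Grothendieck group. Since $\tau\hookrightarrow M(\tau)$, $\sigma\hookrightarrow M(\sigma)$ and parabolic induction is exact, $[\tau\rtimes\sigma]\le[M(\tau)\rtimes M(\sigma)]$. Using transitivity of induction, the commutativity of $\times$, and the identity $[X\rtimes\delta]=[X^{\vee}\rtimes\delta]$ (valid since $G_n$ is symplectic or split odd special orthogonal), one identifies $[M(\tau)\rtimes M(\sigma)]=\sum_l[M_0^{(l)}]$ with the $M_0^{(l)}$ the standard modules above, each having unique irreducible subrepresentation $\pi_0^{(l)}$ with $(\phi_{\pi_0^{(l)}})_{\GL}=(\phi_\tau+\phi_\tau^\vee)+(\phi_\sigma)_{\GL}$; by Theorem~\ref{thm closure ordering std module} every other constituent of $M_0^{(l)}$ has $L$-parameter strictly bigger in the closure order, hence (Proposition~\ref{prop C implies P}) a strictly bigger partition, hence a $\GL$-realization different from the target. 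So among the constituents of $M(\tau)\rtimes M(\sigma)$, exactly the $\pi_0^{(l)}$ have $\GL$-realization $(\phi_\tau+\phi_\tau^\vee)+(\phi_\sigma)_{\GL}$.

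The crux, and the step I expect to be the main obstacle, is to show that the ``error'' $[M(\tau)\rtimes M(\sigma)]-[\tau\rtimes\sigma]$ contains no constituent with that $\GL$-realization, forcing the $\pi_0^{(l)}$ to occur already in $[\tau\rtimes\sigma]$. Expanding $[M(\tau)]-[\tau]=\sum_{\tau'}c_{\tau'}[\tau']$ and $[M(\sigma)]-[\sigma]=\sum_{\sigma'}c_{\sigma'}[\sigma']$ by Theorem~\ref{thm closure ordering std module} (in $\GL_d$ and in $G_{n-d}$), with $\phi_{\tau'}>_C\phi_\tau$ and $\phi_{\sigma'}>_C\phi_\sigma$ strict, the error equals $\sum_{\tau'}c_{\tau'}[\tau'\rtimes M(\sigma)]+\sum_{\sigma'}c_{\sigma'}[\tau\rtimes\sigma']$; applying the standard-module identification of the previous paragraph to each summand, every constituent occurring has $L$-parameter $\ge_C(\phi_{\tau'}+\phi_{\tau'}^\vee)+(\phi_\sigma)_{\GL}$ or $\ge_C(\phi_\tau+\phi_\tau^\vee)+(\phi_{\sigma'})_{\GL}$, whose partition strictly dominates $\underline p\big((\phi_\tau+\phi_\tau^\vee)+(\phi_\sigma)_{\GL}\big)$ --- here one uses that strict closure ordering forces strict domination of the partition of $\SL_2$-types (elementary for multisegments in $\GL$, and Proposition~\ref{prop C implies P} in $G_{n-d}$), together with the fact that doubling multiplicities and $\sqcup$ preserve strict domination. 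Hence the target $\GL$-realization does not occur in the error term, so $\sum_l[\pi_0^{(l)}]$ occurs in $[\tau\rtimes\sigma]$; any $\pi_0^{(l)}$ is then an irreducible subquotient of $\tau\rtimes\sigma$ with the asserted $L$-parameter. Everything outside this monotonicity bookkeeping is a routine translation between parabolic induction, standard modules, and Definition~\ref{def p(phi), p(psi)}.
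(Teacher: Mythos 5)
Your proof is correct, and its skeleton coincides with the paper's: both compare $\tau\rtimes\sigma$ with $M(\tau)\rtimes M(\sigma)$, decompose the latter in the Grothendieck group into standard modules attached to the parameter $\phi$ with $\phi_{\GL}=(\phi_\tau+\phi_\tau^\vee)+(\phi_\sigma)_{\GL}$ (the paper gets this in one line from Proposition \ref{prop A-packet}(b); you rebuild it by hand, and you should cite that proposition for the exponent-zero factors absorbed into the tempered part rather than leave it implicit), and then use Theorem \ref{thm closure ordering std module} to see that the Langlands constituents of those standard modules must already occur in $\tau\rtimes\sigma$. Where you genuinely differ is the finishing step. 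The paper avoids partitions altogether: it picks $\pi\in\Pi_\phi$ occurring in $M(\tau)\rtimes M(\sigma)$, locates it inside a single product $\tau'\rtimes\sigma'$ of irreducible constituents of $M(\tau)$ and $M(\sigma)$, deduces $\phi=\phi_\pi\geq_C\phi_{\pi'}\geq_C\phi$, and then invokes the equality clause of Theorem \ref{thm closure ordering std module} to force $\tau'=\tau$ and $\sigma'=\sigma$. You instead subtract in the Grothendieck group and rule the target $\GL$-realization out of the error term by strict dominance of partitions; this works, but it costs extra inputs: Proposition \ref{prop C implies P} on the classical side, its $\GL$-analogue (strict closure ordering of multisegments forces strict dominance of the partition of segment lengths --- true, since each elementary union--intersection operation strictly increases that partition, but it deserves a justification rather than the label ``elementary''), and the compatibility of dominance with $\sqcup$ and doubling (clear after transposing, via $(\underline{p}\sqcup\underline{q})^{\ast}=\underline{p}^{\ast}+\underline{q}^{\ast}$). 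Your bookkeeping does give a bit more, namely that every Langlands constituent $\pi_0^{(l)}$ survives in $\tau\rtimes\sigma$ with its full multiplicity, but for the lemma as stated the paper's rigidity argument is shorter and needs no partition combinatorics at all.
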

\begin{proof}
 Proposition \ref{prop A-packet}(b) implies that in the Grothendieck group,
\[ M(\tau) \rtimes M(\sigma)= \sum_{\pi \in \Pi_{\phi}}  m_{\pi}M(\pi),   \]
where $\phi_{\GL}=(\phi_{\tau} +  \phi_{\tau}^{\vee})+(\phi_{\sigma})_{\GL} $ and the coefficient $m_{\pi}\in \{0,1\}$. Take any $\pi$ such that $m_{\pi}\neq 0$. In particular, $\pi\leq M(\tau) \rtimes M(\sigma)$. We claim that $\pi \leq \tau \rtimes \sigma$ also holds.

 Indeed, there exist irreducible representations $\tau' \leq M(\tau)$ and $\sigma' \leq M(\sigma)$ such that $\pi \leq \tau' \rtimes \sigma'$. Thus, $\pi \leq M(\pi')$ for some $\pi'$ satisfying that
 \[ (\phi_{\pi'})_{\GL}=  (\phi_{\tau'} + \phi_{\tau'}^{\vee})+ (\phi_{\sigma'})_{\GL} .\]
Now applying Theorem \ref{thm closure ordering std module} to $\tau' \leq M(\tau)$ and $\sigma' \leq M(\sigma)$, we obtain
\[ \phi_{\tau'} \geq_C \phi_{\tau}, \phi_{\sigma'} \geq_C \phi_{\sigma}.\]
Hence, applying Theorem \ref{thm closure ordering std module} on $\pi \leq M(\pi')$, we obtain
 \[\phi_{\pi} \geq_C \phi_{\pi'} \geq_C \phi.\]
 Thus the inequalities above are all equalities. In particular, $\phi_{\tau'}=\phi_{\tau}$ and $\phi_{\sigma'}=\phi_{\sigma}$. Then the last part of Theorem \ref{thm closure ordering std module} implies that $\tau'= \tau$ and $\sigma' = \sigma$. This completes the verification of the claim and the proof of the lemma. 
\end{proof}

Now we prove Proposition \ref{prop limit of reduction}.

\begin{proof}[Proof of Proposition \ref{prop limit of reduction}]
    By Lemma \ref{lem parbolic induction subquotient}, there exists a subquotient $\widehat{\pi'}$ of $\widehat{\tau} \rtimes \widehat{\sigma}$ such that
    \[ (\phi_{\widehat{\pi'}})_{\GL}= (\phi_{\widehat{\tau}}+ \phi_{\widehat{\tau}}^{\vee})+ (\phi_{\widehat{\sigma}})_{\GL}.\]
    We check that $\pi'$ satisfies Conditions (i), (ii).

    For Condition (i), note that $\phi_{\widehat{\pi'}}$ is not multiplicity free, and hence $\phi_{\widehat{\pi}} >_{C} \phi_{\widehat{\pi}'}$ since $\phi_{\widehat{\pi}}$ corresponds to the unique open orbit in the associated Vogan variety (\cite[Proposition 7.2]{HLLZ22}). This implies $\underline{p}(\phi_{\widehat{\pi'}})< \underline{p}(\phi_{\widehat{\pi}})$ by Proposition \ref{prop C implies P}.

    For Condition (ii), write
    \[ \phi_{\widehat{\tau}}= \bigoplus_{i=1}^f \rho_i|\cdot|^{x_i} \otimes S_{a_i},\]
    where $\dim(\rho_i)=d_i$. Then $\WFP(\tau)=\{ \sum_{i=1}^f [d_i^{a_i}] \}$. Let $(X,X')\in \{(B,C), (C,B)\}$. It suffices to show that for any partition $\underline{p}$ of type $X'$, the following equality holds.
    \[ d_{BV}\left( \bigsqcup_{i=1}^f [a_i^{2d_i}] \sqcup \underline{p} \right)= \left(  \sum_{i=1}^f [(2d_i)^{a_i}] + d_{BV}(\underline{p})   \right)_X.\]
    However, it follows directly from Proposition \ref{prop computation of induced orbit} and Lemma \ref{lem goal}. This completes the proof of the proposition.
\end{proof}

\section{Representations with unipotent cuspidal support}\label{sec unipotent}

In this section, we survey the recent progress of Conjecture \ref{conj bound of WF general} for representations with unipotent cuspidal support. We prove certain cases of the Jiang conjecture (Conjecture \ref{conj Jiang}) (Theorems \ref{thm Jiang upper bound unipotent}, \ref{thm upper bound SO(2n+1) unip}) as an application of Theorem \ref{thm main Jiang}. We assume the residue field of $F$ has sufficiently large characteristic throughout this section.

Representations with unipotent cuspidal support, or simply unipotent representations, are classified by Deligne-Langlands-Lusztig parameters. This is proved in \cite{KL87, Lus95, Lus02}, and we refer the reader to \cite[Theorem 4.1.1]{CMBO23} for the properties of the correspondence. For pure inner forms of classical groups, it is discussed in \cite[\S 2.3]{AMS21} and \cite[\S 4]{AMS22} that the above correspondence for unipotent representations is compatible with the local Langlands correspondence (Conjecture \ref{conj LLC}) defined from endoscopy theory. Therefore, we have the following equivalent definition of unipotent representations.

\begin{defn}\label{def unipotent}
We say $\pi \in \Pi(G(V))$ is with unipotent cuspidal support, or unipotent for short, if $\phi_{\pi}$ is trivial on $I_{F}$, the inertia subgroup of $W_F$. In this case, we say $\pi$ is with real infinitesimal parameter if the eigenvalues of 
 \[ \phi_{\pi}\left( \Fr, \begin{pmatrix}
     q^{1/2} & \\& q^{-1/2}
 \end{pmatrix} \right) \]
are real powers of $q$, where $\Fr$ is any choice of Frobenius element and $q$ is the cardinality of the residue field of $F$.
\end{defn}

Let $\Pi^{unip}(G)$ (resp. $\Pi^{\R unip}(G)$) denote the subset of $\Pi(G)$ consists of unipotent representations (resp. unipotent representations with real infinitesimal parameter). For $\ast \in \{\emptyset, A,\widehat{temp}, \widehat{2}\}$ and $ \oldbullet \in \{unip, \R unip\}$, let $\Pi^{\oldbullet}_{\ast}(G):= \Pi^{\oldbullet}(G)\cap \Pi_{\ast}(G)$. 

Write $G=G(V)$ and $V=V_{an,\mathfrak{r}}$. Let $(\Xi, -) \in \{(\Pi, \ref{conj bound of WF}), (\Phi, \ref{conj Jiang ABV 2}), (\Psi, \ref{conj Jiang 2})\}$. We consider the restriction of the statements \eqref{eq Xi ast} 
\begin{align}\label{eq Xi ast heart}
    \tag{$\Xi_{\ast}^{\oldbullet}$} \text{Conjecture }- \text{ holds for any }\pi \in \Pi_{\ast}^{\oldbullet}(G(V_{an},r)) \text{ for any }r \leq \mathfrak{r},
\end{align}
for $\oldbullet \in \{unip, \R unip\}$. One can check that the proof Theorems \ref{thm main classical groups}, \ref{thm main Jiang} and \ref{thm main Jiang ABV} work for the equivalence among $(\Xi_{\ast}^{\oldbullet})$ without change. We state it in the following theorem.

\begin{thm}\label{thm main unip}
    Let $V=V_{an, \mathfrak{r}}$ and $G=G(V)$. Let $\oldbullet \in \{unip, \R unip \}$
    \begin{enumerate}
        \item [(a)] The statements  $(\Pi^{\oldbullet}), (\Pi_{A}^{\oldbullet}), (\Pi_{\widehat{temp}}^{\oldbullet}), (\Pi_{\widehat{2}}^{\oldbullet})$ are equivalent.
        \item [(b)] Suppose that Working Hypothesis \ref{assu closure ordering A-packet} holds for any $\pi \in \Pi^{\oldbullet}(G(V_{an,r}))$ for any $r \leq \mathfrak{r}$. Then the statements $(\Psi_A^{\oldbullet}), (\Psi_{\widehat{temp}}^{\oldbullet}), (\Psi_{\widehat{2}}^{\oldbullet}), (\Pi^{\oldbullet})$ are equivalent.
        \item [(c)] Suppose that Working Hypothesis \ref{assu ABV AZ dual} holds for any $\pi \in \Pi^{\oldbullet}(G(V_{an,r}))$ for any $r \leq \mathfrak{r}$. Then the statements $(\Phi^{\oldbullet}), (\Phi_{\widehat{temp}}^{\oldbullet}), (\Phi_{\widehat{2}}^{\oldbullet}), (\Pi^{\oldbullet})$ are equivalent.
    \end{enumerate}
\end{thm}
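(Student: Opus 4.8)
The plan is to observe that passing to the subclass of unipotent (resp.\ unipotent with real infinitesimal parameter) representations is transparent to every reduction step in the diagram \eqref{eq diagram}: the proofs of Theorems \ref{thm main classical groups}, \ref{thm main Jiang} and \ref{thm main Jiang ABV} consist precisely of the implications (A)--(H) of that diagram together with the downward implications coming from the chain of containments, and I will check that each of them restricts to the unipotent (resp.\ real-unipotent) locus once every class is decorated with the superscript $\oldbullet$. The verification splits into two stability principles. On the representation side, the Aubert--Zelevinsky involution $\pi \mapsto \widehat{\pi}$ preserves the supercuspidal support of $\pi$, hence the defining condition of Definition \ref{def unipotent}, and it fixes the infinitesimal parameter $\lambda_{\phi_{\pi}}$, hence the real-infinitesimal condition; moreover, for any parabolic induction $\tau \rtimes \sigma$ the supercuspidal support and the infinitesimal parameter of every irreducible subquotient are assembled from those of $\tau$ and $\sigma$, so $\tau$ and $\sigma$ both unipotent (resp.\ both real unipotent) forces every irreducible subquotient of $\tau \rtimes \sigma$ to be so as well. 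On the parameter side, the members of a local Arthur packet $\Pi_{\psi}$ (resp.\ of an ABV-packet $\Pi_{\phi}^{\textrm{ABV}}$) share the infinitesimal parameter $\lambda_{\phi_{\psi}}$ (resp.\ $\lambda_{\phi}$), and the maps $\psi \mapsto \phi_{\psi}$, $\psi \mapsto \widehat{\psi}$ and $\phi \mapsto \widehat{\phi}$ leave the restriction to $I_F$ unchanged and preserve the real-infinitesimal condition; hence $\pi \in \Pi_{\psi}$ unipotent (resp.\ real unipotent) forces $\psi$, $\widehat{\psi}$ and $\widehat{\pi} \in \Pi_{\widehat{\psi}}$ to be unipotent (resp.\ real unipotent), and likewise with $\Pi_{\phi}^{\textrm{ABV}}$ and $\widehat{\phi}$ in place of $\Pi_{\psi}$ and $\widehat{\psi}$.

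Granting these, I will run through the diagram. For (A) and (F) (Theorems \ref{thm (A)}, \ref{thm (F)}) the reduction passes from $\pi$ to $\pi_- = \widehat{(\widehat{\pi})^{-}}$ and to a general linear block $\Speh(\rho_1, a_1)|\cdot|^{x_1}$; since $\widehat{\pi}$ is unipotent, the cuspidal data $\rho_i$ appearing in the standard module of $\widehat{\pi}$ (or, in the anti-tempered case, in the induction of Proposition \ref{prop A-packet}(b) realizing $\widehat{\pi}$) are unramified unitary characters — trivial characters, in the real-infinitesimal case — so each $\Speh(\rho_i, a_i)|\cdot|^{x_i}$, and hence $\pi_-$, remains in the class, while the combinatorial ingredients (Propositions \ref{prop induced orbit}, \ref{prop computation of induced orbit}, Theorem \ref{thm WF GL Speh}, Lemma \ref{lem goal}) are untouched. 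For (B) (Lemma \ref{lem (B)}), given unipotent $\pi \in \Pi_{\psi}$ the parameter $\widehat{\psi}$ and the representation $\widehat{\pi} \in \Pi_{\widehat{\psi}}$ are unipotent, so $(\Pi_{A}^{\oldbullet})$ and the unipotent form of Working Hypothesis \ref{assu closure ordering A-packet} apply. For (C) and (H) (Lemma \ref{lem (C) (H)}), if $\widehat{\pi}$ is tempered and unipotent then the tempered parameter $\widehat{\psi}$ with $\widehat{\pi} \in \Pi_{\widehat{\psi}}$ is unipotent, whence $\psi = \widehat{\widehat{\psi}}$ and $\pi \in \Pi_{\psi}$ are unipotent. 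For (D), (E), (G) (Lemma \ref{lem (D) (E) (G)}), given unipotent $\pi \in \Pi_{\phi}^{\textrm{ABV}}$ the parameters $\phi$, $\widehat{\phi}$ and the representation $\widehat{\pi} \in \Pi_{\widehat{\phi}}^{\textrm{ABV}}$ are unipotent, so Proposition \ref{prop ABV} and the unipotent form of Working Hypothesis \ref{assu ABV AZ dual} apply. The downward implications are the chain of containments restricted to the unipotent (resp.\ real-unipotent) locus, using that every member of a unipotent local Arthur packet is unipotent. This yields parts (a), (b), (c).

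I expect the only real content to be the stability of the two classes under peeling a segment or a tempered block, i.e.\ the passage from $\pi$ to $\pi_-$: this reduces to the structural fact that the standard module of a unipotent representation of $G(V)$ — and, through Proposition \ref{prop A-packet}(b), the parabolic induction realizing an anti-tempered unipotent representation — carries only unramified unitary characters (trivial characters, with real exponents, in the real-infinitesimal case) on its general linear factors, so that both the removed block and the residual representation stay in the class. Everything else in the proofs of Theorems \ref{thm main classical groups}, \ref{thm main Jiang} and \ref{thm main Jiang ABV} is purely combinatorial or already stated for an arbitrary representation, and therefore carries over without change.
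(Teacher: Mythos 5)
Your proposal is correct and follows essentially the same route as the paper, which simply asserts that the proofs of Theorems \ref{thm main classical groups}, \ref{thm main Jiang} and \ref{thm main Jiang ABV} go through "without change" on the unipotent (resp.\ real-unipotent) locus; your explicit verification that the class is stable under Aubert--Zelevinsky involution, under passage between a packet member and its parameter (via the shared infinitesimal parameter, noting $\phi|_{I_F}$ trivial iff $\lambda_\phi|_{I_F}$ trivial), and under the peeling step $\pi \mapsto \pi_-$ in directions (A) and (F) (where unipotence forces the GL-blocks to be built from unramified characters) is precisely the check the paper leaves implicit.
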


\subsection{\texorpdfstring{Real infinitesimal parameter}{}}

We say a local Arthur parameter $\psi$ is \emph{basic} if $\psi$ is trivial on $W_F \times \SL_2^{D}(\BC)$. We recall one of the main results in \cite{CMBO22,CMBO23}.

\begin{thm}[{\cite[Corollary 3.0.5]{CMBO22} \cite[Theorem 1.4.1(1)]{CMBO23}}]\label{thm CMBO} 
Let $\mathrm{G}$ be a connected reductive group defined and inner to split over $F$, and let $G=\mathrm{G}(F)$. Assume that the residue field of $F$ has sufficiently large characteristic. Suppose $\pi \in \Pi(G)$ is unipotent with real infinitesimal parameter. Then the wavefront set of $\pi$ is a singleton, and 
    \begin{align}\label{eq computation of WF}
        \WFN(\pi)= \{ d_{BV}(\OO_{\phi_{\widehat{\pi}}}) \}.
    \end{align}
    In particular, this implies the followings.
    \begin{enumerate}
        \item [(i)] The statement $(\Pi^{\R unip})$ is true.
        \item [(ii)]If $\psi_0$ is a basic local Arthur parameter, then Part (i) of Conjecture \ref{conj Jiang} holds, and the wavefront set of any $\pi \in \Pi_{\psi_0}$ achieves the upper bound in Part (ii) of Conjecture \ref{conj Jiang}.
    \end{enumerate}       
\end{thm}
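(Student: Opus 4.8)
The displayed identity \eqref{eq computation of WF} is precisely \cite[Corollary 3.0.5]{CMBO22} together with \cite[Theorem 1.4.1(1)]{CMBO23}, so I would simply invoke it; the content of the proof is to deduce (i) and (ii). For (i), translating \eqref{eq computation of WF} to partitions via $\underline{p}(d_{BV}(\OO))=d_{BV}(\underline{p}(\OO))$ and $\underline{p}(\OO_{\phi_{\widehat{\pi}}})=\underline{p}(\phi_{\widehat{\pi}})$ shows that $\WFP(\pi)$ is the singleton $\{d_{BV}(\underline{p}(\phi_{\widehat{\pi}}))\}$ for every unipotent $\pi$ with real infinitesimal parameter. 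In particular the inequality demanded by Conjecture \ref{conj bound of WF} holds for such $\pi$ (in fact with equality), and since every member of $\Pi^{\R unip}(G(V_{an,r}))$ with $r\leq\mathfrak{r}$ is a unipotent representation with real infinitesimal parameter of a group that is again inner to split (each member of the relevant Witt towers is), the statement $(\Pi^{\R unip})$ follows at once.

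For (ii), fix a basic $\psi_0\in\Psi^+(G)$, so $\psi_0(w,d,a)=\psi_0(1,1,a)$, and take $\pi\in\Pi_{\psi_0}$. First I would check that $\pi$ is unipotent with real infinitesimal parameter: from $\phi_{\psi_0}(w,x)=\psi_0(w,x,\mathrm{diag}(|w|^{1/2},|w|^{-1/2}))=\psi_0(1,1,\mathrm{diag}(|w|^{1/2},|w|^{-1/2}))$ one reads off that $\lambda_{\phi_{\psi_0}}$ is trivial on $I_F$ and has eigenvalues integral powers of $q^{1/2}$ at Frobenius, hence is real; as all members of $\Pi_{\psi_0}$ carry the common infinitesimal parameter $\lambda_{\phi_{\psi_0}}$, the representation $\pi$ qualifies and \eqref{eq computation of WF} gives $\WFN(\pi)=\{d_{BV}(\OO_{\phi_{\widehat{\pi}}})\}$. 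It remains to identify $\OO_{\phi_{\widehat{\pi}}}$ with $\OO_{\psi_0}$. By Proposition \ref{prop A-packet}(c), $\widehat{\pi}\in\Pi_{\widehat{\psi_0}}$; since $\psi_0$ is basic, $\widehat{\psi_0}(w,d,a)=\psi_0(w,a,d)=\psi_0(1,1,d)$ is trivial on $W_F\times\SL_2^A(\BC)$, hence tempered, so $\Pi_{\widehat{\psi_0}}=\Pi_{\phi_{\widehat{\psi_0}}}$ by Proposition \ref{prop A-packet}(a) and therefore $\phi_{\widehat{\pi}}=\phi_{\widehat{\psi_0}}$. Finally $\phi_{\widehat{\psi_0}}|_{\SL_2(\BC)}(x)=\widehat{\psi_0}(1,x,1)=\psi_0(1,1,x)=\psi_0|_{\SL_2^A(\BC)}(x)$, so the two restrictions determine the same $\mathfrak{sl}_2$-triple and $\OO_{\phi_{\widehat{\pi}}}=\OO_{\phi_{\widehat{\psi_0}}}=\OO_{\psi_0}$. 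Combining, $\WFN(\pi)=\{d_{BV}(\OO_{\psi_0})\}$ for \emph{every} $\pi\in\Pi_{\psi_0}$, which yields Part (i) of Conjecture \ref{conj Jiang} for $\psi_0$ (with equality) and the asserted strengthening of Part (ii).

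Since the analytic heart of the statement, equation \eqref{eq computation of WF}, is imported wholesale, there is no serious obstacle; the work is bookkeeping. The points requiring care are: (a) confirming that the ``inner to split'' hypothesis of the cited theorem really does cover the groups $G(V_{an,r})$, and every member of each Witt tower, to which it is applied in (i) and (ii); (b) justifying the use of Proposition \ref{prop A-packet}(c) in the unipotent setting, which I would trace back to the compatibility of the Aubert--Zelevinsky involution with (twisted) endoscopic transfer cited after Proposition \ref{prop A-packet}; and (c) the harmless but essential observation that the Deligne-$\SL_2$ of $\widehat{\psi_0}$ realizes the Arthur-$\SL_2$ of $\psi_0$, so that $\OO_{\phi_{\widehat{\psi_0}}}$ and $\OO_{\psi_0}$ literally coincide.
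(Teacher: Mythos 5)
Your proposal is correct, and it is worth noting that the paper does not actually prove this theorem at all: both the identity \eqref{eq computation of WF} and the two consequences are imported wholesale from the cited references (the formula is \cite[Theorem 1.4.1(1)]{CMBO23}, and part (ii) is essentially \cite[Corollary 3.0.5]{CMBO22} itself, which already computes the wavefront set of every member of the packet of a basic parameter). Your treatment of (i) is exactly the implicit bookkeeping the paper relies on: the formula gives $\WFP(\pi)=\{d_{BV}(\underline{p}(\phi_{\widehat{\pi}}))\}$, so Conjecture \ref{conj bound of WF} holds with equality, and your caution about the ``inner to split'' hypothesis is well placed — $(\Pi^{\R unip})$ can only be asserted for those Witt towers whose groups are inner to split (symplectic and the relevant orthogonal towers; unitary and non-split quasi-split even orthogonal groups fall outside the CMBO hypotheses), which is how the paper itself uses it later (Theorem \ref{thm Jiang upper bound unipotent} is stated for pure inner forms of split classical groups). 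For (ii) you take a genuinely different route: instead of quoting \cite[Corollary 3.0.5]{CMBO22}, you rederive it from \eqref{eq computation of WF} via Proposition \ref{prop A-packet}(a),(c) — $\widehat{\pi}\in\Pi_{\widehat{\psi_0}}=\Pi_{\phi_{\widehat{\psi_0}}}$, so $\OO_{\phi_{\widehat{\pi}}}=\OO_{\psi_0}$ — which is precisely the mechanism the paper uses in the proof of Theorem \ref{thm Jiang upper bound unipotent}(ii). The trade-off is that your route quietly imports the hypotheses under which Proposition \ref{prop A-packet}(c) is available (the Arthur-packet theory for the group in question and the compatibility of the Aubert--Zelevinsky involution with endoscopic transfer, stated in the paper only for pure inner forms of classical groups), whereas the direct citation covers all connected reductive $G$ inner to split, where the packet of a basic parameter is the one constructed in \cite{CMBO22}. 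With that caveat acknowledged — as you do in your closing points (a)--(c) — the argument is sound.
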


By Theorem \ref{thm main unip}, Part (i) of the above theorem implies the Jiang conjecture (Conjecture \ref{conj Jiang}) on local Arthur packets and ABV-packets assuming Working Hypothesis \ref{assu closure ordering A-packet}, \ref{assu ABV AZ dual}. We can also prove the achievement part by the explicit formula of wavefront sets. Let us focus on Conjecture \ref{conj Jiang} and introduce some notation. We say a local Arthur parameter $\psi$ is \emph{$W_F$-trivial} (resp. \emph{$I_F$-trivial}) if $\psi|_{W_F}$ (resp. $\psi|_{I_F}$) is trivial. Suppose $\pi \in \Pi_{\psi}$. Then $\pi$ is unipotent (resp. unipotent with real infinitesimal parameter) if and only if $\psi$ is $I_F$-trivial (resp. $W_F$-trivial).

\begin{thm}\label{thm Jiang upper bound unipotent}
Let $G=G(V)$ be a pure inner form of a split classical group defined over $F$. Assume that the residue field of $F$ has sufficiently large characteristic. We have the followings.
\begin{enumerate}
    \item [(i)]  If Working Hypothesis \ref{assu closure ordering A-packet} holds for any $\pi \in \Pi^{\R unip}(G)$, then Conjecture \ref{conj Jiang}(i) holds for any $W_F$-trivial local Arthur parameters $\psi$ of $G$.
    \item [(ii)] If $G$ is quasi-split, then
    Conjecture \ref{conj Jiang}(ii) holds for any $W_F$-trivial local Arthur parameters $\psi$ of $G$.
\end{enumerate}
\end{thm}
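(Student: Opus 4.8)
The plan is to deduce both parts from the explicit computation of wavefront sets of unipotent representations with real infinitesimal parameter (Theorem~\ref{thm CMBO}), combined with the way the Aubert--Zelevinsky dual interacts with the closure ordering inside an Arthur packet, together with Proposition~\ref{prop A-packet}(c).

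For Part~(i), fix a $W_F$-trivial $\psi \in \Psi^+(G)$ and $\pi \in \Pi_\psi$. Since $\widehat\psi|_{W_F}=\psi|_{W_F}$ is trivial, $\widehat\psi$ is again $W_F$-trivial, and by Proposition~\ref{prop A-packet}(c) we have $\widehat\pi \in \Pi_{\widehat\psi}$; moreover, since the members of a local Arthur packet share the infinitesimal parameter of the parameter and the Aubert--Zelevinsky dual preserves supercuspidal supports, hence infinitesimal parameters, both $\pi$ and $\widehat\pi$ lie in $\Pi^{\R unip}(G)$. Thus the assumed Working Hypothesis~\ref{assu closure ordering A-packet} applies to $\widehat\pi$ and gives $\phi_{\widehat\pi} \geq_C \phi_{\widehat\psi}$; applying Proposition~\ref{prop C implies P}, then the order-reversing property of $d_{BV}$ (Proposition~\ref{prop dBV}(1)), and finally the identity $\underline{p}(\phi_{\widehat\psi})=\underline{p}(\psi)$ recorded in \S\ref{lp and lap}, I obtain $d_{BV}(\underline{p}(\phi_{\widehat\pi})) \leq d_{BV}(\underline{p}(\psi))$. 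On the other hand $\pi \in \Pi^{\R unip}(G)$, so Theorem~\ref{thm CMBO} gives $\WFN(\pi)=\{d_{BV}(\OO_{\phi_{\widehat\pi}})\}$; combining the two yields $\WFN(\pi) \leq d_{BV}(\OO_\psi)$, which is Conjecture~\ref{conj Jiang}(i).

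For Part~(ii), where $G$ is the split classical group itself, I would again pass to $\widehat\psi$ and seek $\tau \in \Pi_{\widehat\psi}$ with Langlands parameter exactly $\phi_{\widehat\psi}$; such a $\tau$ exists because the $L$-packet $\Pi_{\phi_{\widehat\psi}}$ is contained in the Arthur packet $\Pi_{\widehat\psi}$ (a known structural property of local Arthur packets of quasi-split classical groups, due to M{\oe}glin in the $p$-adic case) and $L$-packets of quasi-split groups are non-empty. Setting $\pi:=\widehat\tau$, Proposition~\ref{prop A-packet}(c) gives $\pi \in \Pi_\psi$, and $\phi_{\widehat\pi}=\phi_\tau=\phi_{\widehat\psi}$, hence $\OO_{\phi_{\widehat\pi}}=\OO_{\phi_{\widehat\psi}}=\OO_\psi$ (the last equality because the Deligne $\SL_2$-factor of $\widehat\psi$ is, by definition, the Arthur $\SL_2$-factor of $\psi$). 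Since $\pi \in \Pi^{\R unip}(G)$, Theorem~\ref{thm CMBO} then gives $\overline{\mathfrak{n}}^m(\pi)=\{d_{BV}(\OO_{\phi_{\widehat\pi}})\}=\{d_{BV}(\OO_\psi)\}$, which is Conjecture~\ref{conj Jiang}(ii).

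The main obstacle is the input used in Part~(ii): one needs a representation in $\Pi_{\widehat\psi}$ whose Langlands parameter is precisely $\phi_{\widehat\psi}$, equivalently the member corresponding to the relevant closed orbit, and this rests on the structure theory of Arthur packets of classical groups rather than on anything established in the present paper. The remaining points (that $\widehat\pi \in \Pi^{\R unip}(G)$ in Part~(i), and the bookkeeping with $\widehat\psi$ and the two copies of $\SL_2$) are routine, but should be checked carefully so that Working Hypothesis~\ref{assu closure ordering A-packet} and Theorem~\ref{thm CMBO} are genuinely applicable to the representations in question.
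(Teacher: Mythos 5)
Your proposal is correct and follows essentially the same route as the paper: Part~(i) is exactly the argument of Lemma~\ref{lem (B)} (apply Proposition~\ref{prop A-packet}(c) to get $\widehat\pi\in\Pi_{\widehat\psi}$, then Working Hypothesis~\ref{assu closure ordering A-packet} together with Propositions~\ref{prop C implies P} and~\ref{prop dBV}(1)) combined with the exact wavefront formula of Theorem~\ref{thm CMBO}, which is precisely what the paper invokes via Theorem~\ref{thm main unip}; and Part~(ii) coincides with the paper's proof, the inclusion $\Pi_{\phi_{\widehat\psi}}\subseteq\Pi_{\widehat\psi}$ being the same structural input the paper cites from Arthur and Mok.
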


\begin{proof}
    Part (i) follows from Theorems \ref{thm main unip} and \ref{thm CMBO}. For Part (ii), recall that we require the group $G$ to be quasi-split in the statement. Therefore, the $L$-packet $\Pi_{\phi_{\widehat{\psi}}}$ is non-empty and we have an inclusion $\Pi_{\phi_{\widehat{\psi}}} \subseteq \Pi_{\widehat{\psi}}$ by \cite[Proposition 7.4.1]{Art13} and \cite[Proposition 8.4.1]{Mok15}. Thus, we can take $\widehat{\pi} \in \Pi_{\phi_{\widehat{\psi}}} \subseteq \Pi_{\widehat{\psi}}$ and let $\pi:= \widehat{(\widehat{\pi})} \in \Pi_{\psi}$ by Proposition \ref{prop A-packet}(c). This gives the desired representation for Conjecture \ref{conj Jiang}(ii) for $\psi$ since Theorem \ref{thm CMBO} gives
    \[ \WFP(\pi)= d_{BV}\{(\underline{p}(\phi_{\widehat{\pi}}))\}=\{d_{BV}(\underline{p}(\phi_{\widehat{\psi}}))\}=\{d_{BV}(\underline{p}(\psi))\}.\]
    This completes the proof of the theorem.
\end{proof}

\begin{remark}
For the groups $G=\Sp_{2n}(F)$ or split $\SO_{2n+1}(F)$, Working Hypothesis \ref{assu closure ordering A-packet} is verified (Theorem \ref{thm closure ordering A-packet SpSO}). Thus Conjecture \ref{conj Jiang} holds for all $W_F$-trivial local Arthur parameters $\psi$ of $G$ as long as the residue field of $F$ has sufficiently large characteristic.
\end{remark}

\subsection{\texorpdfstring{Non-real infinitesimal parameter for $\SO_{2n+1}$}{}}

For unipotent $\pi$ with \emph{non-real} infinitesimal parameter, the equality \eqref{eq computation of WF} may fail (see the remark after \cite[Theorem 1.4.1]{CMBO23}). However, for $\SO_{2n+1}(F)$, Conjecture \ref{conj bound of WF} is established for all unipotent anti-tempered representations in \cite{Wal18}.

\begin{thm}[{\cite[Theorem 3.3]{Wal18}}]\label{thm Wal unipotent anti-tempered}
 Suppose $\pi$ is a unipotent anti-tempered representation of $\SO_{2n+1}(F)$ (not necessarily quasi-split) and the cardinality of the residue field of $F$ is greater than or equal to $6n+4$. Then 
\[ \WFP(\pi)= \{ d_{BV}( \underline{p}(\phi_{\widehat{\pi}})) \}.\]
\end{thm}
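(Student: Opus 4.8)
The plan is to reduce, by the argument of Theorem \ref{thm (F)} (its unipotent form being Theorem \ref{thm main unip}(a)), to the case where $\pi$ is anti-discrete, and then to settle that case by an explicit analysis of the Harish-Chandra--Howe expansion through Lusztig's classification of unipotent representations. Since $\widehat{\pi}$ is tempered and unipotent, Proposition \ref{prop A-packet}(a),(b) realizes it as a constituent of a unitary parabolic induction $\St(\rho_1,a_1)\times\cdots\times\St(\rho_k,a_k)\rtimes\delta_0$, with $\delta_0$ a unipotent discrete series of a split $\SO_{2m+1}(F)$, $m\le n$, and each $\rho_i$ an unramified character of $F^{\times}$ of order dividing $2$; applying the Aubert--Zelevinsky involution, $\pi$ is a constituent of $\Speh(\rho_1,a_1)\times\cdots\times\Speh(\rho_k,a_k)\rtimes\widehat{\delta_0}$ with $\widehat{\delta_0}$ unipotent anti-discrete. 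By Proposition \ref{prop induced orbit} the whole induced representation has wavefront set a single orbit, which Proposition \ref{prop computation of induced orbit}, Theorem \ref{thm WF GL Speh} and Lemma \ref{lem goal} identify --- provided $\WFP(\widehat{\delta_0})=\{d_{BV}(\underline{p}(\phi_{\widehat{\delta_0}}))\}$ --- with $d_{BV}(\underline{p}(\phi_{\widehat{\pi}}))$, so that $\WFP(\pi)\le d_{BV}(\underline{p}(\phi_{\widehat{\pi}}))$ follows at once. What remains is (a) the theorem for anti-discrete $\pi$, which by \S\ref{limitation} admits no further reduction of this type, and (b) upgrading this inequality to an equality with singleton right-hand side for \emph{every} anti-tempered $\pi$.

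For (a), assume $\widehat{\pi}$ is a unipotent discrete series of $\SO_{2n+1}(F)$. For $q$ large --- whence the hypothesis $q\ge 6n+4$ --- Lusztig's classification \cite{Lus95, Lus02} attaches to $\widehat{\pi}$ a pair (unipotent class in $\widehat G=\Sp_{2n}(\BC)$, irreducible local system), encoded by a \emph{symbol}, and the restriction of $\phi_{\widehat{\pi}}$ to the Deligne--$\SL_2$ recovers the orbit $\OO_{\phi_{\widehat{\pi}}}$ of $\mathfrak{sp}_{2n}(\BC)$ read off that symbol, while the $W_F$-part of $\phi_{\widehat{\pi}}$ is trivial or the unramified quadratic character --- the two alternatives being precisely real versus non-real infinitesimal parameter. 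In the real case Theorem \ref{thm CMBO} already applies, so the substance is the non-real case, where the plan is to compute $\WFP(\pi)$ directly. The Aubert--Zelevinsky formula writes $\Theta(\pi)$ as a signed sum of induced characters $\Ind_P^G\circ\mathrm{Jac}_P(\widehat{\pi})$, the Jacquet modules of the unipotent $\widehat{\pi}$ being again sums of unipotent representations of the Levi subgroups involved; expanding each near $0$ and using the behaviour of the Harish-Chandra--Howe expansion under parabolic induction, $c_{\OO}(\pi)$ becomes a signed sum of germ-coefficients of such unipotent representations and their unramified twists, all computable from Lusztig's theory of character sheaves and Green functions and from Waldspurger's evaluations for $\SO_{2n+1}$ in \cite{Wal20}. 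One then shows that the orbits $\OO$ with $c_{\OO}(\pi)\ne 0$ have a unique maximal element --- uniqueness being available because type $B$ has no ``very even'' partitions, so the symbol combinatorics yields a single maximal partition of $2n+1$ --- and that this orbit is $((\underline{p}(\phi_{\widehat{\pi}})^{+})_{B})^{\ast}=d_{BV}(\underline{p}(\phi_{\widehat{\pi}}))$, a transpose-and-$B$-collapse identity on the associated symbols. Step (b) is then handled by running the same germ analysis over the parabolic subgroup used in the reduction: the upper bound confines $\WFP(\pi)$ to orbits $\le d_{BV}(\underline{p}(\phi_{\widehat{\pi}}))$, while the character computation shows that each constituent, not merely the full induced representation, carries a nonzero coefficient at that orbit. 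The non-quasi-split pure inner form is treated identically with the (smaller) classification of its unipotent representations.

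The main obstacle is step (a) in the non-real infinitesimal parameter case: it cannot be imported from Theorem \ref{thm CMBO}, and it rests on the full strength of Lusztig's and Waldspurger's work on character sheaves, Green functions and the generalized Springer correspondence for $\Sp_{2n}$, together with a delicate manipulation of Lusztig's symbols to match the answer with the Barbasch--Vogan dual and to extract the singleton statement. By contrast the reduction in the first paragraph is formal given what is already established here; the unipotent-discrete-series computation --- and the verification that the conjectural bound is genuinely attained, not merely respected, by every anti-tempered member --- is where the real difficulty lies.
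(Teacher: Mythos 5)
This statement is not proved in the paper at all: it is Waldspurger's result, quoted verbatim from \cite[Theorem 3.3]{Wal18}, and the hypothesis on the residue field is his. Your proposal therefore cannot be compared with a proof in the paper; what it can be measured against is whether it actually establishes the statement, and it does not. The entire substance --- step (a) for anti-discrete $\pi$ with non-real infinitesimal parameter, and step (b), the claim that \emph{every} anti-tempered constituent attains the orbit $d_{BV}(\underline{p}(\phi_{\widehat{\pi}}))$ with nonzero germ coefficient --- is deferred to ``Lusztig's theory of character sheaves and Green functions and Waldspurger's evaluations,'' i.e.\ to precisely the body of work whose conclusion you are asked to prove. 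Writing $c_{\OO}(\pi)$ as a signed sum of germ coefficients of unipotent representations of Levi subgroups via the Aubert--Zelevinsky formula and then asserting that the maximal nonvanishing orbit is the stated one is a restatement of the problem, not an argument; the cancellation analysis in that signed sum is the hard content of \cite{Wal18}.

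There is also a concrete flaw in the way you try to upgrade the reduction to an equality. The mechanism of Theorem \ref{thm (F)} (and its unipotent variant) combined with Proposition \ref{prop induced orbit}, Theorem \ref{thm WF Gl Speh}-type inputs and Lemma \ref{lem goal} only yields the inequality $\WFP(\pi)\leq d_{BV}(\underline{p}(\phi_{\widehat{\pi}}))$, because $\pi$ is merely a subquotient of $\Speh(\rho_1,a_1)\times\cdots\rtimes\widehat{\delta_0}$ and Proposition \ref{prop induced orbit} computes the wavefront set of the full induced representation, not of its constituents. Your step (b) asserts that ``each constituent, not merely the full induced representation, carries a nonzero coefficient at that orbit,'' but nothing in the formal reduction supports this, and in general it is false: Example \ref{exmp non-achieve}(1) exhibits an induced representation of $\SO_{4n+1}(F)$ exactly one of whose two constituents attains the induced orbit. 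That the equality nevertheless holds for all unipotent anti-tempered representations of $\SO_{2n+1}(F)$ (for large residue field) is a genuinely nontrivial fact, and it is exactly what \cite{Wal18} proves; in the paper it is simply imported as a black box and fed into Theorem \ref{thm main unip} to obtain Theorem \ref{thm upper bound SO(2n+1) unip}.
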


Therefore, Theorem \ref{thm main unip} implies the following. 

\begin{thm}\label{thm upper bound SO(2n+1) unip}
Let $G=G(V)$ be an odd special orthogonal group.
\begin{enumerate}
    \item [(a)]Conjecture \ref{conj bound of WF} holds for any unipotent representations of $G$.
    \item [(b)]Conjecture \ref{conj Jiang} holds for any $I_F$-trivial local Arthur parameters of $G$ if $G$ is split over $F$. If $G$ is not split over $F$, the same holds if Working Hypothesis \ref{assu closure ordering A-packet} holds for any $I_F$-trivial local Arthur parameters of $G$.
\end{enumerate}
\end{thm}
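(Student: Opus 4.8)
The plan is to feed Waldspurger's wavefront-set computation for unipotent anti-tempered representations of odd special orthogonal groups (Theorem \ref{thm Wal unipotent anti-tempered}) into the reduction machinery of Theorem \ref{thm main unip}. For Part (a), write $V=V_{an,\mathfrak{r}}$; then $G=G(V)$ and every $G(V_{an,r})$ with $r\le\mathfrak{r}$ is an odd special orthogonal group, possibly non-quasi-split. By Theorem \ref{thm main unip}(a) the statement $(\Pi^{unip})$ is equivalent to $(\Pi_{\widehat{temp}}^{unip})$, i.e.\ to Conjecture \ref{conj bound of WF} for every unipotent anti-tempered $\pi$ of $G(V_{an,r})$, $r\le\mathfrak{r}$. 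For such a $\pi$ the group $G(V_{an,r})$ has bounded size, so under the standing assumption that the residue field of $F$ has sufficiently large characteristic, Theorem \ref{thm Wal unipotent anti-tempered} applies and gives $\WFP(\pi)=\{d_{BV}(\underline{p}(\phi_{\widehat{\pi}}))\}$; in particular $\WFP(\pi)\le d_{BV}(\underline{p}(\phi_{\widehat{\pi}}))$, which is Conjecture \ref{conj bound of WF} for $\pi$. Hence $(\Pi_{\widehat{temp}}^{unip})$, and therefore $(\Pi^{unip})$, holds, which contains Part (a).

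For Part (b)(i), recall that a local Arthur parameter $\psi$ is $I_F$-trivial exactly when every member of $\Pi_\psi$ is unipotent; then $\widehat{\psi}$ and $\phi_{\widehat{\psi}}$ are $I_F$-trivial as well, so all representations occurring in the reduction are unipotent. When $G=\SO(V)$ is split, Working Hypothesis \ref{assu closure ordering A-packet} holds for all $G(V_{an,r})$, $r\le\mathfrak{r}$, by Theorem \ref{thm closure ordering A-packet SpSO}; when $G$ is non-split it is assumed, as in the statement. In either case Part (a) together with Theorem \ref{thm main unip}(b) yields $(\Psi_A^{unip})$, which is precisely Conjecture \ref{conj Jiang 2}, equivalently Part (i) of Conjecture \ref{conj Jiang}, for all $I_F$-trivial $\psi$. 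This already proves the theorem when $G$ is non-split, since Part (ii) of Conjecture \ref{conj Jiang} is then vacuous.

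For Part (b)(ii), let $G=\SO(V)$ be split, hence quasi-split, and let $\psi$ be $I_F$-trivial; we must produce $\pi\in\Pi_\psi$ with $\overline{\mathfrak{n}}^m(\pi)=\{d_{BV}(\OO_\psi)\}$, using that $d_{BV}(\OO_\psi)=d_{BV}(\underline{p}(\phi_{\widehat{\psi}}))$. Since $G$ is quasi-split, the $L$-packet $\Pi_{\phi_{\widehat{\psi}}}$ is non-empty and contained in $\Pi_{\widehat{\psi}}$ by the work of Arthur and Mok, so any $\sigma\in\Pi_{\phi_{\widehat{\psi}}}$ satisfies $\phi_\sigma=\phi_{\widehat{\psi}}$ and $\widehat{\sigma}\in\Pi_\psi$ by Proposition \ref{prop A-packet}(c); by Part (a), $\overline{\mathfrak{n}}^m(\widehat{\sigma})\le d_{BV}(\underline{p}(\phi_{\widehat{\psi}}))$, and it suffices to exhibit one such $\sigma$ for which equality holds. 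When $\widehat{\psi}$ is tempered (equivalently $\psi$ is trivial on the Deligne $\SL_2$), one takes $\sigma=\tau$ to be a generic member of $\Pi_{\widehat{\psi}}=\Pi_{\phi_{\widehat{\psi}}}$; then $\tau$ is generic tempered, $\widehat{\tau}$ is anti-tempered and unipotent, and Theorem \ref{thm Wal unipotent anti-tempered} gives $\WFP(\widehat{\tau})=\{d_{BV}(\underline{p}(\phi_{\widehat{\widehat{\tau}}}))\}=\{d_{BV}(\underline{p}(\phi_{\tau}))\}=\{d_{BV}(\underline{p}(\phi_{\widehat{\psi}}))\}=\{d_{BV}(\underline{p}(\psi))\}$, so $\pi:=\widehat{\tau}$ works. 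For a general $I_F$-trivial $\psi$, $\widehat{\psi}$ need not be tempered and $\Pi_{\phi_{\widehat{\psi}}}$ need not contain a generic member; here I would instead invoke Waldspurger's full computation of wavefront sets of unipotent representations of $\SO_{2n+1}(F)$ in \cite{Wal20} (rather than only the anti-tempered case \cite{Wal18}) to compute $\overline{\mathfrak{n}}^m(\widehat{\sigma})$ for $\sigma\in\Pi_{\phi_{\widehat{\psi}}}$ and to select one achieving the bound, or alternatively establish a sharpness-reduction parallel to Theorem \ref{thm main unip} that reduces the achievement statement for $\psi$ to the case $\widehat{\psi}$ tempered just treated.

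The main obstacle is precisely the achievement part, (b)(ii): Parts (a) and (b)(i) are essentially formal once Theorems \ref{thm main unip} and \ref{thm Wal unipotent anti-tempered} are available, but upgrading the inequality from Part (a) to an equality for a suitable member of $\Pi_\psi$ is delicate. For $\psi$ trivial on the Deligne $\SL_2$ it is immediate from the anti-tempered input, but for a general $I_F$-trivial $\psi$ it requires either a finer wavefront-set computation for $\SO_{2n+1}$ than Theorem \ref{thm Wal unipotent anti-tempered} provides or a new sharpness-reduction, together with careful bookkeeping to ensure the representations of the smaller groups produced along the way stay unipotent so that these inputs apply.
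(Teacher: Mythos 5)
Your Parts (a) and (b)(i) are exactly the paper's argument: Part (a) is Theorem \ref{thm main unip}(a) (the unipotent version of the reduction to anti-tempered representations) combined with Theorem \ref{thm Wal unipotent anti-tempered}, and Part (b)(i) is Theorem \ref{thm main unip}(b) with the same input, with Working Hypothesis \ref{assu closure ordering A-packet} supplied by Theorem \ref{thm closure ordering A-packet SpSO} in the split case and assumed in the non-split case; your observation that Conjecture \ref{conj Jiang}(ii) is vacuous for the non-quasi-split form is also how the paper treats it.

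The genuine gap is Part (b)(ii), and you have flagged it yourself: you only establish the achievement statement when $\psi$ is trivial on the Deligne $\SL_2$ (so that $\widehat{\psi}$ is tempered and the Aubert--Zelevinsky dual of a member of $\Pi_{\phi_{\widehat{\psi}}}=\Pi_{\widehat{\psi}}$ is anti-tempered, where Theorem \ref{thm Wal unipotent anti-tempered} applies), while for a general $I_F$-trivial $\psi$ you defer either to the computations of \cite{Wal20} or to an unproved ``sharpness reduction''; neither route is carried out, so the proposal does not prove the theorem as stated. The paper's own proof of (b)(ii) is the single construction of Theorem \ref{thm Jiang upper bound unipotent}(ii) transplanted to this setting: since $G$ is split, $\Pi_{\phi_{\widehat{\psi}}}$ is non-empty and contained in $\Pi_{\widehat{\psi}}$, one takes $\widehat{\pi}\in\Pi_{\phi_{\widehat{\psi}}}$, sets $\pi:=\widehat{(\widehat{\pi})}\in\Pi_{\psi}$ by Proposition \ref{prop A-packet}(c), and reads off
\[
\WFP(\pi)=\{d_{BV}(\underline{p}(\phi_{\widehat{\pi}}))\}=\{d_{BV}(\underline{p}(\phi_{\widehat{\psi}}))\}=\{d_{BV}(\underline{p}(\psi))\},
\]
the wavefront-set equality being played by Theorem \ref{thm CMBO} in the $W_F$-trivial case and by Waldspurger's computation here; there is no case split on $\psi|_{\SL_2^D}$ and no auxiliary reduction. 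Your hesitation does isolate the one delicate point of that transplant --- for $\psi$ nontrivial on the Deligne $\SL_2$ the constructed $\pi$ is not anti-tempered (nor of real infinitesimal parameter when the unramified quadratic character occurs), so the displayed equality is not literally the statement of Theorem \ref{thm Wal unipotent anti-tempered}, and note that the reduction theorems themselves only give upper bounds for subquotients, never the lower bound needed for achievement --- but to match the theorem you must actually justify that equality for the dual of a member of $\Pi_{\phi_{\widehat{\psi}}}$ (e.g.\ by carrying out the appeal to \cite{Wal18,Wal20} you sketch), rather than leave it as one of several possible options.
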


\begin{proof}
    Part (a) follows from Theorems \ref{thm main unip} and \ref{thm Wal unipotent anti-tempered}. For Part (b), Part (i) of Conjecture \ref{conj Jiang} is verified by Theorems \ref{thm main unip} and \ref{thm Wal unipotent anti-tempered} again. Part (ii) of Conjecture \ref{conj Jiang} can be verified as in the proof Part (ii) of Theorem \ref{thm Jiang upper bound unipotent}. This completes the proof of the theorem.
\end{proof}

 Waldspurger (\cite{Wal20}) also computed the wavefront sets of unipotent tempered representations of $\SO_{2n+1}(F)$, which are all singletons. For these representations $\pi$, we already have $\WFP(\pi) \leq d_{BV}(\underline{p}(\phi_{\widehat{\pi}}))$ by Theorem \ref{thm upper bound SO(2n+1) unip}. We expect that $\WFP(\pi)=\{  d_{BV}(\underline{p}(\phi_{\widehat{\pi}})) \}$ and that it also matches the arithmetic wavefront set considered in \cite{CJLZ23, JLZ25}. In the rest of this subsection, we examine this for the two families considered in \cite[Appendix B.2.3]{JLZ22}.

  Let $G_n^{\ast}$ be the split group $\SO_{2n+1}(F)$ and $G_n$ its non-quasi-split pure inner form in the following discussion. Let $\rho$ denote the trivial character of $W_F$ and let $\chi$ denote the non-trivial unramified quadratic character of $W_F$. We consider unramified tempered $L$-parameters of $G_n$ of \emph{good parity}, which are of the form
  \[ \phi= \bigoplus_{i \in I_{\rho}} \rho \otimes S_{2a_i} + \bigoplus_{i \in I_{\chi}} \chi \otimes S_{2a_i},\]
    where $a_i$'s are integers for any $ i\in I_{\rho} \sqcup I_{\chi}$ and
    \[ \sum_{i \in I_{\rho}\sqcup I_{\chi}} 2a_i= 2n. \]
    
    The representations $\pi$ in the $L$-packet $\Pi_{\phi}:=\Pi_{\phi}(G_n^{\ast}) \sqcup \Pi_{\phi}(G_n)$ can be identified with functions $\varepsilon: I_{\rho} \sqcup I_{\chi} \to \{\pm 1\}$ such that for any $i,j \in I_{\rho}$ (resp. $i,j \in I_{\chi}$),  $\varepsilon(i)=\varepsilon(j)$ if $a_i=a_j$. Write $\pi=\pi(\phi, \varepsilon)$ in this case. The representation $\pi(\phi,\varepsilon)$ is in $\Pi_{\phi}(G_n^{\ast})$ if and only if $\prod_{i \in I_{\rho}\sqcup I_{\chi}}\varepsilon(i)=1$.

\begin{exmp}\label{exmp SO odd} With the notation above, we compute $d_{BV}(\underline{p}(\phi_{\widehat{\pi}}))$ for two families of unipotent representations.
    \begin{enumerate}
        \item  Consider the case that $\pi=\pi(\phi,\varepsilon)$ is supercuspidal. In this case, $\phi$ is discrete, and hence we may write the multi-sets as
        \[ \{ a_i \}_{i \in I_{\rho}}= \{1, 2,\ldots, a_{\rho}\},\ \{ a_i \}_{i \in I_{\chi}}= \{1,2, \ldots, a_{\chi}\}.\]
        Moreover, by \cite[Corollary 3.5]{MR18}, the function $\varepsilon$ is determined by
        \[\varepsilon(i)= (-1)^{a_i}.\]
        Now we compute $d_{BV}(\underline{p}(\phi_{\widehat{\pi}}))$. Since $\pi$ is supercuspidal, we have $\widehat{\pi}= \pi$, and hence 
        \begin{align*}
            d_{BV}(\underline{p}(\phi_{\widehat{\pi}}))=d_{BV}( [2a_{\rho},2a_{\rho}-2,\ldots,2]\sqcup [2a_{\chi},2a_{\chi}-2,\ldots,2]  ),
        \end{align*}
        which matches the computation in \cite[(B.2.3.a)]{JLZ22} and the wavefront set in \cite{Wal20}.
        
        \item Consider the case that $I_{\chi}$ is empty and $|I_{\rho}|=3$. Indeed, in this case, $\pi=\pi(\phi,\varepsilon)$ is unipotent with real infinitesimal parameter, and hence Theorems \ref{thm CMBO}, \ref{thm Wal unipotent anti-tempered} already imply that $d_{BV}(\underline{p}(\phi_{\widehat{\pi}}))$ matches $\WFP(\pi)$ computed by Waldspurger. Thus, we only focus on the case that $I_{\rho}=\{1,2,3\}$, $a_1<a_2<a_3$ and $\varepsilon(i)=(-1)^i$ to show how to compute $d_{BV}(\underline{p}(\phi_{\widehat{\pi}}))$ using (the proof of) \cite[Theorem 3.4]{MR18}, following the idea of \cite{Jan18} and \cite[Algorithm 4.1]{AM20}.

        For $3 \leq j \leq a_3$, consider the $L$-parameters
        \[ \phi_j:=\begin{cases}
            \rho\otimes S_{2a_1}+ \rho\otimes S_{2a_2}+ \rho\otimes S_{2j} & \text{ if }a_2<j\leq a_3,\\
            \rho\otimes S_{2a_1}+ \rho\otimes S_{2j-2}+ \rho\otimes S_{2j} & \text{ if } a_1+1< j \leq a_2,\\
            \rho\otimes S_{2j-4}+ \rho\otimes S_{2j-2}+ \rho\otimes S_{2j} & \text{ if }3\leq j\leq a_1+1,
        \end{cases} \]
        and the corresponding representations $\pi_j:= \pi(\phi_j, \varepsilon)$. Note that $\pi_{a_3}=\pi$ and $\pi_3$ is supercuspidal. Applying \cite[Theorem 3.4]{MR18} and the compatibility between Aubert-Zelevinsky involution and partial Jacquet module (see \cite[Proposition 3.9]{AM20}), we have the followings.
        \begin{enumerate}
            \item [(a)] If $a_2 <j \leq a_3-1$, then $ \pi_{j+1} \hookrightarrow \rho|\cdot|^{\half{2j+1}} \rtimes \pi_{j},$ and hence 
            \begin{align*}
                \widehat{\pi}_{j+1} \hookrightarrow \rho|\cdot|^{\half{-2j-1}} \rtimes \widehat{\pi}_{j}. 
            \end{align*}
            \item [(b)] If $a_1+1 < j \leq a_2$, then $ \pi_{j+1} \hookrightarrow \Speh(\rho,2)|\cdot|^{j} \rtimes \pi_{j},$ and hence
            \begin{align*}
                 \widehat{\pi}_{j+1} \hookrightarrow \St(\rho,2)|\cdot|^{-j} \rtimes \widehat{\pi}_{j}.
            \end{align*}
            \item [(c)] If $3\leq j\leq a_1+1$, then $ \pi_{j+1} \hookrightarrow \Speh(\rho,3)|\cdot|^{\half{2j-1}} \rtimes \pi_{j},$ and hence
            \begin{align*}
                \widehat{\pi}_{j+1} \hookrightarrow \St(\rho,3)|\cdot|^{\half{-2j+1}} \rtimes \widehat{\pi}_{j}.
            \end{align*}
        \end{enumerate}
        Composing the injections above, we obtain the standard module of $\widehat{\pi}$:
        \begin{align*}
            \widehat{\pi} \hookrightarrow \bigtimes_{a_2 < j \leq a_3-1} \St(\rho,1)|\cdot|^{\half{-2j-1}} \times \bigtimes_{a_1+1<j\leq a_2 } \St(\rho,2)|\cdot|^{-j} \times \bigtimes_{3 \leq j\leq a_1+1} \St(\rho,3)|\cdot|^{\half{-2j+1}} \rtimes \widehat{\pi}_3. 
        \end{align*}
        As a consequence, we have $(\phi_{\widehat{\pi}})_{\GL}= \phi_1+ (\phi_{\widehat{\pi}_3})_{\GL} +\phi_1^{\vee}$, where
        \begin{align*}
            \phi_1&= \bigoplus_{a_2 < j \leq a_3-1}\rho|\cdot|^{\half{-2j-1}}\otimes S_1+ \bigoplus_{a_1+1 < j \leq a_2}\rho|\cdot|^{-j}\otimes S_2+ \bigoplus_{3 < j \leq a_1+1}\rho|\cdot|^{\half{-2j+1}}\otimes S_3,\\
            (\phi_{\widehat{\pi}_3})_{\GL}&=\rho\otimes S_{2}+\rho\otimes S_{4}+\rho\otimes S_{6}.
        \end{align*}
        Therefore,        
        \begin{align*}
           d_{BV}( \underline{p}(\phi_{\widehat{\pi}}))&= d_{BV}([3^{a_1-1}, 2^{a_2-a_1-1}, 1^{a_3-a_2-1}] \sqcup [3^{a_1-1}, 2^{a_2-a_1-1}, 1^{a_3-a_2-1}] \sqcup [6,4,2])\\
           &= d_{BV}([6,4,3^{2a_1-2}, 2^{2a_2-2a_1-1}, 1^{2a_3-2a_2-2}])\\
           &= ([7,4,3^{2a_1-2}, 2^{2a_2-2a_1-1}, 1^{2a_3-2a_2-2}]_B)^{\ast}\\
           &=[7,3^{2a_1},2^{2a_2-2a_1-2},1^{2a_3-2a_2-2}]^{\ast}\\
           &=[2a_3-3,2a_2-1,2a_1+1,1^4],
        \end{align*}
       which matches the computation in \cite[\S 5.3]{Wal20}.
    \end{enumerate}
\end{exmp}

Using the algorithm for computing Aubert-Zelevinsky involution in \cite{AM20}, we have checked that for any tempered unipotent representation $\pi$ of split $\SO_{2n+1}(F)$ with $n\leq 25$, the upper bound $d_{BV}(\underline{p}(\phi_{\widehat{\pi}}))$ matches $\WFP(\pi)$ computed by Waldspurger.

\begin{remark}\label{rmk La}
    Let $\pi$ be a unipotent representation of $\SO_{2n+1}(F)$. One may define another unipotent representation $\textbf{IM}(\pi)$ of $\SO_{2n+1}(F)$, the Iwahori-Matsumoto involution of $\pi$, via the affine Hecke algebra. When $\pi$ is tempered, with the computation in \cite[\S 7]{La24}, one can see that the wavefront set of $\pi$ computed in \cite{Wal20} is exactly $\{d_{BV}(\underline{p}(\phi_{\textbf{IM}(\pi)}))\}$. Therefore, our expectation that $\WFP(\pi)=\{d_{BV}(\underline{p}(\phi_{\widehat{\pi}}))\}$ is a consequence of the equality
    \[ \widehat{\pi}=\textbf{IM}(\pi),\]
    which is also expected (see the last paragraph of \cite[\S 4]{La24}).
\end{remark}

\subsection{Non-real infinitesimal parameter for general groups}

For general groups, currently there are no definite results to compute the wavefront set of unipotent representations with non-real infinitesimal parameter. On the other hand, Okada (\cite[Conjecture 1.4.3]{CMBO23}) conjectured that the wavefront sets of these representations are all singletons and described their image under the Sommer duality. In this subsection, we briefly recall necessary notations to state Okada's conjecture explicitly, and show that Okada's conjecture implies Conjecture \ref{conj bound of WF general} for general unipotent representations. See Theorem \ref{thm equiv CMBO conj}. 

For the following notations, see \cite[\S 2.3]{CMBO23} or the introduction of \cite{Ach03} for more details. Let $\mathcal{N}_{o}$ denote the set of nilpotent orbit of $\mathfrak{g}(\BC)$ and let $\mathcal{N}_{o,c}$ (resp. $\mathcal{N}_{0,\overline{c}}$) denote the set of pairs $(\OO, C)$ (resp. $(\OO, \overline{C})$) where $\OO \in \mathcal{N}_o$ and $C$ (resp. $\overline{C}$) is a conjugacy class in the fundamental group $A(\OO)$ of $\OO$ (resp. Lusztig’s canonical quotient $\overline{A} (\OO)$ of $A(\OO)$). Denote $\mathcal{N}_{o}^{\vee}$, $\mathcal{N}_{o,c}^{\vee}$, and $\mathcal{N}_{o,\overline{c}}^{\vee}$ similarly for the dual group $\widehat{G}$. Let 
\begin{align*}
    d_{S}:\mathcal{N}_{o,c} \to \mathcal{N}_{o}^{\vee}
\end{align*}
denote the duality map defined in \cite{Som01}. By \cite[Proposition 15]{Som01}, the map $d_S$ factors through the quotient map $ \mathcal{N}_{o,c} \twoheadrightarrow \mathcal{N}_{o,\overline{c}}$, and hence we may regard
\[d_{S}:\mathcal{N}_{o,\overline{c}} \to \mathcal{N}_{o}^{\vee}.\]
We need the following properties of $d_{S}$.

\begin{prop}[{\cite[Proposition 2.3]{Ach03}}]\label{prop Sommer}
Let $(\OO,\overline{C}) \in \mathcal{N}_{o,\overline{c}}$. We have
    \[ d_{BV}(\OO)= d_S(\OO,1)\geq d_S(\OO,\overline{C}). \]
\end{prop}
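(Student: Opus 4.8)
The plan is to compare, step by step, the two maps appearing in the statement: the Barbasch--Vogan dual $d_{BV}$, defined combinatorially in classical types as in \S\ref{wfs} and in general through the Lusztig--Spaltenstein map $d_{LS}$ together with the identification of special orbits with special representations of the Weyl group, and Sommer's map $d_S$, defined through Lusztig's canonical quotient $\overline{A}(\OO)$ and the generalized Springer correspondence. The crux is the equality $d_{BV}(\OO)=d_S(\OO,1)$; granting it, the inequality $d_S(\OO,1)\geq d_S(\OO,\overline{C})$ will come from a monotonicity property of $d_S$ in its second argument.

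First I would recall Sommer's construction in the form I need: to a pair $(\OO,\overline{C})$ one attaches, via the generalized Springer correspondence and the identification $W\cong W^{\vee}$, a special representation of $W^{\vee}$, and $d_S(\OO,\overline{C})$ is the special nilpotent orbit of $\widehat{\mathfrak{g}}$ it labels; that this is well defined on $\overline{A}(\OO)$ rather than on $A(\OO)$ is \cite[Proposition 15]{Som01}, already invoked in the excerpt. The trivial class $\overline{C}=1$ is the image under the canonical quotient of the trivial local system on $\OO$, so the generalized Springer datum degenerates to an \emph{ordinary} Springer datum attached to $\OO$ (up to passing to the special orbit $d_{LS}(\OO)$ heading its special piece). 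One then checks that the special $W^{\vee}$-representation Sommer extracts from $(\OO,1)$ is exactly $E_{\OO}\otimes\mathrm{sgn}$, where $E_{\OO}$ is the special Weyl-group representation attached to $\OO$ (equivalently to $d_{LS}(\OO)$) by the Springer correspondence; since this is precisely the special $W^{\vee}$-representation labelling $d_{BV}(\OO)$ (one standard characterization of Barbasch--Vogan duality, \cite{BV85}; the non-special case reduces to the special one via $d_{BV}=d_{BV}^{3}$, Proposition \ref{prop dBV}), the two sides coincide. For classical $\RG$ one can avoid the cell language altogether: $\overline{A}(\OO)$ is an explicit product of copies of $\mathbb{Z}/2$ indexed by certain parts of $\underline{p}(\OO)$, Sommer gives a closed formula for $\underline{p}(d_S(\OO,\overline{C}))$ in terms of $\underline{p}(\OO)$ and $\overline{C}$, and specializing $\overline{C}=1$ reproduces, symbol for symbol, the collapse--transpose recipes for $d_{BV}$ recorded in \S\ref{wfs}.

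For the inequality I would prove that $\overline{C}\mapsto d_S(\OO,\overline{C})$ is order-reversing from the natural partial order on conjugacy classes of $\overline{A}(\OO)$ (with $1$ minimal) to the closure order on $\mathcal{N}_o^{\vee}$; equivalently, that all the orbits $d_S(\OO,\overline{C})$ lie in the closure of the single special piece whose unique special (hence maximal) element is $d_S(\OO,1)$. In classical types this is again a finite combinatorial check: replacing $1$ by a larger class only moves boxes of $\underline{p}(d_S(\OO,\overline{C}))$ downward, so the partition strictly decreases in the dominance order, which by the dictionary of \S\ref{wfs} between dominance order and closure order gives $d_S(\OO,1)\geq d_S(\OO,\overline{C})$. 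Together with the equality this yields the proposition; one may also sanity-check against Proposition \ref{prop dBV} on the cases where $\overline{A}(\OO)$ is trivial, where $d_S(\OO,-)$ reduces to $d_{BV}$.

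The main obstacle is the first step: matching Barbasch--Vogan's original definition, phrased through $d_{LS}$ and the special-representation dictionary for $\RG$ and $\widehat{\RG}$, with Sommer's, phrased through $\overline{A}(\OO)$ and the generalized Springer correspondence. The subtlety is that $\OO$ equipped with the trivial local system need not itself be special, so one must carefully track how the generalized Springer correspondence ``rounds $\OO$ up'' to $d_{LS}(\OO)$ before comparing, and verify this rounding is performed compatibly on both sides; in the classical case the analogous obstacle is simply that the two explicit formulas for $d_{BV}$ and for $d_S(-,1)$ are written in superficially different ways (one via transpose-then-collapse, the other via Sommer's recipe) and must be shown to agree. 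By contrast the monotonicity step is largely formal once Sommer's factorization through $\overline{A}(\OO)$ is granted.
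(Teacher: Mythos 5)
You should first note that the paper does not prove this statement at all: it is imported verbatim as \cite[Proposition 2.3]{Ach03}, which in turn packages results of Sommers \cite{Som01}, and it is then used as a black box in the proof of Theorem \ref{thm equiv CMBO conj}. So there is no in-paper argument to compare with; your proposal has to stand or fall as a reconstruction of the Achar--Sommers statement itself.

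As such a reconstruction it has a genuine gap. Your central step describes $d_S$ as the map that reads off, via the generalized Springer correspondence, a special representation of $W^{\vee}$ from the pair $(\OO,\overline{C})$ and then takes the orbit it labels. That is not Sommers' definition and is not accurate for $\overline{C}\neq 1$: the generalized Springer correspondence attaches to an orbit with a nontrivial local system a representation of the relative Weyl group of a cuspidal datum on a proper Levi, not a representation of $W^{\vee}$, so there is no ``special $W^{\vee}$-representation'' canonically attached to a general pair. Sommers' map is defined instead by lifting $\overline{C}$ to a semisimple element $s$ of the reductive centralizer of $e\in\OO$, passing to the pseudo-Levi $\mathfrak{g}^{s}$ (a subalgebra attached to a subdiagram of the affine Dynkin diagram), dualizing there by Lusztig--Spaltenstein, and saturating into $\widehat{\mathfrak{g}}$; with that definition the equality $d_S(\OO,1)=d_{BV}(\OO)$ is essentially immediate, since for the trivial class the pseudo-Levi is $\mathfrak{g}$ itself, whereas your route to it is built on the mischaracterized map. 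For the inequality, there is no ``natural partial order on conjugacy classes of $\overline{A}(\OO)$'' to appeal to; what has to be shown is only $d_S(\OO,\overline{C})\leq d_S(\OO,1)$, your claim that the partition ``strictly decreases'' is not correct as stated, and your verification is confined to classical types, while the proposition is stated and used (in \S 10, for unipotent representations of groups inner to split, including exceptional types) for arbitrary connected reductive groups. A correct write-up should either quote Achar--Sommers for both assertions, as the paper does, or derive the inequality from the pseudo-Levi description (plus the case analysis in \cite{Som01, Ach03}), rather than from the generalized Springer picture and an undefined ordering on $\overline{A}(\OO)$.
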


Suppose $\RG$ is a connected reductive algebraic group defined and inner to split over $F$. In \cite{Oka22}, he defined the \emph{unramified canonical wavefront set} ${}^K \mathrm{WF}(\pi)$ for each depth-0 representation $\pi$ of $G:=\RG(F)$, which can be viewed as a subset of $\mathcal{N}_{o,\overline{c}}$. When ${}^K \mathrm{WF}(\pi)=\{ (\OO, \overline{C})\}$ is a singleton, he also showed that the wavefront set of $\pi$ is also a singleton and  $\overline{\mathfrak{n}}^m(\pi)=\{\OO\}$ (\cite[Theorem 2.37]{Oka22}). Now we state Okada's conjecture.
\begin{conj}[{\cite[Conjecture 1.4.3]{CMBO23}}]\label{conj CMBO}
    Let $\pi$ be a unipotent representation of $G$. The unramified canonical wavefront set ${}^K\mathrm{WF}(\pi)$ is a singleton, and
    \begin{align*}
         d_S( {}^K\mathrm{WF}(\pi))= \OO_{\phi_{\widehat{\pi}}}.
    \end{align*}
\end{conj}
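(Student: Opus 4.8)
\emph{Proof plan.} This is a conjecture of Okada (\cite[Conjecture 1.4.3]{CMBO23}); what follows is a strategy rather than a proof. I would work throughout with the Deligne--Langlands--Lusztig datum $(s, u, \rho)$ in $\widehat{G}$ attached to the unipotent $\pi$ by \cite{KL87, Lus95, Lus02}, so that $\OO_{\phi_{\pi}}$ is the $\widehat{G}$-orbit of the unipotent element $u$ and $\OO_{\phi_{\widehat{\pi}}}$ is the orbit of the corresponding element for the Aubert dual. The first step is to reduce to the case where $\pi$ is a unipotent supercuspidal representation of a Levi subgroup of $G$: both sides of the desired equality are compatible with parabolic induction. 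On the spectral side, the Aubert involution commutes with parabolic induction, and under the classification of unipotent representations it is realized by the Iwahori--Matsumoto involution on the relevant graded affine Hecke algebra (cf. \cite{AMS21, AMS22}), so $\OO_{\phi_{\widehat{\pi}}}$ is computed from the Aubert dual of the inducing datum. On the geometric side, one needs that $\pi \mapsto {}^{K}\mathrm{WF}(\pi)$ and the Sommer map $d_{S}$ are compatible with Lusztig--Spaltenstein induction of nilpotent orbits carrying the $\mathcal{N}_{o,\overline{c}}$-decoration --- an identity of the same flavour as Lemma \ref{lem goal}, to be proved separately. Granting these, the general case follows from that of unipotent supercuspidal representations and their twists by unramified characters.

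For a unipotent supercuspidal $\pi$, $\widehat{\pi} = \pi$ and $\pi$ is attached by Lusztig to a cuspidal datum on $\widehat{G}$. Here I would, first, compute ${}^{K}\mathrm{WF}(\pi)$ explicitly: by construction it is assembled from the wavefront sets of the unipotent representations of the finite reductive quotients of the parahoric subgroups of $G$ occurring in $\pi$ --- these being governed by Lusztig's theory of unipotent support and generalized Gelfand--Graev representations --- with the resulting orbit promoted to a pair $(\OO, \overline{C}) \in \mathcal{N}_{o,\overline{c}}$ via Lusztig's canonical quotient; one must check that this is a singleton. Second, I would verify that $d_{S}(\OO, \overline{C})$ is exactly the orbit $\OO_{\phi_{\pi}}$ of the unipotent element in the Deligne--Langlands--Lusztig datum. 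This second point is the statement that $d_{S}\colon \mathcal{N}_{o,\overline{c}} \to \mathcal{N}_{o}^{\vee}$ intertwines Lusztig's unipotent-support map on the finite (hence depth-$0$) side with the Iwahori--Matsumoto/Aubert involution on the parameter side --- a refinement of the classical fact that $d_{BV} = d_{S}(-, 1)$ reverses the dominance order (Proposition \ref{prop Sommer}). This will be the main obstacle, and the place where the non-connectedness of centralizers and the precise tracking of component groups (how $\rho \mapsto \widehat{\rho}$ under the Aubert involution interacts with $\overline{A}(\OO)$) make the argument genuinely hard.

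To return to general $s$ I would descend the unramified twist: a general unipotent $\pi$ is a subquotient of a representation parabolically induced from a unipotent supercuspidal of a Levi tensored with an unramified character, and one combines the supercuspidal case with the induction compatibilities above. Two consistency checks should guide this. When $\pi$ has real infinitesimal parameter, Theorem \ref{thm CMBO} gives $\WFN(\pi) = \{d_{BV}(\OO_{\phi_{\widehat{\pi}}})\}$, Okada's singleton theorem \cite[Theorem 2.37]{Oka22} gives $\WFN(\pi) = \{\OO\}$ once ${}^{K}\mathrm{WF}(\pi) = \{(\OO, \overline{C})\}$ is known to be a singleton, and Proposition \ref{prop Sommer} then forces $d_{BV}(\OO) = d_{BV}(d_{S}(\OO, \overline{C}))$; since $d_{BV}$ is a bijection on special orbits, this pins $d_{S}(\OO, \overline{C})$ down to the special orbit $\OO_{\phi_{\widehat{\pi}}}$, giving the conjecture in this case. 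And for $\SO_{2n+1}(F)$, the wavefront sets computed by Waldspurger \cite{Wal18, Wal20} for unipotent anti-tempered and tempered representations must agree with $d_{S}$ of the canonical wavefront set; the sample computations in Example \ref{exmp SO odd} are instances of this compatibility. The hard part, to reiterate, is establishing the duality compatibility of the previous paragraph in full generality; without it one recovers only the coarser inequality $\WFN(\pi) \leq d_{BV}(\OO_{\phi_{\widehat{\pi}}})$ --- the content of Conjecture \ref{conj bound of WF general} for unipotent representations, which is exactly what Theorem \ref{thm equiv CMBO conj} of the present paper extracts from the conjecture.
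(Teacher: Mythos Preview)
The statement you were asked to address is labeled as a \emph{conjecture} in the paper (Conjecture~\ref{conj CMBO}), not a theorem: the paper does not prove it but cites it from \cite{CMBO23} and only remarks that the real infinitesimal parameter case is already a theorem there. So there is no proof in the paper to compare your proposal against; your recognition that this is open, and your decision to offer a strategic outline rather than a proof, is the correct response.

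One technical gap in your consistency check for the real infinitesimal parameter case: from $\WFN(\pi)=\{d_{BV}(\OO_{\phi_{\widehat{\pi}}})\}=\{\OO\}$ you get $\OO=d_{BV}(\OO_{\phi_{\widehat{\pi}}})$, but Proposition~\ref{prop Sommer} only yields the one-sided bound $d_S(\OO,\overline{C})\leq d_S(\OO,1)=d_{BV}(\OO)$; it does not give $d_{BV}(\OO)=d_{BV}(d_S(\OO,\overline{C}))$ as you assert, and ``$d_{BV}$ is a bijection on special orbits'' does not by itself pin $d_S(\OO,\overline{C})$ to $\OO_{\phi_{\widehat{\pi}}}$ without also knowing that $d_S(\OO,\overline{C})$ is special and lies in the correct $d_{BV}$-fiber. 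This does not matter for the paper, since the real case is already established in \cite{CMBO23} and need not be re-derived, but the argument as written is incomplete.
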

Note that the above conjecture is already proved if $\pi$ has real infinitesimal parameter (Theorem \ref{thm CMBO}). Thus, the content of the conjecture is for representations with non-real infinitesimal parameter.

The following is the main result of this subsection.

\begin{thm}\label{thm equiv CMBO conj} If Conjecture \ref{conj CMBO} holds for $G$, then Conjecture \ref{conj bound of WF general} holds for all unipotent representations $\pi$ of $G$.
\end{thm}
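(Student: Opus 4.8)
The plan is to derive the inequality directly from Conjecture~\ref{conj CMBO} by translating Okada's statement about the Sommer duality $d_S$ into the $d_{BV}$-language of Conjecture~\ref{conj bound of WF general}, using the standard comparison between the two dualities and the order-theoretic properties of $d_{BV}$. Assume Conjecture~\ref{conj CMBO} holds for $G$ and let $\pi\in\Pi(G)$ be unipotent. Then by the conjecture ${}^K\mathrm{WF}(\pi)$ is a singleton, say ${}^K\mathrm{WF}(\pi)=\{(\OO,\overline{C})\}$ with $(\OO,\overline{C})\in\mathcal{N}_{o,\overline{c}}$; by \cite[Theorem~2.37]{Oka22} the geometric wavefront set $\overline{\mathfrak{n}}^m(\pi)$ is then also a singleton, equal to $\{\OO\}$. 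Hence it suffices to establish the single inequality $\OO\le d_{BV}(\OO_{\phi_{\widehat{\pi}}})$.

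For this, Conjecture~\ref{conj CMBO} also gives $d_S(\OO,\overline{C})=\OO_{\phi_{\widehat{\pi}}}$ in $\mathcal{N}_o^{\vee}$. By Proposition~\ref{prop Sommer} we have $d_{BV}(\OO)=d_S(\OO,1)\ge d_S(\OO,\overline{C})=\OO_{\phi_{\widehat{\pi}}}$. Applying the Barbasch--Vogan duality $\mathcal{N}_o^{\vee}\to\mathcal{N}_o$, which is order-reversing (the general-group analogue of Proposition~\ref{prop dBV}(1); see \cite{BV85,Spa82,Ach03}), yields $d_{BV}^2(\OO)\le d_{BV}(\OO_{\phi_{\widehat{\pi}}})$. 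Finally, invoking the identity $d_{BV}^3=d_{BV}$ and the standard fact that $\OO\le d_{BV}^2(\OO)$ for every nilpotent orbit (again \cite{BV85,Ach03}), we conclude $\OO\le d_{BV}^2(\OO)\le d_{BV}(\OO_{\phi_{\widehat{\pi}}})$, which is exactly Conjecture~\ref{conj bound of WF general} for $\pi$.

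The step needing the most care is ensuring that all of these maps and orderings behave as stated for an arbitrary connected reductive group $G$ that is inner to split, rather than only for the classical groups treated combinatorially in \S\ref{wfs}: that $d_S$ factors through $\mathcal{N}_{o,c}\twoheadrightarrow\mathcal{N}_{o,\overline{c}}$ (\cite{Som01}); that both directions $\mathcal{N}_o\leftrightarrow\mathcal{N}_o^{\vee}$ of $d_{BV}$ are order-reversing and satisfy $\OO\le d_{BV}^2(\OO)$ and $d_{BV}^3=d_{BV}$; and that $\mathcal{N}_o$ is identified with nilpotent orbits over $\overline{F}$ compatibly with all of this. Each of these facts is available in the cited literature, so no input beyond Conjecture~\ref{conj CMBO} is required; the content of the theorem is precisely the bookkeeping that reconciles Okada's $d_S$-formulation of the wavefront set with the $d_{BV}$-formulation of the upper bound.
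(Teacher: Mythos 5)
Your proof is correct and follows essentially the same route as the paper: use Conjecture \ref{conj CMBO} together with Okada's result to identify the wavefront set as $\{\OO\}$, apply Proposition \ref{prop Sommer} to get $d_{BV}(\OO)\geq d_S(\OO,\overline{C})=\OO_{\phi_{\widehat{\pi}}}$, then apply the order-reversing Barbasch--Vogan duality and $\OO\leq d_{BV}^2(\OO)$ to conclude. Your extra remarks on checking the order-theoretic facts for general groups are sensible bookkeeping but add nothing beyond the paper's argument.
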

\begin{proof}
    Let $\pi$ be a unipotent representation of $G$. By Conjecture \ref{conj CMBO}, write ${}^K\mathrm{WF}(\pi)=\{(\OO,\overline{C})\}$, and Proposition \ref{prop Sommer} gives
\[ d_{BV}(\OO)= d_S(\OO,1) \geq d_S(\OO,\overline{C})= \OO_{\phi_{\widehat{\pi}}}. \]
Taking the Barbasch-Vogan dual on both sides, we obtain
\[ d_{BV}( \OO_{\phi_{\widehat{\pi}}}) \geq d_{BV}^2( \OO) \geq \OO.\]
This verifies Conjecture \ref{conj bound of WF} for $\pi$ and completes the proof of the theorem.
\end{proof}

\section{\texorpdfstring{Proof of Lemma \ref{lem goal}}{}}\label{sec proof of key lemma}

\subsection{Computation of collapse}\label{sec collapse}
In this subsection, we introduce the notation and lemmas needed for the computation of the collapse.

\begin{defn}
For $\underline{p}= [p_1,\dots, p_N]\in \mathcal{P}(n)$ and $b\in \Z$, we define
\[ \underline{p}_{> b}= [p_1,\dots, p_i] \]
where $i= \max\{ 1 \leq j\leq N \ | \ p_j>b \}$. We define $\underline{p}_{\oldbullet b}$ similarly for $\oldbullet \in \{ =,<,\leq,\geq \}$ so that $\underline{p}= \underline{p}_{>b} \sqcup \underline{p}_{\leq b}= \underline{p}_{>b} \sqcup \underline{p}_{=b} \sqcup \underline{p}_{<b}$, etc.
\end{defn}

Following the notation in \cite{Ach03}, we shall omit the parentheses between the superscript and subscript. For example, we shall write $\underline{p}_{>x,D} \!^{+} \underline{\vphantom{p}}_{B}  {}^{-\ast}$ instead of $(((((\underline{p}_{>x})_{D})^{+})_{B})^{-})^{\ast}$.

We apply \cite[Lemma 3.1]{Ach03} in the following form. Note that $\underline{p}_{>x}$ is always superior to $\underline{p}_{\leq x}{}^{+}$ in the notation there.

\begin{lemma}\label{lem Ach03}
Let $x$ be a positive integer and $\underline{p}$ be a partition. Then for $X \in \{B,C,D\}$, the $X$-collapse (if defined) of $\underline{p}$ is given by the following table.
\begin{center}
\begin{tabular}[b]{|l|c|c|c|c|}
\hline
& \multicolumn{2}{|c|}{$l(\underline{p}_{>x})$ even}&\multicolumn{2}{|c|}{$l(\underline{p}_{>x})$ odd} \\ \hline   
& $|\underline{p}_{>x}|$ even     & $|\underline{p}_{>x}|$ odd     & $|\underline{p}_{>x}|$ even     & $|\underline{p}_{>x}|$ odd     \\  \\ [-2em]&&&& \\ \hline  \\ [-2em]&&&& \\
$\underline{p}_B$: &
  $\underline{p}_{>x,D}        \sqcup \underline{p}_{\leq x,B}$ &
  $\underline{p}_{>x}{}^-\underline{\vphantom{p}}_D    \sqcup \underline{p}_{\leq x}{}^+\underline{\vphantom{p}}_B$ &
  $\underline{p}_{>x}{}^-\underline{\vphantom{p}}_B    \sqcup \underline{p}_{\leq x}{}^+\underline{\vphantom{p}}_D$ &
  $\underline{p}_{>x,B}        \sqcup \underline{p}_{\leq x,D}$  \\  \\ [-2em]&&&& \\ \hline  \\ [-2em]&&&& \\
$\underline{p}_C$: &
  $\underline{p}_{>x,C}        \sqcup \underline{p}_{\leq x,C}$ &
  $\underline{p}_{>x}{}^-\underline{\vphantom{p}}_C    \sqcup \underline{p}_{\leq x}{}^+\underline{\vphantom{p}}_C$ &
  $\underline{p}_{>x,C}        \sqcup \underline{p}_{\leq x,C}$ &
  $\underline{p}_{>x}{}^-\underline{\vphantom{p}}_C    \sqcup \underline{p}_{\leq x}{}^+\underline{\vphantom{p}}_C$  \\  \\ [-2em]&&&& \\ \hline  \\ [-2em]&&&& \\
$\underline{p}_D$: &
  $\underline{p}_{>x,D}        \sqcup \underline{p}_{\leq x,D}$ &
  $\underline{p}_{>x}{}^-\underline{\vphantom{p}}_D    \sqcup \underline{p}_{\leq x}{}^+\underline{\vphantom{p}}_D$ &
  $\underline{p}_{>x}{}^-\underline{\vphantom{p}}_B    \sqcup \underline{p}_{\leq x}{}^+\underline{\vphantom{p}}_B$ &
  $\underline{p}_{>x,B}        \sqcup \underline{p}_{\leq x,B}$ \\  [-1em]&&&& \\\hline
\end{tabular}
\end{center}
\end{lemma}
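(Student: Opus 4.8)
The statement to be proved is Lemma~\ref{lem Ach03}, the explicit formula for the $X$-collapse of a partition $\underline{p}$ split at a positive integer $x$ into $\underline{p}_{>x}$ and $\underline{p}_{\leq x}$. The plan is to reduce everything to \cite[Lemma 3.1]{Ach03}, which is the abstract statement of which this table is a concrete transcription, and then to carry out the bookkeeping. First I would recall the precise content of Achar's Lemma~3.1: given a partition $\underline{p}$ and a way of writing it as a disjoint union $\underline{p} = \underline{r} \sqcup \underline{s}$ where every part of $\underline{r}$ exceeds every part of $\underline{s}$ (here $\underline{r} = \underline{p}_{>x}$, $\underline{s} = \underline{p}_{\leq x}$, so this ``superiority'' hypothesis holds automatically, as the remark before the lemma notes), the $X$-collapse of $\underline{p}$ is computed by independently collapsing the two pieces, except that in the ``defect'' cases one transfers a single box from the tail of $\underline{r}$ to the head of $\underline{s}$ (the operations ${}^-$ and ${}^+$) and collapses the modified pieces, with the types of the two collapses determined by the parities of $l(\underline{p}_{>x})$ and $|\underline{p}_{>x}|$.

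**Main steps.** The argument then proceeds in the following order. (1) Fix $X \in \{B,C,D\}$ and the corresponding sign $\epsilon \in \{\pm 1\}$ so that type-$X$ partitions are exactly those in $\mathcal{P}_\epsilon(N)$, i.e.\ with even multiplicity for every part of parity $-\epsilon$ (type $B$: $N$ odd, $\epsilon = 1$; type $C$: $N$ even, $\epsilon = -1$; type $D$: $N$ even, $\epsilon = 1$). (2) Recall the standard algorithm for the $X$-collapse: scan the Young diagram and, whenever a ``bad'' part (one violating the multiplicity/parity condition) is found, subtract one from the smallest offending part and add one to the next available part; iterate until a type-$X$ partition is reached, which is then the unique maximal type-$X$ partition $\leq \underline{p}$. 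The key locality observation is that, because all parts of $\underline{p}_{>x}$ are strictly larger than all parts of $\underline{p}_{\leq x}$, the collapse procedure restricted to the ``large'' block and to the ``small'' block can only interact across the boundary at most once, namely a single box can slide from the bottom row of the large block into the top row of the small block. (3) Carry out the four parity cases. When $l(\underline{p}_{>x})$ is even and $|\underline{p}_{>x}|$ has the parity making $\underline{p}_{>x}$ compatible with a stand-alone type-$X'$ (or type-$D$) partition, no box crosses the boundary and the answer is simply $\underline{p}_{>x,X_1} \sqcup \underline{p}_{\leq x, X_2}$ for the appropriate $X_1, X_2 \in \{B,C,D\}$. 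In the remaining cases a single box must cross, producing the $\underline{p}_{>x}{}^-$ / $\underline{p}_{\leq x}{}^+$ pattern; here one checks that after the transfer the large piece has the parity of total size to admit a type-$X_1$ collapse and the small piece a type-$X_2$ collapse, and that collapsing the two modified pieces independently indeed yields a type-$X$ partition. (4) Verify in each case that the resulting partition is type $X$ and is $\leq \underline{p}$, and appeal to uniqueness of the collapse to conclude it equals $\underline{p}_X$; the superiority (``$\underline{p}_{>x}$ superior to $\underline{p}_{\leq x}{}^+$'') makes Achar's lemma directly applicable so that one does not re-derive it from scratch.

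**The obstacle.** The conceptual content is modest once \cite[Lemma 3.1]{Ach03} is invoked — essentially this lemma is that result specialized to the cut at a fixed integer $x$ rather than at a general ``superior pair'' decomposition. The main obstacle is therefore purely combinatorial care: correctly matching the four columns of the table (the two parities of $l(\underline{p}_{>x})$ and the two parities of $|\underline{p}_{>x}|$) against Achar's case analysis, and keeping track of which of $B,C,D$ appears on the large block versus the small block in each of the twelve cells, including the subtlety that for $X = B$ or $X = D$ the large block may itself collapse to type $B$ rather than to type $X$. A secondary point needing attention is the edge behaviour when $\underline{p}_{>x}$ is empty (then the formula must reduce to $\underline{p}_X = \underline{p}_{\leq x, X}$) and when $\underline{p}_{\leq x}$ is empty; these are easily checked but should be stated explicitly. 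I would organize the writeup as a single case table mirroring the statement, citing \cite[Lemma 3.1]{Ach03} for the core transfer rule and then discharging each cell by a one-line parity check.
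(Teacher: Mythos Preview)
Your proposal is correct and matches the paper's own treatment: the paper simply states that this lemma is \cite[Lemma 3.1]{Ach03} specialized to the decomposition $\underline{p}=\underline{p}_{>x}\sqcup\underline{p}_{\leq x}$, noting that $\underline{p}_{>x}$ is always superior to $\underline{p}_{\leq x}{}^{+}$ in Achar's notation, and gives no further argument. Your plan to invoke that lemma and then discharge the twelve cells by parity bookkeeping is exactly this approach, written out in more detail than the paper provides.
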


We need the following corollary of above lemma.
\begin{cor}\label{cor Ach03}
Let $\underline{p}$ be a partition and $x$ a positive integer.
\begin{enumerate}
    \item [(i)] If $\underline{p}\in \mathcal{P}(2n+1)$ and $\underline{p}_{=2x+1}$ is non-empty, then
    \[ \underline{p}_{B}= (\underline{p}_{>2x+1}\sqcup \underline{p}_{<2x+1})_X \sqcup \underline{p}_{=2x+1},\]
    where $X=B$ or $D$.
    \item [(ii)]If $\underline{p}\in \mathcal{P}(2n)$ and $\underline{p}_{=2x}$ is non-empty, then
    \[ \underline{p}_{C}= (\underline{p}_{>2x}\sqcup \underline{p}_{<2x})_C \sqcup \underline{p}_{=2x} .\]
    \item [(iii)] If $\underline{p}\in \mathcal{P}(2n)$ and $\underline{p}_{=2x+1}$ is non-empty, then
    \[ \underline{p}_{D}= (\underline{p}_{>2x+1}\sqcup \underline{p}_{<2x+1})_X \sqcup \underline{p}_{=2x+1},\]
    where $X=B$ or $D$.
\end{enumerate}
\end{cor}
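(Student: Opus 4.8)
The plan is to derive all three parts from Achar's collapse formula, Lemma~\ref{lem Ach03}, applied at two successive cut-off values. Throughout write $r$ for the multiplicity of $2x$ (in part (ii)) or of $2x+1$ (in parts (i), (iii)) in $\underline p$, so that $\underline p_{=2x}=[(2x)^r]$, resp. $\underline p_{=2x+1}=[(2x+1)^r]$; the hypothesis that this block is non-empty, i.e. $r\geq 1$, is used precisely to guarantee that $2x$ (resp. $2x+1$) is the largest part of $\underline p_{\leq 2x}$ (resp. $\underline p_{\leq 2x+1}$), which is what makes the operators $(\cdot)^{+}$ and $(\cdot)^{-}$ of Lemma~\ref{lem Ach03} act on that block.

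For part (ii), apply Lemma~\ref{lem Ach03} to $\underline p$ with cut-off $2x$. Since the $C$-row of the table depends only on the parity of $|\underline p_{>2x}|$, this writes $\underline p_C$ in terms of $(\underline p_{>2x})_C$ and either $(\underline p_{\leq 2x})_C$ or $(\underline p_{\leq 2x})^{+}{}_C$. Next apply the lemma to $\underline p_{\leq 2x}$ with cut-off $2x-1$, whose top block is $\underline p_{=2x}$: as $|\underline p_{=2x}|=2xr$ is even and $[(2x)^r]$ is already of type $C$ (a single even value carries no multiplicity constraint), one gets $(\underline p_{\leq 2x})_C=\underline p_{=2x}\sqcup(\underline p_{<2x})_C$; in the subcase $|\underline p_{>2x}|$ odd one meets instead $(\underline p_{\leq 2x})^{+}{}_C$, and a short induction on $r$ — peeling off one part equal to $2x$ at a time via the lemma at cut-off $2x$, using $([2x+1])^{-}{}_C=[2x]$ and $[2x]_C=[2x]$ — yields $(\underline p_{\leq 2x})^{+}{}_C=\underline p_{=2x}\sqcup(\underline p_{<2x})^{+}{}_C$. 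Finally applying Lemma~\ref{lem Ach03} to $\underline p_{>2x}\sqcup\underline p_{<2x}$ at cut-off $2x$ gives, in the two parity cases, $(\underline p_{>2x}\sqcup\underline p_{<2x})_C$ equal to $(\underline p_{>2x})_C\sqcup(\underline p_{<2x})_C$, resp. $(\underline p_{>2x})^{-}{}_C\sqcup(\underline p_{<2x})^{+}{}_C$; comparing with the previous displays proves $\underline p_C=(\underline p_{>2x}\sqcup\underline p_{<2x})_C\sqcup\underline p_{=2x}$.

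Parts (i) and (iii) run along the same lines, now with cut-off $2x+1$ for the outer application and cut-off $2x$ (or, after a $(\cdot)^{+}$, again $2x+1$) for the inner ones. Two elementary observations carry the argument. First, $\underline p_{=2x+1}=[(2x+1)^r]$ consists of a single odd value, hence is already of type $B$ when $r$ is odd and of type $D$ when $r$ is even, so $(\underline p_{=2x+1})_B=(\underline p_{=2x+1})_D=\underline p_{=2x+1}$ whenever the relevant collapse is defined; and the parity constraints imposed by $|\underline p|$ being odd force that only the defined one is ever invoked. Second, applying Lemma~\ref{lem Ach03} at cut-off $2x+1$ to a partition of the shape $[2x+2,(2x+1)^{s}]\sqcup\underline q$ (length-$1$ top block $[2x+2]$, even size) peels off one part $([2x+2])^{-}=[2x+1]$ and replaces the rest by $([(2x+1)^{s-1}]\sqcup\underline q)^{+}$, \emph{with the collapse type of the tail switching between $B$ and $D$}; iterating $r$ times peels off exactly $\underline p_{=2x+1}$ and leaves $(\underline p_{<2x+1})^{+}$ collapsed in the type dictated by the parity of $r$. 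Using these, the outer application of Lemma~\ref{lem Ach03} to $\underline p$ (a case split on the parities of $l(\underline p_{>2x+1})$ and $|\underline p_{>2x+1}|$) and the application of the lemma to $\underline p_{>2x+1}\sqcup\underline p_{<2x+1}$ at cut-off $2x+1$ produce expressions whose $\underline p_{>2x+1}$-parts coincide for \emph{both} choices $X\in\{B,D\}$, while the $\underline p_{<2x+1}$-part of $\underline p_B$ (resp. $\underline p_D$) carries precisely the collapse type forced by the parity of $r$; matching $X$ to that parity — one checks $X=B$ exactly when $r$ is even in case (i), and $X=D$ exactly when $r$ is even in case (iii) — gives the claimed identities.

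The routine but delicate point will be the bookkeeping: tracking which of $B$, $D$ decorates each sub-block through the nested applications of Lemma~\ref{lem Ach03} and through the $r$-fold peeling, and checking that the $(\cdot)^{\pm}$ operators interact with the decomposition $\underline p=\underline p_{>2x+1}\sqcup\underline p_{=2x+1}\sqcup\underline p_{<2x+1}$ so that corresponding sub-blocks of the two expressions being compared genuinely agree in every parity sub-case (four for each of (i), (iii), and two for (ii)). Finally, the displayed equalities are to be read as equalities of partitions, so the concatenations on the right are re-sorted into non-increasing order; this is harmless since collapse does not raise parts and $\underline p_{=2x+1}$ lies strictly between $\underline p_{>2x+1}$ and $\underline p_{<2x+1}$ in value, and the degenerate cases where $\underline p_{>2x+1}$ or $\underline p_{<2x+1}$ is empty reduce to the trivial first column of the table.
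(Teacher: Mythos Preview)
Your approach is correct and is exactly the routine verification the paper leaves implicit: the corollary is stated without proof, simply as a consequence of Lemma~\ref{lem Ach03}, and your argument---applying the lemma at the cut-off $2x$ or $2x+1$, then again at one level below to separate $\underline{p}_{=\,\cdot}$ from $\underline{p}_{<\,\cdot}$, with the $r$-fold peeling to handle the $(\cdot)^{+}$ cases---is the natural way to unpack that. Your parity bookkeeping (in particular that $X=B$ iff $r$ is even in (i) and $X=D$ iff $r$ is even in (iii), and that the collapse type alternates at each peel) checks out in all sub-cases.
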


Now we prove Lemma \ref{lem inj of induced partition} which is used in the proof of Theorem \ref{thm reduction to a single gp}.

\begin{proof}[Proof of Lemma \ref{lem inj of induced partition}]
    Write $\underline{p}=[p_1,\ldots, p_r],$ $\underline{q}=[q_1,\ldots, q_s],$ $([2d]+\underline{p})_X=[p_1',\ldots, p_r'],$ and $([2d]+\underline{q})_X=[q_1',\ldots, q_s'].$  By Lemma \ref{lem Ach03}, it is not hard to see that $p_{i}'=p_i$ for $i\geq 3$, and 
    \[ [p_1',p_2']= \begin{cases}
        [p_1+2d, p_2] & \text{ if }(-1)^{p_1}=\varepsilon,\\
        [p_1+2d-1, p_2+1] & \text{ if }(-1)^{p_1}= -\varepsilon,
    \end{cases} \]
   where $\varepsilon= 1$ if $X=C$ and $\varepsilon= -1$ if $X\in \{ B,D\}$. The same holds for $\underline{q}$.

    Since $ ([2d]+\underline{p})_{X} \geq ([2d]+\underline{q})_{X}, $ we obtain that for $ t \geq 2$,
    \[ \sum_{i=1}^t p_i =\sum_{i=1}^t p_i' -2d \geq \sum_{i=1}^t q_i' -2d= \sum_{i=1}^t q_i. \]
    Thus, it remains to check $p_1 \geq q_1$. Suppose the contrary that $p_1< q_1$. Since $p_1'\geq q_1'$, we must have 
    \[p_1+2d=p_1'=q_1'=q_1+2d-1\]
    and $ -\varepsilon= (-1)^{q_1}$. However, since $p_1+p_2\geq q_1+q_2$, we must have $p_2 > q_2$ and hence
    \[ q_1 >p_1 \geq p_2 >q_2.\]
    This implies that the multiplicity of $q_1$ in $\underline{q}$ is 1, which is odd. Thus, we derive a contradiction that $\underline{q}$ is not of type $X$. This completes the proof of the lemma.
\end{proof}

\subsection{\texorpdfstring{Type $B$}{}}\label{sec B}
In this subsection, we prove \eqref{eq goal partition} when $\underline{p}$ is of type $B$. We may rewrite the equality as 
\begin{align}\label{eq goal type B}
    (\underline{p}\sqcup [b^{2d}])^{-}\underline{\vphantom{p}}_{C}{}^{\ast} =   ([(2d)^b] +  \underline{p}^{-}\underline{\vphantom{p}}_{C}{}^{\ast})_C.
\end{align} 
First, we compute $(\underline{p}\sqcup [b^{2d}])^{-}\underline{\vphantom{p}}_C$ explicitly.
\begin{lemma}\label{lem LHS type B}
For any $\underline{p}\in \mathcal{P}(2n+1)$, we have 
\begin{align}\label{eq LHS type B}
    ((\underline{p}\sqcup [b^{2d}])^{-})_C= (\underline{p}^{-})_C \sqcup [b^{2d}]
\end{align}
unless all of the following conditions hold.
\begin{enumerate}
    \item [(a)] $b$ is odd.
    \item [(b)] $|\underline{p}_{>b}|$ is odd.
    \item [(c)]$\underline{p}\ul{=b}$ is empty.
   
\end{enumerate}
 If \eqref{eq LHS type B} fails, then
\begin{align}\label{eq LHS type B bad}
    ((\underline{p}\sqcup [b^{2d}])^{-})_C= (\underline{p}^{-})_C \sqcup [b+1,b^{2d-2},b-1].
\end{align}
\end{lemma}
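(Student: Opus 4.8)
\textbf{Proof plan for Lemma \ref{lem LHS type B}.}

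The plan is to analyze the $C$-collapse of $\underline{q} := (\underline{p}\sqcup[b^{2d}])^-$ by a careful case split based on the position of the subtracted box, using the explicit collapse formula of Corollary \ref{cor Ach03}(ii) together with Lemma \ref{lem Ach03}. First I would observe that in $\underline{p}\sqcup[b^{2d}]$ the part $b$ appears with multiplicity $2d$ (plus whatever multiplicity it already had in $\underline{p}$), so $\underline{p}\sqcup[b^{2d}]$ has the block $[b^{2d}]$ sitting among its parts. The superscript $(-)$ removes one box from the \emph{last} (smallest) part of $\underline{p}\sqcup[b^{2d}]$. Here the key structural point is: if $\underline{p}$ has a part strictly smaller than $b$ (i.e.\ $\underline{p}_{<b}$ is nonempty), then the last part of $\underline{p}\sqcup[b^{2d}]$ is the last part of $\underline{p}$, the minus is applied entirely within $\underline{p}$, and the block $[b^{2d}]$ is untouched; if instead $\underline{p}_{<b}$ is empty, the last part equals $b$ and the minus turns one copy of $b$ into $b-1$, so $\underline{q} = \underline{p}^-_{\geq b}\sqcup[b^{2d-1},b-1]$ when $\underline{p}_{=b}$ is nonempty (the minus still lands in $\underline{p}$), and $\underline{q}=\underline{p}_{>b}\sqcup[b^{2d-1},b-1]$ when $\underline{p}_{=b}$ is empty.

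Next I would compute the $C$-collapse in each case and compare with $(\underline{p}^-)_C\sqcup[b^{2d}]$. The benign cases are those where $\underline{q}$ still contains the full block $[b^{2d}]$: there $\underline{q}=(\underline{p}^-)\sqcup[b^{2d}]$ literally (as a multiset of parts, after reordering), and since $[b^{2d}]$ already satisfies the type-$C$ multiplicity condition on the part $b$ regardless of the parity of $b$, Corollary \ref{cor Ach03}(ii) (applied with $2x=b$ if $b$ is even, or the analogous argument reorganizing around $b$) gives $\underline{q}_C=(\underline{p}^-)_C\sqcup[b^{2d}]$ — this covers the situation when $\underline{p}_{<b}$ is nonempty, or when $b$ is even, or when $b$ is odd but $\underline{p}_{=b}$ is nonempty (in the last case $\underline{q}$ contains $b$ with total even multiplicity so the part $b$ is ``protected''). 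The delicate case is exactly: $b$ odd, $\underline{p}_{<b}$ empty, $\underline{p}_{=b}$ empty — then $\underline{q}=\underline{p}_{>b}\sqcup[b^{2d-1},b-1]$, and I must collapse this. Here the multiplicity of the odd part $b$ is $2d-1$, which is odd, so a collapse is forced at level $b$; the standard collapse mechanism pushes one box down from the lowest $b$ to $b-1$, producing $[b^{2d-2},b-1,b-1]=[b^{2d-2},(b-1)^2]$ locally — but we already have one $b-1$ from the minus, so we get $[b^{2d-2},(b-1)^3]$ and $b-1$ is even so no further collapse there, while above we need $|\underline{p}_{>b}|$ to be handled. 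Tracking the parity bookkeeping via Lemma \ref{lem Ach03} (the relevant column being determined by $l(\underline{q}_{>b})$ and $|\underline{q}_{>b}|$, where $\underline{q}_{>b}=\underline{p}_{>b}$) is where condition (b), $|\underline{p}_{>b}|$ odd, enters: it is precisely the parity that forces the collapse to reach up and modify the top, yielding $[b+1,b^{2d-2},b-1]$ in place of $[b^{2d}]$, i.e.\ \eqref{eq LHS type B bad}; when $|\underline{p}_{>b}|$ is even the collapse stays local and one recovers \eqref{eq LHS type B}.

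The main obstacle I anticipate is the bookkeeping in this last case: one must carefully verify that $(\underline{p}_{>b}\sqcup[b^{2d-1},b-1])_C$ really does equal $(\underline{p}^-)_C\sqcup[b+1,b^{2d-2},b-1]$, and not something with a further cascade of collapses propagating into $\underline{p}_{>b}$ or mixing with $(\underline{p}^-)_C$. The way I would control this is to note that $\underline{p}$ is of type $B$ (all even parts have even multiplicity, and since $|\underline{p}|$ is odd this is consistent), so $\underline{p}^-$ is ``almost'' type $C$ — specifically $\underline{p}^-_{>b}=\underline{p}_{>b}$ and the only type-$C$ defects of $\underline{p}^-$ occur at the bottom, below level $b$, hence disjoint from the region being modified. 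Using Proposition \ref{prop computation of induced orbit}'s compatibility and the fact (Corollary \ref{cor Ach03}) that collapse localizes around a protected part, I can split the collapse of $\underline{q}$ into the collapse of $\underline{p}^-$ on the part $<b+1$ and a purely local computation on $\underline{p}_{>b}\sqcup[b^{\text{odd}},b-1]$; the latter is a finite explicit check. I would also record the boundary subtleties when $2d=1$ (so $[b^{2d-2}]$ is empty and $[b+1,b^{2d-2},b-1]$ reads $[b+1,b-1]$) and when $b=1$ (then $b-1=0$ and these zero parts are simply dropped), confirming the stated formulas still hold verbatim under the convention that $0$-parts are discarded.
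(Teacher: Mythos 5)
Your central reduction is incorrect, and it contradicts the very statement you are proving. You claim that whenever the block $[b^{2d}]$ survives the operation $(\cdot)^{-}$ — in particular whenever $\underline{p}_{<b}\neq\emptyset$ — the $C$-collapse splits it off, so that \eqref{eq LHS type B} holds. But the exceptional conditions (a)--(c) of Lemma \ref{lem LHS type B} do not require $\underline{p}_{<b}$ to be empty, and \eqref{eq LHS type B} can indeed fail with $\underline{p}_{<b}\neq\emptyset$: take $b=3$, $d=1$, $\underline{p}=[5,1,1]$, for which (a)--(c) hold. Then $(\underline{p}\sqcup[3,3])^{-}=[5,3,3,1]$, and one checks directly that $([5,3,3,1])_{C}=[4,4,2,2]=(\underline{p}^{-})_{C}\sqcup[4,2]$, i.e.\ \eqref{eq LHS type B bad}; your ``benign'' prediction $(\underline{p}^{-})_{C}\sqcup[3,3]=[4,3,3,2]$ is strictly smaller and wrong. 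The flaw is the appeal to Corollary \ref{cor Ach03}(ii): that corollary protects an \emph{even} part only. When $b$ is odd, the presence of the part $b$ with even multiplicity does not force the collapse to split off $[b^{2d}]$; the correct governing datum is the parity of $|\underline{p}_{>b}|$ (through the table of Lemma \ref{lem Ach03} applied at $x=b$), together with (a) and (c), and this holds irrespective of whether $\underline{p}_{<b}$ is empty.

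Your ``delicate case'' is likewise misidentified and its mechanism is wrong. If $\underline{p}_{\leq b}=\emptyset$ then $|\underline{p}_{>b}|=|\underline{p}|=2n+1$ is automatically odd, so the dichotomy on $|\underline{p}_{>b}|$ you run inside that case is vacuous, while the genuinely exceptional cases with $\underline{p}_{<b}\neq\emptyset$ are never treated; moreover the local outcome you describe, $[b^{2d-2},(b-1)^{3}]$, does not even have the same size as $[b^{2d-1},b-1]$ (they differ by $b-2$), so the bookkeeping cannot be right. What actually happens, via Lemma \ref{lem Ach03} at $x=b$, is that when $|\underline{p}_{>b}|$ is odd the collapse equals $((\underline{p}_{>b})^{-})_{C}\sqcup((\underline{p}\sqcup[b^{2d}])^{-}_{\leq b}{}^{+})_{C}$, i.e.\ one box is transferred \emph{across} level $b$ from the upper segment to the lower one; after the minus this turns the block into $[b+1,b^{2d-2},b-1]$, which is already of type $C$ exactly when $b$ is odd (for $b$ even it collapses back to $[b^{2d}]$), whereas if $|\underline{p}_{>b}|$ is even, or if the transferred box is absorbed by a copy of $b$ from $\underline{p}_{=b}\neq\emptyset$, one recovers \eqref{eq LHS type B}. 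So the case division must follow the parity table of Lemma \ref{lem Ach03} (the location of the minus being only a preliminary normalization), not the intactness of the block $[b^{2d}]$. Two further slips: the lemma is stated for arbitrary $\underline{p}\in\mathcal{P}(2n+1)$, and your auxiliary claim that the type-$C$ defects of $\underline{p}^{-}$ lie below level $b$ already fails for $\underline{p}=[5,1,1]$, $b=3$; also the boundary case $2d=1$ you propose to check cannot occur, since $2d$ is even.
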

\begin{proof}
First, we deal with the case that $ \underline{p}_{\leq b}$ is empty. Note that Conditions (b) and (c) automatically hold in this case. By Lemma \ref{lem Ach03}, we have
\begin{align*}
    ((\underline{p}\sqcup [b^{2d}])^{-})_C&= (\underline{p} \sqcup [b^{2d-1}, b-1])_C\\
    &= (\underline{p}^{-})_C\sqcup ([b+1,b^{2d-2},b-1])_C.
\end{align*}
If $b$ is even, then $([b+1,b^{2d-2},b-1])_C=[b^{2d}]$, which gives \eqref{eq LHS type B}. Otherwise, we get \eqref{eq LHS type B bad}. This completes the verification of this case.

Next, we consider the case that $\underline{p}_{\leq b}$ is non-empty, which gives that $(\underline{p}\sqcup [b^{2d}])^{-}=\underline{p}^{-}\sqcup [b^{2d}]$. Note that in this case, we have
\[ \underline{p}_{\oldbullet x}= (\underline{p}^{-})_{\oldbullet x}\]
for $x \geq b$ and $\oldbullet \in \{>, \geq \}$.

 If $b$ is even, then \eqref{eq LHS type B} follows from Corollary \ref{cor Ach03}(ii). If $|\underline{p}_{>b}|$ is even, then 
 \begin{align*}
      ((\underline{p}\sqcup [b^{2d}])^{-})_C&= (\underline{p}_{>b})_C \sqcup ([b^{2d}] \sqcup (\underline{p}^{-})_{\leq b})_C \\
      &= (\underline{p}_{>b})_C \sqcup [b^{2d}] \sqcup ((\underline{p}^{-})_{\leq b})_C\\
      &= (\underline{p}^{-})_{C} \sqcup [b^{2d}].
 \end{align*}
 
 If $\underline{p}\ul{=b}$ is non-empty, $b$ is odd and $|\underline{p}_{>b}|$ is odd, then $\underline{p}^{-}\ul{=b}$ is also non-empty, for otherwise, $\underline{p}=\underline{p}_{>b}\sqcup [b]$ and hence $|\underline{p}|$ is even, a contradiction. Therefore,
 \begin{align*}
     (\underline{p}\sqcup [b^{2d}])^{-}\ul{C}&= \underline{p}_{>b}{}^{-}\ul{C} \sqcup ([b^{2d}] \sqcup \underline{p}^-\ul{=b} \sqcup \underline{p}^{-}\ul{< b})^{+}\ul{C} \\
     &=\underline{p}_{>b}{}^{-}\ul{C} \sqcup [b^{2d}] \sqcup (\underline{p}^-\ul{=b}{}^{+}\sqcup \underline{p}^{-}\ul{< b})\ul{C}\\
     &=\underline{p}_{>b}{}^{-}\ul{C} \sqcup [b^{2d}] \sqcup \underline{p}^{-}\ul{\leq b}{}^{+}\ul{C}=\underline{p}^{-}\ul{C} \sqcup [b^{2d}].
 \end{align*}
 
 Finally, assuming Conditions (a), (b) and (c) hold, we have
 \begin{align*}
     (\underline{p}\sqcup [b^{2d}])^{-}\ul{C}&= (\underline{p}_{>b} \sqcup ( [b^{2d}] \sqcup \underline{p}_{<b})^{-})\ul{C}\\
     &= \underline{p}_{>b}{}^{-}\ul{C} \sqcup  ( [b+1, b^{2d-1}] \sqcup \underline{p}_{<b})^{-}\ul{C}\\
     &=\underline{p}_{>b}{}^{-}\ul{C}  \sqcup [b+1, b^{2d-2},b-1]\sqcup \underline{p}_{<b}{}^{+-}\ul{C}\\
     &= \underline{p}^{-}\ul{C} \sqcup [b+1,b^{2d-2},b-1].
 \end{align*}
 This completes the proof of the lemma.
\end{proof}
We give a corollary of the computation in the proof above, which will be used in future work.
\begin{cor}\label{cor type B ineq}
Suppose $\underline{p}, \underline{q}$ are partitions in $\mathcal{P}_B(2n+1)$ such that $\underline{p} \geq \underline{q}$ and $d_{BV}(\underline{p})<d_{BV}(\underline{q})$. Then for any positive integers $b,d$, 
\[ d_{BV}(\underline{p}\sqcup [b^{2d}])< d_{BV}(\underline{q}\sqcup [b^{2d}]). \]
\end{cor}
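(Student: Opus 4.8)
The plan is to read off the effect of adjoining $[b^{2d}]$ directly from the computation in the proof of Lemma \ref{lem LHS type B}, and then argue by comparing partial sums. Recall that for a type $B$ partition $\underline{r}$ one has $d_{BV}(\underline{r}) = ((\underline{r}^-)_C)^{\ast}$, and that transpose turns disjoint union into partition addition, $(A\sqcup B)^{\ast}=A^{\ast}+B^{\ast}$. Feeding the formula of Lemma \ref{lem LHS type B} through these two facts gives
\[
d_{BV}(\underline{p}\sqcup[b^{2d}]) \;=\; d_{BV}(\underline{p}) + E_{\underline{p}},
\qquad E_{\underline{p}}\in\{\,[(2d)^{b}],\ [(2d)^{b-1},2d-1,1]\,\},
\]
where, using $[b^{2d}]^{\ast}=[(2d)^{b}]$ and $[b+1,b^{2d-2},b-1]^{\ast}=[(2d)^{b-1},2d-1,1]$, the second (``exceptional'') value of $E_{\underline{p}}$ occurs exactly when $b$ is odd, $|\underline{p}_{>b}|$ is odd and $\underline{p}_{=b}$ is empty. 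The numerical point is that the two candidates for $E_{\underline{p}}$ have the same partial sums except at index $b$, where $[(2d)^{b}]$ is larger by exactly $1$. The same identity holds with $\underline{p}$ replaced by $\underline{q}$.

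Next I would dispose of the non-strict inequality for free: by Lemma \ref{lem goal}, $d_{BV}(\underline{p}\sqcup[b^{2d}]) = ([(2d)^{b}]+d_{BV}(\underline{p}))_C$ and likewise for $\underline{q}$; since $\underline{p}\geq\underline{q}$ forces $d_{BV}(\underline{p})\leq d_{BV}(\underline{q})$ by Proposition \ref{prop dBV}(1), and both partition addition and $C$-collapse are order-preserving, we obtain $d_{BV}(\underline{p}\sqcup[b^{2d}])\leq d_{BV}(\underline{q}\sqcup[b^{2d}])$.

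For strictness, write $s_k(\underline{r}):=\sum_{i=1}^{k}r_i$. From the displayed formula,
\[
s_k\big(d_{BV}(\underline{q}\sqcup[b^{2d}])\big)-s_k\big(d_{BV}(\underline{p}\sqcup[b^{2d}])\big)
=\big(s_k(d_{BV}(\underline{q}))-s_k(d_{BV}(\underline{p}))\big)+\big(s_k(E_{\underline{q}})-s_k(E_{\underline{p}})\big).
\]
The first bracket is $\geq 0$ for all $k$ and, because $d_{BV}(\underline{p})<d_{BV}(\underline{q})$, strictly positive for some index $k_0$; the second bracket vanishes for $k\neq b$ and lies in $\{0,\pm 1\}$, equalling $-1$ only when $\underline{p}$ is exceptional and $\underline{q}$ is not. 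Hence the desired strict inequality holds at $k_0$ unless the following occur simultaneously: $k_0=b$ is the unique index of strictness, $s_b(d_{BV}(\underline{q}))-s_b(d_{BV}(\underline{p}))=1$, and $\underline{p}$ is exceptional while $\underline{q}$ is not.

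The main obstacle is to rule out this residual configuration. In it, $(\underline{p}^-)_C$ and $(\underline{q}^-)_C$ — both of type $C$ — differ by moving exactly one box, $\underline{p}$ is exceptional for $b$ while $\underline{q}$ is not, and $\underline{p}\geq\underline{q}$. I expect these requirements to be mutually inconsistent: tracking the parts of $\underline{p}$ and $\underline{q}$ in a small window around $b$ through the case distinction in the proof of Lemma \ref{lem LHS type B}, and using the parity constraints defining $\mathcal{P}_C$ together with the fact that $(\underline{q}^-)_C$ sits one box below $(\underline{p}^-)_C$, one should conclude that $\underline{q}$ is forced to be exceptional as well, a contradiction. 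This combinatorial bookkeeping is the only genuine work; everything else is already contained in the proof of Lemma \ref{lem goal}.
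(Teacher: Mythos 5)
Your reduction is sound up to the strictness analysis: the identity $d_{BV}(\underline{p}\sqcup[b^{2d}])=d_{BV}(\underline{p})+E_{\underline{p}}$ with $E_{\underline{p}}\in\{[(2d)^{b}],[(2d)^{b-1},2d-1,1]\}$ is just the transpose of Lemma \ref{lem LHS type B}, and the non-strict inequality is fine. The gap is a sign error in the correction term, and it sends you after the wrong case. Since $s_b([(2d)^{b}])=2db$ while $s_b([(2d)^{b-1},2d-1,1])=2db-1$, the bracket $s_k(E_{\underline{q}})-s_k(E_{\underline{p}})$ equals $-1$ at $k=b$ precisely when $\underline{q}$ is exceptional (i.e.\ satisfies (a)--(c) of Lemma \ref{lem LHS type B}) and $\underline{p}$ is not --- the opposite of what you assert. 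Hence the residual configuration you try to eliminate ($\underline{p}$ exceptional, $\underline{q}$ not) is harmless, since there the correction contributes $+1$; it is also perfectly consistent with $\underline{p}\geq\underline{q}$ (e.g.\ $\underline{p}=[3,3,3]$, $\underline{q}=[3,3,1,1,1]$, $b=1$), so the contradiction you hope to extract does not exist and is not needed. Meanwhile the genuinely dangerous case --- $\underline{q}$ exceptional, $\underline{p}$ not, with the unique strict index $k_0=b$ and gap $1$ --- is left untreated, and your proposed fix (``$\underline{q}$ is forced to be exceptional as well'') is vacuous there because $\underline{q}$ already is the exceptional one. On top of the misidentification, this decisive step is offered only as an expectation, not a proof, so the argument is incomplete even on its own terms.

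The paper closes exactly this case not by showing the configuration cannot occur but by a multiplicity count on the untransposed side: if \eqref{eq LHS type B bad} holds for $\underline{q}$, then the multiplicity of $b$ in $((\underline{q}\sqcup[b^{2d}])^{-})_C$ is exactly $2d-2$, whereas if \eqref{eq LHS type B} holds for $\underline{p}$, the multiplicity of $b$ in $((\underline{p}\sqcup[b^{2d}])^{-})_C=(\underline{p}^{-})_C\sqcup[b^{2d}]$ is at least $2d$; so the two partitions are unequal, and together with the already established non-strict inequality this forces strictness. If you want to keep your partial-sum framework, replace the attempted inconsistency argument by this observation: equality of $d_{BV}(\underline{p}\sqcup[b^{2d}])$ and $d_{BV}(\underline{q}\sqcup[b^{2d}])$ would force equality of their transposes, which the multiplicity count rules out in the mixed case.
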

\begin{proof}
Since $d_{BV}$ is order-reversing and $\underline{p}\geq \underline{q}$ implies $\underline{p} \sqcup [b^{2d}] \geq  \underline{q} \sqcup [b^{2d}] $, we already have 
\[ d_{BV}(\underline{p}\sqcup [b^{2d}])\leq  d_{BV}(\underline{q}\sqcup [b^{2d}]). \]
Therefore, it suffices to show this inequality is strict.

Since transpose is an order-reversing bijection on $\mathcal{P}(2n)$, we have 
\[\underline{p}^{-}\ul{C}= d_{BV}(\underline{p})^{\ast} >d_{BV}(\underline{q})^{\ast}= \underline{q}^{-}\ul{C}. \]
If \eqref{eq LHS type B} holds for $\underline{q}$, then since either \eqref{eq LHS type B} or \eqref{eq LHS type B bad} holds for $\underline{p}$, we have
\[ (\underline{p}\sqcup [b^{2d}])^{-}\ul{C} \geq \underline{p}^{-}\ul{C} \sqcup [b^{2d}]>\underline{q}^{-}\ul{C} \sqcup [b^{2d}] = (\underline{q}\sqcup [b^{2d}])^{-}\ul{C},  \]
which gives the desired inequality by taking transpose again.

Suppose \eqref{eq LHS type B bad} holds for $\underline{q}$. In this case, the computation in the proof of Lemma \ref{lem LHS type B} shows that the multiplicity of $b$ in $ (\underline{q} \sqcup [b^{2d}])^{-}\ul{C}$ is exactly $2d-2$. Therefore, either \eqref{eq LHS type B bad} also holds for $\underline{p}$, and hence the desired strict inequality holds by the same argument above, or 
\[(\underline{p} \sqcup [b^{2d}])^{-}\ul{C}=\underline{p}^{-}\ul{C} \sqcup [b^{2d}] \neq (\underline{q} \sqcup [b^{2d}])^{-}\ul{C} \]
by comparing the multiplicity of $b$. As the equality does not hold in this case, we obtain the desired strict inequality. This completes the proof of the corollary.
\end{proof}

Using the fact that $d_{BV}(\underline{p} \sqcup [b^{2d}])$ is of type $C$ if $\underline{p}$ is of type $B$, we prove Lemma \ref{lem goal} when $(X,X')=(B,C)$. 
\begin{prop} 
Lemma \ref{lem goal} holds when $(X,X')=(B,C)$. 
\end{prop}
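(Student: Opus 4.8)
The plan is to unfold $d_{BV}$ on both sides of \eqref{eq goal type B} and reduce everything to the explicit collapse formula of Lemma~\ref{lem LHS type B} together with the elementary behaviour of transpose. Set $\underline{m} := d_{BV}(\underline{p}) = ((\underline{p}^{-})_C)^{\ast}\in\mathcal{P}_C(2n)$, so that by Definition~\ref{def transpose} and the involutivity of transpose we have $(\underline{m})^{\ast} = (\underline{p}^{-})_C$. The right-hand side of \eqref{eq goal type B} is $([(2d)^b] + \underline{m})_C$. For the left-hand side $(\underline{p}\sqcup[b^{2d}])^{-}\ul{C}{}^{\ast}$, Lemma~\ref{lem LHS type B} tells us that $(\underline{p}\sqcup[b^{2d}])^{-}\ul{C}$ is either $(\underline{p}^{-})_C\sqcup[b^{2d}]$ (the generic case) or $(\underline{p}^{-})_C\sqcup[b+1,b^{2d-2},b-1]$ (the exceptional case, which occurs exactly when $b$ is odd, $|\underline{p}_{>b}|$ is odd and $\underline{p}_{=b}$ is empty). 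Using $(\underline{r}\sqcup\underline{s})^{\ast}=\underline{r}^{\ast}+\underline{s}^{\ast}$ together with the direct computations $[b^{2d}]^{\ast}=[(2d)^b]$ and $[b+1,b^{2d-2},b-1]^{\ast}=[(2d)^{b-1},2d-1,1]$, the left-hand side of \eqref{eq goal type B} equals $\underline{m}+[(2d)^b]$ in the generic case and $\underline{m}+[(2d)^{b-1},2d-1,1]$ in the exceptional case. So the proposition reduces to showing that $([(2d)^b]+\underline{m})_C$ equals $\underline{m}+[(2d)^b]$, resp.\ $\underline{m}+[(2d)^{b-1},2d-1,1]$, in these two cases.

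For the generic case I would argue that $[(2d)^b]+\underline{m}$ is \emph{already} of type $C$, so that its $C$-collapse is itself. The point is that, since $\underline{m}=[m_1,m_2,\dots]$ is non-increasing, every part of the shifted block $[m_1+2d,\dots,m_b+2d]$ is $\ge m_b+2d > m_b\ge m_{b+1}$ and hence strictly larger than every part of the tail $[m_{b+1},m_{b+2},\dots]$; thus these two blocks share no common value, and since $2d$ is even the multiplicity of any value in $[(2d)^b]+\underline{m}$ is either a multiplicity of an odd-indexed value of $\underline{m}$ (unchanged parity) or splits the block of the value $m_b$. Consequently type-$C$-ness of $[(2d)^b]+\underline{m}$ fails if and only if $m_b$ is odd, $m_{b+1}=m_b$, and the number of indices $i\le b$ with $m_i=m_b$ is odd. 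Writing $m_j = l\!\left(\big((\underline{p}^{-})_C\big)_{\ge j}\right)$ from $\underline{m}=((\underline{p}^{-})_C)^{\ast}$, I would translate this numerical condition into the conditions (a)--(c) of Lemma~\ref{lem LHS type B}, controlling the parts of $(\underline{p}^{-})_C$ near $b$ by means of Corollary~\ref{cor Ach03}(ii); this shows that the ``generic'' dichotomy on the two sides is the same one.

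In the exceptional case $[(2d)^b]+\underline{m}$ is not of type $C$, differing from a type-$C$ partition only at the two (odd) values $m_b$ and $m_b+2d$, and I would compute its $C$-collapse by applying Lemma~\ref{lem Ach03} at $x=m_b$: the collapse performs a single ``box move'', replacing one copy of $m_b+2d$ at position $b$ and one copy of $m_b$ at position $b+1$ by $m_b+2d-1$ and $m_b+1$ respectively, which is exactly $\underline{m}+[(2d)^{b-1},2d-1,1]$; verifying that the result is of type $C$, is $\le [(2d)^b]+\underline{m}$, and is maximal among such then identifies it with the $C$-collapse, completing the match with the transposed output of Lemma~\ref{lem LHS type B}. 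The main obstacle I anticipate is precisely this exceptional-case bookkeeping: because transpose does not commute with the $X$-collapse, one cannot shortcut the passage between the conditions of Lemma~\ref{lem LHS type B}, which are phrased on $\underline{p}$ (parity of $b$, parity of $|\underline{p}_{>b}|$, emptiness of $\underline{p}_{=b}$), and the failure of type-$C$-ness on the right-hand side, which is phrased on the parts of $d_{BV}(\underline{p})$ around position $b$; carefully tracking how $(\cdot)^{-}$, the $C$-collapse, and transpose interact near the value $b$, and confirming that the collapse produces exactly the tail $[(2d)^{b-1},2d-1,1]$ and nothing smaller, is where the real work lies.
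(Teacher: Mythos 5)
Your reduction of the left-hand side is the same as the paper's: Lemma \ref{lem LHS type B} together with $(\underline{r}\sqcup\underline{s})^{\ast}=\underline{r}^{\ast}+\underline{s}^{\ast}$ correctly turns \eqref{eq goal type B} into the claim that the $C$-collapse of $[(2d)^b]+\underline{m}$, with $\underline{m}=d_{BV}(\underline{p})$, equals $\underline{m}+[(2d)^b]$ in the generic case and $\underline{m}+[(2d)^{b-1},2d-1,1]$ in the exceptional case, and your multiplicity criterion for when $[(2d)^b]+\underline{m}$ fails to be of type $C$ (the odd value $m_b$ straddling position $b$ with an odd count on each side) is correct. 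The genuine gap is the step you describe as ``I would translate this numerical condition into the conditions (a)--(c)''. Your whole argument — in \emph{both} cases — hinges on the equivalence between the exceptional-case conditions of Lemma \ref{lem LHS type B}, which are phrased on $\underline{p}$, and your failure condition, which is phrased on the parts of $\underline{m}=((\underline{p}^{-})_{C})^{\ast}$ near position $b$: in the generic case you need ``failure $\Rightarrow$ (a)--(c)'', and in the exceptional case you need the converse to know that $m_b$ is odd, $m_{b+1}=m_b$, and (so that the correct column of Lemma \ref{lem Ach03} applies at your cut) that the upper block $[m_1+2d,\dots,m_b+2d]$ has odd size. None of this is proved, and the tool you cite, Corollary \ref{cor Ach03}(ii), does not apply in the critical situation: it requires a part equal to an even value to be present, whereas in the exceptional case $b$ is odd and $\underline{p}_{=b}$ is empty. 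Carrying out the translation honestly means analyzing how $(\cdot)^{-}$ and the $C$-collapse interact with $\underline{p}$ near the value $b$, a case analysis via Lemma \ref{lem Ach03} of essentially the same size as the proof of Lemma \ref{lem LHS type B} itself; as written, the proposal asserts the conclusion of that analysis rather than performing it.

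What you are missing is the one observation that lets the paper bypass all of this, stated immediately before the proposition: the left-hand side of \eqref{eq goal type B} is $d_{BV}(\underline{p}\sqcup[b^{2d}])$ and is therefore \emph{automatically} a partition of type $C$. In the generic case this says at once that $\underline{m}+[(2d)^b]$ is of type $C$, hence equals its own $C$-collapse, and the identity follows with no translation of conditions. In the exceptional case the same a priori type-$C$ property, now applied to $\underline{m}+[(2d)^{b-1},2d-1,1]$, combined with the oddness of $m_{b+1}$ (which the paper extracts directly from (a)--(c) and the type-$B$ hypothesis on $\underline{p}$, via $l(((\underline{p}^{-})_{C})_{>b})=l(\underline{p}_{>b})$), shows that both pieces of the cut at the value $m_b+2d$ are of type $C$ and that the upper piece has odd size; Lemma \ref{lem Ach03} then identifies the collapse of $[(2d)^b]+\underline{m}$ with your ``box-moved'' partition in one line, with no need for a separate maximality verification. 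I would reorganize the argument around this a priori fact rather than the direct multiplicity bookkeeping; otherwise you must supply the missing equivalence of dichotomies in full.
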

\begin{proof}
It is equivalent to establish \eqref{eq goal type B}. Consider the partitions appear in Lemma \ref{lem LHS type B}
\[ \underline{q}:=\underline{p}^{-}\ul{C} \sqcup [b^{2d}] \leq  \underline{q}':=\underline{p}^{-}\ul{C} \sqcup [b+1,b^{2d-2},b-1]. \]
It is not hard to see from Definition \ref{def transpose} that
\begin{align*}
    \underline{q}^{\ast}&=[2d^b]+ \underline{p}^{-}\ul{C}{}^{\ast},\\
    (\underline{q}')^{\ast}&=[2d^{b-1},2d-1,1]+ \underline{p}^{-}\ul{C}{}^{\ast}.
\end{align*}
Note that $\underline{q}^{\ast}$ is exactly the right hand side of \eqref{eq goal type B} before taking $C$-collapse. Therefore, if the left hand side of \eqref{eq goal type B} is equal to $\underline{q}^{\ast}$, then $\underline{q}^{\ast}$ is already of type $C$ and hence \eqref{eq goal type B} holds. 

On the other hand, if the left hand side of \eqref{eq goal type B} is equal to $(\underline{q}')^{\ast}$, then $b$ is odd, $|\underline{p}_{>b}|$ is odd and $\underline{p}_{=b}$ is empty by Lemma \ref{lem LHS type B}. In particular, $l((\underline{p}^{-}\ul{C}) \ul{>b})= l(\underline{p}_{>b})$, which is odd since $ \underline{p}$ is of type $B$. Therefore, if we denote $\underline{p}^{-}\ul{C}{}^{\ast}= [p_1,\dots, p_N]$, then $p_{b+1}$ is odd. 

Next, observe that 
\[ (\underline{q}^{\ast})_{\geq p_b+2d}=[p_1+2d,\dots, p_{b}+2d],\ (\underline{q}^{\ast})_{<p_b+2d}= [p_{b+1},\dots, p_{N}], \]
and
\[ (\underline{q}')^{\ast}=(\underline{q}^{\ast})\ul{\geq p_b+2d}{}^{-} \sqcup (\underline{q}^{\ast})\ul{< p_b+2d}{}^{+}. \]
Since $(\underline{q}')^{\ast}=d_{BV}(\underline{p}\sqcup [b^{2d}])$ is of type $C$ and $p_{b+1}+1>p_{b+2}$, we see that $(\underline{q}^{\ast})\ul{\geq p_b+2d}{}^{-} \sqcup [p_{b+1}+1]$ is of type $C$. As $p_{b+1}$ is odd, we conclude that both $(\underline{q}^{\ast})\ul{\geq p_b+2d}{}^{-} $ and $(\underline{q}^{\ast})\ul{< p_b+2d}{}^{+}$ are of type $C$. In particular, $|(\underline{q}^{\ast})\ul{\geq p_b+2d}|$ is odd, and Lemma \ref{lem Ach03} implies that 
\[ (\underline{q}^{\ast})\ul{C}=((\underline{q}^{\ast})\ul{\geq p_b+2d}{}^{-} )\ul{C} \sqcup ((\underline{q}^{\ast})\ul{< p_b+2d}{}^{+})\ul{C}=  (\underline{q}^{\ast})\ul{\geq p_b+2d}{}^{-} \sqcup (\underline{q}^{\ast})\ul{< p_b+2d}{}^{+}=(\underline{q}')^{\ast}. \]
This completes the proof of the proposition.
\end{proof}

\subsection{\texorpdfstring{Type $C$}{}}\label{sec C}
In this subsection, we prove \eqref{eq goal partition} when $\underline{p}$ is of type $C$. We may rewrite the equality as
\begin{align}\label{eq goal type C}
    (\underline{p}\sqcup [b^{2d}])^{+}\underline{\vphantom{p}}_{B}{}^{\ast} =   ([(2d)^b] +  \underline{p}^{+}\underline{\vphantom{p}}_{C}{}^{\ast})\underline{\vphantom{p}}_B.
\end{align} 
For simplicity, throughout this and the next section, we denote the $O$-collapse of a partition $\underline{p}$ by
\[ \underline{p}_O:= \begin{cases} \underline{p}_B &\text{if }|\underline{p}| \text{ is odd,}\\ 
\underline{p}_D &\text{if }|\underline{p}| \text{ is even.}\end{cases}\]
With this notation, we may rephrase Lemma \ref{lem Ach03} in this case as
\[ \underline{p}_{O}= \begin{cases}
 \underline{p}_{>x} {}^{-}\ul{O} \sqcup \underline{p}_{\leq x} {}^{+}\ul{O} & \text{ if }l( \underline{p}_{>x} )+| \underline{p}_{>x} | \text{ is odd},\\
    \underline{p}_{>x,O}  \sqcup  \underline{p}_{\leq x,O}  & \text{ if }l( \underline{p}_{>x} )+| \underline{p}_{>x} | \text{ is even}.
\end{cases}\]

First, we compute $  (\underline{p}\sqcup [b^{2d}])^{+}\underline{\vphantom{p}}_{B}$ explicitly.
\begin{lemma}\label{lem LHS type C}
For any $\underline{p}\in \mathcal{P}(2n)$, we have 
\begin{align}\label{eq LHS type C}
    (\underline{p}\sqcup [b^{2d}])^{+}\underline{\vphantom{p}}_{B}= \underline{p}^{+}\underline{\vphantom{p}}_{B}\sqcup [b^{2d}]
\end{align}
unless all of the following conditions hold.
\begin{enumerate}
    \item [(a)] $b$ is even.
    \item [(b)] $l(\underline{p}_{>b})+|\underline{p}_{>b}|$ is even.
    \item [(c)]$\underline{p}_{=b}$ is empty.
\end{enumerate}
 If \eqref{eq LHS type C} fails, then
\begin{align}\label{eq LHS type C bad}
    (\underline{p}\sqcup [b^{2d}])^{+}\underline{\vphantom{p}}_{B}= \underline{p}^{+}\underline{\vphantom{p}}_{B} \sqcup [b+1,b^{2d-2},b-1].
\end{align}
\end{lemma}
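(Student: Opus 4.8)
The plan is to mirror the proof of Lemma \ref{lem LHS type B}, using the formal parallel between the two statements: here $(-)^{+}$ and the target type $B$ play the roles that $(-)^{-}$ and the target type $C$ played there, and correspondingly the defect conditions (a)--(c) become ``$b$ even'', ``$l(\underline{p}_{>b})+|\underline{p}_{>b}|$ even'' (the parity that appears in Lemma \ref{lem Ach03} for a $B$-collapse at $x=b$), and ``$\underline{p}_{=b}$ empty''. The first step is to compute $(\underline{p}\sqcup[b^{2d}])^{+}$ itself: if $\underline{p}$ has a part $\geq b$ then $(\underline{p}\sqcup[b^{2d}])^{+}=\underline{p}^{+}\sqcup[b^{2d}]$, while if every part of $\underline{p}$ is $<b$ then $(\underline{p}\sqcup[b^{2d}])^{+}=[b+1,b^{2d-1}]\sqcup\underline{p}$. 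In the latter case conditions (b) and (c) hold automatically (both $\underline{p}_{>b}$ and $\underline{p}_{=b}$ are empty), so only the parity of $b$ enters, and the identity follows by directly collapsing $[b+1,b^{2d-1}]\sqcup\underline{p}$ with Lemma \ref{lem Ach03}, exactly parallel to the ``$\underline{p}_{\leq b}$ empty'' branch of the proof of Lemma \ref{lem LHS type B}.

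For the main case $\underline{p}_{\geq b}\neq\emptyset$, so $(\underline{p}\sqcup[b^{2d}])^{+}=\underline{p}^{+}\sqcup[b^{2d}]$, I would split on the parity of $b$. When $b$ is odd, the block of $b$'s splits off cleanly: $(\underline{p}^{+}\sqcup[b^{2d}])_{=b}$ is non-empty, so Corollary \ref{cor Ach03}(i) gives $(\underline{p}^{+}\sqcup[b^{2d}])_{B}=(\underline{p}^{+}_{>b}\sqcup\underline{p}^{+}_{<b})_{X}\sqcup\underline{p}^{+}_{=b}\sqcup[b^{2d}]$, and comparing with the same corollary — or, in the degenerate subcase $\underline{p}^{+}_{=b}=\emptyset$, with Lemma \ref{lem Ach03} — applied to $\underline{p}^{+}$ identifies $(\underline{p}^{+}_{>b}\sqcup\underline{p}^{+}_{<b})_{X}\sqcup\underline{p}^{+}_{=b}$ with $(\underline{p}^{+})_{B}$, giving \eqref{eq LHS type C} (consistent with (a) failing). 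When $b$ is even, Corollary \ref{cor Ach03}(i) is unavailable, so I would invoke Lemma \ref{lem Ach03} at $x=b$ for both $\underline{p}^{+}\sqcup[b^{2d}]$ and $\underline{p}^{+}$ and compare: the two partitions have the same truncation above $b$, hence the same controlling parity, and their $B$-collapses differ only through how the parts at and below $b$ are rearranged after $(-)^{+}$. A short bookkeeping with $(-)^{\pm}$ and $\sqcup$ then shows the two sides of \eqref{eq LHS type C} agree unless $b$ is even, $\underline{p}_{=b}$ is empty, and $l(\underline{p}_{>b})+|\underline{p}_{>b}|$ is even — exactly conditions (a), (b), (c) — and that in this exceptional configuration the collapse is forced to bump a part strictly below $b$ up to $b+1$ and drop one copy of $b$ to $b-1$, producing the block $[b+1,b^{2d-2},b-1]$ of \eqref{eq LHS type C bad}.

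I expect the main obstacle to be the bookkeeping of the degenerate sub-cases — $\underline{p}_{\geq b}$ empty, $\underline{p}_{=b}$ empty, $\underline{p}^{+}_{=b}$ empty, and the borderline case where $b$ is the top part of $\underline{p}$ with multiplicity $1$ — together with the fact that Corollary \ref{cor Ach03}(i) leaves the collapse type ambiguous between $B$ and $D$: in each branch one must verify that the parities of $l(\underline{p}_{>b})$ and $|\underline{p}_{>b}|$ (equivalently, of $|\underline{p}_{=b}|$ relative to the total) force the same choice of $X$ on both sides of the asserted identity, so the ambiguity cancels. Along the way I would also record, as in the proof of Lemma \ref{lem LHS type B}, that in the defect case the multiplicity of $b$ in $(\underline{p}\sqcup[b^{2d}])^{+}_{B}$ equals $2d-2$; this refinement will be useful both for a strict-inequality statement analogous to Corollary \ref{cor type B ineq} and for the $C$-collapse step deducing \eqref{eq goal type C} from Lemma \ref{lem LHS type C}.
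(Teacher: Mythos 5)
Your proposal follows essentially the same route as the paper's proof: split off the case $\underline{p}_{\geq b}=\emptyset$ (where $(\underline{p}\sqcup[b^{2d}])^{+}=[b+1,b^{2d-1}]\sqcup\underline{p}$ and only the parity of $b$ matters), and in the main case use Corollary \ref{cor Ach03}(i) when $b$ is odd and Lemma \ref{lem Ach03} at $x=b$ together with the bookkeeping that splits off $[b^{2d}]$ when $b$ is even, with the defect block $[b+1,b^{2d-2},b-1]$ appearing exactly under (a)--(c). One small point for the bookkeeping: the parity that actually governs Lemma \ref{lem Ach03} here is that of $l((\underline{p}^{+})_{>b})+|(\underline{p}^{+})_{>b}|$, which is opposite to the parity in condition (b) whenever $\underline{p}_{>b}\neq\emptyset$, so the exceptional configuration is the one where the collapse uses the $({-},{+})$-modified branch rather than the untouched one.
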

\begin{proof}
First, we deal with the case that $\underline{p}_{\geq b}$ is empty, which gives \[(\underline{p}\sqcup [b^{2d}])^{+}= [b+1,b^{2d-1}] \sqcup \underline{p}.\]
Note that conditions (b) and (c) automatically holds in this case. We have
\begin{align*}
    ( [b+1,b^{2d-1}] \sqcup \underline{p})_B= ([b+1,b^{2d-2},b-1])\underline{\vphantom{p}}_D \sqcup \underline{p}^{+}\underline{\vphantom{p}}_B
\end{align*}
Then \eqref{eq LHS type C} holds if $b$ is odd, and \eqref{eq LHS type C bad} holds if $b$ is even.

Next, we deal with the case that $\underline{p}_{\geq b}$ is non-empty, which gives 
\[(\underline{p}\sqcup [b^{2d}])^{+}= \underline{p}^{+} \sqcup [b^{2d}].\] 
Note that in this case, we have
\[ \underline{p}_{\oldbullet x}= \underline{p}^{+}\underline{\vphantom{p}}_{\oldbullet x}\]
for $x \leq b$ and $\oldbullet \in \{<, \leq \}$.

If $b$ is odd, then \eqref{eq LHS type C} follows from Corollary \ref{cor Ach03}(i). If $l(\underline{p}_{>b})+|\underline{p}_{>b}|$ is odd, then $l(\underline{p}^{+}\ul{>b})+|\underline{p}^{+} \ul{>b}|$ is even, and hence
\begin{align*}
    (\underline{p}^{+}\sqcup [b^{2d}])\ul{B}&= \underline{p}^{+} \ul{>b,O}  \sqcup ([b^{2d}] \sqcup \underline{p}_{\leq b}) \ul{O}\\
    &=\underline{p}^{+} \ul{>b,O} \sqcup [b^{2d}] \sqcup \underline{p}_{\leq b,O} \\
    &=\underline{p}^{+}\ul{B} \sqcup [b^{2d}].
\end{align*}
If $\underline{p}_{=b}=[b^{\alpha}]$ is non-empty, $b$ is even, and $l(\underline{p}_{>b})+|\underline{p}_{>b}|$ is even, then
\begin{align*}
    (\underline{p}^{+}\sqcup [b^{2d}])\ul{B}&= (\underline{p}^{+} \ul{>b} {}^{-}) \ul{O} \sqcup ([b^{2d+\alpha}]  \sqcup \underline{p}_{<b})^{+} \ul{O}\\
    &=(\underline{p}^{+} \ul{>b} {}^{-}) \ul{O} \sqcup [b+1] \sqcup ([b^{2d}] \sqcup ([b^{\alpha-1}]\sqcup \underline{p}_{<b}) )\ul{O}\\
    &=(\underline{p}^{+} \ul{>b} {}^{-}) \ul{O} \sqcup [b^{2d}] \sqcup [b+1] \sqcup ([b^{\alpha-1}]\sqcup \underline{p}_{<b}) \ul{O}\\
    &=(\underline{p}^{+} \ul{>b} {}^{-}) \ul{O} \sqcup [b^{2d}]\sqcup  \underline{p}_{\leq b}{}^{+} \ul{O}\\
    &=\underline{p}^{+}\ul{B} \sqcup [b^{2d}].
\end{align*}

Finally, assuming Conditions (a), (b) and (c) hold, we have 
\begin{align*}
    (\underline{p}^{+}\sqcup [b^{2d}])\ul{B}&=(\underline{p}^{+} \ul{>b} {}^{-}) \ul{O} \sqcup ([b^{2d}] \sqcup \underline{p}_{<b} )^{+} \ul{O}\\
    &=(\underline{p}^{+} \ul{>b} {}^{-}) \ul{O} \sqcup [b+1]\sqcup ([b^{2d-1}] \sqcup \underline{p}_{<b} ) \ul{O}\\
    &= (\underline{p}^{+} \ul{>b} {}^{-}) \ul{O} \sqcup [b+1]\sqcup [b^{2d-2},b-1]\ul{O} \sqcup \underline{p}_{<b}{}^{+} \ul{O}\\
    &=\underline{p}^{+}\underline{\vphantom{p}}_{B} \sqcup [b+1,b^{2d-2},b-1].
\end{align*}
This completes the proof of the lemma.
\end{proof}
Remark that if $\underline{p}\in \mathcal{P}_C(2n)$, then the Condition (b) above is equivalent to $l(\underline{p}_{>b})$ is even since $|\underline{p}_{>x}|$ is even for any $x$ given that $\underline{p}$ is of type $C$.

Similar to the case of type $B$, we give the following corollary.
\begin{cor}\label{cor type C ineq}
Suppose $\underline{p}, \underline{q}$ are partitions in $\mathcal{P}_C(2n)$ such that $\underline{p} \geq \underline{q}$ and $d_{BV}(\underline{p})<d_{BV}(\underline{q})$. Then for any positive integers $b,d$, 
\[ d_{BV}(\underline{p}\sqcup [b^{2d}])< d_{BV}(\underline{q}\sqcup [b^{2d}]). \]
\end{cor}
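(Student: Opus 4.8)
The proof will be the exact analogue of that of Corollary~\ref{cor type B ineq}, with Lemma~\ref{lem LHS type B} replaced by Lemma~\ref{lem LHS type C}. Recall that for $\underline{p}\in\mathcal{P}_C(2n)$ one has $d_{BV}(\underline{p})=((\underline{p}^{+})_B)^{\ast}\in\mathcal{P}_B(2n+1)$, so $d_{BV}(\underline{p})^{\ast}=(\underline{p}^{+})_B$, and likewise $d_{BV}(\underline{p}\sqcup[b^{2d}])^{\ast}=((\underline{p}\sqcup[b^{2d}])^{+})_B$, which is precisely the partition computed in Lemma~\ref{lem LHS type C}. Since $d_{BV}$ is order-reversing (Proposition~\ref{prop dBV}(1)) and $\underline{p}\geq\underline{q}$ forces $\underline{p}\sqcup[b^{2d}]\geq\underline{q}\sqcup[b^{2d}]$, we already have $d_{BV}(\underline{p}\sqcup[b^{2d}])\leq d_{BV}(\underline{q}\sqcup[b^{2d}])$; the whole content is strictness. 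Applying transpose (an order-reversing bijection on $\mathcal{P}(2n+1)$), the hypothesis $d_{BV}(\underline{p})<d_{BV}(\underline{q})$ becomes $(\underline{p}^{+})_B>(\underline{q}^{+})_B$, and the desired conclusion becomes the strict inequality $((\underline{p}\sqcup[b^{2d}])^{+})_B>((\underline{q}\sqcup[b^{2d}])^{+})_B$.

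\textbf{Case analysis.} I would distinguish whether the generic formula \eqref{eq LHS type C} or the exceptional formula \eqref{eq LHS type C bad} of Lemma~\ref{lem LHS type C} holds for $\underline{q}$. If \eqref{eq LHS type C} holds for $\underline{q}$, then since $[b+1,b^{2d-2},b-1]\geq[b^{2d}]$ and $\underline{a}\mapsto\underline{a}\sqcup\underline{c}$ preserves the dominance order, whichever of \eqref{eq LHS type C}, \eqref{eq LHS type C bad} holds for $\underline{p}$ we get
\[
((\underline{p}\sqcup[b^{2d}])^{+})_B\ \geq\ (\underline{p}^{+})_B\sqcup[b^{2d}]\ >\ (\underline{q}^{+})_B\sqcup[b^{2d}]\ =\ ((\underline{q}\sqcup[b^{2d}])^{+})_B,
\]
and transposing back finishes this case. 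If instead \eqref{eq LHS type C bad} holds for $\underline{q}$ (so $b$ is even, $l(\underline{q}_{>b})$ is even, and $\underline{q}_{=b}$ is empty), then as in the type $B$ argument the intermediate collapses appearing in the proof of Lemma~\ref{lem LHS type C} exhibit the multiplicity of the part $b$ in $((\underline{q}\sqcup[b^{2d}])^{+})_B$; using this, either \eqref{eq LHS type C bad} also holds for $\underline{p}$, in which case the same chain of inequalities applies, or \eqref{eq LHS type C} holds for $\underline{p}$, in which case $((\underline{p}\sqcup[b^{2d}])^{+})_B=(\underline{p}^{+})_B\sqcup[b^{2d}]\geq(\underline{q}^{+})_B\sqcup[b^{2d}]$ differs from $((\underline{q}\sqcup[b^{2d}])^{+})_B$ by the multiplicity of $b$, so the $\geq$ is strict. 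In every case, transposing back yields $d_{BV}(\underline{p}\sqcup[b^{2d}])<d_{BV}(\underline{q}\sqcup[b^{2d}])$.

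\textbf{Main obstacle.} The delicate step is the multiplicity bookkeeping in the exceptional case: one must verify, from the explicit collapse computation in the proof of Lemma~\ref{lem LHS type C} (and the conditions (a)--(c) on $\underline{q}$, resp. $\underline{p}$), that the blocks $(\underline{p}^{+}_{>b}{}^{-})_O$ and $\underline{p}_{<b}{}^{+}_O$ contribute no part equal to $b$, so that the multiplicity of $b$ in $((\underline{q}\sqcup[b^{2d}])^{+})_B$ is pinned down well enough to force $((\underline{p}\sqcup[b^{2d}])^{+})_B\neq((\underline{q}\sqcup[b^{2d}])^{+})_B$ whenever \eqref{eq LHS type C} holds for $\underline{p}$ but \eqref{eq LHS type C bad} holds for $\underline{q}$. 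This is the only place where the parity hypotheses on $b$ and on $l(\underline{p}_{>b})$ are genuinely used, and it is the point to write out in full detail; the rest is formal manipulation with transpose, $\sqcup$, and order-reversal.
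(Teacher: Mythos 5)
Your proposal is correct and follows essentially the same route as the paper: the paper proves the type $C$ statement by declaring it "exactly the same as" the type $B$ corollary, whose proof is precisely your argument — reduce to the weak inequality via order-reversal, transpose so that Lemma \ref{lem LHS type C} computes both sides, split into the generic/exceptional cases for $\underline{q}$, and in the mixed case force non-equality by tracking the multiplicity of the part $b$. The multiplicity bookkeeping you flag as the delicate step is the same point the paper delegates to the computation inside the collapse lemma, so nothing in your plan diverges from the paper's proof.
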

\begin{proof}
The proof is exactly the same as Corollary \ref{cor type B ineq}, which we omit.
\end{proof}

Using the fact that $d_{BV}(\underline{p} \sqcup [b^{2d}])$ is of type $B$ if $\underline{p}$ is of type $C$, we prove Lemma \ref{lem goal} when $(X,X')=(C,B)$.

\begin{prop}
Lemma \ref{lem goal} holds when $(X,X')=(C,B)$. 
\end{prop}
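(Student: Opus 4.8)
The plan is to establish the reformulation \eqref{eq goal type C}, i.e.
\[
(\underline{p}\sqcup [b^{2d}])^{+}\underline{\vphantom{p}}_{B}{}^{\ast}
= \bigl([(2d)^b] + \underline{p}^{+}\underline{\vphantom{p}}_{B}{}^{\ast}\bigr)\underline{\vphantom{p}}_{B},
\]
by transcribing the type $B$ argument of \S\ref{sec B} with the roles of $B$ and $C$ (and of $^{+}$, $^{-}$) interchanged, using Lemma \ref{lem LHS type C} in place of Lemma \ref{lem LHS type B}. First I would feed in Lemma \ref{lem LHS type C}: setting
\[
\underline{q} := \underline{p}^{+}\underline{\vphantom{p}}_{B} \sqcup [b^{2d}],
\qquad
\underline{q}' := \underline{p}^{+}\underline{\vphantom{p}}_{B} \sqcup [b+1, b^{2d-2}, b-1]
\qquad(\underline{q}\le\underline{q}'),
\]
that lemma says $(\underline{p}\sqcup [b^{2d}])^{+}\underline{\vphantom{p}}_{B}$ is $\underline{q}$ in general and is $\underline{q}'$ precisely when its conditions (a)--(c) all hold, where (b) reads ``$l(\underline{p}_{>b})$ even'' because $\underline{p}$ has type $C$ (so $|\underline{p}_{>x}|$ is always even). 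Since $(\underline{r}\sqcup\underline{s})^{\ast}=\underline{r}^{\ast}+\underline{s}^{\ast}$, a direct computation from Definition \ref{def transpose} gives
\[
\underline{q}^{\ast} = [(2d)^b] + \underline{p}^{+}\underline{\vphantom{p}}_{B}{}^{\ast},
\qquad
(\underline{q}')^{\ast} = [(2d)^{b-1}, 2d-1, 1] + \underline{p}^{+}\underline{\vphantom{p}}_{B}{}^{\ast},
\]
and $\underline{q}^{\ast}$ is exactly the right-hand side of \eqref{eq goal type C} before the $B$-collapse.

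The easy case is when the left-hand side of \eqref{eq goal type C} equals $\underline{q}^{\ast}$: the left-hand side is $d_{BV}([b^{2d}]\sqcup\underline{p})$, hence already of type $B$, so $(\underline{q}^{\ast})_{B}=\underline{q}^{\ast}$ and \eqref{eq goal type C} holds. The substantive case is when it equals $(\underline{q}')^{\ast}$, which by Lemma \ref{lem LHS type C} forces conditions (a)--(c); one then has to show $(\underline{q}^{\ast})_{B}=(\underline{q}')^{\ast}$. Writing $\underline{p}^{+}\underline{\vphantom{p}}_{B}{}^{\ast}=[p_1,\dots,p_N]$, so that $\underline{q}^{\ast}=[p_1+2d,\dots,p_b+2d,p_{b+1},\dots,p_N]$, one checks (using $p_b+2d>p_{b+1}$, valid as $d\ge 1$) that
\[
(\underline{q}')^{\ast}=(\underline{q}^{\ast})_{\ge p_b+2d}{}^{-}\sqcup(\underline{q}^{\ast})_{< p_b+2d}{}^{+},
\]
so the point is that the $B$-collapse of $\underline{q}^{\ast}$ at the break value $x=p_b+2d-1$ lands on exactly this partition. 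By the rephrasing of Lemma \ref{lem Ach03} at the start of \S\ref{sec C} (with $(\underline{q}^{\ast})_{O}=(\underline{q}^{\ast})_{B}$, since $|\underline{q}^{\ast}|$ is odd) this reduces to two points: (1) the parity of $l((\underline{q}^{\ast})_{>x})+|(\underline{q}^{\ast})_{>x}|=b+\bigl(2bd+\sum_{i=1}^b p_i\bigr)$ is odd, so the ``$^{-}/^{+}$'' branch of the table applies; and (2) each of $(\underline{q}^{\ast})_{\ge p_b+2d}{}^{-}$ and $(\underline{q}^{\ast})_{< p_b+2d}{}^{+}$ is already of the relevant type, so no further collapse occurs. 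For (1), (a) gives $b$ even, reducing this to the parity of $\sum_{i=1}^b p_i$, which via $p_i=l((\underline{p}^{+}\underline{\vphantom{p}}_{B})_{\ge i})$ is governed by the parity of $|(\underline{p}^{+}\underline{\vphantom{p}}_{B})_{\ge b}|$; this one relates to $l(\underline{p}_{>b})$ (even, by (b)) using that (c) forces $(\underline{p}^{+})_{=b}$ empty and $(\underline{p}^{+})_{\ge b}=(\underline{p}^{+})_{>b}$, and that $\underline{p}$ has type $C$. For (2), one argues exactly as in the closing paragraph of the type $B$ proposition: since $(\underline{q}')^{\ast}=d_{BV}([b^{2d}]\sqcup\underline{p})$ is of type $B$ and $p_{b+1}+1>p_{b+2}$, both summands must be of type $B$ (resp.\ $D$), and then Lemma \ref{lem Ach03} yields $(\underline{q}^{\ast})_{B}=(\underline{q}^{\ast})_{\ge p_b+2d}{}^{-}\sqcup(\underline{q}^{\ast})_{< p_b+2d}{}^{+}=(\underline{q}')^{\ast}$.

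The main obstacle is precisely this last case: the parity bookkeeping that converts conditions (a)--(c) on $\underline{p}$ into the parity data of the transposed partition $\underline{p}^{+}\underline{\vphantom{p}}_{B}{}^{\ast}$ needed to select the correct entry of the table in Lemma \ref{lem Ach03}, and the checking that the degenerate configurations ($\underline{p}_{\le b}$ or $\underline{p}_{>b}$ empty, or $d=1$ where $[b+1,b^{2d-2},b-1]=[b+1,b-1]$) remain consistent with \eqref{eq goal type C}. Everything else is a routine transcription of the type $B$ computation.
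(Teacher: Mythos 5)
Your proposal is correct and follows essentially the same route as the paper's own proof: the reduction to \eqref{eq goal type C}, the partitions $\underline{q},\underline{q}'$ supplied by Lemma \ref{lem LHS type C}, the transpose computation, the trivial case when the left-hand side equals $\underline{q}^{\ast}$, and the treatment of the hard case by collapsing $\underline{q}^{\ast}$ at the cut $p_b+2d$ via the rephrased Lemma \ref{lem Ach03}. The only (cosmetic) divergence is that the paper deduces your parity point (1) directly from your point (2) — type $B$/$D$-ness of $\underline{q}^{\ast}\ul{\geq p_b+2d}{}^{-}$ together with $b$ even already forces $l+|\cdot|$ to be odd — which also avoids the small slip in your bookkeeping, since condition (c) does not force $(\underline{p}^{+})_{=b}$ to be empty when the largest part of $\underline{p}$ is $b-1$ (harmless for the parity, as $b$ is even).
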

\begin{proof}
It is equivalent to prove \eqref{eq goal type C}. Consider the partitions appear in Lemma \ref{lem LHS type C}
\[ \underline{q}:=\underline{p}^{+}\ul{B} \sqcup [b^{2d}] \leq  \underline{q}':=\underline{p}^{+}\ul{B} \sqcup  [b+1,b^{2d-2},b-1]. \]
It is not hard to see from Definition \ref{def transpose} that
\begin{align*}
    \underline{q}^{\ast}&=[2d^b]+ \underline{p}^{+}\ul{B}{}^\ast,\\
    (\underline{q}')^{\ast}&=[2d^{b-1},2d-1,1]+ \underline{p}^{+}\ul{B}{}^\ast.
\end{align*}
Note that $\underline{q}^{\ast}$ is exactly the right hand side of \eqref{eq goal type C} before taking $B$-collapse. Therefore, if the left hand side of \eqref{eq goal type C} is equal to $\underline{q}^{\ast}$, then $\underline{q}^{\ast}$ is already of type $B$ and hence \eqref{eq goal type C} holds.

On the other hand, if the left hand side of \eqref{eq goal type C} is equal to $(\underline{q}')^{\ast}$, then $b$ is even, $l(\underline{p}_{>b})$ is even and $\underline{p}_{=b}$ is empty by Lemma \ref{lem LHS type C}. This implies that $l((\underline{p}^{+} \ul{B})\ul{\geq b+1})=l(\underline{p}_{>b}) $, which is even. Therefore, if we denote $\underline{p}^{+}\ul{B}{}^{\ast}=[p_1,\dots, p_N]$, then $p_{b+1}$ is even. Observe that
\[ \underline{q}^{\ast} \ul{\geq p_b+2d}=[p_1+2d,\dots, p_b +2d], \ \underline{q}^{\ast} \ul{< p_b+2d}=[p_{b+1},\dots, p_N ],  \]
and
\[ (\underline{q}')^{\ast}= \underline{q}^\ast \ul{\geq p_b+2d} {}^{-}  \sqcup \underline{q}^\ast \ul{< p_b+2d} {}^{+}.  \]
Since $(\underline{q}')^{\ast}$ is already of type $B$ and $p_{b+1}+1> p_{b+2}$, we see that $\underline{q}^\ast \ul{\geq p_b+2d} {}^{-} \sqcup [p_{b+1}+1]$ is of type $B$ or $D$, and hence so is $\underline{q}^\ast \ul{\geq p_b+2d} {}^{-}$. This implies that $ l(\underline{q}^\ast \ul{\geq p_b+2d} )+ |\underline{q}^\ast \ul{\geq p_b+2d}|$ is odd and 
\[\underline{q}^{\ast} \ul{B}= (\underline{q}^\ast \ul{\geq p_b+2d} {}^{-})\ul{O}  \sqcup (\underline{q}^\ast \ul{< p_b+2d} {}^{+})\ul{O}= (\underline{q}')^{\ast},\] 
which completes the proof of the proposition.
\end{proof}

\subsection{\texorpdfstring{Type $D$}{}}\label{sec D}
In this subsection, we prove \eqref{eq goal partition} when $\underline{p}$ is of type $D$. We prove the following equality directly.
\begin{align}\label{eq goal type D}
    (\underline{p}\sqcup [b^{2d}])^{+-}\ul{C}{}^{\ast} =   ([(2d)^b] +  \underline{p}^{+-}\ul{C}{}^{\ast}) \ul{D}.
\end{align} 

First, we compute $(\underline{p}\sqcup [b^{2d}])^{+-}\underline{\vphantom{p}}_{C}$ explicitly.
\begin{lemma}\label{lem LHS type D}
For any $\underline{p}\in \mathcal{P}(2n)$, we have 
\begin{align}\label{eq LHS type D}
    (\underline{p}\sqcup [b^{2d}])^{+-}\underline{\vphantom{p}}_{C}= \underline{p}^{+-}\underline{\vphantom{p}}_{C}\sqcup [b^{2d}]
\end{align}
unless all of the following conditions hold.
\begin{enumerate}
    \item [(a)] $b$ is odd.
    \item [(b)] $|\underline{p}_{>b}|$ is even.
    \item [(c)]$\underline{p}_{=b}$ is empty.
\end{enumerate}
 If \eqref{eq LHS type D} fails, then
\begin{align}\label{eq LHS type D bad}
    (\underline{p}\sqcup [b^{2d}])^{+-}\underline{\vphantom{p}}_{C}= \underline{p}^{+-}\underline{\vphantom{p}}_{C} \sqcup [b+1,b^{2d-2},b-1].
\end{align}
\end{lemma}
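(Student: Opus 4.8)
The plan is to prove Lemma \ref{lem LHS type D} by the same kind of case analysis used for Lemmas \ref{lem LHS type B} and \ref{lem LHS type C}, organized according to where the $2d$ new parts equal to $b$ sit relative to the parts of $\underline{p}$. First I would compute $(\underline{p}\sqcup[b^{2d}])^{+}$ and then $(\underline{p}\sqcup[b^{2d}])^{+-}$ \emph{before} collapsing. The elementary point is that the $^{+}$ operation acts on a part of $\underline{p}$ (leaving the block of $b$'s untouched below the top) whenever $\underline{p}_{>b}$ or $\underline{p}_{=b}$ is non-empty, and dually the $^{-}$ operation acts on a part of $\underline{p}$ whenever $\underline{p}_{<b}$ or $\underline{p}_{=b}$ is non-empty; in all such situations one checks $(\underline{p}\sqcup[b^{2d}])^{+-}=\underline{p}^{+-}\sqcup[b^{2d}]$ as multisets (when $\underline{p}_{=b}\neq\emptyset$ the $b$'s of $\underline{p}$ and of the block merge, but the outcome is the same either way). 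The only configurations where this fails are the ``one-sided'' ones, $\underline{p}_{\le b}=\emptyset$ (all parts of $\underline{p}$ exceed $b$) or $\underline{p}_{\ge b}=\emptyset$ (all parts of $\underline{p}$ are below $b$), and both of these automatically force conditions (b) and (c), exactly as the case $\underline{p}_{\le b}=\emptyset$ does in the type-$B$ proof; there I would instead record $(\underline{p}\sqcup[b^{2d}])^{+-}=\underline{p}^{+}\sqcup[b^{2d-1},b-1]$ in the first case and $[b+1,b^{2d-1}]\sqcup\underline{p}^{-}$ in the second.

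Next I would apply the collapse, and here the dichotomy $b$ even versus $b$ odd governs everything. When $b$ is even, the partition $(\underline{p}\sqcup[b^{2d}])^{+-}$ still contains a (non-empty) part equal to $b$, so Corollary \ref{cor Ach03}(ii) peels the $b$'s off the $C$-collapse; after verifying that the $C$-collapse of the remaining parts recombines to $\underline{p}^{+-}\underline{\vphantom{p}}_{C}$, this yields \eqref{eq LHS type D} (in the one-sided cases I would instead use $([b+1,b^{2d-2},b-1])\underline{\vphantom{p}}_{C}=[b^{2d}]$ for $b$ even, again giving \eqref{eq LHS type D}). When $b$ is odd I would apply Lemma \ref{lem Ach03} to $\underline{q}:=(\underline{p}\sqcup[b^{2d}])^{+-}$ with cut-point $x=b$, tracking $l(\underline{q}_{>b})=l(\underline{p}_{>b})$ and $|\underline{q}_{>b}|$, the latter being $|\underline{p}_{>b}|$ or $|\underline{p}_{>b}|+1$ according to whether $\underline{p}_{>b}$ absorbed the $^{+}$. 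Comparing with the same application of Lemma \ref{lem Ach03} that computes $\underline{p}^{+-}\underline{\vphantom{p}}_{C}$, the two collapses agree part-by-part except on the $b$-block, which remains $[b^{2d}]$ unless $|\underline{p}_{>b}|$ is even and $\underline{p}_{=b}$ is empty, in which case one $b$ of the block is pushed up to $b+1$ and one pushed down to $b-1$, producing $[b+1,b^{2d-2},b-1]$ — precisely the bad case (a),(b),(c), giving \eqref{eq LHS type D bad}.

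The step I expect to be the main obstacle is making the collapse bookkeeping airtight in the boundary situations: after the $^{+}$ operation the new entry $b+1$ (one-sided case) or the incremented top of $\underline{p}$ enters $(\cdot)_{>b}$ and shifts the length/size parities, and after the $^{-}$ operation the smallest surviving part may drop to $b$ or below, so one must be careful which of the four rows of the table in Lemma \ref{lem Ach03} applies and whether the cut at $x=b$ separates the block cleanly. A secondary subtlety is that when $\underline{p}_{=b}\neq\emptyset$ the ``$[b^{2d}]$'' in \eqref{eq LHS type D} must be extracted \emph{after} the collapse, which is exactly why Corollary \ref{cor Ach03}(ii) (valid as soon as there is a part equal to the even value $2x$) is the right tool; one should also dispose of the trivial edge case $\underline{p}=\emptyset$, reading $\underline{p}^{+-}\underline{\vphantom{p}}_{C}$ as the empty partition. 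As in the type-$B$ and type-$C$ sections, I would also extract from this computation the analogue of Corollaries \ref{cor type B ineq} and \ref{cor type C ineq}, comparing $d_{BV}(\underline{p}\sqcup[b^{2d}])$ with $d_{BV}(\underline{q}\sqcup[b^{2d}])$ for partitions $\underline{p}\ge\underline{q}$ of type $D$, for later use.
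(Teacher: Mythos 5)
Your proposal follows essentially the same route as the paper's proof: the same pre-collapse identity $(\underline{p}\sqcup[b^{2d}])^{+-}=\underline{p}^{+-}\sqcup[b^{2d}]$ away from the one-sided configurations $\underline{p}_{\geq b}=\emptyset$ or $\underline{p}_{\leq b}=\emptyset$, the same observation that those configurations force (b) and (c), and the same collapse analysis via Corollary \ref{cor Ach03}(ii) when $b$ is even and via the parity bookkeeping of Lemma \ref{lem Ach03} at the cut $x=b$ when $b$ is odd, isolating exactly the exceptional case (a),(b),(c) that yields \eqref{eq LHS type D bad}, together with the intended analogue Corollary \ref{cor type D ineq}. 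The only caveat, which the paper's own write-up shares, is that the pre-collapse identity can fail literally when $\underline{p}=[b]$ (then $b$ is even and the $C$-collapse restores the equality), so no genuine gap separates your plan from the published argument.
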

\begin{proof}
First, we deal with the cases that $\underline{p}_{\geq b}$ is empty or $\underline{p}_{\leq b}$ is empty. Note that Conditions (b), (c) automatically hold in these cases.

If $\underline{p}_{\geq b}$ is empty, then
\[ (\underline{p}\sqcup [b^{2d}])^{+-}= [b+1,b^{2d-1}] \sqcup \underline{p}^{-},\]
and hence
\begin{align*}
     (\underline{p}\sqcup [b^{2d}])^{+-}\underline{\vphantom{p}}_{C}= ([b+1,b^{2d-2},b-1])\ul{C} \sqcup (\underline{p}^{-+} \ul{C}).
\end{align*}
Thus \eqref{eq LHS type D} holds if $ b$ is even, and \eqref{eq LHS type D bad} holds if $b$ is odd. Similarly, if $\underline{p}_{\leq b}$ is empty, then
\[ (\underline{p}\sqcup [b^{2d}])^{+-}=  \underline{p}^{+}\sqcup [b^{2d-1},b-1],\]
and hence
\begin{align*}
     (\underline{p}\sqcup [b^{2d}])^{+-}\underline{\vphantom{p}}_{C}= (\underline{p}^{+-} \ul{C}) \sqcup ([b+1,b^{2d-2},b-1])\ul{C}.
\end{align*}
The same conclusion holds. 

Next, we deal with the case that both $\underline{p}_{\geq b}$ and $\underline{p}_{\leq b}$ are non-empty, which gives 
\[  (\underline{p}\sqcup [b^{2d}])^{+-}= \underline{p}^{+-} \sqcup [b^{2d}]. \]

If $b$ is even, then \eqref{eq LHS type D} follows from Corollary \ref{cor Ach03}(ii). If $|\underline{p}_{>b}|$ is odd, then $|\underline{p}^{+} \ul{{>b}}|$ is even, and 
\begin{align*}
    (\underline{p}^{+-} \sqcup [b^{2d}] )\ul{C}&= \underline{p}^{+} \ul{>b, C} \sqcup ([b^{2d}] \sqcup \underline{p}^- \ul{\leq b} )\ul{C}\\
    &= \underline{p}^{+} \ul{>b, C} \sqcup [b^{2d}] \sqcup \underline{p}^- \ul{\leq b,C} \\
    &= \underline{p}^{+-} \ul{C}\sqcup [b^{2d}].
\end{align*}
If $b$ is odd, $|\underline{p}_{>b}|$ is even, and $\underline{p}_{=b}=[b^{\alpha}]$ is non-empty, then $[b^{\alpha-1}] \sqcup \underline{p}_{<b}$ is non-empty since $|[b^{\alpha-1}] \sqcup \underline{p}_{<b}|$ is odd, and hence
\begin{align*}
    (\underline{p}^{+-} \sqcup [b^{2d}] )\ul{C}&=( \underline{p}_{>b}  \sqcup  [ b^{2d+\alpha}] \sqcup \underline{p}_{<b})^{+-}  \ul{C}\\
    &=\underline{p}_{>b} {}^{+-}\ul{C}  \sqcup ([ b^{2d+1}] \sqcup [b^{\alpha-1}] \sqcup \underline{p}_{<b})^{+-}  \ul{C}\\
    &= \underline{p}_{>b} {}^{+-}\ul{C}  \sqcup ( [b+1, b^{2d}]\sqcup ([b^{\alpha-1}] \sqcup \underline{p}_{<b})^{-})  \ul{C}\\
    &= \underline{p}_{>b} {}^{+-}\ul{C}  \sqcup  [b+1, b^{2d}]\sqcup ( [b^{\alpha-1}] \sqcup \underline{p}_{<b})^{-}  \ul{C}\\
    &=\underline{p}_{>b} {}^{+-}\ul{C}  \sqcup  [ b^{2d}]\sqcup ( [b+1,b^{\alpha-1}] \sqcup \underline{p}_{<b})^{-}  \ul{C}\\
    &= (\underline{p}_{>b} {}^{+})^{-}\ul{C} \sqcup (\underline{p}_{\leq b}{}^{-})^{+}\ul{C} \sqcup[b^{2d}]\\
    &=\underline{p}^{+-}\ul{C} \sqcup [ b^{2d}].
\end{align*}
Finally, assuming Conditions (a), (b) and (c) hold and both $\underline{p}_{>b}$ and $\underline{p}_{<b}$ are non-empty, then  
\begin{align*}
     (\underline{p}^{+-} \sqcup [b^{2d}] )\ul{C}&= (\underline{p}_{>b}{}^{+} \sqcup [b^{2d}] \sqcup \underline{p}_{<b}{}^{-})\ul{C}\\
     &= \underline{p}_{>b}{}^{+-} \ul{C} \sqcup ([b+1,b^{2d-1}] \sqcup \underline{p}_{<b}{}^{-}) \ul{C}\\
     &=\underline{p}_{>b}{}^{+-} \ul{C} \sqcup ([b+1,b^{2d-2},b-1])\ul{C} \sqcup \underline{p}_{<b}{}^{+-} \ul{C}\\
     &=\underline{p}^{+-}\ul{C} \sqcup [b+1,b^{2d-2},b-1].
\end{align*}
This completes the proof of the lemma.
\end{proof}

Similar to the cases of type $B,C$, we give the following corollary.

\begin{cor}\label{cor type D ineq}
Suppose $\underline{p}, \underline{q}$ are partitions in $\mathcal{P}_D(2n)$ such that $\underline{p}\geq \underline{q}$ and $d_{BV}(\underline{p})<d_{BV}(\underline{q})$. Then for any positive integers $b,d$, we have
\[ d_{BV}(\underline{p}\sqcup [b^{2d}])< d_{BV}(\underline{q}\sqcup [b^{2d}]). \]
\end{cor}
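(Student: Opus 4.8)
The plan is to follow the proof of Corollary~\ref{cor type B ineq}, with Lemma~\ref{lem LHS type D} replacing Lemma~\ref{lem LHS type B}. Since $d_{BV}$ is order-reversing (Proposition~\ref{prop dBV}(1)) and $\underline{p} \geq \underline{q}$ forces $\underline{p} \sqcup [b^{2d}] \geq \underline{q} \sqcup [b^{2d}]$, we already have $d_{BV}(\underline{p} \sqcup [b^{2d}]) \leq d_{BV}(\underline{q} \sqcup [b^{2d}])$, so only strictness needs proof. Observe that $\underline{p} \sqcup [b^{2d}]$ and $\underline{q} \sqcup [b^{2d}]$ are again of type $D$; hence, using the identity $d_{BV}(\underline{r}) = \underline{r}^{+-}\ul{C}{}^{\ast}$ valid for every $\underline{r} \in \mathcal{P}_D$ (the one underlying the reformulation \eqref{eq goal type D}) together with the fact that transpose is an order-reversing involution on $\mathcal{P}(2m)$, the hypothesis $d_{BV}(\underline{p}) < d_{BV}(\underline{q})$ becomes $\underline{p}^{+-}\ul{C} > \underline{q}^{+-}\ul{C}$, and the desired conclusion is equivalent to the strict inequality $(\underline{p} \sqcup [b^{2d}])^{+-}\ul{C} > (\underline{q} \sqcup [b^{2d}])^{+-}\ul{C}$.

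Next I would apply Lemma~\ref{lem LHS type D} to each of $\underline{p}$ and $\underline{q}$, distinguishing cases by which of \eqref{eq LHS type D}, \eqref{eq LHS type D bad} holds. If \eqref{eq LHS type D} holds for $\underline{q}$, then for either alternative for $\underline{p}$ one has $(\underline{p} \sqcup [b^{2d}])^{+-}\ul{C} \geq \underline{p}^{+-}\ul{C} \sqcup [b^{2d}] > \underline{q}^{+-}\ul{C} \sqcup [b^{2d}] = (\underline{q} \sqcup [b^{2d}])^{+-}\ul{C}$, using $[b+1, b^{2d-2}, b-1] \geq [b^{2d}]$ and the monotonicity of $\sqcup$ with respect to the dominance order (strict here because the left factors are strictly ordered). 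If \eqref{eq LHS type D bad} holds for $\underline{q}$ and also for $\underline{p}$, the same chain with $[b+1, b^{2d-2}, b-1]$ replacing $[b^{2d}]$ throughout gives the strict inequality. The only remaining case is \eqref{eq LHS type D bad} for $\underline{q}$ but \eqref{eq LHS type D} for $\underline{p}$; here I would argue as in type $B$: the computation inside the proof of Lemma~\ref{lem LHS type D} shows that in the bad case the multiplicity of $b$ in $\underline{q}^{+-}\ul{C}$ is $0$, so the multiplicity of $b$ in $(\underline{q} \sqcup [b^{2d}])^{+-}\ul{C}$ is exactly $2d-2$, whereas in $(\underline{p} \sqcup [b^{2d}])^{+-}\ul{C} = \underline{p}^{+-}\ul{C} \sqcup [b^{2d}]$ it is at least $2d$; the two partitions are therefore distinct, and combined with the inequality $(\underline{p} \sqcup [b^{2d}])^{+-}\ul{C} \geq (\underline{q} \sqcup [b^{2d}])^{+-}\ul{C}$ already established above, this forces the strict inequality, hence the corollary.

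The routine points---$\underline{p} \sqcup [b^{2d}] \in \mathcal{P}_D$, monotonicity of $\sqcup$, and $[b+1, b^{2d-2}, b-1] \geq [b^{2d}]$---are immediate. The one step requiring genuine care, and the main obstacle, is the multiplicity bookkeeping in the last case: one must verify, from the proof of Lemma~\ref{lem LHS type D}, that the $C$-collapses occurring there (the blocks $\underline{p}_{>b}{}^{+-}\ul{C}$ and $\underline{p}_{<b}{}^{+-}\ul{C}$, together with their degenerate variants when $\underline{p}_{\geq b}$ or $\underline{p}_{\leq b}$ is empty) never contribute a part equal to $b$, so that in the bad case all parts equal to $b$ come from the explicit block $[b+1, b^{2d-2}, b-1]$. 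This is a finite check via Lemma~\ref{lem Ach03} and Corollary~\ref{cor Ach03}, completely parallel to the type $B$ and type $C$ situations, which is why the same argument carries over.
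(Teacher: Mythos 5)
Your proposal is correct and follows exactly the route the paper intends: the paper's proof of this corollary is simply ``the proof is exactly the same as Corollary \ref{cor type B ineq}'', i.e.\ the weak inequality from order-reversal of $d_{BV}$, the transpose reformulation via $d_{BV}(\underline{r})=\underline{r}^{+-}\ul{C}{}^{\ast}$, the case split according to \eqref{eq LHS type D} versus \eqref{eq LHS type D bad}, and the multiplicity-of-$b$ comparison in the mixed case. You have merely written out the type~$D$ details (including the observation that the blocks in Lemma \ref{lem LHS type D} contribute no parts equal to $b$, which holds since $b$ is odd in the bad case) that the paper leaves implicit, so there is nothing to correct.
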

\begin{proof}
The proof is exactly the same as Corollary \ref{cor type B ineq}, which we omit.
\end{proof}

Using the fact that $d_{BV}(\underline{p} \sqcup [b^{2d}])$ is of type $D$ if $\underline{p}$ is of type $D$, we prove the last case of Lemma \ref{lem goal}.
\begin{prop}
Lemma \ref{lem goal} holds when $(X,X')=(D,D)$.
\end{prop}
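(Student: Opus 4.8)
The plan is to mirror the structure of the $(B,C)$ and $(C,B)$ cases, establishing \eqref{eq goal type D} directly from the explicit description of $(\underline{p}\sqcup[b^{2d}])^{+-}\underline{\vphantom{p}}_C$ given in Lemma \ref{lem LHS type D}. Write $\underline{q}:=\underline{p}^{+-}\underline{\vphantom{p}}_C\sqcup[b^{2d}]$ and $\underline{q}':=\underline{p}^{+-}\underline{\vphantom{p}}_C\sqcup[b+1,b^{2d-2},b-1]$, so that Lemma \ref{lem LHS type D} says the left-hand side of \eqref{eq goal type D} equals either $\underline{q}^{\ast}$ or $(\underline{q}')^{\ast}$. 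As in the previous two propositions, a direct computation from Definition \ref{def transpose} gives
\begin{align*}
\underline{q}^{\ast}&=[(2d)^b]+\underline{p}^{+-}\underline{\vphantom{p}}_C{}^{\ast},\\
(\underline{q}')^{\ast}&=[(2d)^{b-1},2d-1,1]+\underline{p}^{+-}\underline{\vphantom{p}}_C{}^{\ast},
\end{align*}
and $\underline{q}^{\ast}$ is precisely the right-hand side of \eqref{eq goal type D} before the $D$-collapse. So when the left-hand side is $\underline{q}^{\ast}$, one only needs to observe that $\underline{q}^{\ast}$ is already of type $D$ (which follows from $\underline{q}'^{\ast}=d_{BV}(\underline{p}\sqcup[b^{2d}])$ being of type $D$, via the $D$-collapse comparison), and then \eqref{eq goal type D} is immediate.

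The substantive case is when Lemma \ref{lem LHS type D} puts us in the exceptional branch, i.e. $b$ is odd, $|\underline{p}_{>b}|$ is even, and $\underline{p}_{=b}$ is empty. Here I would argue exactly as in the $(C,B)$ proposition: these conditions force a parity statement about the entry of $\underline{p}^{+-}\underline{\vphantom{p}}_C{}^{\ast}$ in position $b+1$. Precisely, writing $\underline{p}^{+-}\underline{\vphantom{p}}_C{}^{\ast}=[p_1,\dots,p_N]$, the hypotheses say $l\big((\underline{p}^{+-}\underline{\vphantom{p}}_C)_{\geq b+1}\big)=l(\underline{p}_{>b})$ is even (using that $\underline{p}^{+-}\underline{\vphantom{p}}_C$ is of type $C$ so its parts $>b$ have the right parity), whence $p_{b+1}$ is even. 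Then I would split $\underline{q}^{\ast}$ at the level $p_b+2d$: $\underline{q}^{\ast}_{\geq p_b+2d}=[p_1+2d,\dots,p_b+2d]$ and $\underline{q}^{\ast}_{<p_b+2d}=[p_{b+1},\dots,p_N]$, observe that $(\underline{q}')^{\ast}=\underline{q}^{\ast}_{\geq p_b+2d}{}^{-}\sqcup\underline{q}^{\ast}_{<p_b+2d}{}^{+}$, and use that $(\underline{q}')^{\ast}$ is of type $D$ together with $p_{b+1}+1>p_{b+2}$ to conclude that each of $\underline{q}^{\ast}_{\geq p_b+2d}{}^{-}$ and $\underline{q}^{\ast}_{<p_b+2d}{}^{+}$ is separately of type $B$ or $D$. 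This pins down the parity invariant $l(\underline{q}^{\ast}_{\geq p_b+2d})+|\underline{q}^{\ast}_{\geq p_b+2d}|$, and the form of Lemma \ref{lem Ach03} recalled in \S\ref{sec D} (with $O$-collapse) then yields $\underline{q}^{\ast}\underline{\vphantom{p}}_D=\underline{q}^{\ast}_{\geq p_b+2d}{}^{-}\sqcup\underline{q}^{\ast}_{<p_b+2d}{}^{+}=(\underline{q}')^{\ast}$, which is exactly \eqref{eq goal type D} in this branch.

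The main obstacle I anticipate is purely bookkeeping rather than conceptual: one must be careful about the interplay between $\underline{p}^{+}$, $\underline{p}^{-}$, and $\underline{p}^{+-}$ when the largest or smallest part of $\underline{p}$ interacts with the block $[b^{2d}]$ (the degenerate cases $\underline{p}_{\geq b}=\emptyset$ or $\underline{p}_{\leq b}=\emptyset$ are already absorbed into Lemma \ref{lem LHS type D}), and one must verify that the type-$D$ "very even" ambiguity $\OO^I$ versus $\OO^{II}$ never intervenes — it does not, because all the partitions appearing on both sides of \eqref{eq goal type D} have an odd part (namely $2d-1$ on the right in the bad branch, or an odd part inherited from $b$) or, in the good branch, the collapse is vacuous so no choice arises. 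With these checks in place the proof closes exactly as in the $(C,B)$ case.
\begin{proof}
It is equivalent to prove \eqref{eq goal type D}. Consider the partitions appearing in Lemma \ref{lem LHS type D}
\[ \underline{q}:=\underline{p}^{+-}\underline{\vphantom{p}}_{C} \sqcup [b^{2d}] \leq  \underline{q}':=\underline{p}^{+-}\underline{\vphantom{p}}_{C} \sqcup [b+1,b^{2d-2},b-1]. \]
It is not hard to see from Definition \ref{def transpose} that
\begin{align*}
    \underline{q}^{\ast}&=[(2d)^b]+ \underline{p}^{+-}\underline{\vphantom{p}}_{C}{}^{\ast},\\
    (\underline{q}')^{\ast}&=[(2d)^{b-1},2d-1,1]+ \underline{p}^{+-}\underline{\vphantom{p}}_{C}{}^{\ast}.
\end{align*}
Note that $\underline{q}^{\ast}$ is exactly the right hand side of \eqref{eq goal type D} before taking $D$-collapse. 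Therefore, if the left hand side of \eqref{eq goal type D} is equal to $\underline{q}^{\ast}$, then $\underline{q}^{\ast}$ is already of type $D$, since $(\underline{q}')^{\ast}=d_{BV}(\underline{p}\sqcup [b^{2d}])$ is of type $D$ and $\underline{q}^\ast \geq (\underline{q}')^{\ast}$ forces $\underline{q}^\ast=(\underline{q}^\ast)\underline{\vphantom{p}}_D$; hence \eqref{eq goal type D} holds.

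On the other hand, if the left hand side of \eqref{eq goal type D} is equal to $(\underline{q}')^{\ast}$, then $b$ is odd, $|\underline{p}_{>b}|$ is even and $\underline{p}_{=b}$ is empty by Lemma \ref{lem LHS type D}. This implies that $l((\underline{p}^{+-}\underline{\vphantom{p}}_{C})\underline{\vphantom{p}}_{\geq b+1})=l(\underline{p}_{>b})$, which is even. Therefore, if we denote $\underline{p}^{+-}\underline{\vphantom{p}}_{C}{}^{\ast}=[p_1,\dots, p_N]$, then $p_{b+1}$ is even. Observe that
\[ \underline{q}^{\ast} \underline{\vphantom{p}}_{\geq p_b+2d}=[p_1+2d,\dots, p_b +2d], \ \underline{q}^{\ast} \underline{\vphantom{p}}_{< p_b+2d}=[p_{b+1},\dots, p_N ],  \]
and
\[ (\underline{q}')^{\ast}= \underline{q}^\ast \underline{\vphantom{p}}_{\geq p_b+2d} {}^{-}  \sqcup \underline{q}^\ast \underline{\vphantom{p}}_{< p_b+2d} {}^{+}.  \]
Since $(\underline{q}')^{\ast}$ is already of type $D$ and $p_{b+1}+1> p_{b+2}$, we see that $\underline{q}^\ast \underline{\vphantom{p}}_{\geq p_b+2d} {}^{-} \sqcup [p_{b+1}+1]$ is of type $B$ or $D$, and hence so is $\underline{q}^\ast \underline{\vphantom{p}}_{\geq p_b+2d} {}^{-}$. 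As $p_{b+1}$ is even, it follows that both $\underline{q}^\ast \underline{\vphantom{p}}_{\geq p_b+2d} {}^{-}$ and $\underline{q}^\ast \underline{\vphantom{p}}_{< p_b+2d} {}^{+}$ are of type $B$ or $D$. In particular, $l(\underline{q}^\ast \underline{\vphantom{p}}_{\geq p_b+2d} )+ |\underline{q}^\ast \underline{\vphantom{p}}_{\geq p_b+2d}|$ is odd and
\[\underline{q}^{\ast} \underline{\vphantom{p}}_{D}= (\underline{q}^\ast \underline{\vphantom{p}}_{\geq p_b+2d} {}^{-})\underline{\vphantom{p}}_{O}  \sqcup (\underline{q}^\ast \underline{\vphantom{p}}_{< p_b+2d} {}^{+})\underline{\vphantom{p}}_{O}= (\underline{q}')^{\ast},\]
which completes the proof of the proposition.
\end{proof}
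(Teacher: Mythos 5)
Your proof is correct and takes essentially the same route as the paper's: the same partitions $\underline{q}$ and $\underline{q}'$ coming from Lemma \ref{lem LHS type D}, the same computation of their transposes, and the same parity argument on $p_{b+1}$ together with Lemma \ref{lem Ach03} in the exceptional branch. The only blemish is the justification in the first branch: there the left-hand side of \eqref{eq goal type D} equals $\underline{q}^{\ast}$, not $(\underline{q}')^{\ast}$, and the reason $\underline{q}^{\ast}$ is of type $D$ is simply that it then coincides with $d_{BV}(\underline{p}\sqcup[b^{2d}])$, which is a $D$-collapse by definition — dominating a type-$D$ partition would not by itself force $\underline{q}^{\ast}$ to be of type $D$.
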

\begin{proof}
It is equivalent to prove \eqref{eq goal type D}. Consider the partitions appear in Lemma \ref{lem LHS type D}
\[ \underline{q}:=\underline{p}^{+-}\ul{C} \sqcup [b^{2d}] \leq  \underline{q}':=\underline{p}^{+-}\ul{C} \sqcup  [b+1,b^{2d-2},b-1]. \]
It is not hard to see from Definition \ref{def transpose} that
\begin{align*}
    \underline{q}^{\ast}&=[2d^b]+ \underline{p}^{+-}\ul{C}{}^\ast,\\
    (\underline{q}')^{\ast}&=[2d^{b-1},2d-1,1]+ \underline{p}^{+-}\ul{C}{}^\ast.
\end{align*}
Note that $\underline{q}^{\ast}$ is exactly the right hand side of \eqref{eq goal type D} before taking $D$-collapse. Therefore, if the left hand side of \eqref{eq goal type D} is equal to $\underline{q}^{\ast}$, then $\underline{q}^{\ast}$ is already of type $D$ and hence \eqref{eq goal type D} holds.

On the other hand, if the left hand side of \eqref{eq goal type D} is equal to $(\underline{q}')^{\ast}$, then $b$ is odd, $|\underline{p}_{>b}|$ is even and $\underline{p}_{=b}$ is empty by Lemma \ref{lem LHS type D}. This implies that $l(\underline{p}^{+-} \ul{C, > b})= l(\underline{p}_{> b})$, which is even since $\underline{p}$ is of type $D$. Therefore, if we denote $\underline{p}^{+-}\ul{C}{}^{\ast}=[p_1,\dots, p_N]$, then $p_{b+1}$ is even. Observe that
\[ \underline{q}^{\ast} \ul{\geq p_b+2d}=[p_1+2d,\dots, p_b +2d], \ \underline{q}^{\ast} \ul{< p_b+2d}=[p_{b+1},\dots, p_N ],  \]
and
\[ (\underline{q}')^{\ast}= \underline{q}^\ast \ul{\geq p_b+2d} {}^{-}  \sqcup \underline{q}^\ast \ul{< p_b+2d} {}^{+}.  \]
Since $(\underline{q}')^{\ast}$ is already of type $D$ and $p_{b+1}+1> p_{b+2}$, we see that $\underline{q}^\ast \ul{\geq p_b+2d} {}^{-} \sqcup [p_{b+1}+1]$ is of type $B$ or $D$, and hence so is $\underline{q}^\ast \ul{\geq p_b+2d} {}^{-}$. This implies that $ l(\underline{q}^\ast \ul{\geq p_b+2d} )+ |\underline{q}^\ast \ul{\geq p_b+2d}|$ is odd and 
\[\underline{q}^{\ast} \ul{D}= (\underline{q}^\ast \ul{\geq p_b+2d} {}^{-})\ul{O}  \sqcup (\underline{q}^\ast \ul{< p_b+2d} {}^{+})\ul{O}= (\underline{q}')^{\ast},\] 
which completes the proof of the proposition.
\end{proof}


\begin{thebibliography}{999999}


\bibitem[Ach03]{Ach03} P.~Achar, An order-reversing duality map for conjugacy classes in Lusztig's canonical quotient. {\em Transform. Groups}. \textbf{8}, 107-145 (2003).

\bibitem[ABV92]{ABV92} J. Adams, D. Barbasch, and D. Vogan, {\it The Langlands classification and irreducible characters for real reductive groups,} Progress in Mathematics, vol. 104, Birkh\"auser Boston, Inc., Boston, MA, 1992.



\bibitem[Art89]{Art89} J. Arthur, Unipotent automorphic representations: conjectures. {\em Ast{\'e}risque}. pp. 13-71 (1989), Orbites unipotentes et repr{\' e}sentations, II.

\bibitem[Art13]{Art13}
J. Arthur,
{\it The endoscopic classification of representations: Orthogonal and Symplectic groups.}
{\it Colloquium Publication}
Vol. \textbf{61}, 2013,
American Mathematical Society.





\bibitem[Ato22]{Ato22} H.~Atobe, On the socles of certain parabolically induced representations of p-adic classical groups. {\em Represent. Theory}. \textbf{26} pp. 515-541 (2022).

\bibitem[AC26]{AC26}
{H. Atobe and D. Ciubotaru,
Endoscopic transfer and the wavefront upper bound conjecture.
Preprint. 2026.
} 

\bibitem[AG17]{AG17}H.~Atobe and W.~T.~Gan, On the local Langlands correspondence and Arthur conjecture for even orthogonal groups. {\em Represent. Theory}. \textbf{21} pp. 354-415 (2017).



\bibitem[AM23]{AM20} H. Atobe and A. M{\'i}nguez, The explicit Zelevinsky-Aubert duality. {\em Compos. Math.} \textbf{159}, 380-418 (2023).

\bibitem[Aub95]{Aub95} A.-M. Aubert, Dualit{\' e} dans le groupe de Grothendieck de la cat{\' e}gorie des repr{\' e}sentations lisses de longueur finie d'un groupe r{\' e}ductif p-adique. {\em Trans. Amer. Math. Soc.} \textbf{347}  (1995) 2179-2189.


\bibitem[AMS21]{AMS21} A.-M.~Aubert, A.~Moussaoui, and M.~Solleveld, Affine Hecke algebras for Langlands parameters.  (2021), arXiv:1701.03593v4. 


\bibitem[AMS22]{AMS22}
A.-M.~Aubert, A.~Moussaoui, and M.~Solleveld, Affine Hecke algebras for classical $p$-adic groups. (2022), arXiv:2211.08196.

\bibitem[Bad02]{Bad02} A.~Badulescu, Correspondance de Jacquet-Langlands pour les corps locaux de caract{\'e}ristique non nulle. {\em Ann. Sci. {\'E}cole Norm. Sup. (4)}. \textbf{35}, 695-747 (2002).


\bibitem[BZ77]{BZ77} I. Bernstein and A.  Zelevinsky, Induced representations of reductive  p-adic groups. I. {\em Ann. Sci. {\'E}cole Norm. Sup. (4)}. \textbf{10}, 441-472 (1977).


\bibitem[BV85]{BV85}
D. Barbasch and D. Vogan,
 Unipotent representations of complex semisimple groups.
{\it Ann. of Math.} (2) \textbf{121} (1985), no. 1, 41--110.

\bibitem[Bor79]{Bor79} A. Borel, 
Automorphic L-functions. in \textit{ Automorphic forms, representations and L-functions (Proc. Sympos. Pure Math., Oregon State Univ., Corvallis, Ore., 1977), Part 2,} pp. 27-61, Proc. Sympos. Pure Math., XXXIII, Amer. Math. Soc., Providence, R.I., 1979.


\bibitem[BW80]{BW80}A. Borel and N. Wallach. Continuous cohomology, discrete subgroups, and representations of reductive groups. (Princeton University Press, Princeton, NJ; University of Tokyo Press, Tokyo, 1980).

\bibitem[Cai23]{Cai23} Y. Cai, Quaternionic Speh Representations, {\em Doc. Math.}, 28 (2023), pp. 903–937.  

\bibitem[CJLZ23]{CJLZ23}C.~ Chen, D.~Jiang, D.~Liu, and L.~ Zhang, Arithmetic Branching Law and generic L-packets. {\em Represent. Theory} {\bf 28} (2024), 328--365.


\bibitem[CG16]{CG16}K.~Choiy and D.~ Goldberg, Invariance of R-groups between p-adic inner forms of quasi-split classical groups. {\em Trans. Amer. Math. Soc.} \textbf{368}, 1387-1410 (2016).

\bibitem[CK25]{CK25} D. Ciubotaru and J.-L. Kim, The wavefront set: bounds for the Langlands parameter, Math. Ann. {\bf 393} (2025), no.~2, 1827--1861.



\bibitem[CMBO22]{CMBO22} D. Ciubotaru, L. Mason-Brown, and E. Okada, Some unipotent Arthur
packets for reductive p-adic groups. {\em Int. Math. Res. Not. IMRN.} (2023).

\bibitem[CMBO25]{CMBO23} D. Ciubotaru, L. Mason-Brown, and E. Okada,
Wavefront Sets of Unipotent Representations of Reductive p-adic Groups II. {\em J. Reine Angew. Math.} {\bf 823} (2025), 191--253.




\bibitem[CM93]{CM93} D. Collingwood, and W. McGovern, Nilpotent orbits in semisimple Lie algebras. (Van Nostrand Reinhold Co., New York, 1993).


\bibitem[CFK22]{CFK22} C.~ Cunningham, A.~ Fiori, and N.~ Kitt, Appearance of the Kashiwara-Saito singularity in the representation theory of p-adic GL16, Pacific Journal of Mathematics (2022).

\bibitem[CFMMX22]{CFMMX22} C. Cunningham, A. Fiori, A. Moussaoui, J. Mracek, and B. Xu, Arthur packets for p-adic groups by way of microlocal vanishing cycles of perverse sheaves, with examples. {\em Mem. Amer. Math. Soc.} \textbf{276} (2022), ix+216.


\bibitem[DKV84]{DKV84}P.~Deligne, D.~Kazhdan, and M.~Vign{\'e}ras, Repr{\'e}sentations des alg{\`e}bres centrales simples p-adiques. {\em Representations Of Reductive Groups Over A Local Field}. pp. 33-117 (1984).


\bibitem[GGP12]{GGP12} W. T. Gan, B. H. Gross, and D. Prasad, Symplectic local root numbers, central critical L values, and restriction problems in the representation theory of classical groups. {\em Ast{\'e}risque}. pp. 1-109 (2012), Sur les conjectures de Gross et Prasad. I.


\bibitem[GGP20]{GGP20} W. T. Gan, B. H. Gross, and D. Prasad, Branching laws for classical groups: The non-tempered case, {\it Compos. Math.} 156(11) (2020), 2298–2367.

\bibitem[HC78]{HC78}
Harish-Chandra, 
{\it Admissible invariant distributions on reductive p-adic groups.} Lie theories and their applications (Proc. Ann. Sem. Canad. Math. Congr., Queen's Univ., Kingston, Ont., 1977), pp. 281--347. Queen's Papers in Pure Appl. Math., No. 48, Queen's Univ., Kingston, Ont., 1978.

\bibitem[HT01]{HT01}M. Harris and R. Taylor, The geometry and cohomology of some simple Shimura varieties. (Princeton University Press, 2001).


\bibitem[HLL22]{HLL22} A. Hazeltine, B. Liu, and C.-H. Lo, On the intersection of local Arthur packets for classical groups. (2022),
arXiv.2201.10539.

\bibitem[HLLZ25]{HLLZ22} A. Hazeltine, B. Liu, C.-H. Lo, and Q. Zhang,
The closure ordering conjecture on local $L$-parameters in local Arthur packets of classical groups. {\em J. Reine Angew. Math.} {\bf 823} (2025), 1--60.


\bibitem[Hen00]{Hen00} G. Henniart, Une preuve simple des conjectures de Langlands pour GL(n) sur un corps p-adique. {\em Invent. Math.} \textbf{139}, 439-455 (2000).


\bibitem[Hir04]{Hir04}K.~Hiraga,  On functoriality of Zelevinski involutions. {\em Compos. Math.} \textbf{140}, 1625-1656 (2004).

\bibitem[Ish23]{Ish23} H. Ishimoto, The endoscopic classification of representations of non-quasi-split odd
special orthogonal groups. {\em Int. Math. Res. Not. IMRN} {\bf 2024}, no.~14, 10939--11012.

\bibitem[JL70]{JL70}H.~Jacquet  and R.~Langlands,  Automorphic forms on GL(2). (Springer-Verlag, Berlin-New York,1970).

\bibitem[Jan18]{Jan18}C.~ Jantzen, Duality for classical p-adic groups: the half-integral case. {\em Represent. Theory}. \textbf{22} pp. 160-201 (2018).

\bibitem[Jia14]{Jia14}
D. Jiang,
{\it Automorphic Integral transforms for classical groups I: endoscopy correspondences}.
Automorphic Forms: L-functions and related geometry: assessing the legacy of I.I. Piatetski-Shapiro, 179-242,
Comtemp. Math. \textbf{614}, 2014, AMS.



\bibitem[JLZ22]{JLZ22} D.~ Jiang, D.~ Liu and L.~ Zhang, {\it Arithmetic Wavefront Sets and Generic L-packets.} (2022), arXiv:2207.04700v2.

\bibitem[JLZ25]{JLZ25} D.~ Jiang, D.~ Liu and L.~ Zhang, {\it Arithmetic Wavefront Sets and Generic $L$-packets.} Adv. Math. {\bf 483} (2025), Paper No. 110668, 107 pp.



\bibitem[KMSW14]{KMSW14}
T. Kaletha, A. M{\'i}nguez, S. W. Shin, and P.-J. White,
Endoscopic classification of representations: Inner forms of unitary groups. Preprint. 2014. arXiv:1409.3731.

\bibitem[KL87]{KL87}
D.~Kazhdan and G.~Lusztig, Proof of the Deligne-Langlands conjecture for Hecke algebras. {\em Invent. Math.} \textbf{87}, 153-215 (1987).

\bibitem[Kon03]{Kon03} T. Konno, A note on the Langlands classification and irreducibility of induced representations of p-adic groups. {\em Kyushu J. Math.} \textbf{57}, 383-409 (2003).

\bibitem[La24]{La24}
R.~La,
The Iwahori-Matsumoto dual for tempered representations of Lustig's geometric Hecke algebras. 
Preprint. 2024. arXiv:2403.14528. 

\bibitem[LL24a]{LL23}
B. Liu and C.-H. Lo, On the weak local Arthur packets conjecture for split classical groups, {\em Int. Math. Res. Not. IMRN} (2024), no.~14, 10708--10731.

\bibitem[LLS24b]{LLS23} 
B. Liu, C.-H. Lo, and F. Shahidi, 
{Jiang's conjecture and Fibers of the Barbasch-Vogan duality.} {\em Rad Hrvat. Akad. Znan. Umjet. Mat. Znan.} {\bf 28(558)} (2024), 107--129.

\bibitem[LLS24c]{LLS24b} 
B. Liu, C.-H. Lo, and F. Shahidi, 
On anti-tempered local Arthur packets and a lemma of Arthur. Preprint 2024. arXiv:2405.17407.

\bibitem[LLS]{LLS24a} 
B. Liu, C.-H. Lo, and F. Shahidi, 
{Generalizations of the Shahidi conjecture on local $L$-packets of connected reductive groups.} Preprint.  

\bibitem[LLSZ]{LLSZ26} 
B. Liu, C.-H. Lo, F. Shahidi, and L. Zhang,
{On rational local Jiang conjecture for classical groups.} In preparation. 2026.    

\bibitem[LS23]{LS23} B. Liu and F. Shahidi, Jiang's conjecture on the wavefront sets of local Arthur packets, Submitted. 2023.


\bibitem[Lus84]{Lus84} G.~Lusztig. {\it Characters of reductive groups over finite fields.} {\em Proceedings Of The International Congress Of Mathematicians, Vol. 1, 2 (Warsaw, 1983)}. pp. 877-880 (1984).


\bibitem[Lus95]{Lus95} 
G.~Lusztig, Classification of unipotent representations of simple p-adic groups. {\em Internat. Math. Res. Notices}., 517-589 (1995).




\bibitem[Lus02]{Lus02}
G.~Lusztig, Classification of unipotent representations of simple p-adic groups. II. {\em Represent. Theory}. \textbf{6} pp. 243-289 (2002).












\bibitem[M{\oe}11b]{Moe11b} C. M{\oe}glin, Image des op{\'e}rateurs d'entrelacements normalis{\'e}s et p{\^ o}les des s{\'e}ries d'Eisenstein. {\em Adv. Math.} \textbf{228}  (2011), 1068-1134.



\bibitem[MR18]{MR18} C. M{\oe}glin, D. Renard, Sur les paquets d'Arthur des groupes classiques et unitaires non quasi-d{\'e}ploy{\'e}s. {\em Relative Aspects In Representation Theory, Langlands Functoriality And Automorphic Forms}. \textbf{2221} pp. 341-361 (2018).

\bibitem[MW86]{MW86} 
C. M{\oe}glin and J.-L.~ Waldspurger,
Sur l'involution de Zelevinski. {\em J. Reine Angew. Math.}. \textbf{372} pp. 136-177 (1986).


\bibitem[MW87]{MW87} C. M{\oe}glin and J.-L.~ Waldspurger, Mod{\'e}les de Whittaker d{\`e}g{\'e}n{\'e}r{\'e}s pour des groupes p-adiques. {\em Math. Z.} \textbf{196}, 427-452 (1987).

\bibitem[Mok15]{Mok15} C. Mok, Endoscopic classification of representations of quasi-split unitary groups. {\em Mem. Amer. Math. Soc.} \textbf{235}, vi+248 (2015).

\bibitem[Oka22]{Oka22} E.~Okada, The wavefront set over a maximal unramified field extension. 2022. arXiv:2107.10591.


\bibitem[Rog93]{Rog83}J.~Rogawski, Representations of GL(n) and division algebras over a p-adic field. {\em Duke Math. J.} \textbf{50}, 161-196 (1983).






\bibitem[Sch13]{Sch13}P. Scholze, The local Langlands correspondence for $\GL_n$ over p-adic fields. {\em Invent. Math.} \textbf{192}, 663-715 (2013).


\bibitem[SZ18]{SZ18}A.~Silberger and E.~Zink,  Langlands classification for L-parameters. {\em J. Algebra}. \textbf{511} pp. 299-357 (2018).

\bibitem[Som01]{Som01}E.~Sommers,  Lusztig's canonical quotient and generalized duality. {\em J. Algebra}. \textbf{243}, 790-812 (2001).

\bibitem[Spa82]{Spa82}
N. Spaltenstein, 
Classes Unipotentes et Sous-groupes de Borel, Lecture Notes in Mathematics, no. 946, Springer-Verlag, 1982.


\bibitem[Tad90]{Tad90}M.~Tadi{\'c},  Induced representations of GL(n,A) for p-adic division algebras A. {\em J. Reine Angew. Math.} \textbf{405} pp. 48-77 (1990).

\bibitem[Tad18]{Tad18} M. Tadi{\'c}, On unitarizability in the case of classical p-adic groups. {\em Geometric Aspects Of The Trace Formula}. pp. 405-453 (2018).


\bibitem[Tsa24]{Tsa24} C.-C.~ Tsai, Geometric wave-front set may not be a singleton. {\em J. Amer. Math. Soc.} \textbf{37} (2024), 281-304.

\bibitem[Var14]{Var14}
S. Varma,
On a result of Moeglin and Waldspurger in residual characteristic 2, 
{\it Math. Z.} 277, no. 3--4,
1027--1048 (2014).

\bibitem[Vog93]{Vog93} D. Vogan, The local Langlands conjecture. {\em Representation Theory Of Groups And Algebras}. \textbf{145} (1993), 305-379.


\bibitem[Wal18]{Wal18}J.-L. Waldspurger, Repr{\'e}sentations de r{\'e}duction unipotente pour  SO(2n+1), III: exemples de fronts d'onde. {\em Algebra Number Theory}. \textbf{12}, 1107-1171 (2018).

\bibitem[Wal20]{Wal20} J.-L. Waldspurger, Fronts d'onde des repr{\'e}sentations temp{\'e}r{\'e}es et de r{\'e}duction unipotente pour SO(2n+1). {\em Tunis. J. Math.} \textbf{2}, 43-95 (2020).

\bibitem[Xu17]{Xu17b} B. Xu, On M{\oe}glin's parametrization of Arthur packets for p-adic quasisplit Sp(N) and SO(N). {\em Canad. J. Math.} \textbf{69}, (2017), 890-960.

\bibitem[Xu21b]{Xu21b} B. Xu, A combinatorial solution to M{\oe}glin's parametrization of Arthur packets for p-adic quasisplit Sp(N) and O(N). {\em J. Inst. Math. Jussieu}. \textbf{20}, 1091-1204 (2021).

\bibitem[Zel80]{Zel80} A. Zelevinsky, Induced representations of reductive $p$-adic groups. II. On irreducible representations of  GL(n). {\em Ann. Sci. {\'E}cole Norm. Sup. (4)} \textbf{13}, 165-210 (1980).


\bibitem[Zel81]{Zel81}
A. Zelevinsky, The p-adic analogue of the Kazhdan-Lusztig conjecture. {\em Funktsional. Anal. I Prilozhen.} \textbf{15}, 9-21, 96 (1981).

\end{thebibliography}
\end{document}